\newtheorem{defn}{Definition}[section]
\newtheorem{lemma}[defn]{Lemma}
\newtheorem{prop}[defn]{Proposition}
\newtheorem{theo}[defn]{Theorem}
\newtheorem{coro}[defn]{Corollary}
\newtheorem{claim}{Claim}
\newtheorem{rk}[defn]{Remark}
\def\RR{\mathbb{R}}
\def\Ric{\mathop{\rm Ric}\nolimits}
\def\Rm{\mathop{\rm Rm}\nolimits}
\def\tr{\mathop{\rm tr}\nolimits}
\def\vol{\mathop{\rm vol}\nolimits}
\def\vol{\mathop{\rm Vol}\nolimits}
\def\inj{\mathop{\rm inj}\nolimits}
\def\div{\mathop{\rm div}\nolimits}
\def\Li{\mathop{\rm \mathscr{L}}\nolimits}
\def\Ric{\mathop{\rm Ric}\nolimits}
\def\Rm{\mathop{\rm Rm}\nolimits}
\def\tr{\mathop{\rm tr}\nolimits}
\def\vol{\mathop{\rm vol}\nolimits}
\def\vol{\mathop{\rm Vol}\nolimits}
\def\inj{\mathop{\rm inj}\nolimits}
\def\div{\mathop{\rm div}\nolimits}
\def\Li{\mathop{\rm \mathscr{L}}\nolimits}
\def\supp{\mathop{\rm supp}\nolimits}
\def\R{\mathop{\rm R}\nolimits}
\newsavebox\CBox
\newcommand\hcancel[2][0.5pt]{%
  \ifmmode\sbox\CBox{$#2$}\else\sbox\CBox{#2}\fi%
  \makebox[0pt][l]{\usebox\CBox}%  
  \rule[0.5\ht\CBox-#1/2]{\wd\CBox}{#1}}
\numberwithin{equation}{section}
\begin{document}
\title{Dynamical (In)Stability of Ricci-flat ALE metrics along the Ricci flow}
\date{today}
\author{Alix Deruelle}
\address{Institut de math\'ematiques de Jussieu, 4, place Jussieu, Boite Courrier 247 - 75252 Paris}
\email{alix.deruelle@imj-prg.fr}
\author{Tristan Ozuch}
\address{MIT, Dept. of Math., 77 Massachusetts Avenue, Cambridge, MA 02139-4307.}
\email{ozuch@mit.edu}
\maketitle

	\begin{abstract}
	    We study the stability and instability of ALE Ricci-flat metrics around which a \L{}ojasiewicz inequality is satisfied for a variation of Perelman's $\lambda$ functional adapted to the ALE situation and denoted $\lambda_{\operatorname{ALE}}$. This functional was introduced by the authors in a recent work and it has been proven that it satisfies a good enough \L{}ojasiewicz inequality at least in neighborhoods of integrable ALE Ricci-flat metrics in dimension larger than or equal to $5$. 
	    %The proof relies on Gaussian bounds for the heat kernel and controls of the integral of the heat kernel in weighted Hölder spaces.
	\end{abstract}
	
	\markboth{Alix Deruelle and Tristan Ozuch}{Stability of Ricci-flat ALE metrics along the Ricci flow}
	\tableofcontents
	
	\section*{Introduction}
	
	The understanding of Ricci-flat ALE metrics is a central issue in Riemannian geometry. These spaces model the formation of singularities of spaces with Ricci curvature bounds \cite{and,Ban-Kas-Nak} as well as the singularities of $4$-dimensional Ricci flow with bounded scalar curvature \cite{bz}, \cite{Sim-Mil-Ext-4d}. They moreover appear as finite-time blow-up limits of some Ricci flows \cite{app-EH}. Their stability therefore becomes a crucial question for the Ricci flow. 
	
	Our goal here is to study the \emph{dynamical} stability and instability of these spaces along the Ricci flow thanks to a functional on suitable neighborhoods of any ALE Ricci-flat metrics which detects Ricci-flat metrics as its critical points.

	\subsection*{An adaptation of Perelman's $\lambda$-functional to the ALE setting}
	
	In \cite{Der-Ozu-Lam}, the authors introduced an adaptation to the ALE setting of the $\lambda$-functional from \cite{Per-Ent}, this functional is denoted $\lambda_{\operatorname{ALE}}$. More precisely, for any ALE metric $g$ of order $\tau >\frac{n-2}{2}$ (see Definition \ref{defn-ALE}) whose scalar curvature is integrable, we define 
	$$\lambda_{\operatorname{ALE}}(g):= \lambda^0_{\operatorname{ALE}}(g) - m_{\operatorname{ADM}}(g),$$
	where $ \lambda^0_{\operatorname{ALE}}(g) := \inf_w \int_N 4|\nabla^g w|_g^2 + \R_g w^2 $ where the infimum is taken among the smooth functions $w$ with $w-1$ compactly supported, and $m_{\operatorname{ADM}}$ is (up to a constant) the ADM mass of $(N,g)$. The point of substracting $m_{\operatorname{ADM}}(g)$ is that the functional now extends to an analytic function on the classical space of metrics satisfying, for $\tau>\frac{n-2}{2}$, $k\in\{0,1,2,3\}$ and a Ricci-flat metric $g_b$,  $(1+r)^{\tau+k}|\nabla^{g_b,k}(g-g_b)| \ll 1$ where $r$ is the $g_b$-distance to a given point. Neither $\lambda^0_{\operatorname{ALE}}$ or $m_{\operatorname{ADM}}$ can be defined on such a neighborhood without further constraints.
	
	In \cite{Der-Ozu-Lam}, we moreover showed that $\lambda_{\operatorname{ALE}}$ detects Ricci-flat ALE metrics as its only critical points, and the Ricci flow is moreover its gradient flow. 
	
	The second variation of $\lambda_{\operatorname{ALE}}$ at an ALE Ricci-flat metric $(N^n,g_b)$ along divergence-free variations is half the Lichnerowicz operator $L_{g_b}:=\Delta_{g_b}+2\Rm(g_b)\ast$. This leads us to define the linear stability of an ALE Ricci-flat metric $(N^n,g_b)$ as the non-positivity of the associated Lichnerowicz operator $L_{g_b}$ restricted to divergence-free variations. In the integrable case, i.e. in the case where the space of ALE Ricci-flat metrics in the neighborhood of a fixed ALE Ricci-flat metric is a smooth finite-dimensional manifold, we have a nice consequence of the linear stability: any linearly stable and integrable ALE Ricci-flat metric is a local maximum for the functional $\lambda_{\operatorname{ALE}}$. Properties of the ADM mass were deduced thanks to this fact in \cite{Der-Ozu-Lam}.

		A more delicate notion of stability is that of \emph{dynamical stability} of ALE Ricci-flat metrics along the Ricci flow. Its study on \emph{compact} Ricci flat manifolds has been investigated in \cite{Gue-Ise-Kno}, \cite{Ses-Lin-Dyn-Sta}, \cite{Has-Sta} and \cite{Has-Mul}. In the non-compact situation, there are several additional difficulties. A major difference is that $0$ is not isolated in the spectrum of the linearized operator. This prevents an exponential convergence rate as in the case of \cite{Has-Sta} of an integrable Ricci-flat metric on a closed manifold, one only gets a polynomial-in-time convergence.
	
	\subsection*{A weighted \L{}ojasiewicz inequality for $\lambda_{\operatorname{ALE}}$.}One tool that has been quite popular to study the dynamical stability of fixed points of  geometric evolution equations is the notion of \L{}ojasiewicz-Simon inequalities. Its name comes from both the classical work of \L{}ojasiewicz \cite{loj} on finite dimensional dynamical systems of gradient type and that of L. Simon \cite{sim} who extended systematically these inequalities to functionals defined on infinite dimensional spaces. The main geometric applications obtained in \cite{sim} concern the uniqueness of tangent cones of isolated singularities of minimal surfaces in Euclidean space together with the uniqueness of tangent maps of minimizing harmonic maps with values into an analytic closed Riemannian manifold. These geometric equations have the advantage to be strongly elliptic. Notice that all these results do not hold true if one drops the assumption on the analyticity of the data under consideration.
	
	In the compact setting, \L{}ojasiewicz inequalities have been proved for Perelman's $\lambda$-functional in the neighborhood of \emph{compact} Ricci-flat metrics and were the main tool to study the stability of Ricci-flat metrics in \cite{Has-Sta} in the integrable case, and in \cite{Has-Mul} in the general case.
	
The main result in \cite{Der-Ozu-Lam} is that the functional $\lambda_{\operatorname{ALE}}$ satisfies a weighted \L ojasiewicz inequality in a neighborhood of any ALE Ricci-flat metric with respect to the topology of weighted H\"older spaces $C^{2,\alpha}_{\tau}$, $\alpha\in(0,1)$, with polynomial decay of rate $\tau\in (\frac{n-2}{2},n-2)$ (see Definition \ref{def-weighted-norms}). It is \emph{weighted} since it uses the $L^2_{\frac{n}{2}+1}(g_b)$-norm which is essentially $L^2((1+r)dv_{g_b})$ instead of $L^2(dv_{g_b})$. Roughly speaking, the article \cite{Der-Ozu-Lam} proves that there is some $\theta\in(0,1]$ such that for any metric $g$ in a $C^{2,\alpha}_{\tau}$-neighborhood of a given ALE Ricci flat metric, the following \L ojasiewciz inequality holds: 
\begin{equation}
		|\lambda_{\operatorname{ALE}}(g)|^{2-\theta}\leq C\|\nabla \lambda_{\operatorname{ALE}}(g)\|_{L^2_{\frac{n}{2}+1}(g_b)}^{2}.\label{loj-ineq-lambda-ALE-intro}
		\end{equation}
We refer the reader to Theorem \ref{dream-thm-loja-intro} for a precise statement.

	The fact that our spaces are non-compact induces quite a lot of new difficulties. In particular, the spectrum of the Lichnerowicz operator is not discrete anymore and $0$ belongs to the essential spectrum. This explains the need of considering weighted Sobolev spaces different from $L^2$ for which the differential of the gradient $\nabla\lambda_{\operatorname{ALE}}$ at a Ricci-flat ALE metric is Fredholm. Theorem \ref{dream-thm-loja-intro} gives an optimal $L^2_{\frac{n}{2}+1}$-\L ojasiewicz inequality with exponent $\theta=1$ in the integrable situation. 
	Nonetheless, it seems that inequality appears not to be so useful regarding the study of dynamical stability of the Ricci flow near an ALE Ricci flat metric. This is essentially due to the $L^2$-variational structure of $\lambda_{\operatorname{ALE}}$. 
	For this reason, in the setting of $L^2$-perturbations, by interpolation, we obtain the following \L ojasiewicz inequality near any ALE Ricci flat linearly stable and integrable metric in dimension greater than $4$.
			In particular, if $n\geq 5$, one has the following $L^2$-\L ojasiewicz inequality for integrable Ricci-flat ALE metrics: for $\tau\in(\frac{n}{2},n-2)$ and $0<\delta<\frac{2\tau-(n-2)}{2\tau-(n-4)}$, there exists $C>0$ such that for all $g\in B_{C^{2,\alpha}_\tau}(g_b,\varepsilon)$, 
		\begin{equation}
		    |\lambda_{\operatorname{ALE}}(g)|^{2-\theta_{L^2}}\leq C \|\nabla \lambda_{\operatorname{ALE}}(g)\|_{L^2(g_b)}^{2}, \quad\theta_{L^2}:=2-\frac{1}{\delta} < \frac{2\tau-n}{2\tau-(n-2)}.\label{loja L2 intro}
		\end{equation}

	 Notice that we cannot reach the usual optimal $L^2$-\L{}ojasiewicz exponent $\theta_{L^2} = 1$. This is consistent with the known fact that the DeTurck-Ricci flow only converges polynomially fast for perturbations of the Euclidean space: see for instance \cite{Sch-Sch-Sim} and \cite{app-scal}. We also refer to the recent work \cite{Kro-Pet-Lp-stab} on the dynamical stability of integrable ALE Ricci flat metrics carrying a parallel spinor. Indeed, an exponent $\theta_{L^2} =1$ would imply that the convergence is exponential.
	
	%\todo{a verifier}
	%\begin{rk}
	%	We will see in a following article \cite{Der-Ozu-LamI} that as $\tau\to n-2$ this \L{}ojasiewicz exponent recovers the optimal convergence rate for the DeTurck-Ricci flow on Euclidean space \cite{app-scal}. This will hold for the Ricci flow, and for perturbations of other Ricci-flat ALE metrics.
	%\end{rk}
	
	\subsection*{Stability and instability of Ricci-flat ALE metrics.}
	
	In the present article, we study the \emph{dynamical} stability or instability of Ricci-flat ALE metrics along the Ricci flow assuming an $L^2$-\L{}ojasiewicz inequality such as \eqref{loja L2 intro} holds true. This should be a quite general scheme of proof and apply to other stability questions on \emph{non-compact spaces} along the Ricci flow and other parabolic geometric flows.
	
	\begin{theo}[Stability of Ricci-flat ALE metrics]\label{theo stability introp}
	    Let $(N^n,g_b)$ be a Ricci-flat ALE manifold of dimension $n\geqslant 4$ and $\frac{n-2}{2}<\tau<n-2$. There exists $\alpha\in(0,1)$ such that if we assume:
	    \begin{enumerate}
	        \item that $g_b$ is a local maximum of $\lambda_{\operatorname{ALE}}$ in the $C^{2,\alpha}_\tau(g_b)$ topology, 
	        \item in a $C^{2,\alpha}_\tau(g_b)$-neighborhood $B_{C^{2,\alpha}_{\tau}}(g_b,\varepsilon_{\L})$ of $g_b$, an $L^2$-\L{}ojasiewicz inequality is satisfied: for any metric $g$ in $B_{C^{2,\alpha}_{\tau}}(g_b,\varepsilon_{\L})$, we have
	        $$|\lambda_{\operatorname{ALE}}(g)|^{2-\theta}\leq C \|\nabla \lambda_{\operatorname{ALE}}(g)\|_{L^2(g_b)}^{2}$$
	        for some $\theta\in(0,1)$,
	    \end{enumerate}
	    then, for any $0<\tau'<\tau$ and $0<\alpha'<\alpha$, for any metric $g$ sufficiently $C^{2,\alpha}_\tau(g_b)$-close to $g_b$, the Ricci flow starting at $g$, $C^{2,\alpha'}_{\tau'}(g_b)$-converges to a Ricci-flat metric $g'_b$ (which is $C^{2,\alpha}_\tau(g_b)$-close to $g_b$) at a polynomial speed determined by the exponent $\theta$.
	\end{theo}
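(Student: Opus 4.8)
The plan is to run the standard \L{}ojasiewicz--Simon scheme for gradient flows, adapted to the weighted ALE setting. Since the Ricci flow is only weakly parabolic and agrees with the $L^2$-gradient flow of $\lambda_{\operatorname{ALE}}$ only modulo diffeomorphisms, I would first replace it by its gauge-fixed version (a Ricci--DeTurck-type flow with background $g_b$), which is strictly parabolic and whose trajectories coincide with those of the $\lambda_{\operatorname{ALE}}$ gradient flow up to a time-dependent family of diffeomorphisms. For this flow I would establish short-time existence starting in $B_{C^{2,\alpha}_\tau}(g_b,\varepsilon)$ together with parabolic smoothing estimates in the weighted H\"older spaces; these will be the workhorse for upgrading weak convergence to $C^{2,\alpha'}_{\tau'}(g_b)$-convergence. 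Throughout I use that $g_b$ is a critical point with $\lambda_{\operatorname{ALE}}(g_b)=0$, so that the local-maximum hypothesis gives $\lambda_{\operatorname{ALE}}(g(t))\le 0$, nondecreasing along the flow, with $\frac{d}{dt}\lambda_{\operatorname{ALE}}(g(t))=\|\nabla\lambda_{\operatorname{ALE}}(g(t))\|^2$ in the moving $L^2$ structure (uniformly equivalent to $L^2(g_b)$ in the neighborhood).

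The core estimate is the trajectory-length bound. Setting $H(t):=-\lambda_{\operatorname{ALE}}(g(t))\ge 0$, which decreases along the flow, and $\phi(t):=H(t)^{\theta/2}$, I differentiate and insert the hypothesized $L^2$-\L{}ojasiewicz inequality $H^{2-\theta}\le C\|\nabla\lambda_{\operatorname{ALE}}\|_{L^2(g_b)}^2$. Using $\frac{d}{dt}H=-\|\nabla\lambda_{\operatorname{ALE}}\|^2$ and the equivalence of the moving and fixed $L^2$ norms, this yields
$$-\tfrac{d}{dt}\phi(t)=\tfrac{\theta}{2}\,H(t)^{\theta/2-1}\,\|\nabla\lambda_{\operatorname{ALE}}(g(t))\|^2\ \geq\ c\,\|\nabla\lambda_{\operatorname{ALE}}(g(t))\|_{L^2(g_b)}\ \gtrsim\ \|\partial_t g(t)\|_{L^2(g_b)},$$
so that $\int_0^\infty\|\partial_t g\|_{L^2(g_b)}\,dt\lesssim \phi(0)<\infty$. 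Hence $g(t)$ has finite $L^2(g_b)$-length and converges in $L^2(g_b)$ to a limit $g_b'$, which, being a critical point of $\lambda_{\operatorname{ALE}}$, is Ricci-flat ALE.

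To quantify the speed and to keep the flow inside the \L{}ojasiewicz neighborhood, I would feed the inequality back into the monotonicity identity to obtain $\frac{d}{dt}H\le -C^{-1}H^{2-\theta}$; since $\theta\in(0,1)$ this integrates to the polynomial decay $H(t)\lesssim t^{-1/(1-\theta)}$, whence the tail length satisfies $\int_t^\infty\|\partial_s g\|_{L^2(g_b)}\,ds\lesssim \phi(t)\lesssim t^{-\theta/(2(1-\theta))}$. Running this in tandem with a continuity (open--closed) argument, in which the smoothing estimates from the first step convert the $L^2$-smallness of the trajectory into $C^{2,\alpha}_\tau$-smallness, ensures the flow never exits $B_{C^{2,\alpha}_{\tau}}(g_b,\varepsilon_{\L})$, giving global existence. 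Interpolating the $L^2$-decay against uniform weighted higher-order bounds then upgrades the convergence $g(t)\to g_b'$ to $C^{2,\alpha'}_{\tau'}(g_b)$ for any $\tau'<\tau$, $\alpha'<\alpha$, at the stated polynomial rate. Finally I would undo the gauge: the diffeomorphisms solving the associated harmonic-map heat flow converge because the metrics do, so the genuine Ricci flow converges to a metric isometric to $g_b'$.

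I expect the main obstacle to be the mismatch between the $L^2(g_b)$-variational structure, in which the \L{}ojasiewicz inequality and hence the length bound naturally live, and the weighted $C^{2,\alpha}_\tau$-topology in which stability and the non-escape of the neighborhood must be controlled. On a non-compact ALE manifold, $0$ lies in the essential spectrum of the Lichnerowicz operator, so there is no spectral gap and no exponential smoothing; controlling the spatial decay rate $\tau'$ uniformly in time, and transferring $L^2$-decay of the trajectory into decay in the weighted H\"older norm without destroying the polynomial rate, is precisely where the weighted parabolic and interpolation estimates must be pushed, and where the unavoidable losses from $\tau$ to $\tau'$ and from $\theta=1$ to $\theta<1$ originate.
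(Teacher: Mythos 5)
Your proposal transplants the compact-case \L{}ojasiewicz scheme (gauge-fix to a Ricci--DeTurck flow, length bound, smoothing, undo the gauge), but two of its steps break precisely at the points where the ALE setting differs from the compact one, and these are the points the paper's proof of Theorem \ref{theo-dyn-stab-loc-max-lambda} is built to circumvent. First, the gauge. Your final step, ``the diffeomorphisms solving the associated harmonic-map heat flow converge because the metrics do,'' is false for general $\theta\in(0,1)$: convergence of $\psi_t$ requires the deformation vector field to be integrable in time, and the only decay available here is polynomial. The paper's own Proposition \ref{Bianchi-C-0-est-prop} gives $\|B(t)\|_{C^0}\leq C_\eta t^{-\frac{\theta}{1-\theta}+\eta}$, which is integrable only when $\theta>\frac12$; the discussion following Proposition \ref{prop-id-scal-int-mass-init} makes exactly this point, and also notes that the limit metric is not known a priori, so there is no natural background for a DeTurck flow. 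This is why the paper never gauge-fixes: it runs the entire argument on the Ricci flow itself, using that the monotonicity formula \eqref{mono-lambda} holds along the Ricci flow and that $\|\partial_tg\|_{L^2}=2\|\Ric(g)\|_{L^2}$ is dominated by $\|\Ric(g)+\nabla^{g,2}f_g\|_{L^2}$ via Proposition \ref{stabilityALE-RF-L2} (a genuine lemma, proved with a Bochner formula and a Hardy inequality, which you assert implicitly); for a DeTurck flow you would in addition need to dominate the Lie-derivative term $\mathcal{L}_Wg$ in $\partial_t g$ by the gradient, which is not available.

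Second, and more fundamentally, your open--closed argument rests on ``smoothing estimates'' and ``interpolation'' to convert $L^2$-smallness of the trajectory into $C^{2,\alpha}_{\tau}$-smallness, so that the flow never exits the \L{}ojasiewicz neighborhood. The paper examines exactly this route (end of the discussion before Section \ref{subsec-theo-proof-stab}) and shows it fails: Gagliardo--Nirenberg interpolation against the $L^2$ bound on $\Ric(g(t))$ demands uniform control of high weighted norms $C^{m+2,\alpha}_{\tau}$, forces $\tau$ below $\frac n2\cdot\frac{m+2}{m}$, and, worse, requires the interpolation exponent to lie below $\frac{\theta}{2-\theta}$, i.e.\ an a priori lower bound on $\theta$ that cannot be checked. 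What replaces it is the paper's main technical content, which is absent from your proposal: uniform-in-time Gaussian bounds for the heat kernel along the flow (Theorem \ref{Gaussian-est-scal-heat-ker}), fed through Duhamel's formula for the evolution equation of the Ricci tensor, to produce the weighted a priori estimates of Lemmas \ref{lemma-a-priori-wei-metr-wei}, \ref{a-priori-wei-lemma-C0-Ric} and \ref{a-priori-wei-lemma-Holder-met} (this is also where the constraint $\alpha<\min\{1,\tau-1,n-2-\tau\}$ in the theorem originates, and where the integral bounds $\int\|\Ric\|_{C^0}^{1-\eta}\,dt\lesssim|\lambda_{\operatorname{ALE}}|^{\frac\theta2(1+\eta)-\eta}$ of Lemma \ref{stabilityALE} are consumed). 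Without this mechanism your continuity argument has no way to keep the flow inside $B_{C^{2,\alpha}_{\tau}}(g_b,\varepsilon_{\L})$, and the \L{}ojasiewicz inequality can no longer be applied, so the scheme does not close. The remaining ingredients of your proposal (the length bound via $H^{\theta/2}$, the polynomial decay $H(t)\lesssim t^{-1/(1-\theta)}$, compactness of $C^{2,\alpha}_{\tau}\hookrightarrow C^{2,\alpha'}_{\tau'}$ to extract and identify the limit) do match the paper's Lemma \ref{stabilityALE} and Proposition \ref{prop-exi-lim-conv-rate}.
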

	
	Theorem \ref{theo stability introp} provides Type IIb solutions of the Ricci flow unless the background Ricci flat metric is the Euclidean metric, i.e. it ensures the existence of immortal solutions $(g(t))_{t\geq 0}$ satisfying $\limsup_{t\rightarrow+\infty}t\sup_N|\Rm(g(t))|=+\infty$. Notice also that Theorem \ref{theo stability introp} ensures the convergence to hold in weighted H\"older spaces which we believe are well-suited for gluing methods. We refer the reader to the article \cite{Bre-Kap} for such an illustration. Finally, we underline the need in Theorem \ref{theo stability introp} of restricting the convergence rate in space below the threshold value $n-2$: we refer the reader to our discussion on previous results of Dai and Ma \cite{Dai-Ma-Mass} right after the proof of Proposition \ref{prop-id-scal-int-mass-init} which links the mass and the mean value of the scalar curvature along the solutions provided by Theorem \ref{theo stability introp}.

	A direct consequence of Theorem \ref{dream-thm-loja-intro} and Theorem \ref{theo stability introp} is the following stability result for integrable ALE Ricci-flat spaces of dimension at least $5$.
	
	\begin{coro}\label{coro-int-sta-dyn-sta}
	    Let $(N,g_b)$ be a Ricci-flat ALE manifold of dimension $n\geqslant 5$ with integrable Ricci-flat deformations. Assume that $g_b$ is \emph{linearly stable}, that is: its Lichnerowicz operator is nonpositive on divergence-free deformations decaying at infinity. Let $\frac{n-2}{2}<\tau<n-2$ and $0<\alpha<1$ sufficiently small.
	    
	    Then, for any $0<\tau'<\tau$ and $0<\alpha'<\alpha$, for any metric $g$ sufficiently $C^{2,\alpha}_\tau(g_b)$-close to $g_b$, the Ricci flow starting at $g$ $C^{2,\alpha'}_{\tau'}(g_b)$-converges to a Ricci-flat metric $g'_b$ (which is $C^{2,\alpha}_\tau(g_b)$-close to $g_b$) at a polynomial speed determined by the dimension $n$.
	\end{coro}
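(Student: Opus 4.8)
The plan is to read Corollary \ref{coro-int-sta-dyn-sta} as a pure specialization of Theorem \ref{theo stability introp}: under the standing assumptions of integrability and linear stability, I would verify the two hypotheses of that theorem and then invoke it verbatim. Thus the proof splits into checking (1) that $g_b$ is a local maximum of $\lambda_{\operatorname{ALE}}$ in the $C^{2,\alpha}_\tau(g_b)$ topology, and (2) that the $L^2$-\L ojasiewicz inequality holds near $g_b$ with some exponent $\theta\in(0,1)$; the substantive analytic input is entirely contained in Theorem \ref{dream-thm-loja-intro} and in the structure of the critical set established in \cite{Der-Ozu-Lam}.

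For hypothesis (1), I would use that $\lambda_{\operatorname{ALE}}$ is gauge invariant and that its second variation at $g_b$ along divergence-free directions equals $\tfrac12 L_{g_b}$. Decomposing a small variation $h=g-g_b$ into a divergence-free part and a Lie-derivative (gauge) part, linear stability gives $L_{g_b}\leq 0$ on decaying divergence-free tensors, hence a nonpositive Hessian transverse to the kernel, while gauge invariance disposes of the Lie-derivative directions. Integrability is what controls the degenerate kernel directions: the infinitesimal Ricci-flat deformations integrate to a smooth finite-dimensional manifold of ALE Ricci-flat metrics, all of which are critical points at which $\lambda_{\operatorname{ALE}}$ takes the same (maximal) value, so the functional is locally constant along this manifold. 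Combining these three facts upgrades the nonpositive Hessian to a genuine local maximum in the $C^{2,\alpha}_\tau(g_b)$ topology; this is exactly the consequence of linear stability and integrability recalled in the introduction and proved in \cite{Der-Ozu-Lam}.

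For hypothesis (2), I would invoke Theorem \ref{dream-thm-loja-intro}, which in the integrable case yields the optimal weighted inequality \eqref{loj-ineq-lambda-ALE-intro} with exponent $\theta=1$ measured in the $L^2_{\frac n2+1}(g_b)$-norm. Interpolating the weighted gradient norm between the unweighted $L^2(g_b)$-norm and a higher weighted norm, and using the $C^{2,\alpha}_\tau$ decay of $g$ to absorb the tail contributions coming from the weight $(1+r)$, converts this into the unweighted $L^2$-\L ojasiewicz inequality \eqref{loja L2 intro}. This is valid precisely for $n\geq 5$ and $\tau\in(\tfrac n2,n-2)$, and produces a strictly smaller exponent $\theta_{L^2}=2-\tfrac1\delta<\tfrac{2\tau-n}{2\tau-(n-2)}<1$, which is the hypothesis (2) of Theorem \ref{theo stability introp} with $\theta=\theta_{L^2}$.

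With both hypotheses verified, I would apply Theorem \ref{theo stability introp} directly: for any $0<\tau'<\tau$ and $0<\alpha'<\alpha$ and any $g$ sufficiently $C^{2,\alpha}_\tau(g_b)$-close to $g_b$, the Ricci flow from $g$ converges in $C^{2,\alpha'}_{\tau'}(g_b)$ to an ALE Ricci-flat metric $g'_b$ close to $g_b$, at a polynomial rate governed by $\theta_{L^2}$; optimizing over the admissible $\tau$ and $\delta$ makes this rate depend only on the dimension $n$, as claimed. The main obstacle is not the final invocation but the interpolation underlying \eqref{loja L2 intro}: one must carefully track how the weight trades against spatial decay so that the unweighted $L^2$-norm still bounds $|\lambda_{\operatorname{ALE}}(g)|^{2-\theta_{L^2}}$, and this is exactly where the threshold $\tau<n-2$ and the restriction $n\geq 5$ enter, and why the loss to $\theta_{L^2}<1$ forces the convergence to be merely polynomial rather than exponential.
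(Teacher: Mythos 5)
Your proposal is correct and follows essentially the paper's own route: the paper presents this corollary as a direct consequence of Theorem \ref{dream-thm-loja-intro} (which supplies the $L^2$-\L{}ojasiewicz inequality with exponent $\theta_{L^2}<1$ for integrable metrics, $n\geq 5$, $\tau\in(\tfrac{n}{2},n-2)$) together with Theorem \ref{theo stability introp}, the local-maximum hypothesis being exactly the fact, proved in \cite{Der-Ozu-Lam} and recalled in the introduction, that linearly stable and integrable ALE Ricci-flat metrics are local maxima of $\lambda_{\operatorname{ALE}}$. Your remark that the interpolation forces $\tau>\tfrac{n}{2}$ (so that $\theta_{L^2}>0$) is accurate, and is in fact slightly more careful than the corollary's stated range $\tau\in(\tfrac{n-2}{2},n-2)$, for which the \L{}ojasiewicz hypothesis cannot be verified from Theorem \ref{dream-thm-loja-intro} alone.
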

	
	Notice that Corollary \ref{coro-int-sta-dyn-sta} applies to all known Ricci-flat ALE metrics in dimension greater than or equal to $5$. For instance, it applies to Calabi's ALE Ricci-flat K\"ahler metrics \cite{Cal-Fib-Hol} on the total space of the line bundle $L^{-n}\rightarrow\mathbb{CP}^{n-1}$, $n\geq 3$, and more generally to Joyce's ALE Ricci-flat K\"ahler metrics \cite[Chapter $8$]{Joy-Book}. In particular, this makes precise the results of \cite{Chau-Tam-Stab-I} in the ALE case, where the convergence is established on compact subsets only.

	 Corollary \ref{coro-int-sta-dyn-sta} shares some similarities with the work of Kr\"oncke and Petersen \cite{Kro-Pet-Lp-stab} that we now explain. In \cite{Kro-Pet-Lp-stab}, the authors investigates the dynamical stability of integrable ALE Ricci-flat metrics which carry a parallel spinor in dimension greater than or equal to $4$. As they notice, this applies to all know ALE Ricci-flat metrics. Their method is based on a delicate analysis of the heat kernel of the Lichnerowicz operator in $L^p$ spaces and they apply it to the DeTurck-Ricci flow with time-dependent background metrics. We emphasize that the convergence result they get takes place in the $C^k\cap L^p$ topology and the corresponding polynomial convergence rate is sharp whereas Theorem \ref{theo stability introp} (and Corollary \ref{coro-int-sta-dyn-sta}) proves the convergence of the Ricci flow directly and in weighted spaces at the cost of getting an a priori non sharp convergence rate. Moreover, they assume the initial condition to be a perturbation of the ALE Ricci flat background to lie in $L^p\cap L^{\infty}$ for any $p<n$. In particular, they allow initial conditions to decay like $r^{-1-\epsilon}$ at infinity while we require a decay rate to be at least $r^{-(n-2)/2}$. Another interesting technical fact is that \cite{Kro-Pet-Lp-stab} considers nearby Ricci-flat metrics of a given ALE Ricci flat metric with respect to the Bianchi gauge whereas we study such metrics which are divergence free with respect to a background ALE Ricci-flat metric. In the setting of Theorem \ref{theo stability introp}, we prove that the Bianchi gauge converges to $0$ faster then expected: see Section \ref{sec-Bia-for-evo} for a precise statement. Therefore, in view of the previous remarks, we ask whether our methods can be carried to dimension $4$.

	The next result echoes the work \cite{Has-Mul} on the existence of ancient solutions coming out of an unstable Ricci-flat metric:
	
	\begin{theo}[Instability of Ricci-flat ALE metrics]\label{theo instability introp}
	    Let $(N^n,g_b)$ be a Ricci-flat ALE manifold of dimension $n\geqslant 4$ and assume for some $0<\alpha<1$ and $\tau>\frac{n-2}{2}$:
	    \begin{enumerate}
	        \item that $g_b$ is a \emph{not} a local maximum of $\lambda_{\operatorname{ALE}}$ in the $C^{2,\alpha}_\tau(g_b)$ topology,
	        \item in a $C^{2,\alpha}_\tau(g_b)$-neighborhood $\mathcal{U}_{g_b}$ of $g_b$, an $L^2$-\L{}ojasiewicz inequality is satisfied: for any metric $g$ in $\mathcal{U}_{g_b}$, we have
	        $$|\lambda_{\operatorname{ALE}}(g)|^{2-\theta}\leq C \|\nabla \lambda_{\operatorname{ALE}}(g)\|_{L^2(g_b)}^{2}$$
	        for some $\theta\in(0,1)$.
	    \end{enumerate}
	    Then, there exists a non Ricci-flat ancient solution to the Ricci flow $(g(t))_{t\in(-\infty,0]}$
	    which is uniformly ALE of order $\tau$ and $C^{2,\alpha}_\tau$-converges to $g_b$ at a polynomial speed determined by the exponent $\theta$.
	\end{theo}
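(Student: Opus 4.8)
The plan is to construct the ancient solution as a limit of solutions to the Ricci flow that, going backward in time, leave an arbitrarily small neighborhood of $g_b$. The instability assumption (1) provides the crucial starting point: since $g_b$ is \emph{not} a local maximum of $\lambda_{\operatorname{ALE}}$, in every $C^{2,\alpha}_\tau(g_b)$-neighborhood there exists a metric $g_0$ with $\lambda_{\operatorname{ALE}}(g_0) > \lambda_{\operatorname{ALE}}(g_b) = 0$. First I would fix a sequence $g_0^{(k)} \to g_b$ of such metrics and run the (DeTurck-gauged) Ricci flow \emph{backward} from each, or more precisely reverse the role of time: because $\lambda_{\operatorname{ALE}}$ is monotone nondecreasing along the Ricci flow and has $g_b$ as its only critical value $0$ in the neighborhood, a solution with $\lambda_{\operatorname{ALE}}(g_0^{(k)})>0$ cannot have converged to $g_b$ in the past. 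The idea is to choose, for each $k$, a \emph{backward} trajectory that starts very close to $g_b$ at a large negative time $-T_k$ and reaches a fixed definite distance $\delta$ from $g_b$ at time $0$; one then lets $T_k \to \infty$ and extracts a limit.

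The key analytic input is the $L^2$-\L{}ojasiewicz inequality (2), which controls the trajectory's length in time. Along the (suitably gauged) flow $\partial_t g = -2\Ric(g) = \nabla\lambda_{\operatorname{ALE}}(g)$ one has $\frac{d}{dt}\lambda_{\operatorname{ALE}}(g(t)) = \|\nabla\lambda_{\operatorname{ALE}}(g(t))\|_{L^2(g_b)}^2$, and combining this with the \L{}ojasiewicz inequality $|\lambda_{\operatorname{ALE}}|^{2-\theta}\leq C\|\nabla\lambda_{\operatorname{ALE}}\|_{L^2(g_b)}^2$ yields a differential inequality for $\lambda_{\operatorname{ALE}}(g(t))$ of the form $\frac{d}{dt}\lambda_{\operatorname{ALE}} \geq C^{-1}\,\lambda_{\operatorname{ALE}}^{\,2-\theta}$. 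Integrating this backward gives a polynomial lower bound $\lambda_{\operatorname{ALE}}(g(t)) \geq c\,(-t)^{-1/(1-\theta)}$ as $t\to-\infty$, hence $\lambda_{\operatorname{ALE}}\to 0$ at precisely the claimed polynomial rate, and simultaneously a \emph{finite} bound on the $L^2(g_b)$-length $\int_{-\infty}^0 \|\partial_t g\|_{L^2(g_b)}\,dt$ of any backward trajectory on which $\lambda_{\operatorname{ALE}}$ stays small. This integral bound is what forces Cauchy convergence of $g(t)$ as $t\to-\infty$ and lets one identify the limit as a critical point, i.e.\ a Ricci-flat ALE metric; the only critical point that close is $g_b$ itself (up to the gauge), so the limit is $g_b$ and the solution is genuinely ancient and converges to $g_b$ in the past.

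The remaining steps are to promote the $L^2(g_b)$-control to the weighted Hölder convergence and uniform ALE property asserted in the conclusion, and to ungauge. For the former I would use parabolic Schauder/interpolation estimates for the Ricci--DeTurck flow together with the weighted structure, bootstrapping the $L^2_{n/2+1}$-type decay coming from the finiteness of the trajectory length into $C^{2,\alpha}_\tau$-decay; uniform ALE-ness of the whole family $(g(t))$ should follow from the a priori smallness in $C^{2,\alpha}_\tau(g_b)$ maintained along the backward trajectory by the monotonicity of $\lambda_{\operatorname{ALE}}$. For the compactness argument producing the ancient solution from the sequence $g^{(k)}$, one normalizes each backward trajectory so that it first hits the sphere of radius $\delta$ at time $0$, uses the uniform estimates to extract a subsequential limit $g(t)$ on $(-\infty,0]$, and checks that the limit is nonconstant (its time-$0$ slice sits at distance $\delta$ from $g_b$) hence non-Ricci-flat.

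The main obstacle I anticipate is the \emph{noncompactness} of $N$, which enters in two places. First, the \L{}ojasiewicz inequality is stated with an $L^2(g_b)$ norm, whereas the natural Fredholm theory and the gradient estimates live in weighted spaces (the paper emphasizes that $0$ lies in the essential spectrum of $L_{g_b}$ and that the $L^2_{n/2+1}$ norm, not $L^2$, is the one for which the linearization is Fredholm). Reconciling the $L^2$-length control from the \L{}ojasiewicz inequality with the weighted-space a priori estimates needed for convergence — and doing so uniformly in $k$ so that the backward limit does not lose its decay rate at spatial infinity — is the delicate point. Second, establishing genuine Cauchy convergence (rather than mere subsequential convergence) as $t\to-\infty$ from the finite-length bound requires the absence of other critical points near $g_b$ and a uniform "gap," which again must be handled carefully in the noncompact, essential-spectrum setting.
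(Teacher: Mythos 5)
Your skeleton is in fact the paper's: pick $g_k\to g_b$ with $\lambda_{\operatorname{ALE}}(g_k)>0$, use the \L{}ojasiewicz inequality to force the \emph{forward} Ricci flow from $g_k$ to exit a fixed ball $B_{C^{2,\alpha}_{\tau}}(g_b,\varepsilon)$ at a finite time $T_k$ (your differential inequality $\frac{d}{dt}\lambda_{\operatorname{ALE}}\geq C\lambda_{\operatorname{ALE}}^{2-\theta}$ is exactly Lemma \ref{instabdefT}), translate time so the exit occurs at $t=0$, extract a limit on $(-\infty,0]$, and read the polynomial rate off the integrated inequality. (Set aside your phrase ``run the flow backward'': backward Ricci flow is ill-posed, but your normalization shows you really mean forward flows reparametrized, which is what the paper does; also the Ricci flow is not literally the $L^2(g_b)$-gradient flow of $\lambda_{\operatorname{ALE}}$, though the monotonicity formula of Proposition \ref{first-var-prop} serves the same purpose.) The genuine gap is that the entire argument hinges on a uniform positive lower bound $\lambda_{\operatorname{ALE}}(g_k(T_k))\geq C^{-1}>0$ at the exit time, which you never establish, and three of your steps fail without it. First, ancientness: you need $T_k\to\infty$, and this is not soft, because the exit is measured in the weighted norm $C^{2,\alpha}_{\tau}$ while Hamilton-type compactness and continuous dependence only give locally smooth control, which says nothing about the weighted norm near spatial infinity; the paper proves $T_k\to\infty$ (Claim \ref{claim-T-unbded}) by a contradiction that uses precisely this lower bound. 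Second, non-triviality: your argument that the limit is non-Ricci-flat ``because its time-$0$ slice sits at distance $\delta$ from $g_b$'' does not work, since by the compact embedding (Lemma \ref{lemme-Chal-Cho-Bru}) the limit is produced only in the weaker $C^{2,\alpha'}_{\tau'}$ topology, in which the distance-$\delta$ condition is not preserved: a priori the limit slice could even be $g_b$. The paper instead deduces non-Ricci-flatness from $\lambda_{\operatorname{ALE}}(g_{\infty}(0))\geq C^{-1}>0$ and the continuity of $\lambda_{\operatorname{ALE}}$. The lower bound itself (Claim \ref{Claim-pos-inf-lam}) comes from the weighted a priori estimates: moving weighted distance $\varepsilon$ costs $\lambda_{\operatorname{ALE}}$, namely $\varepsilon\leq C\left(\delta_k^{1-\eta}+\lambda_{\operatorname{ALE}}(g_k(T_k))^{\frac{\theta}{2}(1+\eta)-\eta}\right)$, via Lemma \ref{a-priori-wei-lemma-Holder-met} combined with Lemma \ref{instability-ALE}.

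Your route to the backward convergence has a second gap: you invoke finite $L^2$-length plus ``the only critical point that close is $g_b$,'' i.e.\ isolatedness of $g_b$ among Ricci-flat ALE metrics, which is not assumed (there is no integrability hypothesis in this theorem); and in any case finite $L^2(g_b)$-length would only produce an $L^2$-limit, far from the asserted $C^{2,\alpha}_{\tau}$-convergence. The paper needs neither isolatedness nor a spectral gap: the same weighted estimate gives, uniformly in $k$ and $t\in[-T_k,0]$, $\|\tilde g_k(t)-g_b\|_{C^{2,\alpha}_{\tau}}\leq C\left(\delta_k^{1-\eta}+\lambda_{\operatorname{ALE}}(\tilde g_k(t))^{\frac{\theta}{2}(1+\eta)-\eta}\right)$ with $\delta_k=\|g_k-g_b\|_{C^{2,\alpha}_{\tau}}\to 0$, and the backward decay $\lambda_{\operatorname{ALE}}(\tilde g_k(t))\leq C(1+C|t|)^{-\frac{1}{1-\theta}}$ then pins the distance to $g_b$ \emph{itself} at the polynomial rate; passing to the limit gives the conclusion, including the rate $\frac{\theta}{2(1-\theta)}-\eta$. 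Underlying all of this is the conversion of $L^2$ information into weighted H\"older control, which you correctly flag as the main obstacle but delegate to generic ``parabolic Schauder estimates'': in the paper this is the entire machinery of Section \ref{section heat kernel} (Gaussian bounds for the heat kernel along the flow, Theorem \ref{Gaussian-est-scal-heat-ker}, fed through Duhamel's formula), and it is exactly the input needed for the exit-time lower bound above. So the strategy is the right one, but as proposed the proof does not close.
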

	
	It is worth mentioning that it is still an open problem whether there are unstable ALE Ricci flat metrics in dimension greater than or equal to $4$. Also, Theorem \ref{theo instability introp} bears some resemblance with the work \cite{Takahashi} where an ancient solution coming out of the Euclidean Schwarzschild metric is constructed by hand. It would be an interesting problem to recover Takahashi's result via a suitable ALF version of our functional $\lambda_{\operatorname{ALE}}$.
	
	Both Theorem \ref{theo stability introp} and Theorem \ref{theo instability introp} rely heavily on Gaussian estimates and estimates in weighted Hölder spaces for the heat kernel which we believe are of independent interest and that we describe now.
\subsection*{Heat kernel estimates}
	
	We prove that the heat kernel associated to the forward heat equation acting on functions along a Ricci flow in a suitable neighborhood of a Ricci-flat ALE metric satisfies uniform-in-time Gaussian bounds in Theorem \ref{Gaussian-est-scal-heat-ker}. The proof follows the method of Grigor'yan \cite{Gri-Upp-Ene-Dav} and that of Zhang \cite{Zha-Qi-Gra-Est} in a Ricci flow setting.

	These controls are then used on the parabolic equation satisfied by the Ricci curvature and integrated in time to yield controls on the metric in $C^{2,\alpha}_\tau$ for $\frac{n-2}{2}<\tau<n-2$ as long as the Ricci flow stays in a given $C^{2,\alpha}_\tau$-neighborhood of a Ricci-flat metric. Indeed, lemma \ref{lemma-short-time-est-curv} starts with establishing short-time estimates for weighted H\"older norm of the curvature operator and the Ricci tensor. Its proof already reveals a constraint on the H\"older exponent $\alpha\in(0,1)$ in terms of the weight $\tau$. Lemma \ref{lemma-a-priori-wei-metr-wei} takes care of estimating the distance of a Ricci flow satisfying mild assumptions from an ALE Ricci flat metric in the weighted $C_{\tau}^0$ norm a priori. Then Lemma \ref{a-priori-wei-lemma-C0-Ric} proves an a priori bound for the weighted $C^{0,\alpha}_{\tau+2}$ of the Ricci tensor. Finally, Lemma \ref{a-priori-wei-lemma-Holder-met} establishes an a priori bound for the distance of a Ricci flow from an ALE Ricci flat metric in the full weighted H\"older $C_{\tau+2}^{2,\alpha}$ norm in terms of the weighted norm of the corresponding Ricci tensor. As expected, it is more tractable to bound the Ricci tensor than the metric itself along such a Ricci flow. 

	\subsection*{Outline of paper}
	
	In Section \ref{section properties}, we give the main basic definitions of the article and review the properties of the functional $\lambda_{\operatorname{ALE}}$ proven in \cite{Der-Ozu-Lam}.
	
	In Section \ref{section heat kernel}, we prove Gaussian bounds for the heat kernel along a Ricci flow in a $C^{2,\alpha}_\tau$-neighborhood of a Ricci-flat ALE metric and deduce controls on the flow in suitable weighted Hölder spaces.
	
	In Section \ref{section stability}, using the previous controls on the flow thanks to the heat kernel, we prove our stability result for metrics which are local maxima of $\lambda_{\operatorname{ALE}}$ and around which a suitable $L^2$-\L{}ojasiewicz inequality holds.
	
	Finally, in Section \ref{section instability}, we study the case when a metric is \emph{not} a local maximum of the functional $\lambda_{\operatorname{ALE}}$ and prove that if a suitable $L^2$-\L{}ojasiewicz inequality holds around it, then, there exists an ancient nontrivial Ricci flow coming out of it.
	
	In the Appendix, Section \ref{section appendix}, we recall some formulas for the first and second variations of the geometric quantities of interest here.
	
	\subsection*{Acknowledgements}
	We thank Klaus Kr\"oncke for his suggestions and comments on a preliminary version of this paper.
	
The first author is supported by grant ANR-17-CE40-0034 of the French National Research Agency ANR (Project CCEM).	

\section{The functional $\lambda_{\operatorname{ALE}}$ and its properties}\label{section properties}

In this section, we recall some of the properties of the functional $\lambda_{\operatorname{ALE}}$ introduced and studied in \cite{Der-Ozu-Lam}. This functional and more precisely its sign in neighborhoods of Ricci-flat ALE metrics (see Definition \ref{defn-ALE}) will determine the dynamical stability or instability of these Ricci-flat ALE metrics along the Ricci flow.

\subsection{First definitions and function spaces}~~\\

We start by defining the manifolds as well as the function spaces we will be interested in.

%\paragraph{ALE spaces}
\begin{defn}[Asymptotically locally Euclidean (ALE) manifolds]\label{defn-ALE}
	We will call a Riemannian manifold $(N,g)$ \emph{asymptotically locally Euclidean} (ALE) of order $\tau>0$ if the following holds: there exists a compact set $K\subset N$, a radius $R>0$, $\Gamma$ a subgroup of $SO(n)$ acting freely on $\mathbb{S}^{n-1}$ and a diffeomorphism $\Phi : (\RR^n\slash\Gamma )\backslash B_e(0,R)\mapsto N\backslash K$ such that, if we denote $g_e$ the Euclidean metric on $\mathbb{R}^n\slash\Gamma$, we have, for all $k\in \mathbb{N}$,
	$$ \rho^k\big|\nabla^{g_e,k}(\Phi^*g-g_e)\big|_e = O(\rho^{-\tau}),$$
	on $\big(\RR^n\slash\Gamma\big) \backslash B_e(0,R)$, where $\rho = d_e(.,0)$.
\end{defn}

We will study ALE metrics in a neighborhood of a Ricci flat ALE metric. Let us start by defining this neighborhood thanks to weighted norms :

\begin{defn}[Weighted Hölder norms for ALE spaces]\label{def-weighted-norms}
	Let $(N,g,p)$ be an ALE manifold of dimension $n$, $\beta>0$, and $\rho_g(x):= \max\big(d_g(x,p),1\big)$. For any tensor $s$, we define the following weighted $C^{k,\alpha}_\beta$-norm :
	$$\| s \|^g_{C^{k,\alpha}_\beta(N)} := \sup_{N}\rho_g^\beta\Big( \sum_{i=0}^k\rho_g^{i}|\nabla^{g,i} s|_{g} + \rho_g^{k+\alpha}[\nabla^{g,k}s]_{C^\alpha}\Big).$$
\end{defn}

We will make a constant use of the following compact embedding whose proof can be found for instance in \cite[Lemme 3]{Chal-Cho-Bru}:
\begin{lemma}\label{lemme-Chal-Cho-Bru}
Let $(N^n,g)$ be an ALE manifold of dimension $n$. If $k\geq 0$, $\tau>0$ and $\alpha\in(0,1)$ then the continuous embeddings $C^{k,\alpha}_{\tau}\hookrightarrow C^{k,\alpha'}_{\tau'}$ are compact for $\alpha'\in(0,\alpha)$ and $\tau'\in(0,\tau)$.
\end{lemma}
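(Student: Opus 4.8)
The plan is to prove compactness by the standard two-region decomposition for weighted spaces: an Arzel\`a--Ascoli argument on compact pieces, combined with a uniform decay estimate on the ALE end that exploits the margin $\tau'<\tau$. Let $(s_j)_j$ be a sequence bounded in $C^{k,\alpha}_\tau$, say $\|s_j\|^g_{C^{k,\alpha}_\tau(N)}\leq\Lambda$; the goal is to extract a subsequence converging in $C^{k,\alpha'}_{\tau'}$.

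First I would fix an exhaustion $K_1\subset K_2\subset\cdots$ of $N$ by compact sets with $\bigcup_m K_m=N$, for instance $K_m=\{\rho_g\leq m\}$. On each $K_m$ the bound $\|s_j\|^g_{C^{k,\alpha}_\tau}\leq\Lambda$ together with $1\leq\rho_g\leq m$ yields a uniform (unweighted) $C^{k,\alpha}(K_m)$ bound on the $s_j$. Since the inclusion $C^{k,\alpha}(K_m)\hookrightarrow C^{k,\alpha'}(K_m)$ is compact for $\alpha'<\alpha$ by Arzel\`a--Ascoli on a fixed compact manifold-with-boundary, I extract a convergent subsequence on $K_1$, refine on $K_2$, and so on, and pass to the diagonal subsequence $(s_{j_l})_l$. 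This subsequence converges in $C^{k,\alpha'}$ on every compact subset to a limit tensor $s$, which satisfies $\|s\|^g_{C^{k,\alpha}_\tau}\leq\Lambda$ by lower semicontinuity of the supremum under local convergence.

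The key step is the tail estimate on the end $A_R=\{\rho_g\geq R\}$, where I split the weight as $\rho_g^{\tau'}=\rho_g^{\tau'-\tau}\cdot\rho_g^{\tau}$. For the terms $\rho_g^{i}|\nabla^{g,i}u|$ this gives directly $\rho_g^{\tau'+i}|\nabla^{g,i}u|\leq\rho_g^{\tau'-\tau}\|u\|^g_{C^{k,\alpha}_\tau}$. For the H\"older term, using that the seminorm in Definition \ref{def-weighted-norms} is a pointwise quotient over nearby points at the scale $\rho_g$ (with parallel transport for tensors), the $C^{\alpha'}$ quotient at $x$ is controlled by $\rho_g(x)^{\alpha-\alpha'}$ times the $C^{\alpha}$ quotient at $x$, since $d(x,y)^{-\alpha'}=d(x,y)^{-\alpha}d(x,y)^{\alpha-\alpha'}\lesssim d(x,y)^{-\alpha}\rho_g(x)^{\alpha-\alpha'}$. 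Hence $\rho_g^{\tau'+k+\alpha'}[\nabla^{g,k}u]_{C^{\alpha'}}\lesssim\rho_g^{\tau'-\tau}\,\rho_g^{\tau+k+\alpha}[\nabla^{g,k}u]_{C^{\alpha}}\leq\rho_g^{\tau'-\tau}\|u\|^g_{C^{k,\alpha}_\tau}$. As $\tau'<\tau$ and $\rho_g\geq R$ on $A_R$, every contribution is bounded by $R^{\tau'-\tau}\|u\|^g_{C^{k,\alpha}_\tau}$, which tends to $0$ as $R\to\infty$, uniformly over tensors of bounded $C^{k,\alpha}_\tau$ norm.

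Finally I would combine the two regions. Given $\varepsilon>0$, applying the tail estimate to $u=s_{j_l}-s$ (for which $\|s_{j_l}-s\|^g_{C^{k,\alpha}_\tau}\leq 2\Lambda$) makes the $C^{k,\alpha'}_{\tau'}$-norm over $A_R$ smaller than $\varepsilon/2$ once $R$ is large. On the fixed compact set $K_R$, where $\rho_g^{\tau'}$ is bounded, the local $C^{k,\alpha'}$ convergence from the first step makes the $C^{k,\alpha'}_{\tau'}$-norm over $K_R$ smaller than $\varepsilon/2$ for $l$ large. Therefore $\|s_{j_l}-s\|^g_{C^{k,\alpha'}_{\tau'}}<\varepsilon$ eventually, which proves convergence in $C^{k,\alpha'}_{\tau'}$ and hence compactness. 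I expect the tail step to be the main obstacle: one must check that lowering the H\"older exponent from $\alpha$ to $\alpha'$ costs only a factor $\rho_g^{\alpha-\alpha'}$, harmlessly absorbed by the decay margin $\rho_g^{\tau'-\tau}$. This is exactly where the two strict inequalities enter — $\alpha'<\alpha$ for the Arzel\`a--Ascoli compactness on compact pieces, and $\tau'<\tau$ for the decay at infinity — and both are indispensable.
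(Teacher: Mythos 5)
Your proof is correct: the two-region scheme (diagonal Arzel\`a--Ascoli extraction on an exhaustion by compacts, plus the uniform tail estimate where the weight gap $\rho_g^{\tau'-\tau}$ absorbs both the derivative terms and the cost $\rho_g^{\alpha-\alpha'}$ of lowering the H\"older exponent) is exactly the standard argument for this compact embedding. Note that the paper does not prove this lemma itself but simply cites \cite[Lemme 3]{Chal-Cho-Bru}; your argument is the same one found there, so there is nothing further to compare.
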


The next definition concerns weighted Sobolev norms for ALE spaces:

\begin{defn}[Weighted Sobolev norms for ALE spaces]\label{def-weighted-sobolev-norms}
	Let $\beta>0$, and $(N,g,p)$ an ALE manifold of dimension $n$, and $\rho_g(x):= \max\big(d_g(x,p),1\big)$. For any tensor $s$, we define the following weighted $L_\beta^2$-norm :
	$$\| s \|_{L^{2}_{\beta}} ^g:= \Big(\int_N |s|^2 \rho_{g}^{2\beta-n}dv_{g}\Big)^\frac{1}{2}.$$
	We moreover define the $H^k_{\beta}$-norm of $s$ as 
	$$\|s\|_{H^k_\beta}^g:= \sum_{i= 0}^k {\|\nabla^i s \|^g_{L^{2}_{\beta+i}}}.$$	
\end{defn}

\begin{rk}
	Since the above definitions do not formally depend on the type of tensor, and since the metrics we will consider will be equivalent, we will often abusively simply denote these spaces $C^{k,\alpha}_\tau$ or $H^k_\beta$ for instance.
\end{rk}
\subsection{The functionals $\lambda_{\operatorname{ALE}}^0$, $m_{\operatorname{ADM}}$ and $\lambda_{\operatorname{ALE}}$}~~\\

Next, as in \cite{Dai-Ma-Mass,Lee-Parker,Bart-Mass} let us consider for $\tau>\frac{n-2}{2}$ and $\alpha\in(0,1)$, the following classical space of metrics,
\begin{equation}
\mathcal{M}_\tau:= \left\{\text{$g$ is a metric on $N$}\,|\, g-g_b\in C^{1,\alpha}_{\tau}(S^2T^*N)\,|\,\R_g \in L^1\right\},
\end{equation}

\subsubsection{Mass of ALE metrics}
On $\mathcal{M}_\tau$, the mass of an ALE metric is well-defined and only depends on the metric:
\begin{equation}
m_{\operatorname{ADM}}(g):=\lim_{R\rightarrow+\infty}\int_{\{\rho_{g_b}=R\}}\left<\div_{g_b}(g-g_b)-\nabla^{g_b}\tr_{g_b}(g-g_b),\textbf{n}\right>_{g_b}\,d\sigma_{g_b},\label{def-mass}
\end{equation}
where $\textbf{n}$ denotes the outward unit normal of the closed smooth hypersurfaces $\{\rho_{g_b}=R\}$ for $R$ large.

\begin{rk}
	Given $g_b$ a Ricci-flat metric, it is clear from the definition that the map $h\mapsto m_{\operatorname{ADM}}(g_b+h)$ is linear.
\end{rk}

\subsubsection{The functional $\lambda_{\operatorname{ALE}}^0$}\label{subsec-lambda}

\begin{defn}[$\lambda_{\operatorname{ALE}}^0$, a first renormalized Perelman's functional]
	Let $(N^n,g_b)$ be an ALE Ricci flat metric and let $g\in \mathcal{M}^{2,\alpha}_{\tau}(g_b,\varepsilon)$. Define the $\mathcal{F}_{\operatorname{ALE}}$-energy by:
	\begin{eqnarray}
	\mathcal{F}_{\operatorname{ALE}}(w,g):=\int_N\big(4|\nabla^g w|_g^2 +\R_g w^2 \big)\,d\mu_g, 
	\end{eqnarray}
	where $w-1\in C^{\infty}_c(N)$, where $C^{\infty}_c(N)$ is the space of compactly supported smooth functions.
	The $\lambda_{\operatorname{ALE}}^0$-functional associated to the $\mathcal{F}_{\operatorname{ALE}}$-energy is:
	$$\lambda_{\operatorname{ALE}}^0(g) := \inf_w \mathcal{F}_{\operatorname{ALE}}(w,g),$$
	where the infimum is taken over functions $w:N\rightarrow \RR$ such that $w-1\in C_c^\infty(N)$.
	% such that for all $k\in \mathbb{N}_c$, $\|w-1\|_{C^k_{n-2}(M)} <\infty$.
\end{defn}

We prove in \cite{Der-Ozu-Lam} that the above infimum is attained by the (unique) solution $w_g\in 1+C^{2,\alpha}_\tau$ to the equation 
\begin{equation}
-4\Delta_g w_g + \R_g w_g =0.\label{eqdefwg}
\end{equation}
It turns out that $w_g$ is a positive function which lets us to consider the potential function associated to a metric $g$ defined as
$$f_g:=-2\ln w_g,$$
which is the unique solution $f_g\in C^{2,\alpha}_\tau$ to
\begin{equation}
2\Delta_gf_g-|\nabla^gf_g|^2_g+\R_g =0.\label{eqdeffg}
\end{equation}

\subsubsection{Definition of the functional $\lambda_{\operatorname{ALE}}$}\label{sec-def-lambda}~~\\

The above functionals $m_{\operatorname{ADM}}$ and $\lambda_{\operatorname{ALE}}^0$ are only well-defined when the scalar curvature is integrable which is not a convenient assumption as for instance $\mathcal{M}_\tau$ is not a closed subset of $C^{2,\alpha}_\tau$ for $ \frac{n-2}{2}<\tau<n-2$. Moreover, $m_{\operatorname{ADM}}$ and $\lambda_{\operatorname{ALE}}^0$ are not continuous with respect to the $C^{2,\alpha}_\tau$-topology. 
\begin{rk}
	Another finer topology for $\mathcal{M}_\tau$ is obtained by adding the $L^1$-norm of the scalar curvature. We will see in the rest of the present article that there are Ricci flows of ALE metric with nonvanishing (or even infinite) mass $C^{2,\alpha}_\tau$-converging to a Ricci-flat ALE metric. This implies that the scalar curvature does not converge in an $L^1$-sense by \cite[Corollary 3]{Dai-Ma-Mass}.
\end{rk}

%\todo[inline]{Il faudra bien dire que les résultats de Dai-Ma "Let $(M,g)$ be an ALE manifold of dimension $n=3$ (or with suitable decay at infinity). If $m(g)\neq 0$, then the Ricci flow starting at $g$ can not have Euclidean space as its (uniform) limit." ne se placent pas dans la bonne topologie : tout flot partant d'une petite perturbation $C^{2,\alpha}_\tau$ de l'espace euclidien converge vers l'espace euclidien en norme $C^{2,\alpha}_\tau$}
In order to solve these problems, we define for $g$ in a small $C^{2,\alpha}_\tau$-neighborhood of a Ricci-flat ALE metric $g_b$ and in $\mathcal{M}_\tau$, the functional
$$\lambda_{\operatorname{ALE}}(g) := \lambda_{\operatorname{ALE}}^0(g)-m_{\operatorname{ADM}}(g).$$
The advantage is that $\lambda_{\operatorname{ALE}}$ extends as an analytic function on a $C^{2,\alpha}_\tau$-neighborhood of any Ricci-flat ALE metric, see \cite[Proposition 3.4]{Der-Ozu-Lam}. Moreover, by denoting $w_g$ the solution to \eqref{eqdefwg}, which is well-defined without assuming that the scalar curvature is integrable, we have
\begin{equation}
\begin{split}
\lambda_{\operatorname{ALE}}(g)=\lim_{R\rightarrow+\infty}\Bigg(\int_{\{\rho_{g_b}\leq R\}}&\left(|\nabla^{g}f_{g}|^2_{g}+\R_{g}\right)\,e^{-f_{g}}d\mu_{g}\\
&-\int_{\{\rho_{g_b}=R\}}\left<\div_{g_b}(g)-\nabla^{g_b}\tr_{g_b}(g),\textbf{n}_{g_b}\right>_{g_b}\,d\sigma_{g_b}\Bigg).\label{true-def-lambda}
\end{split}
\end{equation}
It is worth noting from \cite[Example 3.1]{Der-Ozu-Lam} that for most perturbations, none of the above integrals converge as $R\to \infty$, but their difference always does if $g$ is sufficiently $C^{2,\alpha}_{\tau}$-close to $g_b$ for $\tau>\frac{n}{2}-1$.

\subsection{Main properties of $\lambda_{\operatorname{ALE}}$}~~\\

We now list some of the properties of $\lambda_{\operatorname{ALE}}$ proven in \cite{Der-Ozu-Lam}.

\subsubsection{Variations of $\lambda_{\operatorname{ALE}}$}~~\\

We have the following first and second variations for $\lambda_{\operatorname{ALE}}$.

\begin{prop}[First variation of $\lambda_{\operatorname{ALE}}$]\label{first-var-prop} Let $(N^n,g_b)$ be an ALE Ricci flat metric asymptotic to $\RR^n\slash\Gamma$, for some finite subgroup $\Gamma$ of $SO(n)$ acting freely on $\mathbb{S}^{n-1}$, and let $\tau\in(\frac{n-2}{2},n-2)$. 
	
	The first variation of $\lambda_{\operatorname{ALE}}$ on a neighborhood of $B_{C^{2,\alpha}_{\tau}}(g_b,\varepsilon)$ at $g$ in the direction $h$ is:
	\begin{equation}
	\begin{split}
	\delta_g \lambda_{\operatorname{ALE}}(h)=&-\int_N\langle h,\Ric(g)+\nabla^{g,2}f_g\rangle_g \,e^{-f_g}d\mu_g.\label{first-var-lambda}
	\end{split}
	\end{equation}
	and the tensor $\Ric(g)+\nabla^{g,2}f_g$ is weighted divergence free, i.e. 
	\begin{equation}
	\div_{f_g}\left(\Ric(g)+\nabla^{g,2}f_g\right)=0,\label{wei-div-free-obs-ten}
	\end{equation}
	where $\div_{f_g} T = \div_gT-T(\nabla^g f_g)$ for a symmetric $2$-tensor $T$.
	
	Moreover, if $(g(t))_{t\in[0,T)}$ is a solution to the Ricci flow on $N$ lying in $B_{C^{2,\alpha}_{\tau}}(g_b,\varepsilon)$, then we have the following monotonicity formula,
\begin{equation}
\frac{d}{dt}\lambda_{\operatorname{ALE}}(g(t))=2\|\Ric(g(t))+\nabla^{g(t),2}f_{g(t)}\|_{L^2(e^{-f_{g(t)}}d\mu_{g(t)})}^2\geq 0.\label{mono-lambda}
\end{equation}

\end{prop}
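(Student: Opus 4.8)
The plan is to derive the first-variation formula \eqref{first-var-lambda} as the central computation, and then read off the weighted divergence-free identity \eqref{wei-div-free-obs-ten} and the monotonicity \eqref{mono-lambda} from it. For \eqref{first-var-lambda} I would first reduce the variation of $\lambda_{\operatorname{ALE}}^0$ to a computation at fixed potential. Since $\lambda_{\operatorname{ALE}}^0(g)=\mathcal{F}_{\operatorname{ALE}}(w_g,g)$ with $w_g$ the minimizer solving \eqref{eqdefwg}, and since $w_g$ depends differentiably on $g$ with derivative $\dot w\in C^{2,\alpha}_\tau$, the envelope principle gives $\delta_g\lambda_{\operatorname{ALE}}^0(h)=(\partial_g\mathcal{F}_{\operatorname{ALE}})(w_g,g)(h)$. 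The vanishing of the $w$-derivative term $\delta_w\mathcal{F}_{\operatorname{ALE}}(w_g,g)(\dot w)$ is not automatic, because $\dot w$ is only decaying rather than compactly supported; I would justify it by integrating \eqref{eqdefwg} against $\dot w$ and checking that the boundary term $\int_{\{\rho_{g_b}=R\}}\dot w\,\partial_{\mathbf n}w_g\,d\sigma$ vanishes as $R\to\infty$. This holds because $\dot w=O(\rho^{-\tau})$ and $\nabla^g w_g=O(\rho^{-\tau-1})$, so the integrand decays like $\rho^{n-2-2\tau}$, which requires exactly $\tau>\frac{n-2}{2}$.

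Next I would compute $(\partial_g\mathcal{F}_{\operatorname{ALE}})(w_g,g)(h)$ directly using the standard variations $\delta_g(|\nabla^g w|_g^2)=-h(\nabla^g w,\nabla^g w)$, $\delta_g\R_g(h)=\div_g\div_g h-\Delta_g\tr_g h-\langle h,\Ric(g)\rangle_g$, and $\delta_g d\mu_g=\tfrac12\tr_g h\,d\mu_g$. Substituting $w_g=e^{-f_g/2}$ and integrating the term $\int w_g^2\,\delta_g\R_g(h)$ by parts twice, the bulk integrand becomes $e^{-f_g}\langle T,h\rangle_g$ with
\begin{equation*}
T=-\Ric(g)-\nabla^{g,2}f_g+\Big(\Delta_g f_g-\tfrac12|\nabla^g f_g|^2_g+\tfrac12\R_g\Big)g .
\end{equation*}
The coefficient of $g$ vanishes \emph{exactly} by the Euler--Lagrange equation \eqref{eqdeffg}, leaving the bulk contribution $-\int_N\langle h,\Ric(g)+\nabla^{g,2}f_g\rangle_g\,e^{-f_g}d\mu_g$, which is the desired expression.

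The main obstacle is the boundary analysis. The two integrations by parts producing $T$ generate surface integrals over $\{\rho_{g_b}=R\}$; since $w_g\to 1$ and $f_g\to 0$ at infinity, the leading surface term reduces to $\int_{\{\rho_{g_b}=R\}}\langle\div_g h-\nabla^g\tr_g h,\mathbf n\rangle\,d\sigma$. I would then show that, modulo contributions decaying fast enough to vanish (here both $\tau>\frac{n-2}{2}$ and the convergence $g\to g_b$ at infinity are used), this surface term converges to $\delta_g m_{\operatorname{ADM}}(h)=\lim_{R\to\infty}\int_{\{\rho_{g_b}=R\}}\langle\div_{g_b}h-\nabla^{g_b}\tr_{g_b}h,\mathbf n\rangle_{g_b}\,d\sigma_{g_b}$, obtained by differentiating the linear functional \eqref{def-mass}. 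Since $\lambda_{\operatorname{ALE}}=\lambda_{\operatorname{ALE}}^0-m_{\operatorname{ADM}}$, this boundary contribution is cancelled by $-\delta_g m_{\operatorname{ADM}}(h)$, leaving precisely \eqref{first-var-lambda}; this cancellation is the entire point of subtracting the mass, and carefully matching the two boundary functionals (including signs and the $g$-versus-$g_b$ discrepancy) is the delicate step.

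The identity \eqref{wei-div-free-obs-ten} is then a direct tensor computation: by the contracted second Bianchi identity $\div_g\Ric(g)=\tfrac12\nabla^g\R_g$, the Bochner formula $\div_g(\nabla^{g,2}f_g)=\nabla^g\Delta_g f_g+\Ric(g)(\nabla^g f_g)$, and $(\Ric(g)+\nabla^{g,2}f_g)(\nabla^g f_g)=\Ric(g)(\nabla^g f_g)+\tfrac12\nabla^g|\nabla^g f_g|^2_g$, one obtains
\begin{equation*}
\div_{f_g}\big(\Ric(g)+\nabla^{g,2}f_g\big)=\nabla^g\Big(\tfrac12\R_g+\Delta_g f_g-\tfrac12|\nabla^g f_g|^2_g\Big)=0
\end{equation*}
by \eqref{eqdeffg}. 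Finally, for \eqref{mono-lambda} I would apply \eqref{first-var-lambda} with $h=\partial_t g=-2\Ric(g(t))$, giving $\frac{d}{dt}\lambda_{\operatorname{ALE}}(g(t))=2\int_N\langle\Ric(g),\Ric(g)+\nabla^{g,2}f_g\rangle_g\,e^{-f_g}d\mu_g$. It remains to check that $\int_N\langle\nabla^{g,2}f_g,\Ric(g)+\nabla^{g,2}f_g\rangle_g\,e^{-f_g}d\mu_g=0$: integrating by parts, the bulk term equals $-\int_N\langle\nabla^g f_g,\div_{f_g}(\Ric(g)+\nabla^{g,2}f_g)\rangle_g\,e^{-f_g}d\mu_g=0$ by \eqref{wei-div-free-obs-ten}, while the boundary term vanishes since $\nabla^g f_g=O(\rho^{-\tau-1})$ and $\Ric(g)+\nabla^{g,2}f_g=O(\rho^{-\tau-2})$ decay fast enough against the surface growth $\rho^{n-1}$ when $\tau>\frac{n-2}{2}$. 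This converts the right-hand side into $2\|\Ric(g)+\nabla^{g,2}f_g\|_{L^2(e^{-f_g}d\mu_g)}^2\geq 0$, completing the argument.
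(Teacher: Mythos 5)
This paper does not actually contain a proof of Proposition \ref{first-var-prop}: it is recalled from \cite{Der-Ozu-Lam} (see the preamble of Section \ref{section properties}), so there is no internal argument to compare yours against. Judged on its own merits, your derivation is essentially the correct one, and the key computations all check out: the envelope argument with its boundary term of size $O(R^{n-2-2\tau})$, which vanishes precisely when $\tau>\frac{n-2}{2}$; the bulk tensor $T$ whose pure-trace coefficient $\Delta_g f_g-\tfrac12|\nabla^g f_g|^2_g+\tfrac12\R_g$ vanishes exactly by the Euler--Lagrange equation \eqref{eqdeffg}; the contracted Bianchi plus Bochner computation giving \eqref{wei-div-free-obs-ten}; and the weighted integration by parts against $\nabla^{g,2}f_{g}$, with boundary term of size $O(R^{n-4-2\tau})$, which upgrades \eqref{first-var-lambda} to the monotonicity formula \eqref{mono-lambda}.

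One caveat deserves attention. You differentiate $\lambda^0_{\operatorname{ALE}}$ and $m_{\operatorname{ADM}}$ as two separate functionals and assert that the surface term produced by your two integrations by parts converges to $\delta_g m_{\operatorname{ADM}}(h)$. On the full $C^{2,\alpha}_{\tau}$-neighborhood where the proposition is stated, neither functional is separately defined, and neither individual limit need exist: as recalled in Section \ref{sec-def-lambda}, for most perturbations neither integral appearing in \eqref{true-def-lambda} converges as $R\to\infty$, only their difference does. So your argument, as phrased, literally proves the formula only on $\mathcal{M}_\tau$, and since $\mathcal{M}_\tau$ is not dense in the $C^{2,\alpha}_\tau$-topology one cannot simply conclude by continuity. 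The repair is organizational rather than substantive: run your bulk-plus-boundary computation on the finite-radius expression in \eqref{true-def-lambda}. The integration by parts over $\{\rho_{g_b}\leq R\}$ produces exactly the surface term you identified, which differs from the finite-$R$ mass integrand by $O(R^{n-2-2\tau})$ (using $e^{-f_g}-1=O(\rho_{g_b}^{-\tau})$ together with the $g$-versus-$g_b$ discrepancies you already listed); the cancellation then takes place at finite $R$, \emph{before} any limit is taken, and the surviving terms converge to the bulk integral $-\int_N\langle h,\Ric(g)+\nabla^{g,2}f_g\rangle_g\,e^{-f_g}d\mu_g$. With this rearrangement your proof is complete and matches what the construction of $\lambda_{\operatorname{ALE}}$ was designed to accomplish.
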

We next consider the second variations of $\lambda_{\operatorname{ALE}}$.
\begin{defn}\label{defn-Lic-Op}
	Let $(N,g)$ be a Riemmanian metric. Then the \emph{Lichnerowicz operator} associated to $g$ acting on symmetric $2$-tensors, denoted by $L_{g}$, is defined by:
	\begin{equation}
	L_{g}h:=\Delta_{g}h + 2\Rm(g)(h)-\Ric(g)\circ h-h\circ\Ric(g),\quad h\in C_{loc}^2(S^2T^*N),\label{defn-Lic-op}
	\end{equation}
	where $\Delta_{g}=-\nabla_g^*\nabla_g$ and where $\Rm(g)(h)(X,Y) := h(\Rm(g)(e_i,X)Y,e_i)$ for an orthonormal basis $(e_i)_{i=1}^n$ with respect to $g$. In particular, if $(N^n,g_b)$ is an ALE Ricci flat metric, then,
	\begin{equation}
	L_{g}h:=\Delta_{g}h + 2\Rm(g)(h),\quad h\in C_{loc}^2(S^2T^*N).\label{defn-Lic-op}
	\end{equation}
\end{defn}

\begin{prop}[Second variation of $\lambda_{\operatorname{ALE}}$ at a Ricci flat metric]\label{second-var-prop}
	Let $(N^n,g_b)$ be an ALE Ricci flat metric asymptotic to $\RR^n\slash\Gamma$, for some finite subgroup $\Gamma$ of $SO(n)$ acting freely on $\mathbb{S}^{n-1}$, and let $\tau\in(\frac{n-2}{2},n-2)$. Then the second variation of $\lambda_{\operatorname{ALE}}$ at $g_b$ along a divergence free variation $h\in S^2T^*N$ such that $h\in B_{C^{2,\alpha}_{\tau}}(g_b,\varepsilon)$ is:
	\begin{equation}
	\delta^2_{g_b}\lambda_{\operatorname{ALE}}(h,h)=\frac{1}{2}\langle L_{g_b}h,h\rangle_{L^2}.
	\end{equation}
	%\todo[inline]{Dans la bonne classe, ça a bien du sens}
\end{prop}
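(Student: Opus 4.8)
The plan is to obtain the Hessian of $\lambda_{\operatorname{ALE}}$ at $g_b$ by differentiating the first variation formula \eqref{first-var-lambda} once more. Write $T_g:=\Ric(g)+\nabla^{g,2}f_g$ for the weighted divergence-free tensor appearing there, so that $\delta_g\lambda_{\operatorname{ALE}}(h)=-\langle h,T_g\rangle_{L^2(e^{-f_g}d\mu_g)}$. The crucial starting observation is that at a Ricci-flat metric one has $\Ric(g_b)=0$ and, since $w_{g_b}\equiv 1$ solves \eqref{eqdefwg}, also $f_{g_b}=0$; hence $T_{g_b}=0$ and $g_b$ is a critical point of $\lambda_{\operatorname{ALE}}$. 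In particular the Hessian is well defined independently of the chosen path, and I may compute it along the straight line $g(s)=g_b+sh$:
\[
\delta^2_{g_b}\lambda_{\operatorname{ALE}}(h,h)=\frac{d}{ds}\Big|_{s=0}\delta_{g(s)}\lambda_{\operatorname{ALE}}(h).
\]

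First I would differentiate the pairing $-\langle h,T_{g(s)}\rangle_{L^2(e^{-f_{g(s)}}d\mu_{g(s)})}$ at $s=0$. Because $T_{g_b}=0$, every term in which the $s$-derivative falls on the metric inside the inner product, on the weight $e^{-f_{g(s)}}$, or on the measure $d\mu_{g(s)}$ is multiplied by $T_{g_b}=0$ and drops out; only the term where the derivative hits $T_{g(s)}$ survives, giving $\delta^2_{g_b}\lambda_{\operatorname{ALE}}(h,h)=-\int_N\langle h,DT_{g_b}(h)\rangle_{g_b}\,d\mu_{g_b}$, where $DT_{g_b}(h)=D\Ric_{g_b}(h)+\Hess_{g_b}\dot f$ and $\dot f:=\frac{d}{ds}|_{s=0}f_{g(s)}$. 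The weighted setup with $\tau>\frac{n-2}{2}$ is exactly what guarantees these integrals converge and legitimizes differentiation under the integral sign. Since $f_{g_b}=0$ and $\nabla^{g_b}f_{g_b}=0$, the variation of the Hessian reduces to $\Hess_{g_b}\dot f$, the Christoffel and metric variations acting trivially on the vanishing function and its gradient.

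Next I would identify $\dot f$ by linearizing equation \eqref{eqdeffg}. Using $f_{g_b}=0$ and $\nabla^{g_b}f_{g_b}=0$, the quadratic term $|\nabla f|^2$ contributes nothing at first order and one is left with $2\Delta_{g_b}\dot f+D\R_{g_b}(h)=0$. For divergence-free $h$ the linearization of the scalar curvature collapses to $D\R_{g_b}(h)=-\Delta_{g_b}(\tr_{g_b}h)$, so $\Delta_{g_b}(2\dot f-\tr_{g_b}h)=0$; since $2\dot f-\tr_{g_b}h$ lies in a weighted Hölder space decaying at infinity, the maximum principle on the ALE manifold forces it to vanish, giving $\dot f=\tfrac12\tr_{g_b}h$. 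Plugging this into the linearization of Ricci at a Ricci-flat metric,
\[
D\Ric_{g_b}(h)=-\tfrac12 L_{g_b}h-\tfrac12\Hess_{g_b}(\tr_{g_b}h)+\delta^{*}_{g_b}(\div_{g_b}h),
\]
and using $\div_{g_b}h=0$, the Hessian-of-trace term cancels precisely against $\Hess_{g_b}\dot f$, so that $DT_{g_b}(h)=-\tfrac12 L_{g_b}h$. Hence $\delta^2_{g_b}\lambda_{\operatorname{ALE}}(h,h)=\tfrac12\langle L_{g_b}h,h\rangle_{L^2}$.

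The main obstacle I anticipate is organizing the two linearizations so that the gauge terms cancel cleanly: one must track the precise sign conventions in the formulas for $D\Ric_{g_b}$ and $D\R_{g_b}$ (a useful internal check is that $\tr_{g_b}(D\Ric_{g_b}(h))=D\R_{g_b}(h)$ at a Ricci-flat metric), and the cancellation of $\Hess_{g_b}(\tr_{g_b}h)$ hinges on both the divergence-free assumption and the identity $\dot f=\tfrac12\tr_{g_b}h$. The latter in turn relies on the ALE vanishing of decaying harmonic functions, which is where the asymptotic decay $\tau>0$ and the weighted-space framework enter essentially.
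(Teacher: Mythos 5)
Your proposal is correct. Note that this paper states Proposition \ref{second-var-prop} without proof (it is quoted from \cite{Der-Ozu-Lam}), so there is no in-text argument to compare against; but your computation is exactly the natural one and is consistent with the paper's own conventions: since $f_{g_b}=0$ and hence $\Ric(g_b)+\nabla^{g_b,2}f_{g_b}=0$, differentiating the first variation \eqref{first-var-lambda} leaves only the term where the derivative hits the Bakry--\'Emery tensor; the linearization of \eqref{eqdeffg} together with \eqref{lem-lin-equ-scal-first-var} and the decay of $2\dot f-\tr_{g_b}h$ gives $\dot f=\tfrac12\tr_{g_b}h$; and Lemma \ref{lem-lin-equ-Ric-first-var} with $\div_{g_b}h=0$ makes the $\tfrac12\Hess_{g_b}(\tr_{g_b}h)$ terms cancel, yielding $\delta^2_{g_b}\lambda_{\operatorname{ALE}}(h,h)=\tfrac12\langle L_{g_b}h,h\rangle_{L^2}$ as claimed.
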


\subsubsection{A \L{}ojasiewicz inequality}~~\\

As explained in the Introduction of this paper, in \cite{Der-Ozu-Lam}, we moreover proved that in a $C^{2,\alpha}_\tau$-neighborhood of any Ricci-flat ALE space, a \L{}ojasiewicz inequality holds for $\lambda_{\operatorname{ALE}}$.

\begin{theo}[{\cite{Der-Ozu-Lam}}]\label{dream-thm-loja-intro}
		Let $(N^n,g_b)$ be an ALE Ricci-flat manifold of dimension $n\geq 4$. Let $\alpha\in(0,1)$ and $\tau\in(\frac{n-2}{2},n-2)$. Then there exist a neighborhood $B_{C^{2,\alpha}_{\tau}}(g_b,\varepsilon)$ of $g_b$, a constant $C>0$ and $\theta\in (0,1]$ such that for any metric $g\in B_{C^{2,\alpha}_{\tau}}(g_b,\varepsilon)$, we have the following $L^2_{\frac{n}{2}+1}$-\L ojasiewicz inequality,
		\begin{equation}
		|\lambda_{\operatorname{ALE}}(g)|^{2-\theta}\leq C\|\nabla \lambda_{\operatorname{ALE}}(g)\|_{L^2_{\frac{n}{2}+1}(g_b)}^{2}.\label{loj-ineq-lambda-ALE texte}
		\end{equation}
		Moreover, if $(N^n,g_b)$ has integrable infinitesimal Ricci-flat deformations, then $\theta=1$.
		
		In particular, if $n\geq 5$, one has the following $L^2$-\L ojasiewicz inequality for integrable Ricci-flat ALE metrics: if $\tau\in(\frac{n}{2},n-2)$ then for any $0<\delta<\frac{2\tau-(n-2)}{2\tau-(n-4)}$, there exists $C>0$ such that for all $g\in B_{C^{2,\alpha}_\tau}(g_b,\varepsilon)$, 
		\begin{equation}
		    |\lambda_{\operatorname{ALE}}(g)|^{2-\theta_{L^2}}\leq C \|\nabla \lambda_{\operatorname{ALE}}(g)\|_{L^2(g_b)}^{2}, \quad\theta_{L^2}:=2-\frac{1}{\delta} = \frac{2\tau-n}{2\tau-(n-2)}.\label{loja L2 texte}
		\end{equation}
	\end{theo}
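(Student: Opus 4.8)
The plan is to deduce \eqref{loja L2 texte} from the weighted inequality \eqref{loj-ineq-lambda-ALE texte} — which in the integrable case holds with the optimal exponent $\theta=1$, i.e. $|\lambda_{\operatorname{ALE}}(g)|\le C\|\nabla\lambda_{\operatorname{ALE}}(g)\|_{L^2_{\frac{n}{2}+1}(g_b)}^{2}$ — by a weighted interpolation between the $L^2=L^2_{n/2}$ norm (weight $\rho_{g_b}^0$) and a heavier weighted norm $L^2_{\frac{n}{2}+\frac1\lambda}$, using the a priori polynomial decay of the gradient.

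First I would record the decay of the gradient. By Proposition \ref{first-var-prop}, on $B_{C^{2,\alpha}_\tau}(g_b,\varepsilon)$ the gradient is (up to the bounded factor $e^{-f_g}$) the tensor $\Ric(g)+\nabla^{g,2}f_g$. Since $g-g_b\in C^{2,\alpha}_\tau$ forces $\Ric(g)\in C^{0,\alpha}_{\tau+2}$, and since $f_g\in C^{2,\alpha}_\tau$ gives $\nabla^{g,2}f_g\in C^{0,\alpha}_{\tau+2}$, one has $|\nabla\lambda_{\operatorname{ALE}}(g)|_{g_b}\lesssim \rho_{g_b}^{-(\tau+2)}$ with a constant controlled by $\varepsilon$. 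Consulting Definition \ref{def-weighted-sobolev-norms}, the weighted norm $\|\nabla\lambda_{\operatorname{ALE}}(g)\|_{L^2_{\beta'}(g_b)}^2=\int_N|\nabla\lambda_{\operatorname{ALE}}(g)|^2\rho_{g_b}^{2\beta'-n}\,dv_{g_b}$ is then finite and bounded by a constant $C=C(\varepsilon)$ for every weight $\beta'<\tau+2$, because $\int^{\infty}\rho^{-2(\tau+2)}\rho^{2\beta'-n}\rho^{n-1}\,d\rho$ converges exactly under that condition. The criticality of the threshold $\beta'=\tau+2$ is what will dictate the admissible range of $\delta$.

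Next is the interpolation step. For any tensor $s$ and any $\lambda\in(0,1)$, writing $\rho_{g_b}^2=\rho_{g_b}^{0}\cdot(\rho_{g_b}^{2/\lambda})^{\lambda}$ and applying Hölder's inequality with conjugate exponents $\tfrac{1}{1-\lambda},\tfrac1\lambda$ gives
\[
\|s\|_{L^2_{\frac{n}{2}+1}}^2=\int_N|s|^2\rho_{g_b}^2\,dv_{g_b}\le\Big(\int_N|s|^2\,dv_{g_b}\Big)^{1-\lambda}\Big(\int_N|s|^2\rho_{g_b}^{2/\lambda}\,dv_{g_b}\Big)^{\lambda}=\|s\|_{L^2}^{2(1-\lambda)}\,\|s\|_{L^2_{\frac{n}{2}+\frac1\lambda}}^{2\lambda},
\]
since $2(\tfrac n2+\tfrac1\lambda)-n=\tfrac2\lambda$. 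Setting $\delta:=1-\lambda$ and taking $s=\nabla\lambda_{\operatorname{ALE}}(g)$, the heavier factor is bounded by the first step provided $\tfrac n2+\tfrac1\lambda<\tau+2$, i.e. $\lambda>\tfrac{2}{2\tau-(n-4)}$, i.e. $\delta<\tfrac{2\tau-(n-2)}{2\tau-(n-4)}$. Feeding the interpolation into the $\theta=1$ inequality yields $|\lambda_{\operatorname{ALE}}(g)|\le C\|\nabla\lambda_{\operatorname{ALE}}(g)\|_{L^2(g_b)}^{2\delta}$, and raising to the power $1/\delta$ produces $|\lambda_{\operatorname{ALE}}(g)|^{1/\delta}\le C\|\nabla\lambda_{\operatorname{ALE}}(g)\|_{L^2(g_b)}^{2}$, which is \eqref{loja L2 texte} with $\theta_{L^2}=2-\tfrac1\delta$.

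The main thing to verify — and the only real obstacle — is that the resulting exponent is genuinely useful, i.e. $\theta_{L^2}\in(0,1)$. From $\delta<1$ one gets $\theta_{L^2}<1$ automatically, while $\theta_{L^2}>0$ forces $\delta>\tfrac12$; the interval $\big(\tfrac12,\tfrac{2\tau-(n-2)}{2\tau-(n-4)}\big)$ is nonempty precisely when $\tau>\tfrac n2$, which is compatible with $\tau<n-2$ only if $n\ge5$. This recovers exactly the hypotheses of the statement, and letting $\delta$ tend to its supremum gives $\theta_{L^2}\to\tfrac{2\tau-n}{2\tau-(n-2)}$, the asserted optimal value. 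The delicate point throughout is the uniform a priori control of the subcritical weighted norm $\|\nabla\lambda_{\operatorname{ALE}}(g)\|_{L^2_{n/2+1/\lambda}}$, whose convergence at infinity is what enforces the strict bound $\delta<\tfrac{2\tau-(n-2)}{2\tau-(n-4)}$ and hence the ceiling on $\theta_{L^2}$.
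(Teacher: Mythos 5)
Your derivation of \eqref{loja L2 texte} from the weighted inequality \eqref{loj-ineq-lambda-ALE texte} is correct and follows essentially the same route the paper indicates (``by interpolation''): the a priori decay $|\nabla\lambda_{\operatorname{ALE}}(g)|\lesssim\rho_{g_b}^{-(\tau+2)}$ on $B_{C^{2,\alpha}_{\tau}}(g_b,\varepsilon)$ makes the heavy norm $\|\nabla\lambda_{\operatorname{ALE}}(g)\|_{L^2_{\frac n2+\frac1\lambda}}$ uniformly finite precisely when $\frac n2+\frac1\lambda<\tau+2$, and your Hölder splitting then yields $\theta_{L^2}=2-\frac1\delta$ with the correct range $0<\delta<\frac{2\tau-(n-2)}{2\tau-(n-4)}$, the constraints $\tau>\frac n2$ and $n\geq 5$ arising exactly as in the statement. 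Like the paper, you treat the main weighted inequality (with $\theta=1$ in the integrable case) as a black box from \cite{Der-Ozu-Lam}; that is consistent with the theorem being stated here as a citation, so the only content to supply is the interpolation step, which you have done correctly.
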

	
Here $\nabla \lambda_{\operatorname{ALE}}$ denotes the gradient of $\lambda_{\operatorname{ALE}}$ in the $L^2(g)$ sense.

\section{Preliminary estimates for short and large time}\label{section heat kernel}

\subsection{Heat kernel Gaussian bounds}\label{sec-heat-ker}~~\\

In this section, we fix an ALE Ricci flat metric $(N^n,g_b)$ once and for all. Let $(g(t))_{t\in[0,T)}$ be a solution to the Ricci flow with $g(0)\in B_{C^{2,\alpha}_{\tau}}(g_b,\varepsilon)$ with $\tau\in (\frac{n-2}{2},n-2)$.
% and let $\theta$ be the exponent in the \L ojasiewicz inequality from Theorem \ref{theo-loja-ALE}.
The main aim of this section is to establish Gaussian bounds for the heat kernel acting on functions along the Ricci flow.

As explained in the Introduction, we follow the same strategy adopted by Grigor'yan \cite{Gri-Upp-Ene-Dav} and Zhang \cite{Zha-Qi-Gra-Est}.\\

 We denote the heat kernel associated to the forward heat equation by $K(x,t,y,s)$, for $0\leq s<t<T$, and $x,y\in N$: 
\begin{equation}
\label{scal-heat-equ-ker}
\left\{
\begin{array}{rl}
&\partial_tK(\cdot,\cdot,y,s)=\Delta_{g(t)}K(\cdot,\cdot,y,s),\\
&\\
&\partial_tg=-2\Ric(g(t)),\\
&\\
&\lim_{t\rightarrow s^{+}}K(\cdot,t,y,s)=\delta_y,
\end{array}
\right.
\end{equation}
on $N\times(0,T)$.
This heat kernel always exists and is positive: see \cite{Gue-Fun-Sol}.

We also consider its conjugate heat equation: if $(x,t)\in N\times(0,T)$ is fixed, then $K(x,t,\cdot,\cdot)$ is the heat kernel associated to the conjugate backward heat equation: 
\begin{equation}
\label{scal-heat-equ-conj-ker}
\left\{
\begin{array}{rl}
&\partial_sK(x,t,\cdot,\cdot)=-\Delta_{g(s)}K(x,t,\cdot,\cdot)+\R_{g(s)}K(x,t,\cdot,\cdot),\\
&\\
&\partial_sg=-2\Ric(g(s)),\\
&\\
&\lim_{s\rightarrow t^{-}}K(x,t,\cdot,s)=\delta_x.
\end{array}
\right.
\end{equation}
on $N\times (0,t)$.

We start with the following proposition that estimates the $L^1$ norms of both forward and backward heat kernels:

\begin{prop}[$L^1$-bound]\label{L^1-bound-heat-kernel-fct}
With the setting and notations introduced above, if $0\leq s\leq t<T$,
\begin{eqnarray}
e^{-\int_s^t\|\R_{g(t')}\|_{C^0}\,dt'}&\leq&\int_NK(x,t,y,s)\,d\mu_{g(t)}(x)\leq e^{\int_s^t\|\R_{g(t')}\|_{C^0}\,dt'},\quad  y\in N,\label{L^1-est-fct}\\
 1&=&\int_NK(x,t,y,s)\,d\mu_{g(s)}(y),\quad \quad x\in N.\label{L^infty-est-fct}
\end{eqnarray}

\end{prop}

\begin{proof}
Let $(\Omega_j)_{j\geq 0}$ be an increasing sequence of domains of $N$ with smooth boundary exhausting the manifold $N$. Let $K_j(\cdot,\cdot,y,s)$ be the heat kernel associated to (\ref{scal-heat-equ-ker}) on $\Omega_j$ with Dirichlet boundary condition. Then, one can prove that $(K_j(\cdot,\cdot,y,s))_{j\geq 0}$ is an increasing sequence converging locally uniformly to $K(\cdot,\cdot,y,s)$. By integrating by parts, one gets, for some fixed $(y,s)\in N\times [0,T)$,
\begin{eqnarray*}
\partial_t\int_{\Omega_j}K_j(x,t,y,s)d\mu_{g(t)}(x)&=&\int_{\Omega_j}\Delta_{g(t)}K_j(x,t,y,s)-\R_{g(t)}(x)K_j(x,t,y,s)\,d\mu_{g(t)}(x)\\
&=&\int_{\partial\Omega_j}\left<\nabla^{g(t)}_{x}K_j(x,t,y,s),\mathbf{n}\right>\,d\sigma_{j,g(t)}(x)\\
&&-\int_{\Omega_j}\R_{g(t)}(x)K_j(x,t,y,s)\,d\mu_{g(t)}(x)\\
&\leq&-\int_{\Omega_j}\R_{g(t)}(x)K_j(x,t,y,s)\,d\mu_{g(t)}(x)\\
&\leq&\|\R_{g(t)}\|_{C^0}\int_{\Omega_j}K_j(x,t,y,s)\,d\mu_{g(t)}(x),\\
\end{eqnarray*}
where $d\sigma_{j,g(t)}$ is the induced measure on $\partial \Omega_j$ by $d\mu_{g(t)}$. Therefore, by Gr\"onwall's inequality:
\begin{equation*}
\begin{split}
\int_{\Omega_j}K_j(x,t,y,s)\,d\mu_{g(t)}(x)&\leq e^{\int_{t'}^t\|\R_{g(s)}\|_{C^0}ds}\int_{\Omega_j}K_j(x,t',y,s)\,d\mu_{g(t')}(x),\\
\end{split}
\end{equation*}
for any $t\geq t'>s$. Let $j$ large enough so that $y\in \Omega_j$ and let $t'$ go to $s$ to get:
\begin{equation*}
\begin{split}
\int_{\Omega_j}K_j(x,t,y,s)\,d\mu_{g(t)}(x)&\leq  e^{\int_{s}^t\|\R_{g(t')}\|_{C^0}\,dt'},
\end{split}
\end{equation*}
for any $t>s$.
 By letting $j$ tending to $+\infty$, we get a half of the first estimate (\ref{L^1-est-fct}), i.e.
  \begin{equation}
\begin{split}\label{half-est-L1}
\int_{N}K(x,t,y,s)\,d\mu_{g(t)}(x)&\leq  e^{\int_{s}^t\|\R_{g(t')}\|_{C^0}\,dt'}.
\end{split}
\end{equation}

On the other hand, let $(\phi_k)_k$ be any sequence of smooth cut-off functions approximating the constant function $1$ with values into $[0,1]$ such that $\lim_{k\rightarrow+\infty}\|\nabla^{g(t'),k}\phi_k\|_{C^0}=0$ for $k=1,2$ and uniformly in $t'\in[s,t]$. Then
\begin{equation*}
\begin{split}
\partial_t\int_N\phi_k(x)K(x,t,y,s)\,d\mu_{g(t)}(x)&=-\int_N\Delta_{g(t)}\phi_k(x)K(x,t,y,s)\,d\mu_{g(t)}(x)\\
&-\int_N\phi_k(x)\R_{g(t)}(x)K(x,t,y,s)\,d\mu_{g(t)}(x)\\
&\geq -e^{\int_{s}^t\|\R_{g(t')}\|_{C^0}\,dt'}\sup_{\supp(\phi_k)}\arrowvert\Delta_{g(t)}\phi_k\arrowvert\\
&-\|R_{g(t)}\|_{C^0}\int_N\phi_k(x)K(x,t,y,s)\,d\mu_{g(t)}(x).
\end{split}
\end{equation*}
Here, we have used (\ref{half-est-L1}) in the second inequality. Again, by Gr\"onwall's inequality:
\begin{equation*}
\begin{split}
\int_N\phi_k(x)K(x,t,y,s)\,d\mu_{g(t)}(x)\geq e^{-\int_{s}^t\|\R_{g(t')}\|_{C^0}\,dt'}\left(1-C\int_s^t\sup_{\supp(\phi_k)}\arrowvert\Delta_{g(t')}\phi_k\arrowvert\,dt'\right),
\end{split}
\end{equation*}
which implies the expected estimate by letting $k$ go to $+\infty$.

The second estimate (\ref{L^infty-est-fct}) is proved similarly. Remark that $K_j(x,t,\cdot,\cdot)$ satisfies (\ref{scal-heat-equ-conj-ker}) with Dirichlet boundary condition. Therefore, 
\begin{eqnarray*}
\int_{\Omega_j}K_j(x,t,y,s)\,d\mu_{g(s)}(y)\leq 1,\quad s<t,
\end{eqnarray*}
for any index $j$ large enough. This implies 
\begin{eqnarray*}
\int_N K(x,t,y,s)\,d\mu_{g(s)}(y)\leq 1,\quad s<t.
\end{eqnarray*}
Similarly, one can prove the expected lower bound for $\int_NK(x,t,y,s)\,d\mu_{g(s)}(y)$.

\end{proof}

\begin{rk}\label{rk-L^1-L^infty}
In particular, Proposition \ref{L^1-bound-heat-kernel-fct} tells us that the heat semigroup associated to $\Delta_{g(t)}$ is a bounded operator when interpreted as an operator on $L^1$ and $L^{\infty}$. By interpolation, one gets for any $p\in[1,+\infty]$,
\begin{eqnarray*}
&&\|u_t\|_{L^p(d\mu_{g(t)})}\leq e^{\frac{1}{p}\int_{s}^t\|\R_{g(t')}\|_{C^0}\,dt'}\| u_s\|_{L^p(d\mu_{g(s)})},\quad t>s,\\
&&u_t(x):=K(x,t,\cdot,s)\ast u_s.
\end{eqnarray*}

\end{rk}
The next proposition concerns an on diagonal upper bound for the forward heat kernel  along the Ricci flow.
\begin{prop}[On diagonal upper bound: $L^2\rightarrow L^{\infty}$ bound]\label{diag-upper-heat-ker}
Let $(N^n,g_b)$ be an ALE Ricci flat metric. Let $(g(t))_{t\in[0,T)}$ be a solution to the Ricci flow such that $g(t)\in B_{C^0}(g_b,\varepsilon)$ for all $t\in[0,T)$ and such that $$\int_{s}^t\|\R_{g(t')}\|_{C^0}\,dt'\leq \frac{1}{2}.$$ Then,
\begin{equation*}
0<K(x,t,y,s)\leq\frac{C}{(t-s)^{\frac{n}{2}}},\quad 0\leq s<t,\quad x,y\in N,
\end{equation*}
where $C=C(n,g_b,\varepsilon)$ is a time-independent positive constant.
\end{prop}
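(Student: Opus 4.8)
The goal is an on-diagonal bound $K(x,t,y,s)\leq C(t-s)^{-n/2}$, which by the standard Nash–Grigor'yan–Davies machinery is equivalent to a uniform $L^2\to L^\infty$ smoothing estimate for the heat semigroup. The plan is to run Grigor'yan's iteration (the integrated maximum principle / Davies' method) on the evolving metrics, using the $L^1$-boundedness from Proposition~\ref{L^1-bound-heat-kernel-fct} together with a uniform Nash or Sobolev inequality that is available because every $g(t)$ is $C^0$-close to the fixed ALE metric $g_b$. The key structural input is that on an ALE manifold $(N^n,g_b)$ one has a uniform Euclidean-type Sobolev inequality $\|u\|_{L^{2n/(n-2)}(g_b)}^2\leq C_S\|\nabla u\|_{L^2(g_b)}^2$ (valid for $n\geq 3$, and by the ALE structure the constant depends only on $g_b$); since $g(t)\in B_{C^0}(g_b,\varepsilon)$ the volume forms, gradients and hence this Sobolev inequality all hold for $g(t)$ with a constant $C_S=C_S(n,g_b,\varepsilon)$ uniform in $t$.

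First I would fix $s$, set $u(x,t):=K(x,t,y,s)$, and for a test function $\psi$ study the evolution of $\int_N u^2 e^{2\psi}\,d\mu_{g(t)}$. Differentiating in $t$, using $\partial_t u=\Delta_{g(t)}u$, $\partial_t d\mu_{g(t)}=-\R_{g(t)}\,d\mu_{g(t)}$, and integrating by parts, one obtains the Davies-type energy inequality in which the bad gradient terms from $\psi$ are controlled provided $\psi$ satisfies $\partial_t\psi+|\nabla^{g(t)}\psi|^2_{g(t)}\leq 0$ (the integrated maximum principle). The scalar-curvature terms coming from $\partial_t d\mu_{g(t)}$ and from the heat operator are absorbed using the hypothesis $\int_s^t\|\R_{g(t')}\|_{C^0}\,dt'\leq \tfrac12$, which keeps all exponential factors of the form $e^{\int \|\R\|_{C^0}}$ bounded by a fixed constant (here Proposition~\ref{L^1-bound-heat-kernel-fct} and Remark~\ref{rk-L^1-L^infty} give the needed $L^1$ and $L^p$ control).

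Next, feeding the uniform Sobolev inequality into this weighted energy estimate and running the Moser/Nash iteration in time yields the $L^2(d\mu_{g(s)})\to L^\infty(d\mu_{g(t)})$ bound
\[
\|u(\cdot,t)\|_{L^\infty(g(t))}\leq \frac{C}{(t-s)^{n/4}}\|u(\cdot,s)\|_{L^2(g(s))},
\]
with $C=C(n,g_b,\varepsilon)$. Applying the same bound to the conjugate (backward) kernel in the $(y,s)$ variables via \eqref{scal-heat-equ-conj-ker}, and then composing the two smoothing estimates across the midpoint time $\tfrac{s+t}{2}$ through the reproducing (semigroup) identity $K(x,t,y,s)=\int_N K(x,t,z,\tfrac{s+t}{2})K(z,\tfrac{s+t}{2},y,s)\,d\mu(z)$, converts the $L^2\to L^\infty$ bounds into the pointwise on-diagonal estimate $K(x,t,y,s)\leq C(t-s)^{-n/2}$. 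Positivity of $K$ is already recorded in \eqref{scal-heat-equ-ker}.

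The main obstacle is ensuring that \emph{all} constants remain uniform in time despite the moving metric: the volume measure, the Dirichlet energy, and in particular the Sobolev constant must be controlled by $g_b$ alone through the $C^0$-closeness $g(t)\in B_{C^0}(g_b,\varepsilon)$, and the various exponential corrections generated by $\R_{g(t)}$ (appearing both in the heat operator and in $\partial_t\,d\mu_{g(t)}$) must be uniformly bounded — this is precisely what the hypothesis $\int_s^t\|\R_{g(t')}\|_{C^0}\,dt'\leq\tfrac12$ is engineered to guarantee. The delicate bookkeeping is to verify that the Davies exponent $\psi$ can be chosen compatibly at each time so that the integrated maximum principle holds for the time-dependent metric, and that the iteration constant does not degenerate as the number of Moser steps grows.
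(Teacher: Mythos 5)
Your proposal is correct, but it assembles the estimate by a genuinely different route than the paper. Both arguments run a parabolic Moser iteration off the same uniform Euclidean Sobolev inequality (the paper's (\ref{unif-sob-eucl-ineq})), with all scalar-curvature contributions absorbed by the hypothesis $\int_s^t\|\R_{g(t')}\|_{C^0}\,dt'\leq\tfrac12$; the difference lies in how the iteration is converted into a kernel bound. The paper first proves an $L^1$ mean value inequality (Lemma \ref{L^1-mean-inequ}): for any nonnegative subsolution, the sup over a parabolic cylinder is controlled by the space-time $L^1$ average over a slightly larger cylinder. Applying this to $u(\cdot,\cdot)=K(\cdot,\cdot,y,s)$ with $r^2=(t-s)/2$ and then bounding the cylinder average by the $L^1$-in-space estimate $\int_N K(x,t',y,s)\,d\mu_{g(t')}(x)\leq e^{\int\|\R\|_{C^0}}$ of Proposition \ref{L^1-bound-heat-kernel-fct} gives the on-diagonal bound in one stroke; despite the proposition's title, this is really an $L^1\to L^\infty$ argument carried out on the forward kernel alone, and it never touches the conjugate equation. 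Your route instead establishes a genuine $L^2\to L^\infty$ smoothing bound for the forward flow, repeats it for the conjugate kernel (where the potential term $\R K$ and the backward evolution of the measure must be absorbed --- this does work under the same integrated scalar curvature hypothesis, since sub-intervals of $[s,t]$ inherit the bound), and composes the two across the midpoint $\tau=\tfrac{s+t}{2}$ via the reproducing identity and Cauchy--Schwarz. This is the classical Nash--Davies scheme; it is somewhat longer here precisely because of the conjugate equation, but the two-sided $L^2$ quantities you produce are exactly the objects $E^1_D$, $E^2_D$ that the paper itself introduces afterwards to upgrade the on-diagonal bound to the full Gaussian estimate of Theorem \ref{Gaussian-est-scal-heat-ker}, so your plan in effect merges Proposition \ref{diag-upper-heat-ker} with part of that later proof. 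One cosmetic remark: for the pure on-diagonal bound the Davies weight can simply be taken $\psi\equiv 0$; the constraint $\partial_t\psi+|\nabla^{g(t)}\psi|^2_{g(t)}\leq 0$ is only needed when you want the off-diagonal Gaussian factor.
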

\begin{rk}
A more accurate but also more ad-hoc assumption on the smallness of the time integral of the $C^0$ norm of the scalar curvature would be $$\int_{\frac{s+t}{2}}^t\|\R_{g(t')}\|_{C^0}\,dt'\leq \frac{1}{2}.$$
\end{rk}
We start with the following lemma that is crucial to prove Proposition \ref{diag-upper-heat-ker}:

\begin{lemma}[$L^1$ mean value inequality]\label{L^1-mean-inequ}
Let $(N^n,g_b)$ be an ALE Ricci flat metric. Let $(g(t))_{t\in[0,T)}$ be a solution to the Ricci flow such that $g(t)\in B_{C^0}(g_b,\varepsilon)$ for all $t\in[0,T)$. Then any nonnegative subsolution $u$ of the heat equation along such a Ricci flow, i.e.
\begin{equation*}
\partial_tu\leq \Delta_{g(t)}u,\quad \mbox{on $N\times (0,T)$,}
\end{equation*}
satisfies, for $\eta\in(0,1)$ and $r^2< t$ such that $\int_{t-r^2}^t\|\R_{g(s)}\|_{C^0}\,ds\leq \frac{1}{2}$,
\begin{eqnarray*}
\sup_{P\left(x,t,\eta r\right)}u\leq C\fint_{P(x,t,r)}u\,d\mu_{g(s)}ds,
\end{eqnarray*}
for some positive constant $C=C(n,g_b,\varepsilon,\eta)$ and where $$P(x,t,r):=\left\{(y,s)\in N\times[0,T)\quad|\quad s\in (t-r^2,t],\quad y\in B_{g_b}(x,r)\right\}.$$
\end{lemma}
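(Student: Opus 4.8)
The plan is to adapt the classical Moser iteration / mean-value inequality for subsolutions of the heat equation to the Ricci flow background, where the metric, the measure $d\mu_{g(t)}$, and the Laplacian $\Delta_{g(t)}$ all depend on time. The crucial point is that we are working in an $L^1$ (i.e. $L^1\to L^\infty$) version of the mean value inequality, which is stronger than the usual $L^2$ version and requires controlling the iteration all the way down from $L^1$. The backbone of the argument is a local Sobolev inequality together with an energy (Caccioppoli) estimate for $u$ on parabolic cylinders; the time-dependence of the geometry is absorbed into uniform constants using the hypothesis that $g(t)\in B_{C^0}(g_b,\varepsilon)$ for all $t$, so all metrics are uniformly equivalent to $g_b$, and the hypothesis $\int_{t-r^2}^t\|\R_{g(s)}\|_{C^0}\,ds\leq\frac12$, which controls the distortion of the volume element under the flow (since $\partial_t d\mu_{g(t)}=-\R_{g(t)}\,d\mu_{g(t)}$).

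The key steps, in order, would be as follows. First I would record the evolution inequality for powers $u^p$: from $\partial_t u\leq\Delta_{g(t)}u$ and $u\geq0$ one derives, after multiplying by a spatial cutoff $\psi^2$ and integrating against $d\mu_{g(t)}$, a Caccioppoli-type energy estimate controlling $\int\psi^2|\nabla u^{p/2}|^2$ and $\sup_s\int\psi^2 u^p$ in terms of lower-order integrals of $u^p$, where the extra terms coming from $\partial_t\psi$, $\partial_t d\mu_{g(t)}$ and the time-dependence of $|\nabla\cdot|$ are all bounded using the $C^0$-equivalence of the metrics and the scalar curvature integral bound. Second, since $(N,g_b)$ is ALE and hence has a uniform Euclidean-type Sobolev inequality, and since $g(t)$ is uniformly equivalent to $g_b$, I would invoke a uniform-in-time local Sobolev inequality on balls $B_{g_b}(x,r)$ with a fixed Sobolev constant. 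Third, combining the energy estimate with the Sobolev inequality in the standard way yields a reverse-Hölder / self-improving estimate relating the $L^{p\kappa}$ norm of $u$ on a smaller cylinder to its $L^p$ norm on a larger one, with $\kappa=\frac{n+2}{n}>1$. Fourth, I would iterate this over a sequence of shrinking cylinders $P(x,t,r_j)$ with $r_j\downarrow\eta r$, starting from exponent $p=1$, and check that the product of constants converges; letting the exponent tend to $+\infty$ gives $\sup_{P(x,t,\eta r)}u\leq C\fint_{P(x,t,r)}u$. Throughout, the volume-doubling needed to pass between $\fint$ on cylinders of comparable radii is supplied by the ALE structure (uniform Euclidean volume growth) and the uniform metric equivalence.

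The main obstacle I expect is the bookkeeping of the time-dependence of the geometry through the iteration. Unlike the fixed-metric case, each integration by parts produces error terms from $\partial_t d\mu_{g(t)}=-\R_{g(t)}\,d\mu_{g(t)}$ and from commuting time derivatives with spatial norms, and one must ensure these errors do not accumulate uncontrollably over the infinitely many iteration steps. This is exactly where the normalization $\int_{t-r^2}^t\|\R_{g(s)}\|_{C^0}\,ds\leq\frac12$ is used: it guarantees that the measures $d\mu_{g(s)}$ for $s$ in the relevant time interval are all comparable with a constant independent of the step, so the per-step constants remain uniformly bounded and the geometric quantities (Sobolev constant, doubling constant) can be taken uniform in $t$ and in the iteration index. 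A secondary technical point is justifying that one may genuinely start the iteration at $p=1$ rather than $p=2$; this is the standard difficulty in obtaining an $L^1$ (rather than $L^2$) mean value inequality, and it is handled by the usual trick of interpolating the $L^1$ norm between $L^{p_0}$ for some $p_0>1$ and $L^\infty$, absorbing the $\sup$ term with a Young-type inequality, so that the final constant $C=C(n,g_b,\varepsilon,\eta)$ depends on $\eta$ through the geometric series of the shrinking radii.
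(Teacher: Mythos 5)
Your proposal is correct and follows essentially the same route as the paper: the paper also derives a Caccioppoli-type energy estimate for powers of $u$ (absorbing the measure-distortion term via the hypothesis $\int_{t-r^2}^t\|\R_{g(s)}\|_{C^0}\,ds\leq\frac12$), invokes the uniform Euclidean Sobolev inequality inherited from the ALE metric $g_b$ through the $C^0$-closeness, obtains the reverse-H\"older inequality with exponent $\alpha_n=1+\frac{2}{n}$, and then iterates over shrinking cylinders, with the reduction from $L^2$ to $L^1$ handled by the same standard interpolation--absorption trick you describe (the paper leaves this last step, and the choice of the sequences $(p_i)$, $(\tau_i)$, $(\sigma_i)$, as a sketch, citing Zhang and Deruelle--Lamm).
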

The proof of Lemma \ref{L^1-mean-inequ} is inspired by that of Zhang \cite{Zha-Qi-Gra-Est} in a Ricci flow setting. This was already noticed in \cite{Der-Lam-Wea-Sta} in the context of expanding gradient Ricci solitons. Therefore, we only sketch a proof of Lemma \ref{L^1-mean-inequ} for the convenience of the reader. 
\begin{proof}[Proof of Lemma \ref{L^1-mean-inequ}]
Let $p\in [1,+\infty)$. Then the function $u^p$ is a sub-solution to the heat equation, i.e.
\begin{equation}\label{sub-sol-power}
\partial_tu^p\leq \Delta_{g(t)}u^p,
\end{equation}
 on $N\times (0,T)$. Consider any smooth space-time cutoff function $\psi$ and multiply (\ref{sub-sol-power}) by $\psi^2u^p$ and integrate by parts as follows: 
 \begin{equation*}
 \begin{split}
&\int_{t- r^2}^{t'}\int_N\arrowvert\nabla^{g(s)}(\psi u^p)\arrowvert^2_{g(s)}-\arrowvert\nabla^{g(s)}\psi\arrowvert^2_{g(s)}u^{2p} \,d\mu_{g(s)}ds\leq\\
& \int_{t-r^2}^{t'}\int_N\frac{\partial_s(\psi^2\ln d\mu_{g(s)})}{2}u^{2p}\,d\mu_{g(s)}ds-\frac{1}{2}\int_N\psi^2u^{2p}\,d\mu_{g(t')},
\end{split}
\end{equation*}
for any $t'\in(t-r^2,t].$

Hence,
\begin{equation}
\begin{split}\label{inequ-IBP-nash-moser-pre}
&\int_{t-r^2}^{t'}\int_N\arrowvert\nabla^{g(s)}(\psi u^p)\arrowvert^2_{g(s)}\,d\mu_{g(s)}ds+\frac{1}{2}\int_N\psi^2u^{2p}\,d\mu_{g(t')}\leq\\
&\int_{t-r^2}^{t'}\int_N\left(\frac{\partial_s\psi^2}{2}+\arrowvert\nabla^{g(s)}\psi\arrowvert_{g(s)}^2+\frac{1}{2}\|\R_{g(s)}\|_{C^0}\psi^2\right)u^{2p}\,d\mu_{g(s)}ds\leq\\
&\int_{t-r^2}^{t'}\int_N\left(\frac{\partial_s\psi^2}{2}+\arrowvert\nabla^{g(s)}\psi\arrowvert_{g(s)}^2\right)u^{2p}\,d\mu_{g(s)}ds\\
&+\frac{1}{2}\int_{t-r^2}^{t'}\|\R_{g(s)}\|_{C^0}\,ds\sup_{s\in(t-r^2,t']}\int_N\psi^2u^{2p}\,d\mu_{g(s)}\leq\\
&\int_{t-r^2}^{t'}\int_N\left(\frac{\partial_s\psi^2}{2}+\arrowvert\nabla^{g(s)}\psi\arrowvert_{g(s)}^2\right)u^{2p}\,d\mu_{g(s)}ds+\frac{1}{2}\int_{t-r^2}^{t}\|\R_{g(s)}\|_{C^0}\,ds\sup_{s\in(t-r^2,t']}\int_N\psi^2u^{2p}\,d\mu_{g(s)}.
\end{split}
\end{equation}
Notice in particular that (\ref{inequ-IBP-nash-moser-pre}) implies, if $\int_{t-r^2}^{t}\|\R_{g(s)}\|_{C^0}\,ds\leq \frac{1}{2}$,
\begin{equation}
\sup_{s\in(t-r^2,t']}\int_N\psi^2u^{2p}\,d\mu_{g(s)}\leq 4\int_{t-r^2}^{t'}\int_N\left(\frac{\partial_s\psi^2}{2}+\arrowvert\nabla^{g(s)}\psi\arrowvert_{g(s)}^2\right)u^{2p}\,d\mu_{g(s)}ds.\label{bd-sup-slice-int-power-u}
\end{equation}

Let $\tau, \sigma\in (0,+\infty)$ such that $\tau\geq \sigma$ and $\tau+\sigma<r$. In particular, $P(x,t, \tau)\subset P(x,t,\tau+\sigma)\subset P(x,t,r)$.
Now, choose two smooth functions $\phi:\mathbb{R}_+\rightarrow[0,1]$ and $\eta:\mathbb{R}_+\rightarrow[0,1]$ such that
\begin{eqnarray*}
&& \supp(\phi)\subset [0,\tau+\sigma],\quad\phi\equiv 1\quad\mbox{in $[0,\tau]$},\quad \phi\equiv 0\quad\mbox{in $[\tau+\sigma,+\infty)$},\quad -c/\sigma\leq \phi'\leq 0,\\
&& \supp(\eta)\subset [t-(\tau+\sigma)^2,+\infty),\quad\eta\equiv 1\quad\mbox{in $[t-\tau^2,+\infty)$},\\
&& \eta\equiv 0\quad\mbox{in $(t-r^2,t-(\tau+\sigma)^2]$},\quad 0\leq \eta'\leq c/\sigma^2.
\end{eqnarray*}
%\todo[inline]{Plutôt $[0,\tau+\sigma]$ le support de $\phi$ ? Et il faut supposer que $\sigma\leqslant\tau$ pour avoir $\eta'\leq c/\sigma^2$, non ?}
Define $\psi(y,s):=\phi(d_{g_b}(x,y))\eta(s)$, for $(y,s)\in N\times(0,T)$. Then,
\begin{eqnarray*}
&&\arrowvert\nabla^{g(s)}\psi\arrowvert_{g(s)}\leq\frac{C}{\sigma},\quad \arrowvert\partial_s\psi\arrowvert\leq \frac{C}{\sigma^2},
\end{eqnarray*}
for some time-independent positive constant $C$. Here we have used the fact that 
$g(t)$ is $\varepsilon$ $C^0$-close to $g_b$. 
%$\partial_tg=-2\Ric(g(t))$ together with Corollary \ref{coro-A-priori-L^2-C^0-est-RF} to ensure that 
%\begin{equation}\exp\{-C|\lambda_{\operatorname{ALE}}(g(0))|^{\frac{\theta}{2}}\}\cdot g(s)\leq g(t)\leq \exp\{C|\lambda_{\operatorname{ALE}}(g(0))|^{\frac{\theta}{2}}\}\cdot g(s),\quad 0\leq s<t<T.
%\end{equation} 
Now, $(N^n,g(t))_{t\in(0,T)}$ satisfies the following Euclidean Sobolev inequality since $(N^n,g_b)$ does by [Chap.$3$, \cite{Sal-Cos-Sob-Boo}] and the fact that $g(t)$ is $\varepsilon$ $C^0$-close to $g_b$: there exists $C=C(g_b)>0$ such that
\begin{equation}\label{unif-sob-eucl-ineq}
\left(\int_N(\psi u^p)^{\frac{2n}{n-2}}\,d\mu_{g(t)}\right)^{\frac{n-2}{n}}\leq C\int_N\arrowvert\nabla^{g(t)}(\psi u^p)\arrowvert_{g(t)}^2\,d\mu_{g(t)},
\end{equation}
for any $t\in[0,T)$.
We are now in a position to follow \cite{Der-Lam-Wea-Sta} very closely:
if $\alpha_n:=1+2/n$, one gets by H\"older's inequality together with (\ref{unif-sob-eucl-ineq}),
\begin{equation*}
\begin{split}
\int_{P(x,t,\tau)}\left(u^{2p}\right)^{\alpha_n}\,d\mu_{g(s)}ds\leq&\int_{P(x,t,r)}\left(\psi u^p\right)^{2\alpha_n}\,d\mu_{g(s)}ds\\
\leq&\int_{t-r^2}^t\left(\int_N (\psi u^p)^{\frac{2n}{n-2}}\,d\mu_{g(s)}\right)^{\frac{n-2}{n}}\left(\int_N (\psi u^p)^2\,d\mu_{g(s)}\right)^{\frac{2}{n}}ds\\
\leq&C\sup_{s\in(t-r^2,t]}\left(\int_N (\psi u^p)^2\,d\mu_{g(s)}\right)^{\frac{2}{n}}\int_{P(x,t,r)}\arrowvert\nabla^{g(s)}(\psi u^p)\arrowvert_{g(s)}^2\,d\mu_{g(s)}\\
\leq&C\frac{1}{\sigma^{2\alpha_n}}\left(\int_{P(x,t,\tau+\sigma)}u^{2p}\,d\mu_{g(s)}\right)^{\alpha_n},
\end{split}
\end{equation*}
for some time-independent positive constant $C$.
Here, we have used (\ref{inequ-IBP-nash-moser-pre}) and (\ref{bd-sup-slice-int-power-u}) in the last line together with the fact that $\psi$ is compactly supported in $P(x,t,\tau+\sigma)$. It is now sufficient to iterate the previous inequality for suitable sequences $(p_i)_i$, $(r_i)_i$ and $(\tau_i)_i$, $(\sigma_i)_i$ to reach the desired conclusion. 

\end{proof}
We are in a position to give a proof of Proposition \ref{diag-upper-heat-ker}.

\begin{proof}[Proof of Proposition \ref{diag-upper-heat-ker}]
It suffices to apply Lemma \ref{L^1-mean-inequ} together with Proposition \ref{L^1-bound-heat-kernel-fct} to the nonnegative (sub)solution 
\begin{equation*}
u(x,t):=K(x,t,y,s),
\end{equation*}
 for some fixed $(y,s)\in N\times[0,T)$ with $r^2=(t-s)/2$: 
\begin{equation*}
\begin{split}
K(x,t,y,s)\leq&\sup_{P\left(x,t,\frac{1}{2}\sqrt{\frac{t-s}{2}}\right)}K(\cdot,\cdot,y,s)\\
\leq&\frac{C}{(t-s)^{\frac{n+2}{2}}}\int_{P(x,t,\sqrt{\frac{t-s}{2}})}K(x',t',y,s)\,d\mu_{g(t')}(x')dt'\\
\leq&\frac{C}{(t-s)^{\frac{n}{2}}}e^{\int_s^t\|\R_{g(t')}\|_{C^0}\,dt'},
\end{split}
\end{equation*}
for some positive constant $C=C(n,g_b,\varepsilon).$
This ends the proof of the expected estimate since $\int_s^t\|\R_{g(t')}\|_{C^0}\,dt'\leq \frac{1}{2}$.

\end{proof}

We now state the main result of this section:
\begin{theo}[Gaussian estimate] \label{Gaussian-est-scal-heat-ker}
Let $(N^n,g_b)$ be an ALE Ricci flat metric. Let $(g(t))_{t\in[0,T)}$ be a solution to the Ricci flow such that $g(t)\in B_{C^0}(g_b,\varepsilon)$ for all $0\leqslant s \leqslant t<T$ and such that 
\begin{equation}
\int_{s}^t\|\Ric(g(t'))\|_{C^0}\,dt'\leq \frac{1}{2n}.\label{hyp-ad-hoc-ric}
\end{equation}

 Then the heat kernel associated to (\ref{scal-heat-equ-ker}) satisfies the following Gaussian estimate: 
\begin{equation}
\begin{split}
&|K(x,t,y,s)|\leq \frac{C}{(t-s)^{\frac{n}{2}}}\exp\left\{-\frac{d_{g(s)}^2(x,y)}{D(t-s)}\right\},\quad 0\leq s<t,\quad x,y\in N,\quad D>D_0,\label{gauss-heat-ker}
\end{split}
\end{equation}
 where $C=C(n,g_b,\varepsilon,D)$ and $D_0= D_0(n,g_b,\varepsilon)$ are time-independent positive constants.
\end{theo}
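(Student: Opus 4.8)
The plan is to follow the classical two-step scheme for deriving off-diagonal Gaussian upper bounds from an on-diagonal bound, in the form adapted to the Ricci flow by Grigor'yan \cite{Gri-Upp-Ene-Dav} and Zhang \cite{Zha-Qi-Gra-Est}. The two ingredients already in hand are the $L^1$-contraction of Proposition \ref{L^1-bound-heat-kernel-fct} and the on-diagonal decay $K(x,t,y,s)\le C(t-s)^{-\frac{n}{2}}$ of Proposition \ref{diag-upper-heat-ker}. Note that the hypothesis \eqref{hyp-ad-hoc-ric} on $\int_s^t\|\Ric(g(t'))\|_{C^0}\,dt'$ is stronger than the scalar-curvature smallness required there, since $|\R|\le n|\Ric|$ gives $\int_s^t\|\R_{g(t')}\|_{C^0}\,dt'\le\frac12$, so those propositions apply on every subinterval. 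The one missing ingredient is a \emph{weighted energy estimate} (integrated maximum principle) that produces the exponential factor, which I would then combine with the on-diagonal bound by Davies' optimization.

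First I would fix $(y,s)$ and set $u(x,t'):=K(x,t',y,s)$, which solves $\partial_{t'}u=\Delta_{g(t')}u$. For a fixed Lipschitz function $\gamma$ on $N$ with $|\nabla^{g(s)}\gamma|\le 1$ and a parameter $\lambda>0$, I would study
\begin{equation*}
h(t'):=\int_N u^2(x,t')\,e^{2\lambda\gamma(x)}\,d\mu_{g(t')}(x).
\end{equation*}
Differentiating, using $\partial_{t'}d\mu_{g(t')}=-\R_{g(t')}\,d\mu_{g(t')}$ and integrating by parts, the cross term $u\,\langle\nabla u,\nabla\gamma\rangle$ is absorbed by Young's inequality into $-|\nabla u|^2$, leaving a differential inequality of the shape $h'(t')\le\big(C\lambda^2+\|\R_{g(t')}\|_{C^0}\big)h(t')$. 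Gr\"onwall's inequality then yields a Davies--Gaffney-type bound $h(t)\le e^{C\lambda^2(t-s)}h(s^+)$, up to the harmless scalar-curvature factor controlled by \eqref{hyp-ad-hoc-ric}. The crucial point is that the constant $C$ is uniform: condition \eqref{hyp-ad-hoc-ric} forces $e^{-1/n}g(s)\le g(t')\le e^{1/n}g(s)$ for all $t'\in[s,t]$, so $|\nabla^{g(t')}\gamma|$ stays comparable to $|\nabla^{g(s)}\gamma|\le1$ and the distances $d_{g(t')}$ remain uniformly equivalent to $d_{g(s)}$.

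Second, I would pass from this weighted $L^2$-control to the pointwise Gaussian bound by Davies' method. Splitting the propagator at the midpoint time $\frac{s+t}{2}$ via the semigroup property, and using that Proposition \ref{diag-upper-heat-ker} together with Proposition \ref{L^1-bound-heat-kernel-fct} upgrades, by duality and interpolation, to $L^1\to L^2$ and $L^2\to L^\infty$ bounds of order $(t-s)^{-\frac{n}{4}}$, I would take $\gamma$ to be (a smoothing of) $d_{g(s)}(\cdot,y)$ and test the weighted estimate against data localized near $x$ and $y$. This produces, for each $\lambda>0$, a bound $K(x,t,y,s)\le C(t-s)^{-\frac{n}{2}}\exp\big(\lambda\, d_{g(s)}(x,y)-c\lambda^2(t-s)\big)$; optimizing over $\lambda\sim d_{g(s)}(x,y)/(t-s)$ gives the exponent $-d_{g(s)}^2(x,y)/(D(t-s))$ for any $D>D_0$, with $D_0$ and $C$ depending only on $n,g_b,\varepsilon$ (and $D$), as stated in Theorem \ref{Gaussian-est-scal-heat-ker}.

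The main obstacle is the genuine time-dependence of the geometry: the weight $\gamma$, its gradient, and the reference measure all evolve along the flow, so the Young absorption and the associated Hamilton--Jacobi inequality must be carried out with the moving metric while keeping every constant independent of $t'$. This is precisely what \eqref{hyp-ad-hoc-ric} buys us — uniform equivalence of the metrics $g(t')$ on $[s,t]$, hence a uniform Euclidean Sobolev inequality (as already exploited in Lemma \ref{L^1-mean-inequ}) and uniform comparability of the distance functions appearing in the exponent. A secondary technical point is the singular ($\delta_y$) initial data at time $s$: this is handled by applying the weighted estimate only after the midpoint split, i.e. to the already-regularized kernel $K(\cdot,\cdot,y,s)$ at positive times, so that $h(s^+)$ is replaced by a finite quantity controlled by the on-diagonal bound.
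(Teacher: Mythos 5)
Your overall architecture (on-diagonal bound, plus a weighted $L^2$ energy estimate, plus midpoint splitting and optimization in the exponential weight) is the same Grigor'yan--Zhang scheme the paper follows, and your Step 1 --- the Davies--Gaffney inequality $h(t)\le e^{C\lambda^2(t-s)+1/2}\,h(s^+)$ for a $1$-Lipschitz weight $\gamma$, with constants made uniform by the metric equivalence forced by \eqref{hyp-ad-hoc-ric} --- is correct and is exactly the integrated maximum principle underlying the paper's argument. The gap is in Step 2, at the point where the weighted estimate must be fed an admissible initial datum. Your fix for the singular data $\delta_y$ (``apply the weighted estimate only after the midpoint split, so that $h(s^+)$ is replaced by a finite quantity controlled by the on-diagonal bound'') is circular: after the split the relevant initial quantity is $\int_N K(z,\tau,y,s)^2e^{2\lambda\gamma(z)}\,d\mu_{g(\tau)}(z)$ with $\gamma\approx d_{g(s)}(\cdot,y)$ unbounded, and Propositions \ref{diag-upper-heat-ker} and \ref{L^1-bound-heat-kernel-fct} only control $\|K(\cdot,\tau,y,s)\|_{L^\infty}$ and $\|K(\cdot,\tau,y,s)\|_{L^1}$; they give no decay of the kernel at spatial infinity, so this weighted integral could a priori be infinite. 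Bounding it is precisely the content of the theorem --- it is the quantity $E^1_D$ of the paper's proof, and there it is Claim \ref{claim-est-univ-fct-E}, whose proof is Grigor'yan's iteration over dyadic spatial scales (carried out in \cite{Der-Lam-Wea-Sta}), not a single Gr\"onwall step from the midpoint. For the same reason, your assertion that the on-diagonal bound ``upgrades, by duality and interpolation, to $L^1\to L^2$ and $L^2\to L^\infty$ bounds'' does not suffice: those bounds hold for the untwisted propagator, whereas Davies' optimization needs them for the conjugated operator $e^{-\lambda\gamma}P_{t,s}\,e^{\lambda\gamma}$ with a factor $e^{c\lambda^2(t-s)}$, and an unbounded weight does not pass through duality or interpolation; one must rerun the Nash/Moser iteration for the twisted operator or avoid the issue altogether.

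Two ways to close the gap with the ingredients you already have: (i) prove $E^1_D(s,t,\cdot)+E^2_D(s,t,\cdot)\le C(t-s)^{-n/2}$ by Grigor'yan's inductive scheme over scales, which is what the paper invokes; or (ii) apply your Davies--Gaffney estimate only to genuine $L^2$ data, namely characteristic functions $\mathbf{1}_{B_{g(s)}(y,r)}$ with $r=\sqrt{t-s}$, obtaining off-diagonal decay of $\langle\mathbf{1}_{B_{g(s)}(x,r)},P_{t,s}\mathbf{1}_{B_{g(s)}(y,r)}\rangle$, and then upgrade this integrated bound to a pointwise one by the parabolic mean value inequality of Lemma \ref{L^1-mean-inequ}, applied in the forward variables $(x,t)$ and, in the backward variables $(y,s)$, to the conjugate kernel solving \eqref{scal-heat-equ-conj-ker} (the zeroth-order term $\R_{g}K$ there is harmless under \eqref{hyp-ad-hoc-ric}). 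Finally, a sign slip: your intermediate bound should read $K\le C(t-s)^{-n/2}\exp\left(-\lambda\, d_{g(s)}(x,y)+c\lambda^2(t-s)\right)$, minimized at $\lambda\sim d_{g(s)}(x,y)/(t-s)$; as written, $\exp\left(\lambda\, d_{g(s)}(x,y)-c\lambda^2(t-s)\right)$ grows with the distance.
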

\begin{rk}
The Gaussian weight in (\ref{gauss-heat-ker}) could be stated in terms of the distance $d_{g(t)}(x,y)$ instead thanks to Proposition \ref{prop-A-priori-dist-distance-RF}.
\end{rk}
\begin{proof}
 Again, we follow closely the presentation of \cite{Der-Lam-Wea-Sta}. Define the following integral quantities for a positive constant $D$ to be chosen later:
 \begin{equation*}
 \begin{split}
&E^1_D(s,t,z):=\int_N|K(x,t,z,s)|^2e^{\frac{2d_{g(t)}^2(x,z)}{D(t-s)}}\,d\mu_{g(t)}(x),\quad s<t,\quad z\in N, \\
&E^2_D(s,t,z):=\int_N| K(z,t,y,s)|^2e^{\frac{2d_{g(s)}^2(z,y)}{D(t-s)}}\,d\mu_{g(s)}(y),\quad s<t,\quad z\in N.\\
\end{split}
\end{equation*}

The motivation for introducing such quantities follows from the next crucial observation: 
 by the semi-group property and the triangular inequality,
\begin{equation*}
\begin{split}
K(x,t,y,s)&=\int_NK(x,t,z,\tau)\cdot K(z,\tau,y,s)\,d\mu_{g(\tau)}(z)\\
&\leq\int_NK(x,t,z,\tau)e^{\frac{d_{g(\tau)}^2(x,z)}{D(t-\tau)}}K(z,\tau,y,s)e^{\frac{d_{g(\tau)}^2(z,y)}{D(\tau-s)}}\,d\mu_{g(\tau)}(z)e^{-\frac{d_{g(\tau)}^2(x,y)}{D(t-s)}}\\
%&\leq C\int_M|K(x,t,z,\tau)|e^{\frac{d_{g(\tau)}^2(x,z)}{D(t-\tau)}}|K(z,\tau,y,s)|e^{\frac{d_{g(t)}^2(z,y)}{(C\cdot D)(\tau-s)}}\,d\mu_{g(\tau)}(z)e^{-\frac{d_{g(\tau)}^2(x,y)}{D(t-s)}}
\end{split}
\end{equation*}
where $s<\tau:=\frac{t+s}{2}<t.$
% and where $C=C(n,g_b,\varepsilon,\lambda_{\operatorname{ALE}}(g(0)),\theta,D)$ is a time-independent positive constant.
Therefore, by Cauchy-Schwarz inequality, we get the following "universal" inequality:
\begin{equation}\label{univ-ineq-heat-ker}
|K(x,t,y,s)|\leq C \sqrt{E^1_{D}(s,\tau,y)E^2_{ D}(\tau,t,x)}\exp\left\{-\frac{d_{g(\tau)}^2(x,y)}{D(t-s)}\right\}.
\end{equation}
We claim the following:
\begin{claim}\label{claim-est-univ-fct-E}
\begin{equation}
E^1_{D}(s,t,\cdot)+E^2_D(s,t,\cdot)\leq \frac{C}{(t-s)^{n/2}},\quad s<t,\quad D>D_0,
\end{equation}
for some time-independent positive constants $C=C(n,g_b,\varepsilon,D)$ and $D_0= D_0(n,g_b,\varepsilon)$.

\end{claim}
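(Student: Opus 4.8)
The plan is to bound the weighted energies $E^1_D$ and $E^2_D$ by an energy method: differentiate them in the relevant time variable, control the resulting error terms using the fact that the Gaussian weight nearly cancels the Laplacian, and then close the estimate using the on-diagonal bound from Proposition \ref{diag-upper-heat-ker}. Since the two quantities are symmetric in nature (one is controlled via the forward equation \eqref{scal-heat-equ-ker} for $K$, the other via the conjugate equation \eqref{scal-heat-equ-conj-ker}), I would carry out the argument in detail for $E^1_D$ and only indicate the modifications for $E^2_D$.

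First I would fix $z\in N$ and introduce a time-dependent weight $\Gamma(x,t):=\frac{d_{g(t)}^2(x,z)}{D(t-s)}$, so that $E^1_D(s,t,z)=\int_N|K(x,t,z,s)|^2 e^{2\Gamma}\,d\mu_{g(t)}$. Differentiating under the integral and using $\partial_t K=\Delta_{g(t)}K$ together with $\partial_t\,d\mu_{g(t)}=-\R_{g(t)}\,d\mu_{g(t)}$, an integration by parts produces a good negative term $-\int |\nabla(Ke^{\Gamma})|^2$ plus error terms of the form $\int K^2 e^{2\Gamma}\big(|\nabla\Gamma|^2+\partial_t\Gamma\big)$ and a scalar-curvature term $-\int \R_{g(t)}K^2 e^{2\Gamma}$. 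The key algebraic fact is that for the Euclidean-type weight one has $|\nabla^{g(t)}\Gamma|^2 \leq \frac{C}{D}\,\partial_t(-\Gamma)$ up to lower-order contributions, so choosing $D$ large enough (this determines $D_0$) makes the combination $|\nabla\Gamma|^2+\partial_t\Gamma$ nonpositive, and the hypothesis \eqref{hyp-ad-hoc-ric} bounds the time-derivative of the distance $d_{g(t)}$ coming from the evolving metric. The scalar-curvature term is absorbed using the smallness assumption $\int_s^t\|\R_{g(t')}\|_{C^0}dt'\leq \frac{1}{2}$ implied by \eqref{hyp-ad-hoc-ric} via $|\R|\leq n|\Ric|$. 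This yields a differential inequality showing that, up to a controlled multiplicative constant, $E^1_D(s,t,z)$ is bounded by its behavior as $t\downarrow s$.

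To extract the $(t-s)^{-n/2}$ rate, I would combine this monotonicity-type control with the pointwise Gaussian-free bound. Concretely, splitting the time interval at $\tau=\frac{t+s}{2}$ and using the semigroup property, one writes $K(x,t,z,s)$ as an integral against $K$ over the midpoint slice; inserting the on-diagonal bound $K\leq C(t-s)^{-n/2}$ from Proposition \ref{diag-upper-heat-ker} for one factor and the $L^1$-bound \eqref{L^1-est-fct} from Proposition \ref{L^1-bound-heat-kernel-fct} for the other reduces the weighted $L^2$ integral defining $E^1_D$ to a pointwise supremum times an $L^1$ mass, both of which are already controlled. The weight $e^{2\Gamma}$ is handled by noting that it is comparable to a genuine Gaussian in the $g(s)$-distance (again using \eqref{hyp-ad-hoc-ric} to compare $d_{g(t)}$ and $d_{g(s)}$), and a Gaussian integrates against the heat kernel to give a finite constant uniformly in $t-s$.

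The main obstacle, I expect, is the interplay between the \emph{time-dependent} metric and the distance function appearing in the weight: unlike the fixed-metric case, $d_{g(t)}(x,z)$ is only Lipschitz and its time derivative must be controlled along the flow, which is exactly where assumption \eqref{hyp-ad-hoc-ric} enters to guarantee that the metrics $g(s)$ and $g(t)$ (and hence their distances) stay uniformly comparable on $[s,t]$. Getting the sign of the weight error term right and choosing $D_0$ correctly so that the Gaussian weight is dominated by the diffusion is the technical heart of the estimate; once that is in place, the upper bound for $K$ in \eqref{gauss-heat-ker} follows directly by inserting Claim \ref{claim-est-univ-fct-E} into the universal inequality \eqref{univ-ineq-heat-ker}.
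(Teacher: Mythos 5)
Your first step is the right opening move and matches the argument the paper points to (the paper actually omits the proof of Claim \ref{claim-est-univ-fct-E}, attributing it to \cite{Der-Lam-Wea-Sta}, which follows \cite{Gri-Upp-Ene-Dav} and \cite{Zha-Qi-Gra-Est}): differentiating the weighted energy, exploiting the Hamilton--Jacobi-type inequality satisfied by the quadratic Gaussian weight once $D$ is large (this is where $D_0$ comes from), and using \eqref{hyp-ad-hoc-ric} to control the time-dependence of $d_{g(t)}$ and of $d\mu_{g(t)}$. The gap is in how you close the estimate, and it is genuine. First, the conclusion you extract from the energy inequality --- that $E^1_D(s,t,z)$ ``is bounded by its behavior as $t\downarrow s$'' --- is vacuous: since $K(\cdot,t,z,s)$ converges to $\delta_z$, one has $E^1_D(s,t,z)\sim (t-s)^{-n/2}\to+\infty$ as $t\downarrow s$, so monotonicity toward the initial time compares against a divergent quantity and in particular does not show that $(t-s)^{n/2}\big(E^1_D+E^2_D\big)$ stays bounded, which is exactly the content of the Claim. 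Second, your midpoint/semigroup step is circular: bounding $\int_N K^2e^{2\Gamma}\,d\mu_{g(t)}\leq \big(\sup K\big)\int_N Ke^{2\Gamma}\,d\mu_{g(t)}$ and invoking Proposition \ref{diag-upper-heat-ker} leaves you with the \emph{weighted} mass $\int_N K(x,t,z,s)\,e^{2d^2(x,z)/(D(t-s))}\,d\mu_{g(t)}(x)$, which the unweighted bound \eqref{L^1-est-fct} of Proposition \ref{L^1-bound-heat-kernel-fct} cannot control; your final assertion that ``a Gaussian integrates against the heat kernel to give a finite constant uniformly in $t-s$'' is precisely the Gaussian upper bound \eqref{gauss-heat-ker} in integrated form, i.e. the statement being proved.

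What is missing is the truncation-and-iteration scheme of Grigor'yan, which is how the two ingredients you correctly identified are actually glued together. One works with a sequence of intermediate times $t_k\uparrow t$ (say $t-t_k=2^{-k}(t-s)$), radii $r_k$, and Gaussian parameters $D_k$ converging to a finite limit: at each scale the weighted integral is split at radius $r_k$; on $\{d(\cdot,z)\leq r_k\}$ the weight is bounded by $e^{2r_k^2/(D_k(t_k-s))}$, so Proposition \ref{diag-upper-heat-ker} together with \eqref{L^1-est-fct} controls that piece outright, while on the complement the integrated maximum principle from your Step 1 transfers the estimate to the earlier scale $t_{k+1}$ at the cost of slightly changing the parameter $D_k\mapsto D_{k+1}$. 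Choosing $r_k$ and $D_k$ so that the resulting series converges geometrically yields $E^1_D+E^2_D\leq C(t-s)^{-n/2}$ for all $D>D_0$; the unavoidable loss along this iteration is exactly why the Claim holds only above a threshold $D_0(n,g_b,\varepsilon)$ rather than for all $D>0$. Without this scheme --- or an equivalent non-circular device, such as Davies' method with \emph{linear} exponential weights $e^{\alpha\psi}$, $|\nabla\psi|\leq 1$ (for which the weighted energy estimate is finite from the start), optimized over $\alpha$ at the end --- the estimate does not close, because every one-shot argument of the type you propose has to evaluate a Gaussian-weighted integral of $K$ that is only finite once the theorem is already known.
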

The proof of Claim \ref{claim-est-univ-fct-E} is essentially based on Proposition \ref{diag-upper-heat-ker} implied by (\ref{hyp-ad-hoc-ric}) and is virtually identical to the proof given in \cite{Der-Lam-Wea-Sta}: it will therefore be omitted.

Now, thanks to (\ref{univ-ineq-heat-ker}) together with Claim \ref{claim-est-univ-fct-E}, one obtains the expected Gaussian estimate: 
\begin{eqnarray*}
|K(x,t,y,s)|&\leq&\frac{C}{(t-s)^{\frac{n}{2}}}\exp\left\{-\frac{d_{g(\tau)}^2(x,y)}{D(t-s)}\right\}\\
&\leq&\frac{C}{(t-s)^{\frac{n}{2}}}\exp\left\{-\frac{d_{g(s)}^2(x,y)}{e^{\frac{1}{n}}\cdot D(t-s)}\right\},\quad 0\leq s<t,\quad x,y\in N,
\end{eqnarray*}
for some positive constant $C=C(n,g_b,\varepsilon,D)$ and any $D\geq D_0(n,g_b,\varepsilon)$. Here we have used Proposition \ref{prop-A-priori-dist-distance-RF} to estimate from below the distance $d_{g(\tau)}(x,y)$ in terms of the distance $d_{g(s)}(x,y)$.

\end{proof}

\subsection{$L^2-C^0$ estimate}~~\\

%\todo[inline]{Pour le cas $\lambda_{\operatorname{ALE}}>0$, on n'utilisera pas Lojasiewicz (car on doit montrer qu'on sort du voisinage) mais l'inégalité de la partie instable pour bien montrer que $\lambda_{\operatorname{ALE}}$ explose en temps fini}

We start by checking that the $L^2$-norms of the Ricci tensor and the Hessian of the potential function satisfying (\ref{eqdeffg}) are controlled from above by that of the associated Bakry-\'Emery tensor.
 \begin{prop}[A priori $L^2$ estimate for the Ricci flow]\label{stabilityALE-RF-L2}
Let $(N^n,g_b)$ be an ALE Ricci flat metric. Let $g$ be a metric in $B_{C^{2,\alpha}_{\tau}}(g_b,\varepsilon)$. Then,
\begin{equation}
\begin{split}\label{a-priori-l2-bd-ric-hess-bakry}
\|\Ric(g)\|_{L^2}+\|\nabla^{g,2}f_{g}\|_{L^2}\leq C\left(\|\Ric(g)+\nabla^{g,2}f_{g}\|_{L^2}\right),
\end{split}
\end{equation}
for some positive constant $C=C(n,g_b,\varepsilon)$. 
\end{prop}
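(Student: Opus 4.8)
Write $B_g := \Ric(g)+\nabla^{g,2}f_g$ for the Bakry--\'Emery tensor, so that the claim is $\|\Ric(g)\|_{L^2}+\|\nabla^{g,2}f_g\|_{L^2}\leq C\|B_g\|_{L^2}$. Throughout I would use the a priori smallness of the potential: since $g\in B_{C^{2,\alpha}_\tau}(g_b,\varepsilon)$, the solution $f_g$ of \eqref{eqdeffg} is controlled in $C^{2,\alpha}_\tau$, i.e. $|\nabla^g f_g|=O(\varepsilon\,\rho_g^{-\tau-1})$ and $|\Ric(g)|=O(\varepsilon\,\rho_g^{-\tau-2})$, and in particular $\nabla^g f_g\in L^2$ and all boundary terms at infinity below will decay like $\rho_g^{\,n-2\tau-4}\to 0$ because $\tau>\frac{n-2}{2}$.

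The plan is to extract everything from the scalar (trace) content of \eqref{eqdeffg} and a Bochner identity. First, tracing $B_g$ and using $2\Delta_g f_g-|\nabla^g f_g|^2_g+\R_g=0$ gives the pointwise identities
\begin{equation*}
\tr_g B_g=\tfrac12\big(\R_g+|\nabla^g f_g|^2_g\big),\qquad \Delta_g f_g=|\nabla^g f_g|^2_g-\tr_g B_g,\qquad \R_g=2\tr_g B_g-|\nabla^g f_g|^2_g.
\end{equation*}
Since $|\tr_g B_g|\leq\sqrt n\,|B_g|$, these immediately control $\|\R_g\|_{L^2}$ and $\|\Delta_g f_g\|_{L^2}$ by $\|B_g\|_{L^2}$ up to the single nonlinear term $\big\||\nabla^g f_g|^2\big\|_{L^2}$, which I treat as an error. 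Next I would pass from $\Delta_g f_g$ to the full Hessian via the integrated Bochner formula: integrating $\tfrac12\Delta_g|\nabla^g f_g|^2_g=|\nabla^{g,2}f_g|^2_g+\langle\nabla^g f_g,\nabla^g\Delta_g f_g\rangle_g+\Ric(g)(\nabla^g f_g,\nabla^g f_g)$ over $N$, all boundary terms vanish by the decay noted above, and integration by parts turns the middle term into $-\int_N(\Delta_g f_g)^2$, yielding
\begin{equation*}
\|\nabla^{g,2}f_g\|_{L^2}^2=\int_N(\Delta_g f_g)^2\,d\mu_g-\int_N\Ric(g)(\nabla^g f_g,\nabla^g f_g)\,d\mu_g.
\end{equation*}

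The two error terms are absorbed using a Hardy inequality on ALE manifolds ($n\geq3$), namely $\int_N|\nabla^g f_g|^2_g\,\rho_g^{-2}\,d\mu_g\leq C\|\nabla^{g,2}f_g\|_{L^2}^2$. Writing $|\nabla^g f_g|^4=\big(\rho_g^2|\nabla^g f_g|^2\big)\,\rho_g^{-2}|\nabla^g f_g|^2$ and bounding $\rho_g^2|\nabla^g f_g|^2\leq C\varepsilon^2$ pointwise gives $\big\||\nabla^g f_g|^2\big\|_{L^2}\leq C\varepsilon\,\|\nabla^{g,2}f_g\|_{L^2}$; similarly $|\Ric(g)|\leq C\varepsilon\rho_g^{-\tau}\cdot\rho_g^{-2}$ yields $\big|\int_N\Ric(g)(\nabla^g f_g,\nabla^g f_g)\big|\leq C\varepsilon\,\|\nabla^{g,2}f_g\|_{L^2}^2$. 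Combining with $\|\Delta_g f_g\|_{L^2}\leq \big\||\nabla^g f_g|^2\big\|_{L^2}+\sqrt n\,\|B_g\|_{L^2}$, the Bochner identity becomes $\|\nabla^{g,2}f_g\|_{L^2}^2\leq 2n\|B_g\|_{L^2}^2+C\varepsilon\,\|\nabla^{g,2}f_g\|_{L^2}^2$. Choosing $\varepsilon$ small enough to absorb the last term gives $\|\nabla^{g,2}f_g\|_{L^2}\leq C\|B_g\|_{L^2}$, and then $\|\Ric(g)\|_{L^2}=\|B_g-\nabla^{g,2}f_g\|_{L^2}\leq \|B_g\|_{L^2}+\|\nabla^{g,2}f_g\|_{L^2}\leq C\|B_g\|_{L^2}$, which is the assertion.

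I expect the main obstacle to be the rigorous control of the nonlinear quantity $|\nabla^g f_g|^2$ and the curvature cross term: one must verify that each genuinely carries a factor of $\varepsilon$ \emph{and} is dominated by the left-hand side $\|\nabla^{g,2}f_g\|_{L^2}$, which is exactly what the Hardy inequality (trading the extra power of $\rho_g$ against the decay of $\nabla^g f_g$ and $\Ric(g)$) is designed to supply. A secondary point requiring care is the justification that all boundary integrals at infinity vanish, which relies on the sharp threshold $\tau>\frac{n-2}{2}$ and the $C^{2,\alpha}_\tau$-regularity of $f_g$ established in \cite{Der-Ozu-Lam}.
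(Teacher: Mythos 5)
Your proof is correct and follows essentially the same strategy as the paper: trace the Euler--Lagrange equation \eqref{eqdeffg}, integrate a Bochner identity for $f_g$, absorb the error terms using the Hardy inequality on ALE spaces together with the $C^{2,\alpha}_{\tau}$-smallness of $\Ric(g)$ and $f_g$, and conclude by the triangle inequality. The only notable difference is that the paper works with the drift Laplacian and the weighted measure $e^{-f_g}d\mu_g$, for which the identity $\Delta_{f_g}f_g=-\tr_g\left(\Ric(g)+\nabla^{g,2}f_g\right)$ is exact and linear in the Bakry--\'Emery tensor, whereas your unweighted version produces the extra quadratic term $|\nabla^g f_g|^2_g$ in $\Delta_g f_g=|\nabla^g f_g|^2_g-\tr_g B_g$, which you correctly dominate by $C\varepsilon\|\nabla^{g,2}f_g\|_{L^2}$ via the same Hardy inequality, so both arguments close by the identical absorption mechanism.
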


\begin{proof}
Let $g$ be a metric in $B_{C^{2,\alpha}_{\tau}}(g_b,\varepsilon)$. Then observe that from the Euler-Lagrange equation (\ref{eqdeffg}) satisfied by the potential function $f_{g}$, one gets:
\begin{equation}
\Delta_{f_{g}}f_{g}=-\Delta_{g}f_{g}-\R_{g}=-\tr_{g}\left(\Ric(g)+\nabla^{g,2}f_{g}\right).\label{euler-lag-disguise}
\end{equation}
 Let us apply the Bochner formula to the smooth metric measure space $\left(N^n,g,\nabla^{g}f_{g}\right)$:
\begin{equation*}
\begin{split}
\Delta_{f_{g}}|\nabla^{g}f_{g}|^2_{g}=&2|\nabla^{g,2}f_{g}|^2_{g}+2\left(\Ric(g)+\nabla^{g,2}f_{g}\right)(\nabla^{g}f_{g},\nabla^{g}f_{g})\\
&+2\langle\nabla^{g}f_{g},\nabla^{g}\Delta_{f_{g}}f_{g}\rangle_{g}\\
=&2|\nabla^{g,2}f_{g}|^2_{g}+2\left(\Ric(g)+\nabla^{g,2}f_{g}\right)(\nabla^{g}f_{g},\nabla^{g}f_{g})\\
&-2\left\langle\nabla^{g}f_{g},\nabla^{g}\tr_{g}\left(\Ric(g)+\nabla^{g,2}f_{g}\right)\right\rangle_{g}.
\end{split}
\end{equation*}
Here, we have used (\ref{euler-lag-disguise}) in the second line. By integrating by parts (with respect to the weighted measure $e^{-f_{g}}d\mu_{g}$) the previous identity, one ends up with:
\begin{equation}
\begin{split}\label{inequ-boc-hess-est-T}
2\|\nabla^{g,2}f_{g}\|^2_{L^2}=&-2\left\langle \Delta_{f_{g}}f_{g},\tr_{g}\left(\Ric(g)+\nabla^{g,2}f_{g}\right)\right\rangle_{L^2}\\
&-2\int_N\left(\Ric(g)+\nabla^{g,2}f_{g}\right)(\nabla^{g}f_{g},\nabla^{g}f_{g})\,e^{-f_{g}}d\mu_{g}\\
=&2\left\|\tr_{g}\left(\Ric(g)+\nabla^{g,2}f_{g}\right)\right\|^2_{L^2}\\
&-2\int_N\left(\Ric(g)+\nabla^{g,2}f_{g}\right)(\nabla^{g}f_{g},\nabla^{g}f_{g})\,e^{-f_{g}}d\mu_{g}\\
\leq&2\left\|\tr_{g}\left(\Ric(g)+\nabla^{g,2}f_{g}\right)\right\|^2_{L^2}\\
&+\|\rho_{g_b}^2\left(\Ric(g)+\nabla^{g,2}f_{g}\right)\|_{C^0}\|\rho_{g_b}^{-1}\nabla^{g}f_{g}\|_{L^2}^2\\
\leq&2\left\|\tr_{g}\left(\Ric(g)+\nabla^{g,2}f_{g}\right)\right\|^2_{L^2}\\
&+C(n,g_b)\|\rho_{g_b}^2\left(\Ric(g)+\nabla^{g,2}f_{g}\right)\|_{C^0}\|\nabla^{g,2}f_{g}\|^2_{L^2}.
\end{split}
\end{equation}
Here we have used (\ref{euler-lag-disguise}) in the second line, and Hardy's inequality from \cite{Min-Har-Ine} is invoked in the last inequality. In particular, if $g$ lies in a sufficiently $C^{2,\alpha}_{\tau}$ small neighborhood of $g_b$ such that $C(n,g_b)\|\rho_{g_b}^2(\Ric(g)+\nabla^{g,2}f_{g})\|\leq 1$ then the last term on the righthand side of (\ref{inequ-boc-hess-est-T}) can be absorbed by the lefthand side so that,
\begin{equation}
\|\nabla^{g,2}f_{g}\|^2_{L^2}\leq 2\left\|\tr_{g}\left(\Ric(g)+\nabla^{g,2}f_{g}\right)\right\|^2_{L^2}.\label{inequ-hess-tr-T}
\end{equation}
This ends the proof of the estimate on the Hessian term in (\ref{a-priori-l2-bd-ric-hess-bakry}). The bound on the $L^2$-norm of the Ricci curvature follows by the triangular inequality.
\end{proof}

The next proposition lets us get an a priori $C^0$ estimate on the distance from a Ricci flow to the origin given by an ALE Ricci flat metric:
\begin{prop}[A priori $L^2-C^{0}$ estimate for the Ricci flow]\label{coro-A-priori-L^2-C^0-est-RF}
Let $(N^n,g_b)$ be an ALE Ricci flat metric. Let $(g(t))_{t\in[0,T)}$ be a solution to the Ricci flow such that $g(t)\in B_{C^0}(g_b,\varepsilon)$ and with bounded curvature for all $t\in[0,T)$.

For $t\in(0,T)$ and $0<r^2<t$, if $\int_{t-r^2}^t\|\R_{g(s)}\|_{C^0}\,ds\leq \frac{1}{2}$ and $\Ric(g(s))\in L^2$, $s\in[t-r^2,t]$, then,
\begin{eqnarray}
\|\Ric(g(t))\|_{C^0}&\leq& Cr^{-\frac{n}{2}}\exp\left(c(n)\int_{t-r^2}^t\|\Rm(g(s))\|_{C^0}\,ds\right)\|\Ric(g(t-r^2)\|_{L^2},\label{int-ric-c-0}
\end{eqnarray}
for some positive constant $C=C(n,g_b,\varepsilon)$.
\end{prop}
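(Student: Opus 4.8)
The plan is to exploit the parabolic equation satisfied by the Ricci tensor along the flow, turn the pointwise norm $|\Ric|$ into a genuine subsolution of the heat equation via an integrating factor, and then represent this subsolution through the heat kernel studied above, pairing it against the $L^2$-datum with Cauchy--Schwarz.

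First I would recall that under the Ricci flow $\partial_t g=-2\Ric(g(t))$ the Ricci tensor evolves by the Lichnerowicz heat equation $\partial_t\Ric=\Delta_{g(t)}\Ric+\Rm(g(t))\ast\Ric$, so that
$$\left(\partial_t-\Delta_{g(t)}\right)|\Ric|_{g(t)}^2\leq -2|\nabla^{g(t)}\Ric|^2+c(n)\|\Rm(g(t))\|_{C^0}|\Ric|_{g(t)}^2.$$
Writing $u:=|\Ric|_{g(t)}$ and invoking Kato's inequality $|\nabla u|^2\leq|\nabla\Ric|^2$, this reduces, at points where $u>0$, to the scalar inequality $(\partial_t-\Delta_{g(t)})u\leq \tfrac{c(n)}{2}\|\Rm(g(t))\|_{C^0}\,u$; the singular set $\{u=0\}$ is handled in the usual way by regularizing $u$ to $\sqrt{u^2+\epsilon^2}$ and letting $\epsilon\to0$. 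Setting $A(t):=\int_{t-r^2}^t\|\Rm(g(s))\|_{C^0}\,ds$ and $w:=e^{-\frac{c(n)}{2}A(\cdot)}u$, the integrating factor exactly cancels the reaction term, so that $w\geq0$ is a subsolution of the heat equation along the flow on $N\times(t-r^2,t]$.

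Then I would use the heat kernel $K$ from \eqref{scal-heat-equ-ker}. Since the flow has bounded curvature, the maximum principle applies and the subsolution is dominated by the heat evolution of its initial datum: setting $t_0:=t-r^2$,
$$w(x,t)\leq\int_N K(x,t,y,t_0)\,w(y,t_0)\,d\mu_{g(t_0)}(y)\leq\|K(x,t,\cdot,t_0)\|_{L^2(d\mu_{g(t_0)})}\,\|w(\cdot,t_0)\|_{L^2}.$$
The $L^2$-norm of the kernel in its third slot is controlled by combining the on-diagonal bound of Proposition \ref{diag-upper-heat-ker}, valid since $\int_{t_0}^t\|\R_{g(s)}\|_{C^0}\,ds\leq\frac12$, with the mass identity \eqref{L^infty-est-fct}:
$$\int_N K(x,t,y,t_0)^2\,d\mu_{g(t_0)}(y)\leq\Big(\sup_y K(x,t,y,t_0)\Big)\int_N K(x,t,y,t_0)\,d\mu_{g(t_0)}(y)\leq\frac{C}{(t-t_0)^{n/2}}=\frac{C}{r^{n}}.$$
Since $A(t_0)=0$ gives $w(\cdot,t_0)=|\Ric(g(t_0))|$, taking the supremum over $x$ and unwinding the integrating factor produces exactly \eqref{int-ric-c-0}, with the exponential factor coming from $e^{\frac{c(n)}{2}A(t)}$.

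The main obstacle is the justification of this representation/comparison step: on the non-compact $N$ one must ensure that the subsolution $w$ does not gain mass from infinity, which is where the bounded-curvature hypothesis and an exhaustion argument by the Dirichlet kernels $K_j$ (exactly as in the proof of Proposition \ref{L^1-bound-heat-kernel-fct}) enter, the integrability $\Ric(g(s))\in L^2$ guaranteeing that the right-hand side is finite and that the boundary terms vanish in the limit. The remaining points — the Kato inequality near the zero set of $\Ric$ and pinning down the precise dimensional constant $c(n)$ in the reaction term — are routine.
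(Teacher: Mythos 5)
Your proof is correct, but it takes a different route from the paper's. The paper also starts from the observation that $u:=\exp\left(-c(n)\int_0^t\|\Rm(g(s))\|_{C^0}\,ds\right)|\Ric(g(t))|_{g(t)}$ is a subsolution of the heat equation, but then it applies the $L^1$ mean value inequality (Lemma \ref{L^1-mean-inequ}, the Moser-iteration result) directly to $|\Ric|^2$, obtaining a bound by the space-time integral $\int_{t-r^2}^t\int_{B_{g_b}(x,r)}|\Ric(g(s))|^2\,d\mu_{g(s)}ds$ over a parabolic cylinder; since this involves the $L^2$ norms at \emph{all} intermediate times, the paper then needs the auxiliary Gr\"onwall-type estimate of Proposition \ref{prop-a-priori-L2-est-rm-ric} to reduce $\sup_{t-r^2\leq s\leq t}\|\Ric(g(s))\|_{L^2}$ to $\|\Ric(g(t-r^2))\|_{L^2}$. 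You instead represent the subsolution through the heat kernel and pair it against the datum at the single time $t_0=t-r^2$ by Cauchy--Schwarz, bounding $\|K(x,t,\cdot,t_0)\|_{L^2}$ by interpolating the on-diagonal estimate of Proposition \ref{diag-upper-heat-ker} against the mass identity \eqref{L^infty-est-fct}; this makes the Gr\"onwall step unnecessary and cleanly isolates where each hypothesis enters (the scalar-curvature smallness feeds Proposition \ref{diag-upper-heat-ker}, the $L^2$ hypothesis makes the right-hand side finite). Note that the two routes are not as far apart as they look: Proposition \ref{diag-upper-heat-ker} is itself proved via Lemma \ref{L^1-mean-inequ}, so Moser iteration underlies both. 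What the paper's approach buys is that it never needs a global comparison principle on the noncompact manifold — the mean value inequality is a purely local integral estimate — whereas your approach requires justifying the inequality $w(x,t)\leq\int_N K(x,t,y,t_0)w(y,t_0)\,d\mu_{g(t_0)}(y)$ for a subsolution on noncompact $N$, which you correctly flag as the main obstacle; the clean way to close it is to note that bounded curvature makes $w$ a \emph{bounded} subsolution, that $v:=K\ast w(\cdot,t_0)$ is a bounded solution (bounded by $\|w(\cdot,t_0)\|_{C^0}$ thanks to \eqref{L^infty-est-fct}), and to apply the maximum principle for bounded subsolutions along a complete bounded-curvature Ricci flow to $w-v$ — so it is really the boundedness of $w$ (not the $L^2$ integrability, as you suggest) that kills the contribution from infinity. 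With that point repaired, your argument yields exactly \eqref{int-ric-c-0}.
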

%\textcolor{red}{add decay in time ? is }

\begin{proof}
Thanks to the evolution equation [(\ref{evo-eqn-Ric-for}), Lemma \ref{lemma-evo-eqn-Ric-Rm}] satisfied by the Ricci tensor, one gets that the function $u(t):=\exp\left(-c(n)\int_{0}^t\|\Rm(g(s))\|_{C^0}\,ds\right)|\Ric(g(t)|_{g(t)}$ is a subsolution to the heat equation, i.e. $\partial_tu\leq \Delta_{g(t)}u$ in the weak sense.

Thanks to Lemma \ref{L^1-mean-inequ}, if $r^2< t\leq T$ and $x\in N$, one is led to:
\begin{equation}
\begin{split}
&\sup_{B_{g_b}(x,\frac{r}{2})}|\Ric(g(t))|^2_{g(t)}\leq \frac{C}{r^{n+2}}e^{c(n)\int_{t-r^2}^t\|\Rm(g(s))\|_{C^0}\,ds}\int_{t-r^2}^t\int_{B_{g_b}(x,r)}|\Ric(g(s))|^2\,d\mu_{g(s)}ds,\label{C0-L2-ver-0-Ric}
\end{split}
\end{equation}
where $C=C(n,g_b,\varepsilon)$ is a positive constant which is independent of space and time variables.

This being said, estimate (\ref{C0-L2-ver-0-Ric}) implies in particular that 
\begin{equation}
\|\Ric(g(t))\|_{C^0}\leq \frac{C(n,g_b,\varepsilon)}{r^{\frac{n}{2}}}e^{c(n)\int_{t-r^2}^t\|\Rm(g(s))\|_{C^0}\,ds}\sup_{t-r^2\leq s\leq t}\|\Ric(g(s))\|_{L^2}.\label{corona-inequ-v0}
\end{equation}
Now, Proposition \ref{prop-a-priori-L2-est-rm-ric} gives:
 \begin{equation}
 \sup_{t-r^2\leq s\leq t}\|\Ric(g(s))\|_{L^2}\leq e^{c(n)\int_{t-r^2}^t\|\Rm(g(s))\|_{C^0}\,ds}\|\Ric(g(t-r^2))\|_{L^2}.
 \end{equation}
 Combining this fact with (\ref{corona-inequ-v0}) finally give 
\begin{equation}
\|\Ric(g(t))\|_{C^0}\leq \frac{C(n,g_b,\varepsilon)}{r^{\frac{n}{2}}}e^{c(n)\int_{t-r^2}^t\|\Rm(g(s))\|_{C^0}\,ds}\|\Ric(g(t-r^2))\|_{L^2}.\label{corona-inequ-v1}
\end{equation}
Then (\ref{corona-inequ-v1}) establishes the desired estimate (\ref{int-ric-c-0}).
\end{proof}

We end this section by an estimate on the decay of the $C^0$-norm of the Ricci tensor along a Ricci flow satisfying some mild assumptions:
\begin{prop}\label{prop-ric-dec-large-time}
Let $(N^n,g_b)$ be an ALE Ricci flat metric. Let $(g(t))_{t\in[0,T)}$ be a solution to the Ricci flow such that $g(t)\in B_{C^0}(g_b,\varepsilon)$ and with bounded curvature for all $t\in[0,T)$ and such that 
\begin{equation}
\int_{0}^t\|\Ric(g(t'))\|_{C^0}\,dt'\leq \frac{1}{2n}.
\end{equation}
Then, if $p\geq 1$, $0<s<t<T$ and $\Ric(g(s))\in L^p$,
\begin{equation}
\|\Ric(g(t))\|_{C^0}\leq  \frac{C}{(t-s)^{\frac{n}{2p}}}\|\Ric(g(s))\|_{L^p}+C\sup_{t'\in[0,T)}\|\Rm(g(t'))\|_{C^0}\int_s^t\|\Ric(g(t'))\|_{C^0}\,dt',
\end{equation}
for some time-independent positive constant $C=C(n,g_b,\varepsilon,p).$
\end{prop}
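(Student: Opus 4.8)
The plan is to combine the forward heat kernel representation of the Ricci tensor along the flow with the Gaussian estimates of Theorem \ref{Gaussian-est-scal-heat-ker} and the $L^1$-bounds of Proposition \ref{L^1-bound-heat-kernel-fct}, interpreted through Remark \ref{rk-L^1-L^infty}. The hypothesis $\int_0^t\|\Ric(g(t'))\|_{C^0}\,dt'\leq\frac{1}{2n}$ guarantees, via the inequality $\|\R_{g}\|_{C^0}\leq n\|\Ric(g)\|_{C^0}$, that the ad-hoc smallness assumption \eqref{hyp-ad-hoc-ric} needed for the Gaussian estimate is satisfied on all of $[0,t]$, so the heat kernel $K$ obeys \eqref{gauss-heat-ker}.

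First I would recall from [Lemma \ref{lemma-evo-eqn-Ric-Rm}, (\ref{evo-eqn-Ric-for})] that the Ricci tensor satisfies a parabolic equation of the schematic form $\partial_t\Ric=\Delta_{g(t)}\Ric+\Rm(g(t))\ast\Ric$. Writing $u(t):=|\Ric(g(t))|_{g(t)}$, the reaction term is controlled pointwise by $c(n)\|\Rm(g(t))\|_{C^0}u$, so by Duhamel's principle one represents the solution as
\begin{equation*}
u(x,t)\leq \int_N K(x,t,y,s)\,u(y,s)\,d\mu_{g(s)}(y)+c(n)\int_s^t\!\!\int_N K(x,t,y,t')\,\|\Rm(g(t'))\|_{C^0}\,u(y,t')\,d\mu_{g(t')}(y)\,dt'.
\end{equation*}
The first term is the homogeneous contribution and the second is the inhomogeneous one coming from the curvature reaction.

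For the homogeneous term I would apply H\"older's inequality with exponents $p$ and its conjugate $p'$, estimating the integral by $\|K(x,t,\cdot,s)\|_{L^{p'}(d\mu_{g(s)})}\,\|\Ric(g(s))\|_{L^p}$. Plugging the Gaussian bound \eqref{gauss-heat-ker} into the $L^{p'}$-norm, the spatial Gaussian integral contributes a factor $(t-s)^{n/(2p')}$ while the prefactor gives $(t-s)^{-n/2}$; combining these yields the $L^{p'}$-norm bounded by $C(t-s)^{-\frac{n}{2}+\frac{n}{2p'}}=C(t-s)^{-\frac{n}{2p}}$, which is exactly the claimed $(t-s)^{-n/(2p)}$ decay rate. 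For the inhomogeneous term I would bound $u(y,t')\leq\|\Ric(g(t'))\|_{C^0}$ pointwise, pull the curvature sup-norm $\sup_{t'}\|\Rm(g(t'))\|_{C^0}$ out, and use the $L^1$-bound \eqref{L^1-est-fct} of Proposition \ref{L^1-bound-heat-kernel-fct} (whose constant is bounded by $e^{1/2}$ under the running hypothesis) to control $\int_N K(x,t,y,t')\,d\mu_{g(t')}(y)$ by a uniform constant, leaving precisely the time integral $\int_s^t\|\Ric(g(t'))\|_{C^0}\,dt'$.

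The main obstacle I anticipate is organizing the regularity and integrability carefully enough to justify the Duhamel representation and the interchange of integration with the heat semigroup in the non-compact ALE setting: one must know a priori that $\Ric(g(s))\in L^p$ (which is assumed) and that the flow has bounded curvature (also assumed) so that all integrals converge and the subsolution/representation argument is valid; a secondary technical point is keeping track of the measure changes $d\mu_{g(t')}$ versus $d\mu_{g(s)}$, which are comparable since $g(t)$ stays $\varepsilon$-$C^0$-close to $g_b$, so these only affect the constant $C=C(n,g_b,\varepsilon,p)$. Once the representation is in place, the two estimates above assemble directly into the stated inequality.
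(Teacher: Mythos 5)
Your proposal is correct and follows essentially the same route as the paper: Duhamel's formula for the Ricci tensor, the Gaussian bound of Theorem \ref{Gaussian-est-scal-heat-ker} giving $\|K(x,t,\cdot,s)\|_{L^{q}}\leq C(t-s)^{-n/(2p)}$ (with $q$ the conjugate exponent) for the homogeneous term via H\"older, and a uniform $L^1$ kernel bound for the reaction term; the paper phrases the reduction to the scalar heat kernel via Kato's inequality applied to the tensor kernel $\mathcal{K}$, which carries the same content as your subsolution argument for $|\Ric(g(t))|$. One cosmetic point: the kernel integral $\int_N K(x,t,y,t')\,d\mu_{g(t')}(y)$ you need is controlled by \eqref{L^infty-est-fct} (it equals $1$), not \eqref{L^1-est-fct}, though both sit in Proposition \ref{L^1-bound-heat-kernel-fct} and yield the uniform constant you claim.
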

\begin{rk}
Proposition \ref{prop-ric-dec-large-time} will be most useful in case $p>\frac{n}{\tau+2}$ since we will ultimately consider a solution $(g(t))_{t\in[0,T)}$ to the Ricci flow such that $\Ric(g(t))=O(\rho_{g_b}^{-\tau-2})$ for each time $t\in[0,T)$.
\end{rk}
\begin{proof}
The assumptions legitimate the use of Theorem \ref{Gaussian-est-scal-heat-ker} through Duhamel's formula as we now explain.

According to the evolution equation satisfied by the Ricci curvature along the Ricci flow given by Lemma (\ref{lemma-evo-eqn-Ric-Rm}) together with Duhamel's formula, one can write:
\begin{equation}
\begin{split}\label{duha-ric-evo-eqn-c0-dec-time}
\Ric(g(t))(x)=&\int_N\left<\mathcal{K}(x,t,\cdot,s),\Ric(g(s))\right>\,d\mu_{g(s)}\\
&+\int_s^t\int_N\left<\mathcal{K}(x,t,\cdot,t'),\Rm(g(t'))\ast\Ric(g(t'))\right>\,d\mu_{g(t')}\,dt',
\end{split}
\end{equation}
where $\mathcal{K}(x,t,y,s)$ denotes the heat kernel acting on symmetric $2$-tensors associated to the one-parameter family of metrics $(g(t))_{t\in[0,T)}$.

Now, by Kato's inequality, the operator norm of $\mathcal{K}(x,t,y,s)$ is bounded by the heat kernel acting on functions, i.e.
\begin{equation*}
\|\mathcal{K}(x,t,y,s)\|_{Hom(S^2T_y^*N,S^2T_x^*N)}\leq K(x,t,y,s),\quad (x,y)\in N\times N, \,\,t>s\geq 0,
\end{equation*}
where $K(x,t,y,s)$ denotes the heat kernel acting on functions with respect to the metric $g(t)$ introduced and studied in Section \ref{sec-heat-ker}. In particular, [(\ref{gauss-heat-ker}), Theorem \ref{Gaussian-est-scal-heat-ker}] ensures (once we fix $D=D_0$) that $$\|K(x,t,\cdot,s)\|_{L^q}\leq \frac{C(n,g_b)}{(t-s)^{\frac{n}{2}\left(1-\frac{1}{q}\right)}} ,$$ for $0<s<t<T.$ 
Therefore, one gets by H\"older's inequality and \eqref{duha-ric-evo-eqn-c0-dec-time}:
\begin{equation*}
\begin{split}
\|\Ric(g(t))\|_{C^0}\leq\,& \sup_{x\in N}\|K(x,t,\cdot,s)\|_{L^{q}}\|\Ric(g(s))\|_{L^p}\\
&+\sup_{t'\in[0,T)}\|\Rm(g(t'))\|_{C^0}\int_s^t\|K(x,t,\cdot,s)\|_{L^{1}}\|\Ric(g(t'))\|_{C^0}\,dt'\\
\leq\,&\frac{C(n,g_b)}{(t-s)^{\frac{n}{2p}}}\|\Ric(g(s))\|_{L^p}+C\sup_{t'\in[0,T)}\|\Rm(g(t'))\|_{C^0}\int_s^t\|\Ric(g(t'))\|_{C^0}\,dt',
\end{split}
\end{equation*}
where $p,q\geq 1$ are such that $p^{-1}+q^{-1}=1$ and where $C$ is a positive constant independent of time.

\end{proof}
\subsection{Weighted estimates}\label{sec-wei-est}~~\\

In this section, we consider an ALE Ricci flat metric $(N^n,g_b)$ and prove an a priori $C^0$ estimate on the distance of a Ricci flow $(g(t))_{t\in[0,T)}$ lying in a small neighborhood $B_{C^{0}}(g_b,\varepsilon)$ starting from a metric $g(0)\in B_{C^{2,\alpha}_{\tau}}(g_b,\varepsilon)$.

To do so, we essentially use the Gaussian estimates proved in Theorem \ref{Gaussian-est-scal-heat-ker} via the Duhamel's formula. We will distinguish short-time from large-time estimates.

\begin{lemma}[Short-time estimates]\label{lemma-short-time-est-curv}
Let $(N^n,g_b)$ be an ALE Ricci flat metric. Let $\tau\in(\frac{n-2}{2},n-2)$, $\alpha\in\left(0,\min\{1,n-2-\tau\}\right)$ and let $(g(t))_{t\in[0,T)}$ be a solution to the Ricci flow in a neighborhood $B_{C^{0}}(g_b,\varepsilon)$ such that $g(0)\in B_{C^{2,\alpha}_{\tau}}(g_b,\varepsilon)$ and
\begin{equation}
\int_{0}^t\|\Ric(g(t'))\|_{C^0}\,dt'\leq \frac{1}{2n}.\label{hyp-ad-hoc-ric-1}
\end{equation}
Then for $t\in[0,T)\,\cap\,[0,T_{\operatorname{Shi}}]$,
\begin{eqnarray}
&&\|\Rm(g(t))\|_{C^{0,\alpha}_{\tau+2}}\leq C(n,g_b,\varepsilon)e^{C(n,g_b,\varepsilon)\cdot t}\|\Rm(g(0))\|_{C^{0,\alpha}_{\tau+2}},\label{short-rm-wei-c0}\\
&&\|\Ric(g(t))\|_{C^{0,\alpha}_{\tau+2}}\leq C(n,g_b,\varepsilon)e^{C(n,g_b,\varepsilon)\cdot t}\|\Ric(g(0))\|_{C^{0,\alpha}_{\tau+2}}.\label{short-ric-wei-c0}
\end{eqnarray}
\end{lemma}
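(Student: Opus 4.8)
The plan is to run a Duhamel--Gr\"onwall argument on the reaction--diffusion equations satisfied by $\Rm$ and $\Ric$ along the flow, propagating weighted decay by means of the Gaussian bounds of Theorem \ref{Gaussian-est-scal-heat-ker}. First I would invoke Shi's estimates on $[0,T_{\operatorname{Shi}}]$: since $g(0)\in B_{C^{2,\alpha}_{\tau}}(g_b,\varepsilon)$ and $g_b$ has bounded curvature, the unweighted norm $\|\Rm(g(t))\|_{C^0}$ is bounded uniformly for $t\in[0,T_{\operatorname{Shi}}]$, together with finitely many of its derivatives; this uniform bound is precisely what will be fed into Gr\"onwall at the end. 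Since $g(t)\in B_{C^0}(g_b,\varepsilon)$, the metrics $g(t)$ are uniformly equivalent to $g_b$, so all distances and weight functions $\rho_{g(t)}$ are comparable to $\rho_{g_b}$ and I work throughout with $\rho_{g_b}$.

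The heart of the argument is a weighted convolution estimate for the heat semigroup. Writing Duhamel's formula for $\Rm$ from the evolution equation of Lemma \ref{lemma-evo-eqn-Ric-Rm},
\begin{equation*}
\Rm(g(t))=\int_N\mathcal{K}(\cdot,t,y,0)\ast\Rm(g(0))\,d\mu_{g(0)}(y)+\int_0^t\!\!\int_N\mathcal{K}(\cdot,t,y,t')\ast\big(\Rm\ast\Rm\big)(g(t'))\,d\mu_{g(t')}(y)\,dt',
\end{equation*}
I would bound the operator norm of the tensor heat kernel $\mathcal{K}$ by the scalar kernel $K$ via Kato's inequality and insert the Gaussian bound \eqref{gauss-heat-ker}. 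The key lemma to establish is that convolution against a kernel satisfying \eqref{gauss-heat-ker} preserves weighted decay: for $0<\beta<n$ one has $\int_N K(x,t,y,s)\,\rho_{g_b}(y)^{-\beta}\,d\mu_{g(s)}(y)\leq C\,\rho_{g_b}(x)^{-\beta}$, the threshold $\beta<n$ being exactly the condition under which the Gaussian convolution of a $\rho_{g_b}^{-\beta}$-weight stays bounded near the compact core (where $\rho_{g_b}\approx 1$) and self-similar along the end. The analogous statement for the weighted H\"older seminorm $\rho_{g_b}^{\tau+2+\alpha}[\,\cdot\,]_{C^\alpha}$ is what forces the hypothesis $\alpha<n-2-\tau$, precisely so that $\tau+2+\alpha<n$ and the same threshold applies. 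I would obtain the H\"older part by a localized parabolic rescaling: at a point with $\rho_{g_b}=R$ large, rescale the flow by $R$ to a unit parabolic cylinder on which the metric is uniformly close to Euclidean and, by Shi, the curvature is uniformly bounded; classical interior parabolic Schauder estimates then give $C^\alpha$-control, and undoing the scaling reproduces exactly the weights $\rho_{g_b}^{\tau+2}$ and $\rho_{g_b}^{\tau+2+\alpha}$. I expect this weighted H\"older convolution estimate to be the main obstacle, since it mixes the Gaussian bounds with the geometry of the ALE end and is exactly what pins down the admissible range of $\alpha$.

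With the weight-preservation estimate in hand, the reaction term is controlled by its product structure, namely $\|\Rm\ast\Rm\|_{C^{0,\alpha}_{\tau+2}}\leq C\|\Rm\|_{C^0}\,\|\Rm\|_{C^{0,\alpha}_{\tau+2}}$, so that the Duhamel identity yields
\begin{equation*}
\|\Rm(g(t))\|_{C^{0,\alpha}_{\tau+2}}\leq C\,\|\Rm(g(0))\|_{C^{0,\alpha}_{\tau+2}}+C\int_0^t\|\Rm(g(t'))\|_{C^0}\,\|\Rm(g(t'))\|_{C^{0,\alpha}_{\tau+2}}\,dt'.
\end{equation*}
Because $\|\Rm(g(t'))\|_{C^0}$ is uniformly bounded on $[0,T_{\operatorname{Shi}}]$ by Shi, Gr\"onwall's inequality gives the exponential-in-time bound \eqref{short-rm-wei-c0}. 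The estimate \eqref{short-ric-wei-c0} for $\Ric$ follows by the identical scheme, replacing the reaction term $\Rm\ast\Rm$ by $\Rm\ast\Ric$ in the evolution equation of Lemma \ref{lemma-evo-eqn-Ric-Rm} and using $\|\Rm\ast\Ric\|_{C^{0,\alpha}_{\tau+2}}\leq C\|\Rm\|_{C^0}\,\|\Ric\|_{C^{0,\alpha}_{\tau+2}}$.
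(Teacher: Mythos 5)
Your core argument coincides with the paper's proof of Lemma \ref{lemma-short-time-est-curv}: both rest on Duhamel's formula for the evolution equations of Lemma \ref{lemma-evo-eqn-Ric-Rm}, domination of the tensor heat kernel by the scalar one, the Gaussian bound \eqref{gauss-heat-ker} of Theorem \ref{Gaussian-est-scal-heat-ker}, and the weighted convolution estimate $\int_N K(x,t,y,s)\,\rho_{g_b}(y)^{-\beta}\,d\mu\leq C\rho_{g_b}(x)^{-\beta}$ for $\beta<n$, which the paper proves by splitting the integral at scale $d_{g_b}(p,x)/2$ in \eqref{est-heat-ker-weight-pol-rm}. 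The only cosmetic difference in the $C^0_{\tau+2}$ part is that the paper absorbs the reaction term $\Rm\ast\Rm$ into an exponential prefactor, observing that $e^{-c(n)\|\Rm(g(0))\|_{C^0}t}|\Rm(g(t))|_{g(t)}$ is a subsolution of the heat equation, whereas you keep the reaction term inside Duhamel and close with Gr\"onwall using Shi's uniform $C^0$ curvature bound; both yield the factor $Ce^{Ct}$, and the product estimate $\|\Rm\ast\Rm\|_{C^{0,\alpha}_{\tau+2}}\leq C\|\Rm\|_{C^0}\|\Rm\|_{C^{0,\alpha}_{\tau+2}}$ you use is exactly the one appearing in \eqref{short-rm-wei-calpha-proof}.

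The one place where you genuinely diverge is the treatment of the weighted H\"older seminorm, and there your alternative, as stated, does not work. The paper handles $\rho_{g_b}^{\tau+2+\alpha}[\Rm(g(t))]_{C^\alpha}$ by running difference quotients through the same Duhamel/convolution machinery, which is precisely where the restriction $\tau+2+\alpha<n$, i.e. $\alpha<n-2-\tau$, enters. Your localized parabolic rescaling at a point with $\rho_{g_b}=R$ cannot produce ``a unit parabolic cylinder'': rescaling space by $R$ forces rescaling time by $R^2$, and the available interval $[0,T_{\operatorname{Shi}}]\,$ is compressed to one of length $T_{\operatorname{Shi}}/R^2\to 0$, so for large $R$ no unit cylinder exists and \emph{interior} parabolic Schauder constants degenerate as the cylinder height shrinks. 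To salvage this route you would need Schauder (or Duhamel) estimates up to the initial time, with the $C^{0,\alpha}$ norm of the rescaled initial data on the right-hand side --- legitimate, since $\Rm(g(0))\in C^{0,\alpha}_{\tau+2}$, but then the argument collapses back into the difference-quotient Duhamel scheme you already set up, and the rescaling adds nothing; moreover it obscures where the constraint $\alpha<n-2-\tau$ (which you correctly identified, but attributed to the convolution estimate rather than to Schauder theory) re-enters. The clean fix is to discard the rescaling and apply your weighted convolution lemma with weight $\tau+2+\alpha$ directly to the difference quotients, which is what the paper does.
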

\begin{rk}
The proof of Lemma \ref{lemma-short-time-est-curv} uses Theorem \ref{Gaussian-est-scal-heat-ker} which explains the assumption (\ref{hyp-ad-hoc-ric-1}). Using the maximum principle and suitable barriers in the same spirit as it is done in \cite{Li-Yu-AF} would have led to the same result without assuming (\ref{hyp-ad-hoc-ric-1}). 
\end{rk}
\begin{proof}
According to (\ref{shi-est-curv-tensor}) from Proposition \ref{prop-a-priori-C0-est-rm-ric}, the curvature operator is uniformly bounded on $[0,T)\,\cap\,[0,T_{\operatorname{Shi}}]$. Thanks to the evolution equation satisfied by the curvature operator [(\ref{evo-eqn-Rm-for}), Lemma \ref{lemma-evo-eqn-Ric-Rm}], this implies that the function $e^{-c(n)\|\Rm(g(0))\|_{C^0}\cdot t}|\Rm(g(t))|_{g(t)}$ is a subsolution to the heat equation along the Ricci flow. In particular, by Duhamel's formula together with the maximum principle,
\begin{equation*}
\begin{split}
|\Rm(g(t))|_{g(t)}(x)\leq\,& e^{c(n)\|\Rm(g(0))\|_{C^0}\cdot t}\int_NK(x,t,y,0)|\Rm(g(0))|_{g(0)}(y)\,d\mu_{g(0)}(y)\\
\leq\,&e^{C\cdot t}\int_NK(x,t,y,0)|\Rm(g(0))|_{g(0)}(y)\,d\mu_{g(0)}(y),
\end{split}
\end{equation*}
for some positive constant $C=C(n,g_b,\varepsilon)$ by assumption on $g(0)$.
By assumption, [(\ref{gauss-heat-ker}), Theorem \ref{Gaussian-est-scal-heat-ker}] holds true and we get (once we fix $D=D_0$),
\begin{equation}
\begin{split}
|\Rm(g(t))|_{g(t)}(x)&\leq\frac{C}{t^{\frac{n}{2}}}\int_N\exp\left\{-\frac{d_{g(0)}^2(x,y)}{D_0t}\right\}|\Rm(g(0))|_{g(0)}(y)\,d\mu_{g(0)}(y)\\
&\leq C\|\Rm(g(0))\|_{C^0_{\tau+2}}\int_Nt^{-\frac{n}{2}}\exp\left\{-\frac{d_{g_b}^2(x,y)}{2D_0t}\right\}\rho_{g_b}^{-\tau-2}(y)\,d\mu_{g_b}(y),\label{est-rough-dec-rm}
\end{split}
\end{equation}
where we have used $d_{g(0)}^2(x,y)\geq \frac{1}{2}d_{g_b}^2(x,y)$ for all $x,y\in N$ in the last line. By splitting the integral on the righthand side of (\ref{est-rough-dec-rm}) in two parts, whether $y\in B_{g_b}(p,d_{g_b}(p,x)/2)$ or not for some fixed point $p\in N$, one gets:
\begin{equation}
\begin{split}\label{est-heat-ker-weight-pol-rm}
\int_Nt^{-\frac{n}{2}}&\exp\left\{-\frac{d_{g_b}^2(x,y)}{2D_0t}\right\}\rho_{g_b}^{-\tau-2}(y)\,d\mu_{g_b}(y)\leq \\
&\quad t^{-\frac{n}{2}}\exp\left\{-\frac{d_{g_b}^2(p,x)}{8D_0t}\right\}\int_{B_{g_b}(p,d_{g_b}(p,x)/2)}\rho_{g_b}^{-\tau-2}(y)\,d\mu_{g_b}(y)\\
&\quad +\frac{C}{(d_{g_b}(p,x)+1)^{\tau+2}}\int_{B^{c}_{g_b}(p,d_{g_b}(p,x)/2)}t^{-\frac{n}{2}}\exp\left\{-\frac{d_{g_b}^2(x,y)}{2D_0t}\right\}\,d\mu_{g_b}(y)\\
&\leq C\left(\frac{d_{g_b}^2(p,x)}{t}\right)^{\frac{n}{2}}\exp\left\{-\frac{d_{g_b}^2(p,x)}{8D_0t}\right\}(d_{g_b}(p,x)+1)^{-\tau-2}+\frac{C}{(d_{g_b}(p,x)+1)^{\tau+2}}\\
&\leq \frac{C}{(d_{g_b}(p,x)+1)^{\tau+2}},
\end{split}
\end{equation}
for some positive constant $C$ independent of time $t>0$ that may vary from line to line. Here, we have used the fact that $\tau+2<n$ in the second line. Estimates (\ref{est-rough-dec-rm}) and (\ref{est-heat-ker-weight-pol-rm}) lead to the expected result (\ref{short-rm-wei-c0}) at the level of the $C^0$-norm:
\begin{equation}
\|\Rm(g(t))\|_{C^{0}_{\tau+2}}\leq Ce^{C\cdot t}\|\Rm(g(0))\|_{C^{0}_{\tau+2}},\label{short-rm-wei-c0-proof}
\end{equation}
for some positive constant $C=C(n,g_b,\varepsilon).$

 Regarding the $C^{\alpha}$-seminorm, by a similar reasoning considering difference quotients together with Duhamel's formula	, one gets schematically if $\alpha\in\left(0,\min\{1,n-2-\tau\}\right)$:
 \begin{equation}
 \begin{split}\label{short-rm-wei-calpha-proof}
\rho_{g_b}^{\tau+2+\alpha}[\Rm(g(t))]_{C^{\alpha}}\leq\,& Ce^{C\cdot t}\| \rho_{g_b}^{\tau+2+\alpha}[\Rm(g(0))]_{C^{\alpha}}\|_{C^0}\\
&+C \int_0^t \|\Rm(g(t'))\ast[\Rm(g(t'))]_{C^{\alpha}}\|_{C^{0}_{\tau+2+\alpha}}\,dt'\\
\leq\,& Ce^{C\cdot t}\|\Rm(g(0))\|_{C^{0,\alpha}_{\tau+2}}\\
&+Ce^{C\cdot t} \|\Rm(g(0))\|_{C^0}\int_0^t \|\Rm(g(t'))\|_{C^{0,\alpha}_{\tau+2}}\,dt'.
\end{split}
\end{equation}
Concatenating (\ref{short-rm-wei-calpha-proof}) together with (\ref{short-rm-wei-c0-proof}) leads to (\ref{short-rm-wei-c0}) by invoking Gr\"onwall's inequality.

The proof of (\ref{short-ric-wei-c0}) goes along the same lines by using (\ref{evo-eqn-Ric-for}) together with (\ref{short-rm-wei-c0}).
\end{proof}
The next lemma takes care of a priori integral-in-time estimates for the weighted $C^0_{\tau}$-norm of the Ricci curvature.

\begin{lemma}[A priori large-time $C^0$ estimate]\label{lemma-a-priori-wei-metr-wei}
Let $(N^n,g_b)$ be an ALE Ricci flat metric. Let $\tau\in(\frac{n-2}{2},n-2)$ and let $(g(t))_{t\in[0,T)}$ be a solution to the Ricci flow in a neighborhood $B_{C^{0}}(g_b,\varepsilon)$ such that $\sup_{t\in [0,T)}\|\Rm(g(t))\|_{C^0_{\tau+2}}\leq C(n,g_b)$ and 
\begin{equation}
\int_{0}^t\|\Ric(g(t'))\|_{C^0}\,dt'\leq \frac{1}{2n}.\label{hyp-ad-hoc-ric-2}
\end{equation}
 Then we have
\begin{equation}
\begin{split}\label{dream-C-0-wei-int-ric-stat}
\int_s^t\|\Ric(g(t'))\|_{C^0_{\tau}}\,dt'&\leq C\left(\|\Ric(g(s))\|_{C^0_{\tau+2}}+\int_s^t\|\Ric(g(s'))\|_{C^0}\,ds'\right),\quad  0\leq s< t<T,
\end{split}
\end{equation}
for some time-independent positive constant $C=C(n,g_b,\varepsilon)$.
In particular, one gets the following a priori $C^0$ estimate:
\begin{equation}\label{dream-C-0-wei-dist-met-stat}
\|g(t)-g(s)\|_{C^0_{\tau}}\leq C\left(\|\Ric(g(s))\|_{C^0_{\tau+2}}+\int_s^t\|\Ric(g(s'))\|_{C^0}\,ds'\right),\quad  0\leq s<t<T.
\end{equation}
%together with higher order a priori estimates for all $k\geq 1$:
%\begin{equation}
%\begin{split}\label{dream-C-k-wei-int-ric-stat}
%\int_0^t\|\nabla^{g(s),k}\Ric(g(s))\|_{C^0_{\tau}}\,ds&\leq C\left(\|g(0)-g_b\|_{C^2_{\tau}}+|\lambda_{\operatorname{ALE}}(g(0))|^{\frac{\theta}{2}}\right),\quad  0\leq t<T.
%\end{split}
%\end{equation}

\end{lemma}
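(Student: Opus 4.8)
The plan is to run Duhamel's formula for the Ricci tensor, reduce to the scalar heat kernel via Kato's inequality, and then exploit the fact that the \emph{time integral} of the Gaussian heat kernel behaves like the ALE Green kernel $d_{g_b}^{2-n}$. This turns the time-integrated estimate into a purely spatial weighted convolution that closes thanks to the numerology $\frac{n-2}{2}<\tau<n-2$.

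First, since the hypothesis \eqref{hyp-ad-hoc-ric-2} is exactly the assumption of Theorem \ref{Gaussian-est-scal-heat-ker}, the Gaussian bound \eqref{gauss-heat-ker} is available. I would write the Duhamel representation \eqref{duha-ric-evo-eqn-c0-dec-time} for $\Ric(g(t'))$ based at time $s$, using the evolution equation \eqref{evo-eqn-Ric-for} of Lemma \ref{lemma-evo-eqn-Ric-Rm}. By Kato's inequality the operator norm of the tensorial kernel is dominated by the scalar kernel $K$ of Section \ref{sec-heat-ker}, so that pointwise
\begin{equation*}
\begin{split}
|\Ric(g(t'))(x)|\leq{}& \int_N K(x,t',y,s)|\Ric(g(s))(y)|\,d\mu_{g(s)}(y)\\
&+\int_s^{t'}\int_N K(x,t',y,\sigma)\,|\Rm\ast\Ric|(g(\sigma))(y)\,d\mu_{g(\sigma)}(y)\,d\sigma.
\end{split}
\end{equation*}
I would then integrate this inequality in $t'$ over $[s,t]$ and, after Fubini, integrate the kernel in time. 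The key elementary computation is that for $n\geq 3$,
\begin{equation*}
\int_s^t K(x,t',y,s)\,dt'\leq C\int_0^{\infty}w^{-\frac{n}{2}}\exp\Big\{-\frac{d_{g(s)}^2(x,y)}{Dw}\Big\}\,dw\leq C\,d_{g_b}(x,y)^{2-n},
\end{equation*}
where I also use $d_{g(s)}\simeq d_{g_b}$ since $g(s)\in B_{C^0}(g_b,\varepsilon)$; that is, the time-averaged heat kernel is controlled by the ALE Green kernel.

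Second, I would estimate the two resulting spatial integrals. For the linear term, bounding $|\Ric(g(s))(y)|\leq \|\Ric(g(s))\|_{C^0_{\tau+2}}\rho_{g_b}^{-\tau-2}(y)$ reduces matters to the weighted convolution
\begin{equation*}
\rho_{g_b}^{\tau}(x)\int_N d_{g_b}(x,y)^{2-n}\rho_{g_b}^{-\tau-2}(y)\,d\mu_{g_b}(y)\leq C,
\end{equation*}
uniformly in $x$, which holds precisely because $2<\tau+2<n$: convolving a kernel of decay $n-2$ with a weight of decay $\tau+2$ produces a function of decay exactly $(n-2)+(\tau+2)-n=\tau$. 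For the nonlinear term I would use the standing hypothesis $\sup_{t}\|\Rm(g(t))\|_{C^0_{\tau+2}}\leq C(n,g_b)$ to write $|\Rm\ast\Ric|(g(\sigma))(y)\leq C\rho_{g_b}^{-\tau-2}(y)\,\|\Ric(g(\sigma))\|_{C^0}$, pull the scalar factor $\|\Ric(g(\sigma))\|_{C^0}$ out of the spatial integral, and apply the same Green kernel and convolution bound; this produces exactly the factor $\int_s^t\|\Ric(g(s'))\|_{C^0}\,ds'$. Summing the two contributions gives, for every $x\in N$,
\begin{equation*}
\rho_{g_b}^\tau(x)\int_s^t|\Ric(g(t'))(x)|\,dt'\leq C\Big(\|\Ric(g(s))\|_{C^0_{\tau+2}}+\int_s^t\|\Ric(g(s'))\|_{C^0}\,ds'\Big),
\end{equation*}
which is the pointwise-in-space form underlying \eqref{dream-C-0-wei-int-ric-stat}.

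Finally, the metric estimate \eqref{dream-C-0-wei-dist-met-stat} is then immediate: the Ricci flow equation $\partial_t g=-2\Ric(g)$ gives $g(t)-g(s)=-2\int_s^t\Ric(g(t'))\,dt'$, so that $\rho_{g_b}^\tau(x)\,|g(t)-g(s)|(x)\leq 2\,\rho_{g_b}^\tau(x)\int_s^t|\Ric(g(t'))(x)|\,dt'$, and taking $\sup_x$ closes the argument. The main obstacle is the time integrability at the intermediate scale $\rho_{g_b}(x)\sim\sqrt{t'-s}$: a naive slice-by-slice use of the on-diagonal decay $(t'-s)^{-1}$ coming from Proposition \ref{diag-upper-heat-ker} is not integrable in time, and it is essential to integrate the Gaussian kernel in time \emph{first}, obtaining the Green kernel $d_{g_b}^{2-n}$, before performing the spatial integration; the gain of two powers of decay is thereby realized as the passage from weight $\tau+2$ to weight $\tau$ rather than as a divergent time integral. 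A secondary technical point is verifying $d_{g(s)}\simeq d_{g_b}$ uniformly and checking the convolution numerology against the constraint $\frac{n-2}{2}<\tau<n-2$.
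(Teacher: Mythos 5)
Your proposal is correct and follows essentially the same route as the paper's proof: Duhamel's formula with Kato's inequality to reduce to the scalar heat kernel, the Gaussian bound of Theorem \ref{Gaussian-est-scal-heat-ker}, Fubini to integrate the kernel in time \emph{first} (yielding the Green-kernel bound $d_{g_b}^{2-n}$), the weighted convolution estimate exploiting $\tau<n-2$ for both the linear and the nonlinear term, and finally integration of $\partial_t g=-2\Ric(g)$ for the metric estimate. The paper merely spells out the convolution bound by splitting $N$ into regions relative to $d_{g_b}(p,x)$, which is the computation you summarize by the exponent count $(n-2)+(\tau+2)-n=\tau$.
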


\begin{proof}
By arguing as in the proof of Proposition \ref{prop-ric-dec-large-time}, one ensures with the help of Duhamel's formula that for $x\in N$ and $t\geq s\geq 0$,
\begin{equation}
\begin{split}\label{est-duha-dec-ric}
|\Ric(g(t))|_{g(t)}(x)\leq&\int_NK(x,t,\cdot,s)|\Ric(g(s))|_{g(s)}\,d\mu_{g(s)}\\
&+\int_s^t\int_NK(x,t,\cdot,t')|\Rm(g(t'))\ast\Ric(g(t'))|_{g(t')}\,d\mu_{g(t')}\,dt'.
\end{split}
\end{equation}

By invoking [(\ref{gauss-heat-ker}), Theorem \ref{Gaussian-est-scal-heat-ker}] (once we fix $D=D_0$), the first term on the righthand side of (\ref{est-duha-dec-ric}) can be estimated as follows:
\begin{equation}
\begin{split}
K(x,t,\cdot,s)\ast | \Ric(g(s))|_{g(s)}&:=\int_NK(x,t,\cdot,s)|\Ric(g(s))|_{g(s)}\,d\mu_{g(s)}\\
&\leq\frac{C}{(t-s)^{\frac{n}{2}}}\int_N\exp\left\{-\frac{d_{g(s)}^2(x,y)}{D_0(t-s)}\right\}|\Ric(g(s))|_{g(s)}(y)\,d\mu_{g(s)}(y)\\
&\leq C\|\Ric(g(s))\|_{C^0_{\tau+2}}\int_N(t-s)^{-\frac{n}{2}}\exp\left\{-\frac{d_{g_b}^2(x,y)}{2D_0(t-s)}\right\}\rho_{g_b}^{-\tau-2}(y)\,d\mu_{g_b}(y),
\end{split}
\end{equation}
where we have used $d_{g(s)}^2(x,y)\geq \frac{1}{2}d_{g_b}^2(x,y)$ for all $x,y\in N$ in the last line together with the fact that $g(t)\in B_{C^0}(g_b,\varepsilon)$ for all $t\in[0,T)$.

Observe that for $x\neq y$, and $n\geq 3$,
\begin{equation*}
\begin{split}
\int_s^{+\infty} (t-s)^{-\frac{n}{2}}\exp\left\{-\frac{d_{g_b}^2(x,y)}{2D_0(t-s)}\right\}\,dt\leq Cd_{g_b}^{-n+2}(x,y).
\end{split}
\end{equation*}

In particular, by Fubini's theorem,
\begin{equation*}
\begin{split}
\int_s^tK(x,t',\cdot,s)\ast|\Ric(g(s))|_{g(s)}\,dt'\leq C\|\Ric(g(s))\|_{C^0_{\tau+2}}\int_Nd_{g_b}^{-n+2}(x,y)\rho_{g_b}^{-\tau-2}(y)\,d\mu_{g_b}(y).
\end{split}
\end{equation*}
By considering regions of the type $B_{g_b}(p,d_{g_b}(p,x)/2)$, $B_{g_b}(p,2d_{g_b}(p,x))\setminus B_{g_b}(p,d_{g_b}(p,x)/2)$ and $N\setminus B_{g_b}(p,2d_{g_b}(p,x))$ for a point $x\in N$ and a fixed point $p\in N$, one gets an a priori estimate on the following $C^0_{\tau}$-norm:
\begin{equation}\label{first-est-C^0-int-ric}
\int_s^t\|K(x,t',\cdot,s)\ast|\Ric(g(s))|_{g(s)}\|_{C^0_{\tau}}\,dt'\leq C\|\Ric(g(s))\|_{C^0_{\tau+2}}.
%\leq C\|g(0)-g_b\|_{C^2_{\tau}}.
\end{equation}
%Notice that we do not use the whole strength on the decay of the Ricci curvature here. 

Notice that we heavily use the restriction $\tau<n-2$ here.
We proceed similarly to handle the second term on the righthand side of (\ref{est-duha-dec-ric}):
\begin{equation}
\begin{split}\label{sec-est-C^0-int-ric}
\int_s^t&\left|\int_s^{t'}\int_N\left<\mathcal{K}(x,t',\cdot,s'),\Rm(g(s'))\ast\Ric(g(s'))\right>\,d\mu_{g(s')}\,ds'\right|dt'\leq\\
&C\int_s^t\int_N\left(\int_{s'}^tK(x,t',y,s')\,dt'\right)|\Rm(g(s'))|(y)\,d\mu_{g(s')}(y)\|\Ric(g(s'))\|_{C^0}\,ds'\\
&\leq C\left(\int_s^t\|\Ric(g(s'))\|_{C^0}\,ds'\right)\int_Nd_{g_b}(x,y)^{-n+2}\rho_{g_b}(y)^{-\tau-2}\,d\mu_{g_b}(y)\\
&\leq C\left(\int_s^t\|\Ric(g(s'))\|_{C^0}\,ds'\right)\rho_{g_b}(x)^{-\tau}.
\end{split}
\end{equation}
Here, we have used Fubini's theorem in the second line together with the assumption on the curvature tensor $\Rm(g(s'))$, $s'\in[0,T)$ in the penultimate line which implies that the curvature tensor $\Rm(g(s'))$ decays as fast as $\rho_{g_b}^{-\tau-2}$ uniformly in time. 

%Notice that we have not used the whole strength of the decay of the curvature tensor.

Estimates (\ref{first-est-C^0-int-ric}) and (\ref{sec-est-C^0-int-ric}) lead directly to the desired estimate (\ref{dream-C-0-wei-int-ric-stat}).

Finally, (\ref{dream-C-0-wei-dist-met-stat}) is proved with the help of (\ref{dream-C-0-wei-int-ric-stat}) by noticing that:
\begin{equation*}
\begin{split}
\|g(t)-g(s)\|_{C^0_{\tau}}&\leq\int_s^t\|\Ric(g(t'))\|_{C^0_{\tau}}\,dt'.
\end{split}
\end{equation*}
%and the proof of (\ref{dream-C-k-wei-int-ric-stat}) is due to Bando-Bernstein-Shi estimates applied to the Ricci curvature together with the weighted $C^0_{\tau}$ estimate (\ref{dream-C-0-wei-int-ric-stat}) we just proved.

\end{proof}
The following lemma proves an a priori weighted $C^{0,\alpha}$ estimate on the Ricci tensor of a solution of the Ricci flow $C^{2,\alpha}_{\tau}$-close to a stable ALE Ricci flat metric.
\begin{lemma}[A priori weighted $C^{0,\alpha}$ estimate on Ricci curvature]\label{a-priori-wei-lemma-C0-Ric}
Let $(N^n,g_b)$, $n\geq 4$, be an ALE Ricci flat metric. Let $\tau\in(\frac{n-2}{2},n-2)$, $\alpha\in(0,1)$ and let $(g(t))_{t\in[0,T)}$ be a solution to the Ricci flow in a neighborhood $B_{C^{2,\alpha}_{\tau}}(g_b,\varepsilon)$ with uniformly bounded covariant derivatives, i.e. $\|\nabla^{g(t),k}\Rm(g(t))\|_{C^0}\leq C_k$, $k\geq 1$ and such that 
\begin{equation}
\int_{0}^t\|\Ric(g(t'))\|_{C^0}\,dt'\leq \frac{1}{2n}.\label{hyp-ad-hoc-ric-3}
\end{equation}

 Then we have the following estimate,
\begin{equation}
\begin{split}\label{a-priori-est-Ric-pointwise}
\|\Ric(g(t))\|_{C^0_{\tau+2}}&\leq C\left(\|\Ric(g(s))\|_{C^0_{\tau+2}}+\int_s^t\|\Ric(g(t'))\|_{C^0}\,dt'\right),\quad  0\leq s < t<T,
\end{split}
\end{equation}
for some time-independent positive constant $C=C(n,g_b,\varepsilon)$.

Moreover, if $\alpha\in\left(0,\min\{1,n-2-\tau\}\right)$ then for any $\eta\in(0,1)$:
\begin{equation}
\begin{split}\label{a-priori-est-Ric-pointwise-holder}
\|\Ric(g(t))\|_{C^{0,\alpha}_{\tau+2}}&\leq C_{\eta}\left(\|\Ric(g(s))\|_{C^{0,\alpha}_{\tau+2}}+\int_s^t\|\Ric(g(t'))\|^{1-\eta}_{C^0}\,dt'\right),\quad  0\leq s< t<T.
\end{split}
\end{equation}
\end{lemma}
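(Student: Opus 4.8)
The plan is to run the same Duhamel scheme as in the proof of Lemma~\ref{lemma-a-priori-wei-metr-wei}, but now keeping the time $t$ fixed and estimating the weighted $C^0$ and $C^{0,\alpha}$ norms of $\Ric(g(t))$ themselves rather than their time integrals. Throughout I would use that, since $(g(t))_{t\in[0,T)}$ stays in $B_{C^{2,\alpha}_{\tau}}(g_b,\varepsilon)$, the curvature tensor is uniformly bounded in the weighted norm, $\sup_{t\in[0,T)}\|\Rm(g(t))\|_{C^0_{\tau+2}}\leq C(n,g_b,\varepsilon)$, exactly as in Lemma~\ref{lemma-a-priori-wei-metr-wei}. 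The starting point is the pointwise Duhamel inequality \eqref{est-duha-dec-ric}, obtained from the evolution equation for $\Ric$ in Lemma~\ref{lemma-evo-eqn-Ric-Rm} together with Kato's inequality (to replace the tensorial heat kernel $\mathcal{K}$ by the scalar kernel $K$) and the Gaussian bound \eqref{gauss-heat-ker} of Theorem~\ref{Gaussian-est-scal-heat-ker}, which is legitimate thanks to \eqref{hyp-ad-hoc-ric-3}.

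For the $C^0_{\tau+2}$ estimate \eqref{a-priori-est-Ric-pointwise}, the first term of \eqref{est-duha-dec-ric} is handled by the weight-preserving heat-kernel computation already carried out in \eqref{est-heat-ker-weight-pol-rm}: bounding $|\Ric(g(s))|(y)\leq\|\Ric(g(s))\|_{C^0_{\tau+2}}\rho_{g_b}^{-\tau-2}(y)$ and convolving against the Gaussian kernel yields $C\|\Ric(g(s))\|_{C^0_{\tau+2}}\rho_{g_b}^{-\tau-2}(x)$, where the convergence of the spatial integral is precisely where $\tau+2<n$ is used. For the source term I would estimate $|\Rm(g(t'))\ast\Ric(g(t'))|(y)\leq C\rho_{g_b}^{-\tau-2}(y)\|\Ric(g(t'))\|_{C^0}$ using the uniform weighted bound on $\Rm$, then apply the same weight-preserving convolution, producing $C\rho_{g_b}^{-\tau-2}(x)\int_s^t\|\Ric(g(t'))\|_{C^0}\,dt'$. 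Adding the two contributions gives \eqref{a-priori-est-Ric-pointwise}.

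The weighted Hölder estimate \eqref{a-priori-est-Ric-pointwise-holder} is the delicate part. Taking difference quotients in \eqref{est-duha-dec-ric} and using the Hölder regularity of the heat kernel as in the derivation of \eqref{short-rm-wei-calpha-proof}, the leading term contributes $C\|\Ric(g(s))\|_{C^{0,\alpha}_{\tau+2}}$ (here the restriction $\alpha<n-2-\tau$ enters, exactly so that the convolution at weight $\tau+2+\alpha$ still converges), while the source term requires controlling the weighted $C^{0,\alpha}$ norm of $\Rm\ast\Ric$. The main obstacle is the cross term $|\Rm|\,[\Ric]_{C^\alpha}$: treating it directly would reintroduce the full norm $\|\Ric(g(t'))\|_{C^{0,\alpha}_{\tau+2}}$ inside the time integral, forcing a Grönwall argument and hence an exponential-in-time factor, which is fatal for an estimate meant to hold uniformly up to $t=T$.

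To circumvent this I would exploit the hypothesis of uniformly bounded covariant derivatives $\|\nabla^{g(t),k}\Rm(g(t))\|_{C^0}\leq C_k$, which bounds $\|\Ric(g(t))\|_{C^m}$ uniformly for every $m$. Interpolating the (unweighted) Hölder seminorm between the $C^0$ and $C^m$ norms gives $[\Ric(g(t'))]_{C^\alpha}\leq C\|\Ric(g(t'))\|_{C^0}^{1-\alpha/m}\|\Ric(g(t'))\|_{C^m}^{\alpha/m}$; choosing $m=m(\eta,\alpha)$ large enough that $\alpha/m\leq\eta$ and using $\|\Ric(g(t'))\|_{C^0}\leq\varepsilon<1$ converts this into $[\Ric(g(t'))]_{C^\alpha}\leq C_\eta\|\Ric(g(t'))\|_{C^0}^{1-\eta}$, with $C_\eta$ time-independent. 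This trades the dangerous Hölder seminorm for a sub-linear power of the small $C^0$ norm, so the source integral becomes $\int_s^t\|\Ric(g(t'))\|_{C^0}^{1-\eta}\,dt'$ with no Grönwall feedback. Combining this with the already-established bound \eqref{a-priori-est-Ric-pointwise} for the $C^0_{\tau+2}$ part (and noting $\|\Ric\|_{C^0}\leq\|\Ric\|_{C^0}^{1-\eta}$ since $\|\Ric\|_{C^0}<1$) then yields \eqref{a-priori-est-Ric-pointwise-holder}.
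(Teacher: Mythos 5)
Your $C^0_{\tau+2}$ estimate \eqref{a-priori-est-Ric-pointwise} is fine, and is in fact a small simplification of the paper's route: by putting the full weight on the curvature factor, $\rho_{g_b}^{\tau+2}|\Rm\ast\Ric|\leq \big(\rho_{g_b}^{\tau+2}|\Rm|\big)\,\|\Ric\|_{C^0}$, and using the time-uniform weight-preserving convolution \eqref{est-heat-ker-weight-pol}, you get the source term directly as $\int_s^t\|\Ric(g(t'))\|_{C^0}\,dt'$ without invoking Lemma~\ref{lemma-a-priori-wei-metr-wei} (the paper instead splits the weight as $\|\Rm\|_{C^0_2}\|\Ric\|_{C^0_{\tau}}$ and then uses that lemma). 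The H\"older estimate \eqref{a-priori-est-Ric-pointwise-holder}, however, has a genuine gap, and it sits exactly at the step you present as the key trick. The cross term must be measured at weight $\tau+2+\alpha$: per Definition~\ref{def-weighted-norms} the seminorm part of $\|\cdot\|_{C^{0,\alpha}_{\tau+2}}$ carries the weight $\rho_{g_b}^{\tau+2+\alpha}$, so you need to bound $\rho_{g_b}^{\tau+2+\alpha}\,|\Rm(g(t'))|\,[\Ric(g(t'))]_{C^{\alpha}}$. But the hypotheses only give $\sup_t\|\Rm(g(t))\|_{C^{0,\alpha}_{\tau+2}}\leq C$, whose $C^0$-part is weighted by $\rho^{\tau+2}$, not $\rho^{\tau+2+\alpha}$; indeed $\|\Rm(g(t))\|_{C^0_{\tau+2+\alpha}}$ is \emph{not} controlled at all (a perturbation with $\nabla^{g_b,2}(g(t)-g_b)$ decaying exactly like $\rho_{g_b}^{-\tau-2}$ has small $C^{2,\alpha}_{\tau}$ norm but infinite $C^0_{\tau+2+\alpha}$ curvature norm). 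So after spending $\rho^{\tau+2}|\Rm|\leq C$ you are left with the \emph{weighted} seminorm $\rho_{g_b}^{\alpha}[\Ric(g(t'))]_{C^{\alpha}}$, and your unweighted interpolation $[\Ric]_{C^{\alpha}}\leq C_{\eta}\|\Ric\|_{C^0}^{1-\eta}$ produces no spatial weight whatsoever: multiplied by $\rho_{g_b}^{\alpha}$ it is unbounded in space. The conclusion that "the source integral becomes $\int_s^t\|\Ric\|_{C^0}^{1-\eta}\,dt'$" therefore does not follow.

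The missing ingredient is a \emph{weighted} interpolation chain, which is precisely what the paper does in \eqref{hol-covid-1}--\eqref{hol-covid-3}: first $\rho_{g_b}^{\alpha}[\Ric]_{C^{\alpha}}\leq C_{\alpha}\|\Ric\|_{C^0}^{1-\alpha}\|\nabla^{g(t')}\Ric\|_{C^0_1}^{\alpha}$, distributing the weight $\alpha=(1-\alpha)\cdot 0+\alpha\cdot 1$; then, since $\|\nabla\Ric\|_{C^0_1}$ is itself not uniformly bounded (the hypothesis bounds only the unweighted $\|\nabla^k\Rm\|_{C^0}$), the weighted Gagliardo--Nirenberg inequality of Lemma~\ref{inter-inequ-gag-nir} lowers it to $\|\Ric\|_{C^0_{k/(k-1)}}^{1-1/k}$; finally H\"older in time and the integrated weighted bound $\int_s^t\|\Ric(g(t'))\|_{C^0_{\tau}}\,dt'$ from Lemma~\ref{lemma-a-priori-wei-metr-wei} close the argument, with the loss $\eta$ arising from these interpolation exponents rather than from a choice of $m$ in an unweighted interpolation. (Incidentally, the other cross term $[\Rm]_{C^{\alpha}}|\Ric|$, which you do not address, is harmless for your scheme: $\rho_{g_b}^{\tau+2+\alpha}[\Rm]_{C^{\alpha}}$ \emph{is} bounded by $\|\Rm\|_{C^{0,\alpha}_{\tau+2}}$, so there the full weight can be absorbed by the curvature and $\|\Ric\|_{C^0}$ suffices.)
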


\begin{proof}
Similarly to the beginning of the proof of Lemma \ref{lemma-a-priori-wei-metr-wei}, we use Duhamel's formula to estimate the $C^0_{\tau+2}$ norm of the Ricci curvature along such a Ricci flow.

Using the Gaussian bounds (\ref{gauss-heat-ker}) from Theorem \ref{Gaussian-est-scal-heat-ker} with $D=D_0$ fixed once and for all, leads us to:
\begin{equation}
\begin{split}\label{ric-est-c0-duha}
&\left|\int_N\left<\mathcal{K}(x,t,\cdot,s),\Ric(g(s))\right>\,d\mu_{g(s)}\right|_{g(t)}\\
&\leq\frac{C}{(t-s)^{\frac{n}{2}}}\int_N\exp\left\{-\frac{d_{g(0)}^2(x,y)}{D_0(t-s)}\right\}|\Ric(g(s))|_{g(s)}(y)\,d\mu_{g(s)}(y)\\
&\leq C\|\Ric(g(s))\|_{C^0_{\tau+2}}\int_N(t-s)^{-\frac{n}{2}}\exp\left\{-\frac{d_{g_b}^2(x,y)}{2D_0(t-s)}\right\}\rho_{g_b}^{-\tau-2}(y)\,d\mu_{g_b}(y).
\end{split}
\end{equation}
Here we have used the fact that $g_b$ and $g(0)$ are uniformly equivalent (due to their $\varepsilon$ closeness).
By splitting the integral on the righthand side of (\ref{ric-est-c0-duha}) in two parts, whether $y\in B_{g_b}(p,d_{g_b}(p,x)/2)$ or not for some fixed point $p\in N$, one gets:
\begin{equation}
\int_N(t-s)^{-\frac{n}{2}}\exp\left\{-\frac{d_{g_b}^2(x,y)}{2D_0(t-s)}\right\}\rho_{g_b}^{-\tau-2}(y)\,d\mu_{g_b}(y)\leq \frac{C}{(d_{g_b}(p,x)+1)^{\tau+2}},\label{est-heat-ker-weight-pol}
\end{equation}
for some positive constant time-independent $C$ that may vary from line to line. Here, we have used the fact that $\tau+2<n$ in the second line. Concatenating estimates (\ref{ric-est-c0-duha}) and (\ref{est-heat-ker-weight-pol}) give the following estimates in terms of the initial data:
\begin{equation}
\begin{split}\label{ric-est-c0-duha-first-term}
\left\|\int_N\left<\mathcal{K}(x,t,\cdot,s),\Ric(g(s))\right>\,d\mu_{g(s)}\right\|_{C^0_{\tau+2}}\leq C\|\Ric(g(s))\|_{C^0_{\tau+2}},\quad t>s\geq 0.
\end{split}
\end{equation}
We proceed similarly to estimate the second term on the right hand side of \eqref{est-duha-dec-ric}:
\begin{equation}
\begin{split}\label{dealing-with-non-lin-Ric-wei}
&\left\|\int_s^t\int_N\left<\mathcal{K}(x,t,\cdot,t'),\Rm(g(t'))\ast\Ric(g(t'))\right>\,d\mu_{g(t')}\,dt'\right\|_{C^0_{\tau+2}}\\
&\leq C \int_s^t \|\Rm(g(t'))\ast\Ric(g(t'))\|_{C^0_{\tau+2}}\,ds\\
&\leq C\sup_{t'\in[s,t]}\|\Rm(g(t'))\|_{C^0_2}\int_s^t\|\Ric(g(t'))\|_{C^0_{\tau}}\,dt'\\
&\leq C\left(\|\Ric(g(s))\|_{C^0_{\tau+2}}+\int_s^t\|\Ric(g(t'))\|_{C^0}\,dt'\right).
\end{split}
\end{equation}
Here, we have used the fact that $g(t)$ is close to $g_b$ in the $C^{2}_{\tau}$-topology in the penultimate line which implies in particular that the curvature tensor $\Rm(g(s))$ decays as fast as $\rho_{g_b}^{-2}$ uniformly in time. Finally, [(\ref{dream-C-0-wei-int-ric-stat}), Lemma \ref{a-priori-wei-lemma-C0-Ric}] is invoked in the last line. Notice that we have not used the whole strength of the decay of the curvature tensor.
 
 The proof of the a priori estimate on the $C^{0,\alpha}_{\tau+2}$ norm of the Ricci tensor goes along the same lines as the $C^{0}_{\tau+2}$ a priori estimate we just proved by using difference quotients under the restriction on $\alpha\in(0,1)$ so that $\tau+2+\alpha<n.$ 
 
 Indeed, under this restriction on $\alpha$, we get schematically:
 \begin{equation}
 \begin{split}
\rho_{g_b}^{\tau+2+\alpha}[\Ric(g(t))]_{C^{\alpha}}\leq\,& C\| \rho_{g_b}^{\tau+2+\alpha}[\Ric(g(s))]_{C^{\alpha}}\|_{C^0}+C \int_s^t \|\Rm(g(t'))\ast[\Ric(g(t'))]_{C^{\alpha}}\|_{C^0_{\tau+2+\alpha}}\,dt'\\
&+C \int_s^t \|[\Rm(g(t'))]_{C^{\alpha}}\ast\Ric(g(t'))\|_{C^0_{\tau+2+\alpha}}\,dt'\\
\leq \,&C\|\Ric(g(s))\|_{C^{0,\alpha}_{\tau+2}}+C\int_s^t \|\Rm(g(t'))\ast[\Ric(g(t'))]_{C^{\alpha}}\|_{C^0_{\tau+2+\alpha}}\,dt'\\
&+C\sup_{t'\in[s,t]}\|\rho_{g_b}^{\alpha}[\Rm(g(t'))]_{C^{\alpha}}\|_{C^0_{2}}\int_s^t\|\Ric(g(t'))\|_{C^0_{\tau}}\,dt'\\
\leq\,& C\|\Ric(g(s))\|_{C^{0,\alpha}_{\tau+2}}+C\int_s^t \|\Rm(g(t'))\ast[\Ric(g(t'))]_{C^{\alpha}}\|_{C^0_{\tau+2+\alpha}}\,dt'\\
&+ C\left(\|\Ric(g(s))\|_{C^0_{\tau+2}}+\int_s^t\|\Ric(g(t'))\|_{C^0}\,dt'\right)\\
\leq\,&C\|\Ric(g(s))\|_{C^{0,\alpha}_{\tau+2}}+C\int_s^t \|\Rm(g(t'))\ast[\Ric(g(t'))]_{C^{\alpha}}\|_{C^0_{\tau+2+\alpha}}\,dt'\\
&+C\int_s^t\|\Ric(g(t'))\|_{C^0}\,dt'.
%\leq\,&C\|\Ric(g(s))\|_{C^{0,\alpha}_{\tau+2}}+\int_s^t \|\Rm(g(t'))\ast[\Ric(g(t'))]_{C^{\alpha}}\|_{C^0_{\tau+2+\alpha}}\,dt'\\
%&+C\left(\|\Ric(g(s))\|_{C^0}+\int_s^t\|\Ric(g(s'))\|_{C^0}\,ds'\right).\\
\label{est-c-alph-ric-wei}
\end{split}
\end{equation}
Here we have used Lemma \ref{lemma-a-priori-wei-metr-wei} in the third inequality.
% together with the interpolation $\|T\|_{C^0_{\tau}}\leq \|T\|_{C^0}^{\frac{2}{\tau+2}}\|T\|_{C^{0}_{\tau+2}}^{\frac{\tau}{\tau+2}}$ for any tensor $T$ on $N$.

In order to handle the last integral on the righthand side of (\ref{est-c-alph-ric-wei}), we proceed as follows: by interpolation together with H\"older inequality, 
\begin{equation}
\begin{split}\label{hol-covid-1}
\int_s^t \rho_{g_b}(x)^{\alpha}&[\Ric(g(t'))]_{C^{\alpha}}(x)\,dt'\leq C_{\alpha}\int_s^t\|\Ric(g(t'))\|_{C^0}^{1-\alpha}\|\nabla^{g(t')}\Ric(g(t'))\|^{\alpha}_{C^0_{1}}\,dt'\\
\leq\,&C_{\alpha}\left(\int_s^t\|\Ric(g(t'))\|_{C^0}\,dt'\right)^{1-\alpha}\cdot\left(\int_s^t\|\nabla^{g(t')}\Ric(g(t'))\|_{C^0_{1}}\,dt'\right)^{\alpha}\\
%\leq& C_{\alpha}\rho_{g_b}(x)^{-\tau}\left(\|g(0)-g_b\|_{C^2_{\tau}}+|\lambda_{\operatorname{ALE}}(g(0))|^{\frac{\theta}{2}}\right),
\end{split}
\end{equation}
for some time-independent positive constant $C_{\alpha}$. 
Now, by interpolation inequalities from Lemma \ref{inter-inequ-gag-nir}, we have for any $k\geq 2$:
\begin{equation*}
\|\nabla^{g(t')}\Ric(g(t'))\|_{C^0_{1}}\leq C_k\|\Ric(g(t'))\|_{C^0_{\frac{k}{k-1}}}^{1-\frac{1}{k}},
\end{equation*}
since we assume all the covariant derivatives of the curvature tensor to be bounded uniformly in time.

Therefore, by H\"older's inequality, if $k\geq 2$ is sufficiently large so that $1-\frac{1}{\tau}>\frac{1}{k}$,
\begin{equation}
\begin{split}
\int_s^t\|\nabla^{g(t)}\Ric(g(t'))\|_{C^0_{1}}\,dt'\leq\,& C_k\int_s^t\|\Ric(g(t'))\|_{C^0_{\frac{k}{k-1}}}^{1-\frac{1}{k}}\,dt'\\
\leq\,& C_k\left(\int_s^t\|\Ric(g(t'))\|_{C^0_{\tau}}\,dt'\right)^{\frac{1}{\tau}}\left(\int_s^t\|\Ric(g(t'))\|_{C^0}^{1-\frac{\tau}{k(\tau-1)}}\,dt'\right)^{1-\frac{1}{\tau}}.\label{hol-covid-2}
\end{split}
\end{equation}
Now, by Lemma \ref{lemma-a-priori-wei-metr-wei} together with (\ref{hol-covid-1}) and (\ref{hol-covid-2}), one gets:
\begin{equation}
\begin{split}\label{hol-covid-3}
\int_s^t \rho_{g_b}(x)^{\alpha}[\Ric(g(t'))]_{C^{\alpha}}(x)\,dt'\leq \,&C_{k,\alpha} \left(\|\Ric(g(s))\|_{C^0_{\tau}}+\int_s^t\|\Ric(g(t'))\|_{C^0}\,dt'\right)\\
&+C_{k,\alpha}\int_s^t\|\Ric(g(t'))\|_{C^0}^{1-\frac{\tau}{k(\tau-1)}}\,dt',
\end{split}
\end{equation}
which implies the expected estimate (\ref{a-priori-est-Ric-pointwise-holder}) and ends the proof.
\end{proof}

\begin{rk}\label{rk-work-harder}
The proof of Lemma \ref{a-priori-wei-lemma-C0-Ric} uses the Gaussian estimates established in Section \ref{sec-heat-ker} in an essential way. Indeed, by linearizing the evolution equation satisfied by the Ricci curvature along the Ricci flow at the metric $g_b$, given by Lemma (\ref{lemma-evo-eqn-Ric-Rm}), one gets schematically:
\begin{equation}
\begin{split}
\partial_t\Ric(g(t))=\Delta_{g_b}\Ric(g(t))+Q(h(t),\Ric(g(t))),
\end{split}
\end{equation}
where $Q(h(t),\Ric(g(t)))$ is a symmetric $2$-tensor on $N$ such that pointwise,
\begin{equation*}
\begin{split}
&\left|Q(h(t),\Ric(g(t)))\right|\leq \\
&C(n,g_b,\varepsilon)\left(|\Rm(g_b)|_{g_b}|\Ric(g(t))|_{g_b}+\sum_{k=0}^2\left|\nabla^{g_b,k}(g(t)-g_b)\right|_{g_b} \left|\nabla^{g_b,2-k}\Ric(g(t))\right|_{g_b}\right).
\end{split}
\end{equation*}
Observe that $g_b$ has non-negative Ricci curvature (since it is Ricci flat) and non-collapsed at all scales, i.e. $\vol_{g_b}B_{g_b}(x,r)\geq v(g_b)r^n$ for all radii $r>0$ and points $x\in N$ so that well-known Gaussian estimates established by Li-Yau \cite{Li-Yau-Sch} can be used.

Reasoning as in (\ref{dealing-with-non-lin-Ric-wei}) would have led us to estimate a priori a term such as $$\int_0^t\|(g(s)-g_b)\ast \nabla^{g_b,2}\Ric(g(s))\|_{C^0_{\tau+2}}\,ds.$$ In particular, we are left with estimating $$\sup_{s\in[0,T)}\|(g(s)-g_b)\|_{C^0_2}\int_0^t\|\nabla^{g_b,2}\Ric(g(s))\|_{C^0_{\tau}}\,ds$$ from above and uniformly in time. This is fine by Lemma \ref{lemma-a-priori-wei-metr-wei} as long as $\tau\geq 2$. Such a condition is always satisfied if $n\geq 6$ or if $n=5$ by restricting $\tau$. However, we are stuck in dimension $4$ since $\tau\in(1,2)$. 
\end{rk}

We end this section by establishing an a priori $C^{2,\alpha}_{\tau}$-estimate on $g(t)-g_b$, $t\in[0,T)$ as long as $g(t)\in B_{C^{2,\alpha}_{\tau}}(g_b,\varepsilon)$.

\begin{lemma}[A priori weighted $C^{2,\alpha}_{\tau}$ estimate]\label{a-priori-wei-lemma-Holder-met} Let $(N^n,g_b)$, $n\geq 4$, be an ALE Ricci flat metric. Let $\tau\in(\frac{n-2}{2},n-2)$, $\alpha\in(0,1)$ and let $(g(t))_{t\in[0,T)}$ be a solution to the Ricci flow in a neighborhood $B_{C^{2,\alpha}_{\tau}}(g_b,\varepsilon)$ with uniformly bounded covariant derivatives, i.e. $\sup_{t\in[0,T)}\|\nabla^{g(t)}\Rm(g(t))\|_{C^0}\leq C_k$, $k\geq 1$, and such that 
\begin{equation}
\int_{0}^t\|\Ric(g(t'))\|_{C^0}\,dt'\leq \frac{1}{2n}.\label{hyp-ad-hoc-ric-4}
\end{equation}
 Then for $\eta\in(0,1)$ and $\alpha\in\left(0,\min\left\{1,\tau-1,n-2-\tau\right\}\right)$:
\begin{equation}
\begin{split}\label{a-priori-est-met-hold-pointwise}
\|g(t)-g(s)\|_{C^{2,\alpha}_{\tau}}\leq C_{\eta}\left(\|\Ric(g(s))\|_{C^{0,\alpha}_{\tau+2}}+\int_s^t\|\Ric(g(t'))\|^{1-\eta}_{C^0}\,dt'\right),\quad  0\leq s< t<T,
\end{split}
\end{equation}
for some time-independent positive constant $C_{\eta}=C(n,g_b,\varepsilon,\eta)$.
\end{lemma}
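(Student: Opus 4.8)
The starting point is the Ricci flow equation $\partial_t g=-2\Ric(g(t))$, which upon integration gives $g(t)-g(s)=-2\int_s^t\Ric(g(t'))\,dt'$; since the background connection $\nabla^{g_b}$ is time independent it commutes with $\partial_t$, yielding the exact identities $\nabla^{g_b,j}(g(t)-g(s))=-2\int_s^t\nabla^{g_b,j}\Ric(g(t'))\,dt'$ for $j=1,2$. (When passing from $\nabla^{g_b}$ to the intrinsic derivatives $\nabla^{g(t')}$ in order to invoke the curvature hypotheses, the Christoffel difference $\nabla^{g_b}-\nabla^{g(t')}$ is a $1$-tensor controlled in $C^0_{\tau+1}$ by the $C^{2,\alpha}_\tau$-closeness, contributing only lower-order terms.) Unwinding the definition of the $C^{2,\alpha}_\tau$-norm, it then suffices to bound the time integrals of $\|\Ric\|_{C^0_\tau}$, of $\|\nabla^{g_b}\Ric\|_{C^0_{\tau+1}}$ and $\|\nabla^{g_b,2}\Ric\|_{C^0_{\tau+2}}$, and of the weighted $C^\alpha$-seminorm of $\nabla^{g_b,2}\Ric$ at weight $\tau+2+\alpha$. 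The first of these is precisely \eqref{dream-C-0-wei-dist-met-stat} from Lemma \ref{lemma-a-priori-wei-metr-wei}, which disposes of the $C^0_\tau$-part of \eqref{a-priori-est-met-hold-pointwise}.

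For the remaining pieces I would follow the interpolation scheme at the end of the proof of Lemma \ref{a-priori-wei-lemma-C0-Ric}, namely \eqref{hol-covid-1}--\eqref{hol-covid-3}. The first-derivative term $\|\nabla^{g_b}\Ric\|_{C^0_{\tau+1}}$ is handled exactly as $\|\nabla\Ric\|_{C^0_1}$ there: Gagliardo--Nirenberg interpolation (Lemma \ref{inter-inequ-gag-nir}) between a low-weight norm of $\Ric$ and the uniformly bounded high covariant derivatives $\|\nabla^{g(t'),k}\Rm\|_{C^0}\le C_k$, followed by log-convexity in the weight of the $C^0_\beta$-norms to reduce everything to $\|\Ric\|_{C^0_\tau}$ (whose time integral is controlled by Lemma \ref{lemma-a-priori-wei-metr-wei}) and to powers of the small quantity $\|\Ric\|_{C^0}$; Hölder's inequality in time then produces the factor $\int_s^t\|\Ric(g(t'))\|_{C^0}^{1-\eta}\,dt'$, with $\eta$ as small as desired by enlarging $k$. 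The uniform-in-time bound \eqref{a-priori-est-Ric-pointwise-holder} of Lemma \ref{a-priori-wei-lemma-C0-Ric} supplies the term $\|\Ric(g(s))\|_{C^{0,\alpha}_{\tau+2}}$, and the quadratic term $\Rm\ast\Ric$ from the evolution equation is absorbed using $\Rm(g(t'))=O(\rho_{g_b}^{-2})$ exactly as in \eqref{dealing-with-non-lin-Ric-wei}.

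The genuinely new difficulty --- and the step I expect to be the main obstacle --- is the top order: $\nabla^{g_b,2}\Ric$ must be estimated at weight $\tau+2$ and its Hölder seminorm at weight $\tau+2+\alpha$, and these weights coincide with the intrinsic decay rate of $\Ric$ rather than lying strictly below it. The first-derivative target weight $\tau+1$ lies strictly below the supremum $\tau+2$ of the weights reachable by interpolating against \emph{unweighted} derivatives, whereas the second-derivative target $\tau+2$ sits exactly at this borderline and is not attained; a naive interpolation there would require $\Ric$ to decay strictly faster than $\rho_{g_b}^{-\tau-2}$, which we do not have. One is therefore forced to interpolate against the \emph{weighted} high-derivative norms $\|\nabla^{g_b,k}\Ric\|_{C^0_{\tau+k}}$, which I would obtain by upgrading the unweighted curvature-derivative bounds to their natural ALE-weighted counterparts through a local parabolic rescaling on the annuli $\{\rho_{g_b}\sim R\}$ of the ALE end, where bounded geometry together with the decay $\Ric=O(\rho_{g_b}^{-\tau-2})$ from Lemma \ref{a-priori-wei-lemma-C0-Ric} converts decay of $\Ric$ into decay of its derivatives. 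Interpolating $\nabla^{g_b,2}\Ric$ at weight $\tau+2$ between $\|\Ric\|_{C^0_\tau}$ and $\|\nabla^{g_b,k}\Ric\|_{C^0_{\tau+k}}$ then matches weights along the line $\tau+j$ and closes the time integral as above. Reconciling this interpolation, and the accompanying difference-quotient estimate for the Hölder seminorm, is exactly what imposes $\alpha\in(0,\min\{1,\tau-1,n-2-\tau\})$: the condition $\tau+2+\alpha<n$ (hence $\alpha<n-2-\tau$) is needed for the weighted spatial integrals to converge as in \eqref{est-heat-ker-weight-pol}, while the Hölder interpolation at this higher order needs $\tau>1$ with a margin, giving $\alpha<\tau-1$ --- the one-derivative-higher analogue of the constraint $1-1/\tau>1/k$ in \eqref{hol-covid-2}. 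The connection-difference and quadratic-term bookkeeping is routine but tedious.
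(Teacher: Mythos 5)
Your reduction (integrating $\partial_t g=-2\Ric$ and commuting the time-independent connection $\nabla^{g_b}$ through the time integral) is fine, and you have correctly located the crux: one is left needing uniform-in-$t$ bounds on $\int_s^t\|\nabla^{g_b}\Ric\|_{C^0_{\tau+1}}\,dt'$ and $\int_s^t\|\nabla^{g_b,2}\Ric\|_{C^0_{\tau+2}}\,dt'$ (plus the matching H\"older seminorm), whose target weights sit at or above $\tau$. But the proposed resolution does not close. First, the uniform weighted bounds $\|\nabla^{g_b,k}\Ric(g(t'))\|_{C^0_{\tau+k}}\le C_k$ you want to interpolate against are not available under the hypotheses: at time $s$ the data is controlled only in $C^{0,\alpha}_{\tau+2}$ (membership $g(s)\in B_{C^{2,\alpha}_{\tau}}(g_b,\varepsilon)$ gives two derivatives of the metric, hence \emph{zero} weighted derivatives of $\Ric$), and parabolic smoothing at the ALE scale $\rho_{g_b}(x)$ needs the flow to have run for a time comparable to $\rho_{g_b}(x)^2$; for any fixed $t'$ the region $\{\rho_{g_b}(x)^2\gg t'-s\}$ is nonempty, and there the best available combination of the unweighted hypothesis $\|\nabla^{g(t'),k}\Rm\|_{C^0}\le C_k$ with the decay of $\Ric$ itself produces quantities of the type $\rho_{g_b}^{\tau+k}|\nabla^{k}\Ric|\lesssim \rho_{g_b}^{\,k-2}(t'-s)^{-k/2}$, which blow up as $\rho_{g_b}\to\infty$ for $k>2$. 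Second, and independently: even granting those bounds, the interpolation $\|\nabla^{2}\Ric\|_{C^0_{\tau+2}}\lesssim\|\Ric\|_{C^0_\tau}^{1-2/k}\|\nabla^{k}\Ric\|_{C^0_{\tau+k}}^{2/k}$ leaves a power deficit $2/k$ on the only time-integrable factor. H\"older in time then either produces an unbounded factor $(t-s)^{2/k}$, or, after trading the deficit against $\|\Ric\|_{C^0}$ via $\|\Ric\|_{C^0_\tau}\le\|\Ric\|_{C^0_{\tau+2}}^{\tau/(\tau+2)}\|\Ric\|_{C^0}^{2/(\tau+2)}$, caps the exponent on $\|\Ric\|_{C^0}$ at $\tfrac{2}{\tau+2}$, i.e.\ forces $\eta\ge\tfrac{\tau}{\tau+2}$. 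That is strictly weaker than the statement, which asserts the bound for \emph{every} $\eta\in(0,1)$ (since $\|\Ric\|_{C^0}<1$, the inequality with large $\eta$ does not imply the one with small $\eta$), and it is useless downstream: Theorem \ref{theo-dyn-stab-loc-max-lambda} and the instability argument invoke this lemma with $\eta<\tfrac{\theta}{2-\theta}$, which must be allowed to be arbitrarily small. The same weight/exponent obstruction already hits your first-derivative term: weight $\tau+1$ is not handled ``exactly as'' the paper's weight $1$, because reaching it costs a power bounded away from zero on the non-time-integrable factor $\|\Ric\|_{C^0_{\tau+2}}$.

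The paper's proof is built precisely to avoid ever integrating in time any derivative of $\Ric$ at weight above $1+\alpha$. It bounds only $\Delta_{g(t)}(g(t)-g(s))$ in $C^{0,\alpha}_{\tau+2}$ and recovers the full $C^{2,\alpha}_{\tau}$ norm by weighted elliptic Schauder estimates; crucially, it uses the identity of Lemma \ref{lemma-beauty-int-lap-met-ric} (an integration by parts in time inside Duhamel's formula) to rewrite $\Delta_{g(t)}(g(t)-g(s))$ so that all second derivatives land on $g(t')-g(s)$, which is uniformly bounded in $C^{2,\alpha}_{\tau}$ by the standing hypothesis and never needs to be time-integrable, while the time integrals involve only $\|\Ric\|_{C^{0,\alpha}}$ and $\|\nabla\Ric\|_{C^{0,\alpha}_{1}}$. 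Since the target weight $1+\alpha$ there lies strictly below $\tau$ (this is exactly where $\alpha<\tau-1$ enters), the scheme of \eqref{hol-covid-1}--\eqref{hol-covid-3} puts exponent $\tfrac{1+\alpha}{\tau}$ on $\int\|\Ric\|_{C^0_\tau}$, controlled by Lemma \ref{lemma-a-priori-wei-metr-wei}, and exponent $1-\eta$ with $\eta$ arbitrarily small on $\|\Ric\|_{C^0}$. To repair your argument you would need to import this mechanism: some integration by parts in time (or the Laplacian-plus-Schauder reduction) is indispensable to eliminate the top-order, top-weight time integrals altogether, rather than trying to estimate them.
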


\begin{proof}
By elliptic regularity, it is sufficient to establish an a priori bound on the $C^{0,\alpha}_{\tau+2}$-norm of $\Delta_{g_b}(g(t)-g_b)$ or equivalently $\Delta_{g(t)}(g(t)-g_b)$ since $g(t)$ is $\varepsilon$-close to $g_b$ in the $C^{2,\alpha}_{\tau}$-topology. 
Indeed, observe that if $h(t):=g(t)-g_b$, then schematically,
\begin{equation*}
\Delta_{g(t)}h(t)-\Delta_{g_b}h(t)=\nabla^{g_b,2}h(t)\ast h(t)+\nabla^{g_b}h(t)\ast \nabla^{g_b}h(t),
\end{equation*}
which implies 
\begin{equation*}
\|\Delta_{g_b}h(t)\|_{C^{0,\alpha}_{\tau+2}}\leq \|\Delta_{g(t)}h(t)\|_{C^{0,\alpha}_{\tau+2}}+c(n)\|h(t)\|_{C^{2,\alpha}_{\tau}}\|h(t)\|_{C^{1}}.
\end{equation*}
%Now, which shows that it is enough to bound the $C^{0,\alpha}_{\tau+2}$ of the integral on the righthand side of (\ref{beautiful-int-der-lap}).

In particular, (\ref{beautiful-int-der-lap-bis}) from the proof of Lemma \ref{lemma-beauty-int-lap-met-ric}
 leads to the following estimate on the $C^0_{\tau+2}$ estimate:
\begin{equation}
\begin{split}\label{not-so-beautiful-int-der-lap}
\|\Delta_{g(t)}(g(t)-g(s))\|_{C^0_{\tau+2}}\leq\,&2\|\Ric(g(s))\|_{C^0_{\tau+2}}+2\|\Ric(g(t))\|_{C^0_{\tau+2}}\\
&+\sup_{t'\in[s,t]}\|\Rm(g(t'))\|_{C^0_{2}}\int_s^t\|\Ric(g(t'))\|_{C^0_{\tau}}\,dt'\\
&+\sum_{k=0}^2\int_s^t\|\nabla^{g(t'),k}(g(t')-g(s))\|_{C^0_{2}}\|\nabla^{g(t'),2-k}\Ric(g(t'))\|_{C^0_{\tau}}\,dt'.
\end{split}
\end{equation}
Invoking Lemmata \ref{lemma-a-priori-wei-metr-wei} and \ref{a-priori-wei-lemma-C0-Ric}, the proof of (\ref{a-priori-est-met-hold-pointwise}) with $\alpha=0$ ends \textbf{provided} $\tau\geq 2$. Indeed, in order to bound the norms $\|\nabla^{g(t'),k}(g(t')-g(s))\|_{C^0_{2}}$, $k=0,1,2$, uniformly in time, this requires either $n\geq 6$ or $n=5$ by restricting $\tau$ accordingly: this phenomenon echoes Remark \ref{rk-work-harder}.

Arguing as in (\ref{not-so-beautiful-int-der-lap}) leads to the desired expected estimate on $\|\Delta_{g(t)}(g(t)-g_b)\|_{C^{0,\alpha}_{\tau+2}}$ by using Lemma (\ref{lemma-beauty-int-lap-met-ric}) instead.
Indeed, we arrive at the following estimate:
\begin{equation}
\begin{split}
\|\Delta_{g(t)}(g(t)-g(s))\|_{C^{0,\alpha}_{\tau+2}}\leq\,&C\left(\|\Ric(g(s)\|_{C^{0,\alpha}_{\tau+2}}+\|\Ric(g(t))\|_{C^{0,\alpha}_{\tau+2}}\right)\\
&+C\sup_{s\leq t'\leq t}\|\Rm(g(t'))\|_{C^{0,\alpha}_{\tau+2}}\int_s^t\|\Ric(g(t'))\|_{C^{0,\alpha}}\,dt'\\
&+C\sup_{s\leq t'\leq t}\|\nabla^{g(t'),2}(g(t')-g(s))\|_{C^{0,\alpha}_{\tau+2}}\int_s^t\|\Ric(g(t'))\|_{C^{0,\alpha}}\,dt'\\
&+C\sup_{s\leq t'\leq t}\|\nabla^{g(t')}(g(t')-g(s))\|_{C^{0,\alpha}_{\tau+1}}\int_s^t\|\nabla^{g(t')}\Ric(g(t'))\|_{C_1^{0,\alpha}}\,dt'\\
\leq\,&C_{\eta}\left(\|\Ric(g(s))\|_{C^{2,\alpha}_{\tau+2}}+\int_s^t\|\Ric(g(t'))\|_{C^0}^{1-\eta}\,dt'\right)\\
&+C\int_s^t\|\nabla^{g(t')}\Ric(g(t'))\|_{C_1^{0,\alpha}}\,dt',\label{Too-many-est}
\end{split}
\end{equation}
if $\alpha\in(0,\min\{1,n-2-\tau\})$ and $\eta\in(0,1)$. Here we have invoked Lemma \ref{a-priori-wei-lemma-C0-Ric} in the second inequality which explains the restriction on $\alpha$. Now, if $\alpha\in (0,\min\{1,\tau-1\})$, a similar reasoning that led to (\ref{hol-covid-3}) in the proof of Lemma \ref{a-priori-wei-lemma-C0-Ric} gives an estimate of the last integral on the righthand side of (\ref{Too-many-est}) as follows if $\gamma\in\left(0,1-\frac{1+\alpha}{\tau}\right)$ and $x\in N$:
\begin{equation*}
\begin{split}
\rho_{g_b}(x)^{1+\alpha}\int_s^t&|[\nabla^{g(s)}\Ric(g(s))]_{C^{\alpha}}|(x)\,ds\leq\, C_{\alpha,\gamma}\int_s^t\|\Ric(g(t'))\|_{C^0_{\tau}}^{\frac{1+\alpha}{\tau}}\|\Ric(g(t'))\|^{1-\gamma-\frac{1+\alpha}{\tau}}_{C^0}\,dt'\\
\leq\,&C_{\alpha,\gamma}\left(\int_s^t\|\Ric(g(t'))\|_{C^0_{\tau}}\,dt'\right)^{\frac{1+\alpha}{\tau}}\cdot\left(\int_s^t\|\Ric(g(t'))\|^{1-\gamma\frac{\tau}{\tau-1-\alpha}}_{C^0}\,dt'\right)^{1-\frac{1+\alpha}{\tau}}.
\end{split}
\end{equation*}
The case $\alpha=0$ is handled similarly. 
Therefore, if $\eta\in (0,1)$, an application of Lemma \ref{lemma-a-priori-wei-metr-wei} leads us to,
\begin{equation*}
\begin{split}
\int_s^t\|\nabla^{g(t')}\Ric(g(t'))\|_{C_1^{0,\alpha}}\,dt'\leq& \,C_{\alpha,\eta} \left(\|\Ric(g(s))\|_{C^0_{\tau}}+\int_s^t\|\Ric(g(t'))\|_{C^0}\,dt'\right)\\
&+C_{\alpha,\eta}\int_s^t\|\Ric(g(t'))\|_{C^0}^{1-\eta}\,dt'\\
\leq&\,C_{\alpha,\eta} \left(\|\Ric(g(s))\|_{C^0_{\tau}}+\int_s^t\|\Ric(g(t'))\|^{1-\eta}_{C^0}\,dt'\right)
\end{split}
\end{equation*}
in case $\alpha\in(0,\min\{1,n-2-\tau,\tau-1\})$ and this ends the proof of the desired estimate.

\end{proof}

	\section{Stability of Ricci-flat ALE metrics}\label{section stability}
	\subsection{A stability result}~~\\
	
Let us now use the \L{}ojasiewicz inequality \eqref{loja L2 intro} to study the stability of Ricci-flat ALE metrics. We will say that a Ricci-flat ALE metric $(N^n,g_b)$ is \textit{stable} if it is a local maximizer of $\lambda_{\operatorname{ALE}}$, and unstable otherwise. From now on, we make the following assumption.\newline

In a $C^{2,\alpha}_\tau(g_b)$-neighborhood $B_{C^{2,\alpha}_{\tau}}(g_b,\varepsilon_{\L})$ of $g_b$, an $L^2$-\L{}ojasiewicz inequality is satisfied: for any metric $g$ in $B_{C^{2,\alpha}_{\tau}}(g_b,\varepsilon_{\L})$, we have
	        \begin{equation}\label{basic-ass-2}
	        |\lambda_{\operatorname{ALE}}(g)|^{2-\theta}\leq C \|\nabla \lambda_{\operatorname{ALE}}(g)\|_{L^2(g_b)}^{2},
\end{equation}
   for some $\theta\in(0,1)$.
   
	The purpose of this section is to give a proof of the following stability result:
	in the stable case, just like in \cite{Has-Sta}, \cite{Has-Mul} or \cite{Kro-Pet-Lp-stab}, the Ricci flow converges to a Ricci-flat metric at a particular rate. 

\begin{theo}\label{theo-dyn-stab-loc-max-lambda}
    Let $n\geq 4$ and $\tau\in(\frac{n-2}{2},n-2)$. Let $\alpha\in\left(0,\min\left\{1,\tau-1,n-2-\tau\right\}\right)$. Let $(N^n,g_b)$ be a stable Ricci-flat ALE metric such that Inequality \ref{basic-ass-2} holds on a neighborhood $B_{C^{2,\alpha}_{\tau}}(g_b,\varepsilon_{\L})$ with exponent $\theta\in(0,1)$. 
        
    Then for every $\varepsilon\in (0,\varepsilon_{\L})$, there exists $\delta>0$ such that the Ricci flow starting at any metric in $B_{C^{2,\alpha}_{\tau}}(g_b,\delta)$ stays in $B_{C^{2,\alpha}_{\tau}}(g_b,\varepsilon)$ and converges to a Ricci-flat metric $g_{\infty}$ in $B_{C^{2,\alpha}_{\tau}}(g_b,\varepsilon)$ in the $C^{2,\alpha'}_{\tau'}$-topology for any $\tau'\in(\frac{n-2}{2},\tau)$ and $\alpha'\in(0,\alpha)$.\\
    
     Moreover there exists a positive constant $C=C(n,g_b,\varepsilon,\theta)$ such that
    \begin{equation}
    \|g(t)-g_{\infty}\|_{C^{0}}\leq Ct^{-\frac{\theta}{2(1-\theta)}},\quad t\geq 1,
    %\|g(t)-g_{\infty}\|_{C^{2,\alpha'}_{\tau}}\leq Ct^{-\frac{\theta}{(\tau+2)(1-\theta)}\cdot\frac{\alpha-\alpha'}{2+\alpha}},\quad t\geq 1,
    \end{equation}
    and
        \begin{equation}
\int_0^{+\infty}\|\Ric(g(s))\|_{C^0}+\|\Ric(g(s))\|_{L^2}\,ds\leq C(n,g_b,\varepsilon,\theta)\left(\delta+|\lambda_{\operatorname{ALE}}(g(0))|^{\frac{\theta}{2}}\right). \label{inequ-c-0-thm-int-ric}\\
\end{equation}

\end{theo}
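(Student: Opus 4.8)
The plan is to run the standard \L ojasiewicz--Simon scheme adapted to the gradient-flow structure of $\lambda_{\operatorname{ALE}}$, feeding the resulting $L^2$-in-time control of the Bakry--\'Emery tensor into the weighted heat-kernel estimates of Section \ref{section heat kernel} to obtain the convergence and the rate. First I would set up the energy: by stability, $\lambda_{\operatorname{ALE}}(g)\leq 0=\lambda_{\operatorname{ALE}}(g_b)$ on the neighborhood, so $\phi(t):=|\lambda_{\operatorname{ALE}}(g(t))|=-\lambda_{\operatorname{ALE}}(g(t))$ is nonnegative and, by the monotonicity formula \eqref{mono-lambda}, satisfies $\phi'(t)=-2\|\Ric(g(t))+\nabla^{g(t),2}f_{g(t)}\|^2_{L^2(e^{-f_{g(t)}}d\mu_{g(t)})}\leq 0$. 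Since $g(t)$ stays $C^0$-close to $g_b$ and $f_{g(t)}\in C^{2,\alpha}_\tau$ is uniformly bounded, the weighted measure $e^{-f}d\mu_g$ and the metric $g$ are uniformly equivalent to $d\mu_{g_b}$ and $g_b$; via \eqref{first-var-lambda} this makes $\|\nabla\lambda_{\operatorname{ALE}}(g)\|_{L^2(g_b)}$ comparable to $\|\Ric(g)+\nabla^{g,2}f_g\|_{L^2}$. Inserting this into the assumed \L ojasiewicz inequality \eqref{basic-ass-2} yields the differential inequality $\phi^{2-\theta}\leq C\|\Ric+\nabla^{2}f\|_{L^2}^2=-\tfrac{C}{2}\phi'$.

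This inequality drives everything. Integrating $\phi'\leq -c\,\phi^{2-\theta}$ with $2-\theta\in(1,2)$ gives the polynomial decay $\phi(t)\leq Ct^{-1/(1-\theta)}$. More importantly, differentiating $\phi^{\theta/2}$ and using $\phi^{1-\theta/2}\leq C^{1/2}\|\Ric+\nabla^{2}f\|_{L^2}$ produces $-\frac{d}{dt}\phi^{\theta/2}\geq c\,\|\Ric+\nabla^{2}f\|_{L^2}$, whence
$$\int_0^{T}\|\Ric(g(t))+\nabla^{g(t),2}f_{g(t)}\|_{L^2}\,dt\leq C\,\phi(0)^{\theta/2}=C\,|\lambda_{\operatorname{ALE}}(g(0))|^{\theta/2}$$
uniformly in $T$. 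By Proposition \ref{stabilityALE-RF-L2} the left-hand side controls $\int_0^T\|\Ric(g(t))\|_{L^2}\,dt$, giving the $L^2$ part of \eqref{inequ-c-0-thm-int-ric}. To pass from $L^2$ to $C^0$, I would feed $\|\Ric\|_{L^2}$ into the Duhamel/Gaussian estimate of Proposition \ref{prop-ric-dec-large-time} (with $p=2$), integrate in time, and use the short-time and weighted a priori estimates of Lemmas \ref{lemma-short-time-est-curv}--\ref{a-priori-wei-lemma-Holder-met} to bound $\int_0^T\|\Ric(g(t))\|_{C^0}\,dt$ by the same quantity, completing \eqref{inequ-c-0-thm-int-ric}.

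The convergence and the requirement that the flow not leave $B_{C^{2,\alpha}_\tau}(g_b,\varepsilon)$ are then handled by a continuity (trapping) argument. Since $|\lambda_{\operatorname{ALE}}(g(0))|\to 0$ as $g(0)\to g_b$ by continuity of $\lambda_{\operatorname{ALE}}$, choosing $\delta$ small forces the total displacement $\int_0^\infty\|\partial_t g\|_{C^0}\,dt=2\int_0^\infty\|\Ric\|_{C^0}\,dt\leq C\,|\lambda_{\operatorname{ALE}}(g(0))|^{\theta/2}$ to be strictly smaller than $\varepsilon$, so on the maximal subinterval where $g(t)\in B(g_b,\varepsilon)$ the metric can never reach the boundary; in particular the standing hypothesis $\int_0^t\|\Ric\|_{C^0}\,dt'\leq\frac{1}{2n}$ needed for the heat-kernel estimates stays valid, and $T=\infty$. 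Finiteness of $\int_0^\infty\|\Ric\|_{C^0}\,dt$ makes $g(t)$ Cauchy in $C^0$, converging to a limit $g_\infty$ which is Ricci-flat because $\|\Ric(g(t))\|\to 0$. The rate $\|g(t)-g_\infty\|_{C^0}\leq Ct^{-\theta/(2(1-\theta))}$ follows from the tail bound $\int_t^\infty\|\Ric\|_{C^0}\,ds\leq C\phi(t)^{\theta/2}\leq Ct^{-\theta/(2(1-\theta))}$. Finally I would upgrade $C^0$-convergence to $C^{2,\alpha'}_{\tau'}$-convergence for $\tau'<\tau$, $\alpha'<\alpha$ using the weighted Hölder bounds of Lemma \ref{a-priori-wei-lemma-Holder-met} together with the compact embedding Lemma \ref{lemme-Chal-Cho-Bru}.

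The main obstacle is the step converting the $L^2$-in-time bound on the Bakry--\'Emery tensor into a $C^0$ (and ultimately weighted-Hölder) bound while simultaneously keeping the flow inside the neighborhood: the a priori estimates of Section \ref{section heat kernel} are all conditional on $g(t)$ remaining $C^0$-close to $g_b$ and on the smallness of $\int\|\Ric\|_{C^0}\,dt$, so the polynomial-decay estimate, the $C^0$ displacement bound, and the trapping argument cannot be established sequentially but must be closed together in a single bootstrap on the maximal interval of existence inside $B_{C^{2,\alpha}_\tau}(g_b,\varepsilon)$.
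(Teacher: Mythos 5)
Your proposal follows essentially the same route as the paper's proof: the \L{}ojasiewicz inequality combined with the monotonicity formula yields the differential inequality for $-\lambda_{\operatorname{ALE}}$ and, via the Haslhofer--M\"uller trick of differentiating $(-\lambda_{\operatorname{ALE}})^{\theta/2}$, the $L^1$-in-time bound on $\|\Ric\|_{L^2}$ (Lemma \ref{stabilityALE}); heat-kernel smoothing then converts this into $\int_0^\infty\|\Ric\|_{C^0}\,dt$, the weighted H\"older estimates close the $C^{2,\alpha}_\tau$ trapping bootstrap (Propositions \ref{claim-T-max-pos} and \ref{claim-T-max-closed}), and compactness plus the tail bound give the limit and the rate (Proposition \ref{prop-exi-lim-conv-rate}). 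The only deviation worth noting is that for the integrated $L^2\to C^0$ step the paper invokes Proposition \ref{coro-A-priori-L^2-C^0-est-RF} with a fixed time-lag, which has no nonlinear remainder, whereas your use of Proposition \ref{prop-ric-dec-large-time} leaves a term $\sup\|\Rm\|_{C^0}\int\int\|\Ric\|_{C^0}$ that must be absorbed by taking the lag small---workable, but requiring an extra step you did not spell out.
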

\begin{rk}
The convergence in which Theorem \ref{theo-dyn-stab-loc-max-lambda} takes place is reminiscent of the convergence result obtained in \cite[Theorem $5.1$]{Li-Yu-AF}. However, unless the background Ricci flat metric $(N^n,g_b)$ is flat, the solution $(N^n,g(t))_{t\geq 0}$ we provide is Type IIb, i.e. $$\limsup_{t\rightarrow+\infty}t|\Rm(g(t))|_{g(t)}=+\infty.$$ Because of this fact, it is unclear if one can get a convergence for all rescaled covariant derivatives of the metric as in \cite[Theorem $5.1$]{Li-Yu-AF}.
\end{rk}

To end this section, let us discuss the need for Gaussian bounds on the heat kernel given by Theorem \ref{Gaussian-est-scal-heat-ker}.  In Proposition \ref{claim-T-max-closed}, we start by proving an a priori $C^0$ estimate on the distance of a Ricci flow $(g(t))_{t\in[0,T)}$ to the origin $g_b$ lying in a small neighborhood $B_{C^{2,\alpha}_{\tau}}(g_b,\varepsilon)$.

At this stage, one is tempted to use appropriate interpolation inequalities together with the a priori $L^2$ bound on $\Ric(g(t))$, $t\in[0,T)$, established in Lemma \ref{stabilityALE} to get such a time-independent a priori $C^0_{\tau}$ bound on $g(t)-g_b$, $t\in[0,T)$. Let us apply the Gagliardo-Nirenberg interpolation inequalities \cite[Theorem $3.70$, Chapter $3$]{Aub-Boo} which can be adapted to $(N^n,g_b)$, to the tensor $\partial_tg$ with $p=\infty$, $j=0$, $r=\infty$, $q=2$ and $m\geq 1$ to get:
\begin{equation*}
\begin{split}
\rho_{g_b}(x)^{\tau}\|\partial_tg\|_{C^0(B_{g_b}(x,\rho_{g_b}(x)/4))}&\leq C(n,g_b)\|\partial_{t}g\|_{L^2}^{1-a}\cdot \|\partial_{t}g\|_{C^m_{\tau+2}}^{a}\cdot\rho_{g_b}(x)^{\tau-a(m+\tau+2)},
\end{split}
\end{equation*}
where $a\in(0,1)$ is such that $(2m+n)a=n$ and if $(g(t))_{t\in[0,T)}$ is assumed to stay in a fixed neighborhood $B_{C^{m+2,\alpha}_{\tau}}(g_b,\varepsilon)$ of $g_b$.
This estimate already shows that it asks for too much regularity of the solution.  Moreover, one needs to ensure the exponent $\tau-a(m+\tau+2)$ to be non-positive which constrains $\tau$ to lie below $\frac{n}{2}\cdot\frac{m+2}{m}$. 
Worse, the application of Lemma \ref{stabilityALE} requires the exponent $\eta$ (which equals $a$ here) to be strictly less than $\frac{\theta}{2-\theta}$, this in turn restricts the range of the exponent $\theta$ in the \L ojasiewicz inequality from Theorem \ref{dream-thm-loja-intro} which is uncheckable in general.

For all these reasons, we somewhat proceed more directly by using the heat kernel estimates from Section \ref{sec-heat-ker} via Duhamel's formula.
\subsection{Proof of Theorem \ref{theo-dyn-stab-loc-max-lambda}}\label{subsec-theo-proof-stab}~~\\

We first show that given $\varepsilon$ as in the statement of Theorem \ref{theo-dyn-stab-loc-max-lambda}, there exists $\delta>0$ such that the Ricci flow starting at any metric $g(0)$ in $B_{C^{2,\alpha}_{\tau}}(g_b,\delta)$ stays in $B_{C^{2,\alpha}_{\tau}}(g_b,\varepsilon)$ and exists for all time. 

%We assume furthermore that the covariant derivatives of the curvature tensor are all bounded on $N$, i.e.\begin{equation*}|\nabla^{g(0),k}\Rm(g(0))|_{g(0)}\leq C_k,\quad k\geq 0.\end{equation*}

Let $g(0)\in B_{C^{2,\alpha}_{\tau}}(g_b,\delta)$ with $\delta<\varepsilon$ to be constrained later and let $(N^n,g(t))_{t\in[0,T_{\operatorname{Shi}}]}$ be Shi's solution \cite{Shi-Def} with 
\begin{equation}
T_{\operatorname{Shi}}:=\frac{ \varepsilon(n)}{\sup_N|\Rm(g(0))|_{g(0)}}\geq \frac{ \varepsilon(n)}{C(n,g_b,\varepsilon)}=:T(n,g_b)>0,
\end{equation}
which exists since $g(0)$ is $\varepsilon$-close to $g_b$ in the $C^{2,\alpha}_{\tau}$-topology.
 
 We define the maximal time of existence of this solution to the Ricci flow with respect to the $C^{2,\alpha}_{\tau}$-topology as follows:
 \begin{equation*}
T_{\operatorname{max}}:=\sup\left\{T>0\,|\,\quad g(t)\in B_{C^{2,\alpha}_{\tau}}(g_b,\varepsilon),\quad \forall t\in[0,T)\right\}.
\end{equation*}
We start with the following proposition:
\begin{prop}\label{claim-T-max-pos}
There exists $\delta(n,g_b,\varepsilon)>0$ such that for $0<\delta\leq \delta(n,g_b,\varepsilon)$, one has $T_{\operatorname{max}}\geq T(n,g_b)>0$ and the set $$\left\{T>0\,|\,\quad \forall t\in[0,T),\quad g(t)\in B_{C^{2,\alpha}_{\tau}}(g_b,\varepsilon)\right\},$$
is open.
\end{prop}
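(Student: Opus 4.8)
The plan is to run a continuity argument in time, feeding Shi's short-time solution into the a priori weighted estimates of Section~\ref{sec-wei-est}, and exploiting in an essential way that $g_b$ is Ricci-flat, so that $\Ric(g(0))$ is as small as $\|g(0)-g_b\|_{C^{2,\alpha}_\tau}$. First I would record that along Shi's solution the curvature cannot grow by more than a fixed factor before time $T(n,g_b)$: since $g(0)\in B_{C^{2,\alpha}_{\tau}}(g_b,\delta)$ we have $\sup_N|\Rm(g(0))|_{g(0)}\leq \|\Rm(g_b)\|_{C^0_{\tau+2}}+C\delta\leq C(n,g_b,\varepsilon)$, so that $T_{\operatorname{Shi}}\geq T(n,g_b)$ and $\sup_N|\Rm(g(t))|_{g(t)}\leq 2\,C(n,g_b,\varepsilon)$ on $[0,T(n,g_b)]$. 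Shrinking the dimensional Shi constant $\varepsilon(n)$ if necessary, this forces
$$
\int_0^t\|\Ric(g(t'))\|_{C^0}\,dt'\leq C(n)\cdot 2C(n,g_b,\varepsilon)\cdot T(n,g_b)=2C(n)\,\varepsilon(n)\leq \frac{1}{2n},\qquad t\in[0,T(n,g_b)],
$$
so that the ad hoc hypotheses \eqref{hyp-ad-hoc-ric-1}, \eqref{hyp-ad-hoc-ric-3}, \eqref{hyp-ad-hoc-ric-4} all hold on this interval.

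The main step is then the following. Because $\Ric(g_b)=0$ and $\Ric$ is a second order operator in the metric, one has $\|\Ric(g(0))\|_{C^{0,\alpha}_{\tau+2}}\leq C\|g(0)-g_b\|_{C^{2,\alpha}_\tau}\leq C\delta$. Inserting this into the short-time estimate \eqref{short-ric-wei-c0} of Lemma~\ref{lemma-short-time-est-curv} yields, on $[0,T(n,g_b)]$,
$$
\|\Ric(g(t))\|_{C^{0,\alpha}_{\tau+2}}\leq C(n,g_b,\varepsilon)e^{C(n,g_b,\varepsilon)\,T(n,g_b)}\delta=:C'\delta,
$$
and in particular $\|\Ric(g(t))\|_{C^0}\leq C'\delta$. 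I would then apply the a priori weighted Hölder estimate \eqref{a-priori-est-met-hold-pointwise} of Lemma~\ref{a-priori-wei-lemma-Holder-met} with $s=0$ and any fixed $\eta\in(0,1)$ to get, for $t\in[0,T(n,g_b)]$,
$$
\|g(t)-g(0)\|_{C^{2,\alpha}_\tau}\leq C_\eta\Big(\|\Ric(g(0))\|_{C^{0,\alpha}_{\tau+2}}+\int_0^t\|\Ric(g(t'))\|_{C^0}^{1-\eta}\,dt'\Big)\leq C_\eta\big(C'\delta+T(n,g_b)(C'\delta)^{1-\eta}\big).
$$
Combined with $\|g(0)-g_b\|_{C^{2,\alpha}_\tau}\leq\delta$, this gives $\|g(t)-g_b\|_{C^{2,\alpha}_\tau}\leq \delta+C''\delta^{1-\eta}$, which is $<\varepsilon$ as soon as $\delta\leq\delta(n,g_b,\varepsilon)$ is small enough. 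Hence $g(t)\in B_{C^{2,\alpha}_{\tau}}(g_b,\varepsilon)$ for every $t\in[0,T(n,g_b))$, i.e.\ $T_{\operatorname{max}}\geq T(n,g_b)>0$.

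The openness statement I would obtain from the continuity of $t\mapsto g(t)$ into $C^{2,\alpha}_\tau$ together with the openness of the ball. If $g(t)\in B_{C^{2,\alpha}_{\tau}}(g_b,\varepsilon)$ for all $t\in[0,T_0)$, then the bound $\|g(t)-g_b\|_{C^{2,\alpha}_\tau}<\varepsilon$ keeps the curvature bounded on $[0,T_0)$, so Shi's flow extends past $T_0$ and $t\mapsto g(t)$ is continuous in $C^{2,\alpha}_\tau$ near $T_0$; the a priori estimate above, giving a bound \emph{strictly} below $\varepsilon$, then guarantees that the limit $g(T_0)$ still lies in the open ball, and continuity places a whole neighborhood of $T_0$ inside the set. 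The uniform-in-time strict bound needed to push this confinement to all times is exactly the content of the a priori estimate of Proposition~\ref{claim-T-max-closed}, where the \L ojasiewicz inequality enters.

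The main obstacle is the \emph{bootstrap} nature of the argument: Lemmas~\ref{a-priori-wei-lemma-C0-Ric}--\ref{a-priori-wei-lemma-Holder-met} presuppose both that the flow already lies in $B_{C^{2,\alpha}_{\tau}}(g_b,\varepsilon)$ and that it has uniformly bounded covariant derivatives of curvature, which is precisely the information one is trying to produce. The confinement is unlocked by the continuity scheme above, but one must verify the hypotheses honestly: the uniform derivative bounds come from Shi's interior estimates, which degenerate like $t^{-k/2}$ as $t\to0$ when only $C^{2,\alpha}_\tau$ initial regularity is available. Reconciling this — for instance by starting the weighted a priori estimates at a small positive time and absorbing the initial layer with the derivative-free short-time estimate \eqref{short-rm-wei-c0}, or by smoothing $g(0)$ — together with the bookkeeping of the purely dimensional constant $\varepsilon(n)$ securing \eqref{hyp-ad-hoc-ric-1}, is where the real care lies.
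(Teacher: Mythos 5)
Your preparatory steps (curvature bounds from Shi's solution, smallness of $\int_0^t\|\Ric(g(t'))\|_{C^0}\,dt'$, and the observation that Ricci-flatness of $g_b$ gives $\|\Ric(g(0))\|_{C^{0,\alpha}_{\tau+2}}\leq C\delta$, hence $\|\Ric(g(t))\|_{C^{0,\alpha}_{\tau+2}}\leq C'\delta$ via Lemma \ref{lemma-short-time-est-curv}) all match the paper. The genuine gap is your main step: you apply Lemma \ref{a-priori-wei-lemma-Holder-met} with $s=0$ on $[0,T(n,g_b)]$. That lemma assumes uniform-in-time bounds $\|\nabla^{g(t),k}\Rm(g(t))\|_{C^0}\leq C_k$, and its constant $C_\eta$ genuinely depends on these $C_k$: they enter through the interpolation inequalities of Lemma \ref{inter-inequ-gag-nir} (see e.g.\ \eqref{hol-covid-2} in the proof of Lemma \ref{a-priori-wei-lemma-C0-Ric}, on which Lemma \ref{a-priori-wei-lemma-Holder-met} relies). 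For initial data that is only $C^{2,\alpha}_\tau$-close to $g_b$, Shi's estimates give $\|\nabla^{g(t),k}\Rm(g(t))\|_{C^0}\lesssim t^{-k/2}$ and nothing better, so no admissible $C_k$ exists on $(0,T(n,g_b)]$; your displayed inequality with a constant $C_\eta=C(n,g_b,\varepsilon,\eta)$ is therefore unjustified. You flag this obstacle yourself, but the proposed remedies do not close it: restarting the weighted estimates at a positive time $t_0$ either blows up the constants (as $t_0\to 0$) or leaves an initial layer of length comparable to $T(n,g_b)$ --- which is the entire content of the proposition --- and ``absorbing'' that layer with \eqref{short-rm-wei-c0} cannot work, since \eqref{short-rm-wei-c0} bounds the \emph{curvature} in $C^{0,\alpha}_{\tau+2}$ and does not control $\|g(t)-g(0)\|_{C^{2,\alpha}_{\tau}}$; passing from curvature bounds to a $C^{2,\alpha}_\tau$ bound on the metric is exactly the difficulty. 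Smoothing $g(0)$ has the same defect: the derivative bounds of the mollified data degenerate with the smoothing scale, so the constants are again not uniform.

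The paper avoids this entirely: it never uses Lemma \ref{a-priori-wei-lemma-Holder-met} in the short-time regime. Instead it estimates $\|\Delta_{g(t)}(g(t)-g(0))\|_{C^{0,\alpha}_{\tau+2}}$ directly from Lemma \ref{lemma-beauty-int-lap-met-ric}, and the dangerous terms $\int_0^t\|\nabla^{g(t')}\Ric(g(t'))\|_{C^{0,\alpha}_1}\,dt'$ are handled by the \emph{local} Shi estimates \eqref{shi-est-ric-cov-tensor} of Proposition \ref{prop-a-priori-C0-est-rm-ric}: these give $\|\nabla^{g(t')}\Ric(g(t'))\|_{C^0_1}\lesssim \|\Ric(g(0))\|_{C^0_1}/\sqrt{t'}$, whose time singularity is integrable (similarly $t'^{-\frac{1-\alpha}{2}-\alpha}$ for the H\"older seminorm, integrable since $\alpha<1$) and, crucially, carries the small factor $\|\Ric(g(0))\|_{C^0_{\tau+2}}\leq C\delta$; this yields \eqref{covid-sras-ric-short-time-1-alpha}--\eqref{covid-sras-ric-short-time-2-alpha}. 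Elliptic Schauder estimates for $\Delta_{g(0)}$ and an absorption argument in $\sup_{t'\leq t}\|g(t')-g(0)\|_{C^{2,\alpha}_\tau}$ (the absorption being possible precisely because the coefficients are $O(\delta)$) then close the estimate. These time-weighted local Shi bounds are the missing idea that replaces the uniform derivative bounds your route needs but cannot have. A secondary point: your openness argument defers to Proposition \ref{claim-T-max-closed} and the \L ojasiewicz inequality, whereas Remark \ref{rk-claim-T-max-pos} stresses that Proposition \ref{claim-T-max-pos}, openness included, uses no stability assumption; the intended openness proof is again the quantitative short-time argument above, restarted slightly before the given time.
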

\begin{rk}\label{rk-claim-T-max-pos}
The proof of Proposition \ref{claim-T-max-pos} does not use the stability of $(N^n,g_b)$.
\end{rk}
\begin{proof}[Proof of Proposition \ref{claim-T-max-pos}]
We only prove the fact that $T_{\operatorname{max}}\geq T(n,g_b)$, since the proof of the openness of the set of solutions is very similar and will therefore be omitted. 

According to Propositions \ref{prop-a-priori-C0-est-rm-ric} and \ref{prop-a-priori-L2-est-rm-ric}, 
\begin{equation}
\begin{split}
&\|\Rm(g(t))\|_{C^0}\leq 2\|\Rm(g(0))\|_{C^0}\leq C(n,g_b),\\  
&\|\Ric(g(t))\|_{C^0}\leq e^{C(n,g_b)\cdot t}\|\Ric(g(0))\|_{C^0}\leq C(n,g_b)\delta,
\end{split}
\end{equation}
 for $t\in [0,T_{\operatorname{Shi}}]$. In particular, if $t\in[0,T(n,g_b)]$,
 \begin{equation*}
\|g(t)-g_b\|_{C^0}\leq 2\int_0^t\|\Ric(g(s))\|_{C^0}\,ds+\|g(0)-g_b\|_{C^0}\leq C(n,g_b)\delta <\varepsilon,
\end{equation*}
if $\delta$ is chosen small enough. Moreover, we choose $\delta$ small enough so that $\int_0^t\|\Ric(g(s))\|_{C^0}\,ds\leq\frac{1}{2n}$. Thanks to this choice, Lemma \ref{lemma-short-time-est-curv} is applicable and gives us $\|\Ric(g(t))\|_{C^{0,\alpha}_{\tau+2}}\leq C(n,g_b)\delta$ for $t\in [0,T(n,g_b)]$. By integrating in time this inequality and by reducing $\delta$ once more if necessary, one gets for $t\in[0,T(n,g_b)]$ that
$\|g(t)-g_b\|_{C^0_{\tau}}<\varepsilon$ (we actually get a stronger decay in space but we do not use this fact as it will not be preserved for large time).

Finally, we invoke Lemma \ref{lemma-beauty-int-lap-met-ric} to get an a priori bound on the full $C^{2,\alpha}_{\tau}$-norm. Indeed, with $0=:s<t\leq T(n,g_b)$, Lemma \ref{lemma-beauty-int-lap-met-ric} leads to:
\begin{equation*}
\begin{split}
\|\Delta_{g(t)}(g(t)-g(0))\|_{C^{0,\alpha}_{\tau+2}}\leq\,& \|\Ric(g(t))\|_{C^{0,\alpha}_{\tau+2}}+\|\Ric(g(0))\|_{C^{0,\alpha}_{\tau+2}}\\
&+\int_0^t\|\Rm(g(t'))\|_{C^{0,\alpha}_{\tau+2}}\|\Ric(g(t'))\|_{C^{0,\alpha}}\,dt'\\
&+\int_0^t\|\nabla^{g(t')}\Ric(g(t'))\|_{C^{0,\alpha}_1}\|g(t')-g(0)\|_{C^{2,\alpha}_{\tau}}\,dt'\\
&+\int_0^t\|\Ric(g(t'))\|_{C^{0,\alpha}}\|g(t')-g(0)\|_{C^{2,\alpha}_{\tau}}\,dt'\\
\leq \,&C(n,g_b)\|\Ric(g(0))\|_{C^{0,\alpha}_{\tau+2}}+C(n,g_b)t\sup_{t'\in[0,t]}\|\Ric(g(t'))\|_{C^{0,\alpha}}\\
&+\int_0^t\|\nabla^{g(t')}\Ric(g(t'))\|_{C^{0,\alpha}_1}\|g(t')-g(0)\|_{C^{2,\alpha}_{\tau}}\,dt'\\
\leq\,& C(n,g_b)\|\Ric(g(0))\|_{C^{0,\alpha}_{\tau+2}}\\
&+\sup_{t'\in[0,t]}\|g(t')-g(0)\|_{C^{2,\alpha}_{\tau}}\int_0^t\|\nabla^{g(t')}\Ric(g(t'))\|_{C^{0,\alpha}_1}\,dt'.
\end{split}
\end{equation*}
Here we have used Lemma \ref{lemma-short-time-est-curv} in the second inequality. Observe that thanks to [(\ref{shi-est-ric-cov-tensor}), Proposition \ref{prop-a-priori-C0-est-rm-ric}], 
\begin{equation}
\begin{split}
\int_0^t\|\nabla^{g(t')}\Ric(g(t'))\|_{C^{0}_1}\,dt'\leq\,& C\int_0^t\frac{\|\Ric(g(0))\|_{C^0_{1}}}{\sqrt{t'}}\,dt'\\
%\leq\,& C_{\eta}\sup_{t'\in[0,t]}\|\Ric(g(t'))\|^{1-\eta}_{C^0_{\tau+2}}\\
%\leq\,&C(n,g_b,\eta)\delta^{1-\eta},
\leq\,&C\|\Ric(g(0))\|_{C^0_{\tau+2}}\\
\leq\,& C\|g(0)-g_b\|_{C^{2,\alpha}_{\tau}},\label{covid-sras-ric-short-time-1-alpha}
\end{split}
\end{equation}
for some positive constant $C=C(n,g_b,\varepsilon)$ that may vary from line to line. 
 
 Similarly,
 \begin{equation}
\begin{split}
\int_0^t\rho_{g_b}^{1+\alpha}&[\nabla^{g(t')}\Ric(g(t'))]_{C^{\alpha}}\,dt'\\
\leq\,& C\int_0^t\rho_{g_b}^{1+\alpha-(\tau+2)}\|\nabla^{g(t')}\Ric(g(t'))\|_{C^0_{\tau+2}}^{1-\alpha}\|\nabla^{g(t'),2}\Ric(g(t'))\|_{C^0_{\tau+2}}^{\alpha}\,dt'\\
\leq\,&C\|\Ric(g(0))\|_{C^0_{\tau+2}}\int_0^t\frac{1}{t'^{\frac{1-\alpha}{2}+\alpha}}\,dt'\\
\leq\,&C\|\Ric(g(0))\|_{C^0_{\tau+2}}\\
\leq\,& C\|g(0)-g_b\|_{C^{2,\alpha}_{\tau}},\label{covid-sras-ric-short-time-2-alpha}
\end{split}
\end{equation}
 for some positive constant $C=C(n,g_b,\varepsilon)$ that may vary from line to line. Here we have used that $1+\alpha-(\tau+2)\leq 0$ since $0<\alpha<1$ and $\tau$ is positive.

 %Since $\tau>1$ and $\alpha\in(0,1)$, it is sufficient to restrict to $\eta\in\left(0,\frac{1}{3}\right]$ which corresponds to using a finite number of bounded covariant derivatives of the initial curvature tensor $\Rm(g(0))$. Here we have used Lemma \ref{lemma-short-time-est-curv} in the last inequality. 

Therefore, as an intermediate conclusion, we obtain:
\begin{equation}
\begin{split}
\|\Delta_{g(t)}(g(t)-g(0))\|_{C^{0,\alpha}_{\tau+2}}\leq\,&C(n,g_b,\varepsilon)\delta\left(1+\sup_{t'\in[0,t]}\|g(t')-g(0)\|_{C^{2,\alpha}_{\tau}}\right).\label{int-concl-sch-est-short-time}
\end{split}
\end{equation}

Now, observe that if $h(t):=g(t)-g(0)$, then schematically,
\begin{equation*}
\Delta_{g(t)}h(t)-\Delta_{g(0)}h(t)=\nabla^{g(0),2}h(t)\ast h(t)+\nabla^{g(0)}h(t)\ast \nabla^{g(0)}h(t),
\end{equation*}
which implies 
\begin{equation}
\|\Delta_{g(0)}h(t)\|_{C^{0,\alpha}_{\tau+2}}\leq \|\Delta_{g(t)}h(t)\|_{C^{0,\alpha}_{\tau+2}}+C(n,g_b)\|h(t)\|_{C^{2,\alpha}_{\tau}}\|h(t)\|_{C^{1}}.\label{equiv-appli-sch-est-ell}
\end{equation}

Plugging (\ref{int-concl-sch-est-short-time}) in the previous estimate (\ref{equiv-appli-sch-est-ell}) leads to:
\begin{equation*}
\|\Delta_{g(0)}h(t)\|_{C^{0,\alpha}_{\tau+2}}\leq C(n,g_b,\varepsilon)\delta\left(1+\sup_{t'\in[0,t]}\|h(t')\|_{C^{2,\alpha}_{\tau}}\right)+C(n,g_b)\|h(t)\|_{C^{2,\alpha}_{\tau}}\|h(t)\|_{C^{1}}.
\end{equation*}

According to elliptic Schauder estimate applied to the background initial metric $g(0)$, one gets $\|h(t)\|_{C^{2,\alpha}_{\tau}}\leq C(n,g_b,\varepsilon)\left(\|\Delta_{g(0)} h(t)\|_{C^{0,\alpha}_{\tau+2}}+\|h(t)\|_{C^0_{\tau}}\right)$ which leads to:
\begin{equation}
\|h(t)\|_{C^{2,\alpha}_{\tau}}\leq C(n,g_b,\varepsilon)\delta\left(1+\sup_{t'\in[0,t]}\|h(t')\|_{C^{2,\alpha}_{\tau}}\right)+C(n,g_b)\|h(t)\|_{C^{2,\alpha}_{\tau}}\|h(t)\|_{C^{1}}.\label{last-least-est-h-2-alpha}
\end{equation}

Notice that \eqref{covid-sras-ric-short-time-1-alpha} and \eqref{covid-sras-ric-short-time-2-alpha} imply a $C^{1,\alpha}$-estimate on $g(t)-g(0)$ by integrating over $[0,T(n,g_b)]$: $\sup_{t'\in[0,T(n,g_b)]}\|h(t')\|_{C^{1,\alpha}}\leq C(n,g_b,\varepsilon)\delta.$

We are then in a position to conclude since this previous fact combined with (\ref{last-least-est-h-2-alpha}) imply if $\delta\leq \delta(n,g_b,\varepsilon)$:
\begin{equation}
\|h(t)\|_{C^{2,\alpha}_{\tau}}\leq C(n,g_b,\varepsilon)\delta\left(1+\sup_{t'\in[0,t]}\|h(t')\|_{C^{2,\alpha}_{\tau}}\right),\quad t\in[0,T(n,g_b)].
\end{equation}
Considering the function in time $\sup_{t'\in[0,t]}\|h(t')\|_{C^{2,\alpha}_{\tau}}$, one gets, by choosing $\delta\leq \delta(n,g_b,\varepsilon)$ sufficiently small that $\|h(t)\|_{C^{2,\alpha}_{\tau}}\leq C(n,g_b,\varepsilon)\delta$ for all $t\in[0,T(n,g_b)]$ which by the triangular inequality leads to the expected result, i.e. $g(t)-g_b\in B_{C^{2,\alpha}_{\tau}}(g_b,\varepsilon)$ for all $t\in[0,T(n,g_b)]$.
\end{proof}
The next result consists in showing a priori estimates on the $C^{2,\alpha}_{\tau}$-norm of our solution which are time-independent. This is where the \L ojasiewciz inequality comes into play.
\begin{prop}\label{claim-T-max-closed}
The set $$\left\{T>0\,|\,\quad \forall t\in[0,T),\quad g(t)\in B_{C^{2,\alpha}_{\tau}}(g_b,\varepsilon)\right\},$$
is closed. More precisely, there exist time-independent positive constants $C=C(n,g_b,\varepsilon,\theta)$ and $C=C(n,g_b,\varepsilon,\theta,\eta)$, $\eta\in\left(0,\frac{\theta}{2-\theta}\right)$, such that for $t\in[0,T)$:
\begin{eqnarray}
\int_0^t\|\Ric(g(t'))\|_{C^0}\,dt'&\leq& C(n,g_b,\varepsilon,\theta)\left(\delta+|\lambda_{\operatorname{ALE}}(g(0))|^{\frac{\theta}{2}}\right), \label{inequ-c-0-prop-closed-int-ric}\\
\|g(t)-g(0)\|_{C^0_{\tau}}&\leq&C(n,g_b,\varepsilon,\theta)\left(\delta+|\lambda_{\operatorname{ALE}}(g(0))|^{\frac{\theta}{2}}\right),\label{inequ-c-0-tau-prop-closed}\\
\|g(t)-g(0)\|_{C^{2,\alpha}_{\tau}}&\leq& C(n,g_b,\varepsilon,\theta,\eta)\left(\delta^{1-\eta}+|\lambda_{\operatorname{ALE}}(g(0))|^{\frac{\theta}{2}(1+\eta)-\eta}\right).
\end{eqnarray}

\end{prop}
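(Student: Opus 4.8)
The plan is to run the classical \L ojasiewicz--Simon scheme adapted to the weighted setting, and to feed the resulting $L^2$-in-time integrability of the Ricci curvature into the heat-kernel a priori estimates of Section~\ref{sec-heat-ker} in order to upgrade it first to a $C^0$-in-time bound and then to the weighted $C^0_\tau$ and $C^{2,\alpha}_\tau$ bounds. Throughout one works on $[0,T_{\operatorname{max}})$, where by hypothesis $g(t)\in B_{C^{2,\alpha}_{\tau}}(g_b,\varepsilon)$, so that $\sup_{t}\|\Rm(g(t))\|_{C^0}\le C(n,g_b,\varepsilon)=:K$, Proposition~\ref{stabilityALE-RF-L2} applies, and (after shrinking $\delta$) the smallness hypothesis $\int_0^t\|\Ric(g(t'))\|_{C^0}\,dt'\le\frac{1}{2n}$ of the lemmas of Section~\ref{sec-heat-ker} is maintained. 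Since $g_b$ is a local maximizer with $\lambda_{\operatorname{ALE}}(g_b)=0$, the quantity $y(t):=-\lambda_{\operatorname{ALE}}(g(t))\ge 0$ is nonincreasing by the monotonicity formula \eqref{mono-lambda}, with $y'(t)=-2\|\Ric(g(t))+\nabla^{g(t),2}f_{g(t)}\|^2_{L^2(e^{-f}d\mu)}$. By the first variation \eqref{first-var-lambda} the $L^2$-gradient of $\lambda_{\operatorname{ALE}}$ is, up to the bounded weight $e^{-f_g}$ and the equivalence of $d\mu_g$ with $d\mu_{g_b}$, the Bakry--\'Emery tensor $\Ric(g)+\nabla^{g,2}f_g$, so \eqref{basic-ass-2} reads $y^{2-\theta}\le C\|\Ric(g)+\nabla^{g,2}f_g\|_{L^2}^2$. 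Combining the two gives $\frac{d}{dt}\,y^{\theta/2}\le -c\,\|\Ric(g(t))+\nabla^{g(t),2}f_{g(t)}\|_{L^2}$; integrating and invoking Proposition~\ref{stabilityALE-RF-L2} to pass from the Bakry--\'Emery tensor to $\Ric(g)$ yields the key estimate
\[
\int_0^t\|\Ric(g(t'))\|_{L^2}\,dt'\le C\,y(0)^{\theta/2}=C\,|\lambda_{\operatorname{ALE}}(g(0))|^{\theta/2}.
\]

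Next I would promote this to the $C^0$-in-time bound \eqref{inequ-c-0-prop-closed-int-ric}. On a short fixed interval $[0,\sigma]$, Lemma~\ref{lemma-short-time-est-curv} together with $\|\Ric(g(0))\|_{C^0_{\tau+2}}\le C\delta$ gives $\int_0^\sigma\|\Ric(g(t'))\|_{C^0}\,dt'\le C\delta$. For $t\ge\sigma$ I would apply Proposition~\ref{prop-ric-dec-large-time} with $p=2$ over the window $[t-\sigma,t]$, integrate the resulting inequality in $t$, and apply Fubini to the nonlinear term: its total contribution is bounded by $CK\sigma\int_0^t\|\Ric\|_{C^0}$, while the linear term integrates against the $L^2$-bound just obtained. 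Choosing the window $\sigma=\sigma(n,g_b,\varepsilon)$ small enough that $CK\sigma\le\frac12$ lets me absorb the nonlinear term into the left-hand side, producing $\int_0^t\|\Ric(g(t'))\|_{C^0}\,dt'\le C(\delta+|\lambda_{\operatorname{ALE}}(g(0))|^{\theta/2})$; the a priori boundedness of $K$ on the ball is exactly what makes this absorption possible. The second estimate \eqref{inequ-c-0-tau-prop-closed} then follows at once from \eqref{dream-C-0-wei-dist-met-stat} in Lemma~\ref{lemma-a-priori-wei-metr-wei} with $s=0$, since $\|\Ric(g(0))\|_{C^0_{\tau+2}}\le C\delta$ and the integral term has just been controlled.

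For the weighted $C^{2,\alpha}_\tau$ bound I would apply \eqref{a-priori-est-met-hold-pointwise} of Lemma~\ref{a-priori-wei-lemma-Holder-met} with $s=0$, reducing matters to estimating $\int_0^t\|\Ric(g(t'))\|^{1-\eta}_{C^0}\,dt'$. The sub-linear power $1-\eta$ cannot be handled by the finite time-integral alone, so I would extract pointwise-in-time decay: integrating the differential inequality $y'\le -c\,y^{2-\theta}$ gives $y(t)\le C\big(y(0)^{-(1-\theta)}+ct\big)^{-1/(1-\theta)}$, and feeding the corresponding tail control of $\|\Ric\|_{L^2}$ back through the short-window smoothing estimate yields a polynomial pointwise decay for $\|\Ric(g(t))\|_{C^0}$. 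Splitting $[0,t]$ at the \L ojasiewicz time-scale $\sim|\lambda_{\operatorname{ALE}}(g(0))|^{-(1-\theta)}$ and integrating makes $\int_0^t\|\Ric\|^{1-\eta}_{C^0}$ converge precisely in the range $\eta<\frac{\theta}{2-\theta}$ and produces the exponent $\frac{\theta}{2}(1+\eta)-\eta$; together with the short-time contribution $C\delta^{1-\eta}$ this gives the third estimate.

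I expect the main obstacle to be this last step: converting the purely $L^2$-variational integrability into genuine $C^0$ and weighted control against the nonlinear $\Rm\ast\Ric$ feedback, and in particular squeezing out enough pointwise-in-time decay of $\|\Ric(g(t))\|_{C^0}$ to integrate the sub-linear power — the restriction $\eta<\frac{\theta}{2-\theta}$ is forced exactly here. The small-window absorption of the nonlinear term, which relies on the uniform curvature bound on the ball and hence on the Gaussian estimates of Section~\ref{sec-heat-ker}, is the technical heart that makes both the $C^0$ and the $C^{2,\alpha}_\tau$ steps close.
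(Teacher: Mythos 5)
Your arguments for \eqref{inequ-c-0-prop-closed-int-ric} and \eqref{inequ-c-0-tau-prop-closed} are correct and essentially the paper's: your differential inequality $\frac{d}{dt}(-\lambda_{\operatorname{ALE}})^{\theta/2}\le -c\|\Ric+\nabla^{g,2}f_g\|_{L^2}$ is exactly [(\ref{diff-evo-equ-lambda-exp}), Lemma \ref{stabilityALE}] with $\eta=0$, and your window-plus-absorption use of Proposition \ref{prop-ric-dec-large-time} is an acceptable (slightly heavier) substitute for the paper's direct appeal to Proposition \ref{coro-A-priori-L^2-C^0-est-RF}, whose $L^1$ mean value inequality already absorbs the $\Rm\ast\Ric$ feedback. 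The gap is in your mechanism for $\int_0^t\|\Ric(g(t'))\|^{1-\eta}_{C^0}\,dt'$. The only pointwise-in-time decay your ingredients produce is $\|\Ric(g(t))\|_{C^0}\le C\|\Ric(g(t-r^2))\|_{L^2}\le C\,(-\lambda_{\operatorname{ALE}}(g(t-c)))^{1/2}$: passing from the integral bound $\int_s^{s+1}\|\Ric\|^2_{L^2}\le C(-\lambda_{\operatorname{ALE}}(g(s)))$ to a bound at a fixed time costs a square root (choose a good time in a unit interval and propagate with Proposition \ref{prop-a-priori-L2-est-rm-ric}). Hence the decay is of order $t^{-\frac{1}{2(1-\theta)}}$, and integrating its $(1-\eta)$-th power over the tail beyond $t_*\sim|\lambda_{\operatorname{ALE}}(g(0))|^{-(1-\theta)}$ converges only when $\frac{1-\eta}{2(1-\theta)}>1$, i.e. $\eta<2\theta-1$. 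That range is empty for $\theta\le\frac12$ (which you must allow, since $\theta\in(0,1)$ is arbitrary here) and is strictly contained in $\left(0,\frac{\theta}{2-\theta}\right)$ otherwise, because $(2\theta-1)(2-\theta)-\theta=-2(1-\theta)^2<0$. So the step you yourself identify as the crux --- ``pointwise decay, split once, integrate'' --- does not close, and cannot yield the claimed range of $\eta$ nor the exponent $\frac{\theta}{2}(1+\eta)-\eta$.

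What the paper does instead is to run the very trick you used for $\eta=0$ at the exponent $\gamma:=\frac{\theta}{2}(1+\eta)-\eta$: writing $\|\cdot\|^2_{L^2}=\|\cdot\|^{1+\eta}_{L^2}\,\|\cdot\|^{1-\eta}_{L^2}$ and applying \eqref{basic-ass-2} only to the factor $\|\cdot\|^{1+\eta}_{L^2}$, the choice of $\gamma$ makes the power of $-\lambda_{\operatorname{ALE}}$ cancel exactly, giving $\frac{d}{dt}(-\lambda_{\operatorname{ALE}}(g(t)))^{\gamma}\le-C\gamma\,\|\Ric(g(t))+\nabla^{g(t),2}f_{g(t)}\|^{1-\eta}_{L^2}$ and therefore $\int_0^t\|\Ric\|^{1-\eta}_{L^2}\,dt'\le C|\lambda_{\operatorname{ALE}}(g(0))|^{\gamma}$ precisely for $\eta<\frac{\theta}{2-\theta}$ ([(\ref{est-l2-ric-hess-eta-a-priori}), Lemma \ref{stabilityALE}]); since the smoothing estimate $\|\Ric(g(t))\|_{C^0}\le C\|\Ric(g(t-r^2))\|_{L^2}$ is linear, the fractional power passes through it and no pointwise decay is ever needed. (Your two building blocks would in fact suffice via a dyadic decomposition of $[t_*,\infty)$, applying H\"older on each block $[2^kt_*,2^{k+1}t_*]$ against the restarted bound $\int_s^{\infty}\|\Ric\|_{L^2}\,dt'\le C(-\lambda_{\operatorname{ALE}}(g(s)))^{\theta/2}$; the resulting geometric series converges exactly when $\eta<\frac{\theta}{2-\theta}$ and reproduces the exponent $\frac{\theta}{2}(1+\eta)-\eta$ --- but that is a genuinely different argument from the one you describe.) A secondary flaw: Lemma \ref{a-priori-wei-lemma-Holder-met} cannot be invoked with $s=0$, since its hypothesis of uniform bounds on $\|\nabla^{g(t),k}\Rm(g(t))\|_{C^0}$ fails near $t=0$ for merely $C^{2,\alpha}_{\tau}$ initial data; the paper applies it from $s=T(n,g_b)/2$ after Shi smoothing (Proposition \ref{prop-a-priori-C0-est-rm-ric} and Lemma \ref{lemma-shi-global}) and covers $[0,T(n,g_b)/2]$ by the short-time Proposition \ref{claim-T-max-pos}.
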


Before we prove Proposition \ref{claim-T-max-closed}, we establish one more crucial lemma which gives an a priori $L^2$ control on the distance of a Ricci flow from the origin given by a stable Ricci flat ALE metric:

\begin{lemma}[A priori $L^2$ estimate for the Ricci flow]\label{stabilityALE}
Let $(N^n,g_b)$ be a stable ALE Ricci flat metric such that Inequality \ref{basic-ass-2} holds on a neighborhood $B_{C^{2,\alpha}_{\tau}}(g_b,\varepsilon_{\L})$ with exponent $\theta\in(0,1)$. Let $(g(t))_{t\in[0,T)}$ be a solution to the Ricci flow in $B_{C^{2,\alpha}_{\tau}}(g_b,\varepsilon)$, $\varepsilon<\varepsilon_{\L}$. 
 Then, one has the following decay in time:
\begin{equation}\label{crucial-est-int-l2-norm-time-der}
\begin{split}
C^{-1}\int_s^{t}\|\Ric(g(t'))+&\nabla^{g(t'),2}f_{g(t')}\|_{L^2}^2\,dt'\leq|\lambda_{\operatorname{ALE}}(g(s))|\leq \frac{|\lambda_{\operatorname{ALE}}(g(0))|}{\left(1+C|\lambda_{\operatorname{ALE}}(g(0))|^{1-\theta}\cdot s\right)^{1+\beta_{\theta}}},\\
&\quad 0\leq s<t<T,\quad \beta_{\theta}:=\frac{\theta}{1-\theta},
\end{split}
\end{equation}
for some positive constant $C=C(n,g_b,\varepsilon,\theta)$ independent of time and where the $L^2$ norm is understood with respect to the weighted measure $e^{-f_{g(t')}}d\mu_{g(t')}$.

In particular, one has the following uniform energy bound if $0\leq s<t<T$: 
\begin{equation}\label{ric-hess-L2-control}
\int_s^{t}\|\Ric(g(t'))\|_{L^2}^2+\|\nabla^{g(t'),2}f_{g(t')}\|^2_{L^2}\,dt'\leq \frac{C|\lambda_{\operatorname{ALE}}(g(0))|}{\left(1+C|\lambda_{\operatorname{ALE}}(g(0))|^{1-\theta}\cdot s\right)^{1+\beta_{\theta}}}.
\end{equation}

Finally, if $\eta\in\left[0,\frac{\theta}{2-\theta}\right)$, one has
\begin{equation}
\begin{split}\label{est-l2-ric-hess-eta-a-priori}
\int_s^{t}\|\Ric(g(t'))\|_{L^2}^{1-\eta}+\|\nabla^{g(t'),2}f_{g(t')}\|_{L^2}^{1-\eta}\,dt'\leq C|\lambda_{\operatorname{ALE}}(g(s))|^{\frac{\theta}{2}(1+\eta)-\eta},\quad 0\leq s<t<T,
\end{split}
\end{equation}
for some positive constant $C=C(n,g_b,\varepsilon,\theta,\eta)$. 

\end{lemma}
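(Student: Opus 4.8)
The plan is to reduce all three estimates to the behavior of the single energy quantity
$$\Phi(t):=\|\Ric(g(t))+\nabla^{g(t),2}f_{g(t)}\|_{L^2(e^{-f_{g(t)}}d\mu_{g(t)})},$$
together with the decay of $|\lambda_{\operatorname{ALE}}(g(t))|$. Writing $\lambda(t):=\lambda_{\operatorname{ALE}}(g(t))$, I would first record that the monotonicity formula \eqref{mono-lambda} reads $\frac{d}{dt}\lambda(t)=2\Phi(t)^2\geq 0$, and that since $g_b$ is a local maximum of $\lambda_{\operatorname{ALE}}$ with $\lambda_{\operatorname{ALE}}(g_b)=0$ (as one reads off from \eqref{true-def-lambda} using $f_{g_b}=0$ and $\div_{g_b}g_b=0$), one has $\lambda(t)\leq 0$ on $[0,T)$; hence $|\lambda(t)|=-\lambda(t)$ is nonincreasing and $\frac{d}{dt}|\lambda(t)|=-2\Phi(t)^2$. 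Next, because the flow stays in the fixed neighborhood $B_{C^{2,\alpha}_{\tau}}(g_b,\varepsilon)$, the metrics $g(t)$ are uniformly equivalent to $g_b$ and the weights $e^{-f_{g(t)}}$ are uniformly bounded above and below; reading off the $L^2(g)$-gradient from the first variation \eqref{first-var-lambda} as $-(\Ric(g)+\nabla^{g,2}f_g)e^{-f_g}$, this yields the two-sided comparison $c\,\Phi(t)^2\leq \|\nabla\lambda_{\operatorname{ALE}}(g(t))\|^2_{L^2(g_b)}\leq C\,\Phi(t)^2$ with constants uniform in time. The left inequality of \eqref{crucial-est-int-l2-norm-time-der} is then immediate by integrating $\frac{d}{dt}\lambda=2\Phi^2$ between $s$ and $t$ and using $\lambda(t)\leq 0$.

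For the decay claim in \eqref{crucial-est-int-l2-norm-time-der} I would feed the comparison above into the \L{}ojasiewicz inequality \eqref{basic-ass-2}, obtaining $|\lambda(t)|^{2-\theta}\leq C\|\nabla\lambda_{\operatorname{ALE}}(g(t))\|^2_{L^2(g_b)}\leq C'\Phi(t)^2=-\tfrac{C'}{2}\frac{d}{dt}|\lambda(t)|$. Setting $y:=|\lambda|$ (we may assume $y>0$, since if $y(t_0)=0$ the flow is already critical and there is nothing to prove) this is the scalar differential inequality $y'\leq -c\,y^{2-\theta}$, whose integration gives $y(t)^{-(1-\theta)}\geq y(0)^{-(1-\theta)}+c(1-\theta)t$ and hence $|\lambda(t)|\leq |\lambda(0)|\bigl(1+C|\lambda(0)|^{1-\theta}t\bigr)^{-1/(1-\theta)}$, which is exactly the stated bound since $\tfrac{1}{1-\theta}=1+\beta_\theta$. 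Estimate \eqref{ric-hess-L2-control} then follows at once: Proposition \ref{stabilityALE-RF-L2} gives $\|\Ric(g(t))\|_{L^2}^2+\|\nabla^{g(t),2}f_{g(t)}\|_{L^2}^2\leq C\Phi(t)^2$ (again using the uniform equivalence of weighted and unweighted norms), so integrating in time and invoking the left half of \eqref{crucial-est-int-l2-norm-time-der} together with the decay bound controls the integral by $C|\lambda(s)|$ and hence by the stated quotient.

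The last estimate \eqref{est-l2-ric-hess-eta-a-priori} is the one requiring the extra idea. Here I would not integrate $\Phi^2$ but instead differentiate a suitable power of $|\lambda|$. Using $\frac{d}{dt}|\lambda|=-2\Phi^2$ and the \L{}ojasiewicz lower bound $\Phi\geq c\,|\lambda|^{1-\theta/2}$ (the square root of \eqref{basic-ass-2} combined with the comparison), one computes for a parameter $a>0$
$$-\frac{d}{dt}|\lambda(t)|^a=2a\,|\lambda(t)|^{a-1}\Phi(t)^2=2a\,|\lambda(t)|^{a-1}\Phi(t)^{1+\eta}\,\Phi(t)^{1-\eta}\geq 2a\,c^{1+\eta}\,|\lambda(t)|^{a-1+(1-\theta/2)(1+\eta)}\,\Phi(t)^{1-\eta}.$$
The exponent of $|\lambda|$ vanishes precisely for the choice $a=\tfrac{\theta}{2}(1+\eta)-\eta$, and the positivity $a>0$ is equivalent to $\eta<\tfrac{\theta}{2-\theta}$, which is exactly the hypothesis. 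For this $a$ one gets $\Phi(t)^{1-\eta}\leq -C\frac{d}{dt}|\lambda(t)|^a$; integrating from $s$ to $t$ and bounding $\|\Ric(g(t))\|_{L^2}^{1-\eta}+\|\nabla^{g(t),2}f_{g(t)}\|_{L^2}^{1-\eta}\leq C\Phi(t)^{1-\eta}$ via Proposition \ref{stabilityALE-RF-L2} produces the claimed bound $C|\lambda(s)|^{\frac{\theta}{2}(1+\eta)-\eta}$ (the case $\eta=0$ recovering the classical \L{}ojasiewicz length estimate $\int_s^t\Phi\,dt'\leq C|\lambda(s)|^{\theta/2}$). I expect the only genuinely delicate points to be (i) keeping all constants uniform in time, which hinges on the flow remaining in the fixed neighborhood so that $g(t)\simeq g_b$ and $e^{-f_{g(t)}}\simeq 1$ with time-independent constants, and (ii) correctly matching the weighted measure $e^{-f_{g}}d\mu_{g}$ appearing in the monotonicity formula with the unweighted $L^2(g_b)$-norm appearing both in \eqref{basic-ass-2} and in Proposition \ref{stabilityALE-RF-L2}; once these comparisons are in place the three estimates are, respectively, an ODE integration, a direct application of Proposition \ref{stabilityALE-RF-L2}, and the single-parameter differentiation trick above with the distinguished exponent $a=\tfrac{\theta}{2}(1+\eta)-\eta$.
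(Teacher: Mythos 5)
Your proposal is correct and follows essentially the same route as the paper: the left inequality from the monotonicity formula plus the local-maximum property, the decay via integrating the \L{}ojasiewicz differential inequality $\frac{d}{dt}(-\lambda_{\operatorname{ALE}})\leq -C(-\lambda_{\operatorname{ALE}})^{2-\theta}$, the passage to \eqref{ric-hess-L2-control} via Proposition \ref{stabilityALE-RF-L2}, and finally differentiating $|\lambda_{\operatorname{ALE}}|^{\gamma}$ with the distinguished exponent $\gamma=\frac{\theta}{2}(1+\eta)-\eta$ exactly as in the paper (which itself follows Haslhofer--M\"uller). The only cosmetic difference is how the degenerate case $\lambda_{\operatorname{ALE}}(g(t))=0$ is handled (you use monotonicity to reduce to the nonvanishing case, the paper regularizes by $\tilde{\varepsilon}$ and passes to the limit); both are valid.
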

\begin{rk}
	The right-hand side of \eqref{ric-hess-L2-control} also tends to $0$ as $s\to \infty$ for some negative values of $\theta$. Recall that up to a second order error, we have $ \lambda_{\operatorname{ALE}}(g) \sim \|g-g_b\|^2_{H^1_{n/2-1}} $ if $g-g_b\perp \ker_{L^2}L_{g_b}$. A natural question would be: can the Ricci flows considered in the proof of Theorem \ref{theo-dyn-stab-loc-max-lambda} still converge in $H^1_{n/2-1}$ if a sufficiently good $L^2$-\L{}ojasiewicz inequality does not hold?
\end{rk}
\begin{proof}
	%\todo[inline]{En fait, Haslhofer-Muller commencent le flot par Ricci-Deturck pour tout lisser au départ du flot est-ce qu'on n'a pas besoin de ça pour que l'interpolation n'utilise pas des normes $C^k$ trop hautes ?}
Observe first that by definition of the Ricci flow together with the first variation of $\lambda_{\operatorname{ALE}}$ computed in [(\ref{mono-lambda}), Proposition \ref{first-var-prop}], one has for $t>s\geq0$,
\begin{equation}
\begin{split}\label{int-L-2-inequ-lambda}
2\int_s^{t}\|\Ric(g(t'))+\nabla^{g(t'),2}f_{g(t')}\|_{L^2(e^{-f_{g(t)}}d\mu_{g(t)})}^2\,dt'&=\lambda_{\operatorname{ALE}}(g(t))-\lambda_{\operatorname{ALE}}(g(s))\\
&\leq -\lambda_{\operatorname{ALE}}(g(s))\leq |\lambda_{\operatorname{ALE}}(g(0))|.
\end{split}
\end{equation}
Here, we have used the fact that $0=\lambda_{\operatorname{ALE}}(g_b)\geq \lambda_{\operatorname{ALE}}(g)$ for all $g\in B_{C^{2,\alpha}_{\tau}}(g_b,\varepsilon)$ since $g_b$ is a local maximizer of $\lambda_{\operatorname{ALE}}$ by assumption. This proves the first part of (\ref{crucial-est-int-l2-norm-time-der}) since the measures $e^{-f_{g(t)}}d\mu_{g(t)}$ and $d\mu_{g(t)}$ are uniformly equivalent in time and space.

By using the \L{}ojasiewicz inequality for ALE metrics given by Inequality \ref{basic-ass-2} with exponent $\theta$, one gets:
\begin{equation}
\begin{split}\label{int-L-2-inequ-lambda}
\frac{d}{dt}(-\lambda_{\operatorname{ALE}}(g(t)))\leq -C(-\lambda_{\operatorname{ALE}}(g(t)))^{2-\theta},\quad t\in(0,T),
\end{split}
\end{equation}
for some positive constant $C$ independent of time.

Integrating this differential inequality leads to:

\begin{equation}
(-\lambda_{\operatorname{ALE}}(g(t)))\leq \frac{|\lambda_{\operatorname{ALE}}(g(0))|}{\left(1+C|\lambda_{\operatorname{ALE}}(g(0))|^{1-\theta}\cdot t\right)^{\frac{1}{1-\theta}}},\quad 0\leq t<T,\label{int-ineq-diff}
\end{equation}
for some positive constant $C$ independent of time. This gives us the full estimate (\ref{crucial-est-int-l2-norm-time-der}). The a priori $L^2$-estimate (\ref{ric-hess-L2-control}) follows by invoking the previous bound (\ref{crucial-est-int-l2-norm-time-der}) together with Proposition \ref{stabilityALE-RF-L2}.\\

Now, on the one hand, consider the function $t\rightarrow(-\lambda_{\operatorname{ALE}}(g(t)))^{\gamma}$ for some positive constant $\gamma$ to be constrained later. Observe as in \cite{Has-Mul} that if $\eta\in[0,\theta/(2-\theta))$ for some $\theta\in[0,1)$:
\begin{equation}
\begin{split}\label{diff-evo-equ-lambda-exp}
\frac{d}{dt}(-\lambda_{\operatorname{ALE}}(g(t)))^{\gamma}&=-\gamma (-\lambda_{\operatorname{ALE}}(g(t)))^{\gamma-1}\frac{d}{dt}\lambda_{\operatorname{ALE}}(g(t))\\
&=-2\gamma (-\lambda_{\operatorname{ALE}}(g(t)))^{\gamma-1}\|\Ric(g(t))+\nabla^{g(t),2}f_{g(t)}\|^2_{L^2}\\
&=-2\gamma (-\lambda_{\operatorname{ALE}}(g(t)))^{\gamma-1}\|\Ric(g(t))+\nabla^{g(t),2}f_{g(t)}\|_{L^2}^{(1+\eta)+(1-\eta)}\\
&\leq-C\gamma (-\lambda_{\operatorname{ALE}}(g(t)))^{\gamma-1+(1+\eta)\left(1-\frac{\theta}{2}\right)}\|\Ric(g(t))+\nabla^{g(t),2}f_{g(t)}\|_{L^2}^{1-\eta}\\
&=-C\gamma \|\Ric(g(t))+\nabla^{g(t),2}f_{g(t)}\|_{L^2}^{1-\eta},
\end{split}
\end{equation}
if $$\gamma:=\frac{\theta}{2}(1+\eta)-\eta>0.$$ Here we have used the \L{}ojasiewicz inequality for ALE metrics given by Inequality \ref{basic-ass-2} with exponent $\theta$ in the fourth line.

Integrating (\ref{diff-evo-equ-lambda-exp}) in time, one gets the expected result, i.e.
 \begin{equation}
\begin{split}
\int_s^{t}\|\Ric(g(t'))+\nabla^{g(t'),2}f_{g(t')}\|_{L^2}^{1-\eta}\,dt'
&\leq \frac{1}{C\gamma}|\lambda_{\operatorname{ALE}}(g(s))|^{\gamma},\quad t>s\geq0.
\end{split}
\end{equation}
This holds true whenever this function is differentiable, i.e. as long as $\lambda_{\operatorname{ALE}}(g(t))$ is not zero. Otherwise, one can adapt this reasoning by considering $t\rightarrow (-\lambda_{\operatorname{ALE}}(g(t))+\tilde{\varepsilon})^{\gamma}$ with $\tilde{\varepsilon}$ small and then we let $\tilde{\varepsilon}$ go to $0$.
This concludes the proof of (\ref{est-l2-ric-hess-eta-a-priori}) by invoking Proposition \ref{stabilityALE-RF-L2} once more.

%\todo{Sous nos hypothèses, $\lambda_{\operatorname{ALE}}\leq 0$ avec égalité pour les métriques Ricci-plates, non ? Oui, \c c a \'evite juste de faire diff\'erents cas selon que le flot de Ricci est Ricci plat en temps fini ou non...}

\end{proof}

We are now in a good position to prove Proposition \ref{claim-T-max-closed}.

\begin{proof}[Proof of Proposition \ref{claim-T-max-closed}]
We proceed by establishing successive claims which lead to the expected result. It is sufficient to prove the corresponding estimates for large time, i.e. if $t\geq \frac{T(n,g_b)}{2}$.
\begin{claim}\label{claim-c0-a-priori}
For $t\in[0,T)$, (\ref{inequ-c-0-prop-closed-int-ric}) and the a priori $C^0_{\tau}$ estimate \eqref{inequ-c-0-tau-prop-closed} hold true.
\end{claim}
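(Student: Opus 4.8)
The plan is to reduce both bounds to the a priori space--time $L^2$ decay of the Ricci tensor furnished by Lemma \ref{stabilityALE}, and then to upgrade this to $C^0$-type information by means of the heat-kernel smoothing of Proposition \ref{coro-A-priori-L^2-C^0-est-RF} and the weighted estimate of Lemma \ref{lemma-a-priori-wei-metr-wei}. Throughout, I work on the interval on which $g(t)\in B_{C^{2,\alpha}_\tau}(g_b,\varepsilon)$, so that $\sup_t\|\Rm(g(t))\|_{C^0_{\tau+2}}\le C(n,g_b)$ and $\sup_t\|\Rm(g(t))\|_{C^0}\le C(n,g_b,\varepsilon)$, and I invoke the standing smallness condition $\int_0^t\|\Ric(g(t'))\|_{C^0}\,dt'\le \frac{1}{2n}$ needed to apply Theorem \ref{Gaussian-est-scal-heat-ker} and its corollaries. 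The estimate \eqref{inequ-c-0-prop-closed-int-ric} that we are about to prove re-establishes this bound with a strictly smaller constant once $\delta$ and $|\lambda_{\operatorname{ALE}}(g(0))|$ are small (the latter being small because $\lambda_{\operatorname{ALE}}$ is continuous at $g_b$ with $\lambda_{\operatorname{ALE}}(g_b)=0$), so that the condition propagates by a standard continuity/bootstrap argument rather than being assumed outright.

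To prove \eqref{inequ-c-0-prop-closed-int-ric} I would split the integral at the fixed time $T_0:=T(n,g_b)$. On $[0,T_0]$, Proposition \ref{claim-T-max-pos} already yields $\|\Ric(g(t'))\|_{C^{0,\alpha}_{\tau+2}}\le C(n,g_b)\delta$, hence $\int_0^{T_0}\|\Ric(g(t'))\|_{C^0}\,dt'\le C\delta$. For $t'\ge T_0$ I apply Proposition \ref{coro-A-priori-L^2-C^0-est-RF} over the window of fixed length $r^2=T_0$: since the curvature is uniformly bounded, the exponential factor $\exp\!\big(c(n)\int_{t'-T_0}^{t'}\|\Rm(g(s))\|_{C^0}\,ds\big)$ is controlled by a time-independent constant $C(n,g_b,\varepsilon)$, so \eqref{int-ric-c-0} becomes $\|\Ric(g(t'))\|_{C^0}\le C(n,g_b,\varepsilon)\,\|\Ric(g(t'-T_0))\|_{L^2}$. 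Integrating and changing variables,
\[
\int_{T_0}^t\|\Ric(g(t'))\|_{C^0}\,dt'\le C\int_0^{t-T_0}\|\Ric(g(s))\|_{L^2}\,ds\le C\,|\lambda_{\operatorname{ALE}}(g(0))|^{\frac{\theta}{2}},
\]
the last step being exactly \eqref{est-l2-ric-hess-eta-a-priori} of Lemma \ref{stabilityALE} with $\eta=0$ and $s=0$ (note $0\in[0,\frac{\theta}{2-\theta})$). Adding the two contributions gives \eqref{inequ-c-0-prop-closed-int-ric}.

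The weighted estimate \eqref{inequ-c-0-tau-prop-closed} then follows immediately from the a priori bound \eqref{dream-C-0-wei-dist-met-stat} of Lemma \ref{lemma-a-priori-wei-metr-wei} applied with $s=0$: using $\|\Ric(g(0))\|_{C^0_{\tau+2}}\le C\|g(0)-g_b\|_{C^{2,\alpha}_\tau}\le C\delta$ (since $\Ric(g_b)=0$) together with \eqref{inequ-c-0-prop-closed-int-ric}, one obtains $\|g(t)-g(0)\|_{C^0_\tau}\le C(\delta+|\lambda_{\operatorname{ALE}}(g(0))|^{\frac{\theta}{2}})$.

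The crux of the argument, and the step I expect to be most delicate, is the passage from the space--time $L^2$ control of $\Ric$ coming from the \L{}ojasiewicz inequality to the $L^1$-in-time, $C^0$-in-space control required here; this is precisely where the uniform-in-time heat-kernel smoothing of Proposition \ref{coro-A-priori-L^2-C^0-est-RF} (resting on the Gaussian bounds of Theorem \ref{Gaussian-est-scal-heat-ker}) is indispensable, and where the use of a window of \emph{fixed} length $T_0$ is what keeps the smoothing constant time-independent. The secondary difficulty is the self-consistency of the standing hypothesis $\int_0^t\|\Ric\|_{C^0}\le\frac{1}{2n}$, which must be closed by the bootstrap described above rather than postulated.
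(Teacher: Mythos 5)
Your proof is correct and follows essentially the same route as the paper: split time at the fixed scale $T(n,g_b)$, bound the short-time contribution by $C\delta$, convert the long-time $C^0$ norm of $\Ric$ into the $L^2$ norm at a fixed earlier time via Proposition \ref{coro-A-priori-L^2-C^0-est-RF}, integrate using \eqref{est-l2-ric-hess-eta-a-priori} with $\eta=0$, and then deduce \eqref{inequ-c-0-tau-prop-closed} from \eqref{dream-C-0-wei-dist-met-stat}. The only cosmetic deviations are the choice of window length ($T_0$ versus $T_0/2$) and citing Proposition \ref{claim-T-max-pos} rather than Proposition \ref{prop-a-priori-L2-est-rm-ric} for the short-time piece, and your bootstrap closing of the hypothesis $\int_0^t\|\Ric(g(t'))\|_{C^0}\,dt'\leq\frac{1}{2n}$ matches the paper's smallness-of-$\delta$ argument.
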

\begin{proof}[Proof of Claim \ref{claim-c0-a-priori}]
Let us check that the assumptions of Lemma \ref{lemma-a-priori-wei-metr-wei} are satisfied. It is sufficient to check condition (\ref{hyp-ad-hoc-ric-2}) since we assume $g(t)\in B_{C^{2,\alpha}_{\tau}}(g_b,\varepsilon)$. Since $g(t)\in B_{C^{2,\alpha}_{\tau}}(g_b,\varepsilon)$ for all $t\in[0,T)$ then if $r^2:=\frac{T(n,g_b)}{2}$, it is straightforward to check that $\int_{t-r^2}^t\|\R_{g(s)}\|_{C^0}\,ds\leq \frac{1}{2}$ by considering $\varepsilon$ small enough. This fact lets us to use Proposition \ref{coro-A-priori-L^2-C^0-est-RF} to get for $0<r^2:=\frac{T(n,g_b)}{2}<t$:
\begin{equation}
\begin{split}
\|\Ric(g(t))\|_{C^0}\leq\,& C(n,g_b,\varepsilon)\exp\left(c(n)\int_{t-r^2}^t\|\Rm(g(s))\|_{C^0}\,ds\right)\|\Ric(g(t-r^2)\|_{L^2}\\
\leq\,&C(n,g_b,\varepsilon)\exp\left(C(n,g_b,\varepsilon)r^2\right)\|\Ric(g(t-r^2)\|_{L^2}\\
\leq\,&C(n,g_b,\varepsilon)\|\Ric(g(t-r^2)\|_{L^2}.\label{intermed-c0-L2-claim}
\end{split}
\end{equation}
Now, \eqref{intermed-c0-L2-claim} and [\eqref{est-l2-ric-hess-eta-a-priori}, Lemma \ref{stabilityALE}] with $s:=0<\frac{T(n,g_b)}{2}\leq t<T$ and $\eta=0$ imply:
\begin{equation}
\begin{split}\label{intermed-c0-int-t-claim}
\int_0^t\|\Ric(g(t'))\|_{C^0}\,dt'\leq\,&\int_0^{\frac{T(n,g_b)}{2}}\|\Ric(g(t'))\|_{C^0}\,dt'+C(n,g_b,\varepsilon,\theta)|\lambda_{\operatorname{ALE}}(g(0))|^{\frac{\theta}{2}}\\
\leq\,&C(n,g_b)\|\Ric(g(0))\|_{C^0}+C(n,g_b,\varepsilon,\theta)|\lambda_{\operatorname{ALE}}(g(0))|^{\frac{\theta}{2}}\\
\leq\,&C(n,g_b,\varepsilon,\theta)\left(\delta+|\lambda_{\operatorname{ALE}}(g(0))|^{\frac{\theta}{2}}\right),
\end{split}
\end{equation}
where we have used Proposition \ref{prop-a-priori-L2-est-rm-ric} in the second inequality. In particular, if $\delta$ (and therefore $|\lambda_{\operatorname{ALE}}(g(0))|$) is chosen small enough, then \eqref{hyp-ad-hoc-ric-2} is satisfied.

Applying [\eqref{dream-C-0-wei-dist-met-stat}, Lemma \ref{lemma-a-priori-wei-metr-wei}] to $s:=0< t<T$ gives:
\begin{equation*}
\begin{split}
\|g(t)-g(0)\|_{C^0_{\tau}}\leq\,& C(n,g_b)\left(\|\Ric(g(0))\|_{C^0_{\tau+2}}+\int_0^t\|\Ric(g(s'))\|_{C^0}\,ds'\right)\\
\leq\,&C(n,g_b,\varepsilon,\theta)\left(\delta+|\lambda_{\operatorname{ALE}}(g(0))|^{\frac{\theta}{2}}\right),
\end{split}
\end{equation*}
where we have used (\ref{intermed-c0-int-t-claim}) in the second line. This concludes the proof of Claim \ref{claim-c0-a-priori}.

\end{proof}
\begin{claim}\label{claim-Ric-C-2-alpha}
For $t\in[0,T)$ and $\eta\in\left(0,\frac{\theta}{2-\theta}\right)$,
\begin{equation}\label{est-claim-Ric-C-2-alpha}
\|g(t)-g(0)\|_{C^{2,\alpha}_{\tau}}\leq C(n,g_b,\varepsilon,\theta,\eta)\left(\delta^{1-\eta}+|\lambda_{\operatorname{ALE}}(g(0))|^{\frac{\theta}{2}(1+\eta)-\eta}\right).
\end{equation}
\end{claim}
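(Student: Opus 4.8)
The plan is to combine the a priori weighted Hölder estimate of Lemma \ref{a-priori-wei-lemma-Holder-met} with the integrated $L^2$ decay of Lemma \ref{stabilityALE}, bridging the two through the $L^2\to C^0$ smoothing of Proposition \ref{coro-A-priori-L^2-C^0-est-RF}. As noted at the start of the proof of Proposition \ref{claim-T-max-closed}, it suffices to treat large time $t\geq \frac{T(n,g_b)}{2}$. Since $g(t)\in B_{C^{2,\alpha}_{\tau}}(g_b,\varepsilon)$ for all $t\in[0,T)$, the standing hypotheses of Lemma \ref{a-priori-wei-lemma-Holder-met} (uniform curvature decay in $C^0_{\tau+2}$, uniformly bounded higher covariant derivatives via Shi's estimates, and \eqref{hyp-ad-hoc-ric-4}, verified exactly as in Claim \ref{claim-c0-a-priori} by shrinking $\varepsilon$ and $\delta$) are all in force. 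Applying that lemma with $s:=0$ and our fixed $\alpha\in(0,\min\{1,\tau-1,n-2-\tau\})$ gives, for any $\eta\in(0,1)$,
\begin{equation*}
\|g(t)-g(0)\|_{C^{2,\alpha}_{\tau}}\leq C_{\eta}\left(\|\Ric(g(0))\|_{C^{0,\alpha}_{\tau+2}}+\int_0^t\|\Ric(g(t'))\|^{1-\eta}_{C^0}\,dt'\right).
\end{equation*}
Since $\Ric$ is a second-order expression in $g$, the first term is controlled by $C\|g(0)-g_b\|_{C^{2,\alpha}_{\tau}}\leq C\delta\leq C\delta^{1-\eta}$ because $\delta<1$, so the whole matter reduces to estimating the time integral.

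For the integral I would split the interval at $r^2:=\frac{T(n,g_b)}{2}$. On the short-time part $[0,\frac{T(n,g_b)}{2}]$, the bounds obtained in the proof of Proposition \ref{claim-T-max-pos} give $\|\Ric(g(t'))\|_{C^0}\leq C(n,g_b)\delta$, hence $\int_0^{T(n,g_b)/2}\|\Ric(g(t'))\|^{1-\eta}_{C^0}\,dt'\leq C\delta^{1-\eta}$. On the large-time part, I would invoke Proposition \ref{coro-A-priori-L^2-C^0-est-RF} with this $r^2$; its hypotheses hold as in Claim \ref{claim-c0-a-priori}, and since $\|\Rm(g(s))\|_{C^0}$ is uniformly bounded the exponential factor is absorbed, yielding $\|\Ric(g(t'))\|_{C^0}\leq C\|\Ric(g(t'-r^2))\|_{L^2}$ for $t'\geq r^2$. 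After the substitution $s'=t'-r^2$ this gives
\begin{equation*}
\int_{T(n,g_b)/2}^t\|\Ric(g(t'))\|^{1-\eta}_{C^0}\,dt'\leq C\int_0^{t-r^2}\|\Ric(g(s'))\|^{1-\eta}_{L^2}\,ds'\leq C\int_0^t\|\Ric(g(s'))\|^{1-\eta}_{L^2}\,ds'.
\end{equation*}
Now applying the a priori estimate \eqref{est-l2-ric-hess-eta-a-priori} of Lemma \ref{stabilityALE} with $s:=0$ — which is legitimate precisely when $\eta\in[0,\frac{\theta}{2-\theta})$ — bounds the right-hand side by $C|\lambda_{\operatorname{ALE}}(g(0))|^{\frac{\theta}{2}(1+\eta)-\eta}$. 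Assembling the short- and large-time contributions and the initial-data term produces exactly the asserted bound \eqref{est-claim-Ric-C-2-alpha}.

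The main point to watch is that the exponent $\eta$ must simultaneously satisfy the requirement $\eta\in(0,1)$ coming from Lemma \ref{a-priori-wei-lemma-Holder-met} and the requirement $\eta\in[0,\frac{\theta}{2-\theta})$ coming from \eqref{est-l2-ric-hess-eta-a-priori}; since $\frac{\theta}{2-\theta}<1$ for $\theta\in(0,1)$, the latter is the binding constraint, which is why the claim is stated for $\eta\in\left(0,\frac{\theta}{2-\theta}\right)$. The only genuinely delicate step is the $C^0$-to-$L^2$ passage via Proposition \ref{coro-A-priori-L^2-C^0-est-RF}, which introduces the time shift by $r^2$ and forces the short-/large-time splitting; once the uniform curvature bound guarantees the smoothing constant is time-independent, the remainder is a direct concatenation of the two previously established a priori estimates.
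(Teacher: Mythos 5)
Your overall strategy --- feeding the smoothing estimate of Proposition \ref{coro-A-priori-L^2-C^0-est-RF} and the integrated decay \eqref{est-l2-ric-hess-eta-a-priori} of Lemma \ref{stabilityALE} into the weighted H\"older estimate of Lemma \ref{a-priori-wei-lemma-Holder-met} --- is the same as the paper's, and your treatment of the time integral (splitting at $T(n,g_b)/2$, short-time bound $\|\Ric(g(t'))\|_{C^0}\leq C\delta$, then the $C^0$-to-$L^2$ passage with the shift by $r^2$) matches the paper's argument in Claim \ref{claim-c0-a-priori}. The gap is in your very first step: you apply Lemma \ref{a-priori-wei-lemma-Holder-met} with $s:=0$, claiming its hypotheses hold on all of $[0,T)$ ``via Shi's estimates''. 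They do not. The lemma requires $\sup_{t\in[0,T)}\|\nabla^{g(t),k}\Rm(g(t))\|_{C^0}\leq C_k$ for all $k\geq 1$, with constants depending only on $(n,g_b,\varepsilon)$; but the initial metric is merely $C^{2,\alpha}_{\tau}$-close to $g_b$, so $\nabla^{g(0),k}\Rm(g(0))$ need not even be bounded, and Shi's estimates (Proposition \ref{prop-a-priori-C0-est-rm-ric}) only give $t^{\frac{k}{2}}\|\nabla^{g(t),k}\Rm(g(t))\|_{C^0}\leq C_k$, which degenerates as $t\to 0^{+}$. These uniform derivative bounds are used essentially in the proofs of Lemmata \ref{a-priori-wei-lemma-C0-Ric} and \ref{a-priori-wei-lemma-Holder-met} (through the interpolation inequalities of Lemma \ref{inter-inequ-gag-nir}), so the estimate you invoke is simply not available on an interval containing $t=0$. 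Even if one insisted on smooth initial data, the constants would then depend on $\|\nabla^{g(0),k}\Rm(g(0))\|_{C^0}$, which is not controlled by $\delta$ or $\varepsilon$, and this would destroy the uniformity of $C(n,g_b,\varepsilon,\theta,\eta)$ needed for the closedness argument in Proposition \ref{claim-T-max-closed}.

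The paper avoids this by never letting the lemma see a neighborhood of $t=0$: it writes $\|g(t)-g(0)\|_{C^{2,\alpha}_{\tau}}\leq \|g(t)-g(T(n,g_b)/2)\|_{C^{2,\alpha}_{\tau}}+\|g(T(n,g_b)/2)-g(0)\|_{C^{2,\alpha}_{\tau}}$, bounds the second term by $C\delta$ using the short-time analysis of Proposition \ref{claim-T-max-pos}, and applies Lemma \ref{a-priori-wei-lemma-Holder-met} only with $s=T(n,g_b)/2$. At that time Proposition \ref{prop-a-priori-C0-est-rm-ric} gives bounds on all covariant derivatives of the curvature with constants $C(n,k,g_b,\varepsilon)$, and Lemma \ref{lemma-shi-global} propagates them uniformly to all later times; the initial term produced by the lemma is then $\|\Ric(g(T(n,g_b)/2))\|_{C^{0,\alpha}_{\tau+2}}\leq C\delta$ by the short-time estimate of Lemma \ref{lemma-short-time-est-curv}. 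With this modification the rest of your argument goes through verbatim; but as written, the invocation of Lemma \ref{a-priori-wei-lemma-Holder-met} at $s=0$ is a genuine error, not a cosmetic one.
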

\begin{proof}[Proof of Claim \ref{claim-Ric-C-2-alpha}]
It is sufficient to prove this claim if $t\geq \frac{T(n,g_b)}{2}$.

In order to use Lemma \ref{a-priori-wei-lemma-Holder-met}, we first check that for each $k\geq 1$, there exists some time-independent positive constant $C_k=C(n,k,g_b,\varepsilon)$ such that $\|\nabla^{g(t),k}\Rm(g(t))\|_{C^0}\leq C_k$ for $t\geq\frac{T(n,g_b)}{2}$. Since $g(t)\in B_{C^{2,\alpha}_{\tau}}(g_b,\varepsilon)$ for all $t\in[0,T)$ by assumption, $\sup_{t\in[0,T)}\|\Rm(g(t))\|_{C^0}\leq C(n,g_b,\varepsilon)$. In particular, Proposition \ref{prop-a-priori-C0-est-rm-ric} implies that $\|\nabla^{g(t),k}\Rm(g(t))\|_{C^0}\leq C_k$ for $t=\frac{T(n,g_b)}{2}$. Therefore, Lemma \ref{lemma-shi-global} applies and guarantees that such uniform-in-time bounds on the covariant derivatives of the curvature tensor hold true for $t\geq \frac{T(n,g_b)}{2}$.\\

Lemma \ref{a-priori-wei-lemma-Holder-met} is therefore applicable: if $\eta\in(0,1)$,
\begin{equation}
\begin{split}
\|g(t)-g(0)\|_{C^{2,\alpha}_{\tau}}\leq\,& \left\|g(t)-g\left(\frac{T(n,g_b)}{2}\right)\right\|_{C^{2,\alpha}_{\tau}}+\left\|g\left(\frac{T(n,g_b)}{2}\right)-g(0)\right\|_{C^{2,\alpha}_{\tau}}\\
\leq\,&C\|g(0)-g_b\|_{C^{2,\alpha}_{\tau}}\\
&+C_{\eta}\left(\left\|\Ric\left(g\left(\frac{T(n,g_b)}{2}\right)\right)\right\|_{C^{0,\alpha}_{\tau+2}}+\int_{\frac{T(n,g_b)}{2}}^t\|\Ric(g(t'))\|^{1-\eta}_{C^0}\,dt'\right).
\end{split}
\end{equation}
Now, a similar argument based on [(\ref{est-l2-ric-hess-eta-a-priori}), Lemma \ref{stabilityALE}] as in the proof of Claim \ref{claim-c0-a-priori} leads to:
\begin{equation*}
\|g(t)-g(0)\|_{C^{2,\alpha}_{\tau}}\leq C\delta+ C_{\eta}\left(\delta^{1-\eta}+|\lambda_{\operatorname{ALE}}(g(0))|^{\frac{\theta}{2}(1+\eta)-\eta}\right),
\end{equation*}
under the restriction that $\eta\in\left(0,\frac{\theta}{2-\theta}\right)$. This ends the proof of Claim \ref{claim-Ric-C-2-alpha}.
\end{proof}
Claim \ref{claim-Ric-C-2-alpha} ends the proof of this proposition by choosing $\delta$ (and therefore $\lambda_{\operatorname{ALE}}(g(0))$) so small so that the righthand side of (\ref{est-claim-Ric-C-2-alpha}) is less than $\frac{\varepsilon}{2}$, say.
\end{proof}
Notice that Propositions \ref{claim-T-max-pos} and \ref{claim-T-max-closed} prove that $T_{\operatorname{max}}=+\infty$, i.e. the solution to the Ricci flow considered at the beginning of this Section \ref{subsec-theo-proof-stab} is immortal.
 
The last step to prove Theorem \ref{theo-dyn-stab-loc-max-lambda} is summarized in the following proposition.
\begin{prop}\label{prop-exi-lim-conv-rate}
Let $(N^n,g_b)$ be a stable Ricci-flat ALE metric such that Inequality \ref{basic-ass-2} holds on a neighborhood $B_{C^{2,\alpha}_{\tau}}(g_b,\varepsilon_{\L})$ with exponent $\theta\in(0,1)$. Let $(g(t))_{t\in[0,+\infty)}$ be a solution to the Ricci flow in $B_{C^{2,\alpha}_{\tau}}(g_b,\varepsilon)$, $\varepsilon<\varepsilon_{\L}$. Then there exists an ALE Ricci flat metric $g_{\infty}\in B_{C^{2,\alpha}_{\tau}}(g_b,\varepsilon)$ such that:
\begin{equation}
\|g(t)-g_{\infty}\|_{C^{0}}\leq Ct^{-\frac{\theta}{2(1-\theta)}},\quad t\geq 1,\label{est-dec-time-c0-prop-sharp}
\end{equation}
    for some positive constant $C=C(n,g_b,\varepsilon,\theta)$. 
    %Moreover,     \begin{equation}
  %\|g(t)-g_{\infty}\|_{C^{0}_{\tau}}\leq Ct^{-\frac{\theta}{1-\theta}(1-\},\quad t\geq 1,\label{est-dec-time-c0-prop-eta}
    %\end{equation}
    %for some positive constant $C=C(n,g_b,\varepsilon,\theta)$. 
     %In particular, for $\alpha'\in(0,\alpha)$,
    %\begin{equation}
    %\|g(t)-g_{\infty}\|_{C^{2,\alpha'}_{\tau}}\leq Ct^{-\frac{(\alpha-\alpha')}{2+\alpha}\frac{\theta}{(\tau+2)(1-\theta)}},\quad t\geq 1,\label{est-dec-time-c2-alpha-prime}
%\end{equation}
  %for some positive constant $C=C(n,g_b,\varepsilon,\alpha,\tau)$.
\end{prop}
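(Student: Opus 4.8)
The plan is to read off both the existence of the limit $g_\infty$ and the sharp $C^0$-rate from the integral controls already established, the decisive input being the polynomial decay of $|\lambda_{\operatorname{ALE}}(g(t))|$ supplied by Lemma \ref{stabilityALE}. First I would produce the limit. Since the flow satisfies $\partial_t g=-2\Ric(g)$, for $0\leq s<t$ one has $\|g(t)-g(s)\|_{C^0}\leq 2\int_s^t\|\Ric(g(t'))\|_{C^0}\,dt'$, and by \eqref{inequ-c-0-prop-closed-int-ric} of Proposition \ref{claim-T-max-closed} the full integral $\int_0^{+\infty}\|\Ric(g(t'))\|_{C^0}\,dt'$ is finite, so its tail tends to $0$ and $(g(t))_t$ is Cauchy in $C^0$; call the limit $g_\infty$. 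The uniform $C^{2,\alpha}_\tau$-bound of Proposition \ref{claim-T-max-closed} together with the compact embedding of Lemma \ref{lemme-Chal-Cho-Bru} gives $C^{2,\alpha'}_{\tau'}$-subconvergence as $t\to+\infty$ to a limit which, by uniqueness of the $C^0$-limit, is $g_\infty$; hence $g_\infty\in B_{C^{2,\alpha}_\tau}(g_b,\varepsilon)$ is ALE, and interpolating the $C^0$-convergence against the uniform $C^{2,\alpha}_\tau$-bound upgrades the convergence to $C^{2,\alpha'}_{\tau'}$ for all $\tau'<\tau$, $\alpha'<\alpha$.

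Next I would check that $g_\infty$ is Ricci-flat. From \eqref{est-l2-ric-hess-eta-a-priori} with $\eta=0$ the integral $\int_0^{+\infty}\|\Ric(g(t'))\|_{L^2}\,dt'$ is finite, so $\liminf_{t\to+\infty}\|\Ric(g(t))\|_{L^2}=0$; since the covariant derivatives of the curvature are uniformly bounded for $t\geq T(n,g_b)/2$, the function $t\mapsto\|\Ric(g(t))\|_{L^2}$ is uniformly continuous, and therefore $\|\Ric(g(t))\|_{L^2}\to 0$. Inserting this into the $C^0$-$L^2$ estimate \eqref{int-ric-c-0} of Proposition \ref{coro-A-priori-L^2-C^0-est-RF} with the fixed radius $r^2=T(n,g_b)/2$ gives $\|\Ric(g(t))\|_{C^0}\to 0$, so passing to the $C^{2,\alpha'}_{\tau'}$-limit yields $\Ric(g_\infty)=0$.

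For the rate I would integrate $\partial_t g=-2\Ric$ from $t$ to $+\infty$, giving $\|g(t)-g_\infty\|_{C^0}\leq 2\int_t^{+\infty}\|\Ric(g(s))\|_{C^0}\,ds$. Since the bare $C^0$-integral carries no usable rate, the idea is to trade $C^0$ for $L^2$ at the cost of a fixed backward time-shift: Proposition \ref{coro-A-priori-L^2-C^0-est-RF} with $r^2=T(n,g_b)/2$ gives $\|\Ric(g(s))\|_{C^0}\leq C\|\Ric(g(s-r^2))\|_{L^2}$, whence
\begin{equation*}
\int_t^{+\infty}\|\Ric(g(s))\|_{C^0}\,ds\leq C\int_{t-r^2}^{+\infty}\|\Ric(g(u))\|_{L^2}\,du\leq C\,|\lambda_{\operatorname{ALE}}(g(t-r^2))|^{\theta/2},
\end{equation*}
the last inequality being \eqref{est-l2-ric-hess-eta-a-priori} with $\eta=0$. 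The decay \eqref{crucial-est-int-l2-norm-time-der}, which yields $|\lambda_{\operatorname{ALE}}(g(s))|\leq C\,s^{-1/(1-\theta)}$ for $s$ large, then gives $\|g(t)-g_\infty\|_{C^0}\leq C\,t^{-\theta/(2(1-\theta))}$ for $t\geq 1$, using $t-r^2\geq t/2$ for $t$ large and absorbing the bounded range into the constant.

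The hard part is exactly this transfer of the sharp polynomial rate from the $L^2$-\L{}ojasiewicz structure, where it is generated via the decay of $\lambda_{\operatorname{ALE}}$, to the $C^0$-norm in which convergence is asserted: one cannot estimate $\int_t^{+\infty}\|\Ric\|_{C^0}$ directly with a good rate, and it is the heat-kernel based $C^0$-$L^2$ smoothing estimate of Proposition \ref{coro-A-priori-L^2-C^0-est-RF} that bridges the gap, at the modest cost of the fixed shift $r^2=T(n,g_b)/2$, which must be carried through the decay estimate.
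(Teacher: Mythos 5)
Your proposal is correct in structure and follows essentially the same route as the paper: the limit is extracted from the uniform $C^{2,\alpha}_{\tau}$-bound via the compact embedding of Lemma \ref{lemme-Chal-Cho-Bru}, and the rate \eqref{est-dec-time-c0-prop-sharp} is obtained exactly as in the paper, by writing $\|g(t)-g_{\infty}\|_{C^0}\leq 2\int_t^{\infty}\|\Ric(g(s))\|_{C^0}\,ds$, converting $C^0$ into $L^2$ at a fixed time-shift through Proposition \ref{coro-A-priori-L^2-C^0-est-RF}, and then invoking [\eqref{est-l2-ric-hess-eta-a-priori}, Lemma \ref{stabilityALE}] with $\eta=0$ together with the decay \eqref{crucial-est-int-l2-norm-time-der} of $\lambda_{\operatorname{ALE}}$.

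The only place you genuinely deviate is the Ricci-flatness of $g_{\infty}$, and there your justification is too quick. The claim that uniform bounds on $\|\nabla^{g(t),k}\Rm(g(t))\|_{C^0}$ make $t\mapsto\|\Ric(g(t))\|_{L^2}$ uniformly continuous is not immediate on a noncompact manifold: differentiating, $\frac{d}{dt}\|\Ric(g(t))\|_{L^2}^2$ contains the term $-2\|\nabla^{g(t)}\Ric(g(t))\|_{L^2}^2$, and unweighted $C^0$-bounds on derivatives of the curvature do not by themselves put $\nabla^{g(t)}\Ric(g(t))$ in $L^2$; one has to interpolate those bounds against the weighted decay $\Ric(g(t))\in C^0_{\tau+2}$ and use $2(\tau+2)>n$ to get a uniform bound on $\|\nabla^{g(t)}\Ric(g(t))\|_{L^2}$. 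The repair is cheap and already in your toolkit: since $\int_0^{\infty}\|\Ric(g(t))\|_{L^2}\,dt<\infty$, there is a sequence $t_i\to\infty$ with $\|\Ric(g(t_i))\|_{L^2}\to 0$, and combined with the uniqueness of the limit you have already established and local $C^2$-convergence along $(t_i)$ this yields $\Ric(g_{\infty})=0$; alternatively, argue as the paper does, using Proposition \ref{prop-a-priori-L2-est-rm-ric} to get $\|\Ric(g(s))\|_{L^2}\leq C\int_{s-1}^{s}\|\Ric(g(u))\|_{L^2}\,du\to 0$ (this is \eqref{est-l2-norm-dec-seq}, which the paper further combines with the smoothing estimate of Proposition \ref{prop-ric-dec-large-time} to obtain the quantitative decay \eqref{sweet-dec-ric-sec} of $\|\Ric(g(t))\|_{C^0}$). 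Finally, note that to call $g_{\infty}$ an ALE Ricci-flat metric in the sense of Definition \ref{defn-ALE}, i.e.\ with decay of all derivatives, membership in $B_{C^{2,\alpha}_{\tau}}(g_b,\varepsilon)$ controls only two derivatives; as in the paper, one should cite \cite{Ban-Kas-Nak} once Ricci-flatness is known.
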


\begin{rk}
The decay in time for the $C^{2,\alpha'}_{\tau'}$-norms, $\tau'\in[0,\tau)$, $\alpha'\in[0,\alpha)$, can be obtained by interpolation between the $C^0$ and $C^{2,\alpha}_{\tau}$ norms together with \eqref{est-dec-time-c0-prop-sharp}. The proof of such interpolation inequalities for H\"older norms can be found for instance in \cite[Chapter $3$, Theorem $3.2.1$]{Kry-Ell-Par-Sch-Boo}.
\end{rk}
\begin{proof}[Proof of Proposition \ref{prop-exi-lim-conv-rate}]
Let us show first that there exists a unique limit metric $g_{\infty}\in B_{C^{2,\alpha}_{\tau}}(g_b,\varepsilon)$ in the $C^{2,\alpha'}_{\tau'}$-topology for any $\tau'\in[0,\tau)$ and $\alpha'\in[0,\alpha)$.

First of all, since we assume $(g(t))_{t\geq 0}$ to lie in $B_{C^{2,\alpha}_{\tau}}(g_b,\varepsilon)$, compactness of the embedding $C^{2,\alpha}_{\tau}\hookrightarrow C^{2,\alpha'}_{\tau'}$, for $\alpha'\in[0,\alpha)$, $\tau'\in[0,\tau)$ ensured by Lemma \ref{lemme-Chal-Cho-Bru}, there is a sequence $(g(t_i))_{i}$ converging to a $2$-tensor $g_{\infty}$ in $B_{C^{2,\alpha}_{\tau}}(g_b,\varepsilon)$ in the $C^{2,\alpha'}_{\tau'}$-topology for any $\tau'\in[0,\tau)$ and $\alpha'\in[0,\alpha)$. The limit $2$-tensor is a Riemannian metric since it is $\varepsilon$-close to $g_b$.\\

Now, if $t\geq t_i\geq 1$, \eqref{intermed-c0-L2-claim} in the proof of Claim \ref{claim-c0-a-priori} together with [\eqref{est-l2-ric-hess-eta-a-priori}, Lemma \ref{stabilityALE}] applied to $\eta=0$ gives the expected decay in time for the $C^0$ norm:
\begin{equation}
\|g(t)-g(t_i)\|_{C^0}\leq 2\int_{t_i}^t\|\Ric(g(s))\|_{C^0}\,ds\leq Ct_i^{-\frac{\theta}{2(1-\theta)}}.\label{real-limit-c0-RF}
\end{equation}
Inequality \eqref{real-limit-c0-RF} shows that the limit metric $g_{\infty}$ is unique, i.e. it does not depend on the sequence $(t_i)_i$. This also proves \eqref{est-dec-time-c0-prop-sharp}.\\

 In order to estimate the decay in time for the $C^0$-norm of the Ricci tensor, we make use of Proposition \ref{prop-ric-dec-large-time} with $s:=\frac{t}{2}$, $t:=t$ and $p>\frac{n}{\tau+2}$ to get:
 \begin{equation}
 \begin{split}\label{ric-est-dec-in-time-ti}
\|\Ric(g(t))\|_{C^0}\leq\,&  Ct^{-\frac{n}{2p}}\left\|\Ric\left(g\left(\frac{t}{2}\right)\right)\right\|_{L^p}+C\sup_{t'\in[0,T)}\|\Rm(g(t'))\|_{C^0}\int_{\frac{t}{2}}^{t}\|\Ric(g(t'))\|_{C^0}\,dt'\\
\leq\,&Ct^{-\frac{n}{2p}}\left\|\Ric\left(g\left(\frac{t}{2}\right)\right)\right\|_{L^p}+C\int_{\frac{t}{2}}^{t}\|\Ric(g(t'))\|_{C^0}\,dt'\\
\leq\,&Ct^{-\frac{n}{2p}}\left\|\Ric\left(g\left(\frac{t}{2}\right)\right)\right\|_{L^p}+Ct^{-\frac{\theta}{2(1-\theta)}},
\end{split}
\end{equation}
where $C=C(n,g_b,\varepsilon,\theta,p)$ is a time-independent positive constant that may vary from line to line. Now, if $p:=2>\frac{n}{\tau+2}$ by the choice of $\tau$ in the previous estimate, Proposition \ref{prop-a-priori-L2-est-rm-ric} together with [\eqref{est-l2-ric-hess-eta-a-priori}, Lemma \ref{stabilityALE}] lead to:
\begin{equation}
\begin{split}\label{est-l2-norm-dec-seq}
\left\|\Ric\left(g\left(\frac{t}{2}\right)\right)\right\|_{L^2}\leq\,& C\int_{\frac{t}{2}-1}^{\frac{t}{2}}\|\Ric(g(t'))\|_{L^2}\,dt'\\
\leq\,& Ct^{-\frac{\theta}{2(1-\theta)}},
\end{split}
\end{equation}
if $t$ is large enough for some positive constant $C=C(n,g_b,\varepsilon,\theta)$.

Therefore, (\ref{ric-est-dec-in-time-ti}) and (\ref{est-l2-norm-dec-seq}) imply the following decay for the $C^0$ norm of the Ricci tensor for $t_i$ large enough:
\begin{equation}\label{sweet-dec-ric-sec}
\|\Ric(g(t))\|_{C^0}\leq Ct^{-\frac{\theta}{2(1-\theta)}}.
\end{equation}
Moreover, (\ref{sweet-dec-ric-sec}) shows that $\Ric(g_{\infty}) =0$, i.e. $g_{\infty}$ is a Ricci flat metric in $B_{C^{2,\alpha}_{\tau}}(g_b,\varepsilon)$. In particular, by \cite{Ban-Kas-Nak}, $g_{\infty}$ is an ALE Ricci flat metric.

Now, [(\ref{inequ-c-0-tau-prop-closed}), Proposition \ref{claim-T-max-closed}] ensures that Lemma \ref{lemma-a-priori-wei-metr-wei} is applicable and for $t\geq t_i\geq 1$,
\begin{equation}\label{success-est-gt-ti}
\begin{split}
\|g(t)-g(t_i)\|_{C^0_{\tau}}\leq \,&C(n,g_b,\varepsilon)\left(\|\Ric(g(t_i))\|_{C^0_{\tau+2}}+\int_{t_i}^t\|\Ric(g(t'))\|_{C^0}\,dt'\right)\\
\leq\,&C(n,g_b,\varepsilon)\left(\|\Ric(g(t_i))\|_{C^0_{\tau+2}}+|\lambda_{\operatorname{ALE}}(g(t_i))|^{\frac{\theta}{2}}\right),
%\leq\,& C\left(\|\Ric(g(t_i))\|_{C^0}^{1-\frac{\tau}{\tau+2}}\|\Ric(g(t_i))\|_{C^0_{\tau+2}}^{\frac{\tau}{\tau+2}}+|\lambda_{\operatorname{ALE}}(g(t_i))|^{\frac{\theta}{2}}\right)\\
%\leq\,& C\left(\|\Ric(g(t_i))\|_{C^0}^{1-\frac{\tau}{\tau+2}}+\frac{|\lambda_{\operatorname{ALE}}(g(0))|^{\frac{\theta}{2}}}{\left(1+C|\lambda_{\operatorname{ALE}}(g(0))|^{1-\theta}\cdot t_i\right)^{\frac{\theta}{2(1-\theta)}}}\right)\\
%\leq\,&C\left(\|\Ric(g(t_i))\|_{C^0}^{1-\frac{\tau}{\tau+2}}+t_i^{-\frac{\theta}{2(1-\theta)}}\right),
\end{split}
\end{equation}
where $C=C(n,g_b,\varepsilon,\theta)$ is a time-independent positive constant. Here we have used [\eqref{crucial-est-int-l2-norm-time-der}, Lemma \ref{stabilityALE}] in the fourth line.

%Estimates (\ref{success-est-gt-ti}) and (\ref{sweet-dec-ric-sec}) lead to:
%\begin{equation}\label{real-limit-g-RF}
%\|g(t)-g(t_i)\|_{C^0_{\tau}}\leq C\left(t_i^{-\frac{\theta}{(\tau+2)(1-\theta)}}+t_i^{-\frac{\theta}{2(1-\theta)}}\right)\leq  Ct_i^{-\frac{\theta}{(\tau+2)(1-\theta)}},
%\end{equation}
%for $t\geq t_i$ large enough.   Arguing as in (\ref{success-est-gt-ti}), (\ref{success-est-gt-ti}) and (\ref{sweet-dec-ric-sec}) give us the expected decay in time (\ref{est-dec-time-c0-prop-eta}) for the $C^0_{\tau}$-norm:
%\begin{equation*}
%\|g(t)-g_{\infty}\|_{C^0_{\tau}}\leq Ct^{-\frac{\theta}{(\tau+2)(1-\theta)}},\quad t\geq 1,
%\end{equation*}
%where $C=C(n,g_b,\varepsilon,\theta)$ is a time-independent positive constant. The $C^0$-decay in time (\ref{est-dec-time-c0-prop-sharp}) can be proved along the same lines.

%The decay in time for the $C^{2,\alpha'}_{\tau}$-norm (\ref{est-dec-time-c2-alpha-prime}) is obtained by interpolation between the $C^0_{\tau}$ and $C^{2,\alpha}_{\tau}$ norms together with (\ref{est-dec-time-c0-prop-eta}). The proof of such interpolation inequalities for H\"older norms can be found for instance in \cite[Chapter $3$, Theorem $3.2.1$]{Kry-Ell-Par-Sch-Boo}.
\end{proof}

	\subsection{Evolution of the Bianchi form, the scalar curvature and the mass} \label{sec-Bia-for-evo}~~\\
	
In this section, we investigate the evolution of the Bianchi one-form together with that of the scalar curvature along a Ricci flow $(N^n,g(t))_{t\geq 0}$ satisfying the assumptions of Theorem \ref{theo-dyn-stab-loc-max-lambda}. Let us denote $g_{\infty}$ the limit ALE Ricci-flat metric of the solution $g(t)$ as $t$ tends to $+\infty$.

Recall that the Bianchi one-form $B(t)$ of $g(t)$ and $g_{\infty}$ is defined by

\begin{equation}
B(t)_i:=B(g(t),\nabla^{g_{\infty}},g(t))_i=\frac{1}{2}g(t)^{kl}\left(\nabla^{g_{\infty}}_kg(t)_{il}+\nabla^{g_{\infty}}_lg(t)_{ik}-\nabla_i^{g_{\infty}}g(t)_{kl}\right),\quad i=1,...,n.\label{lovely-bianchi-defn}
\end{equation}

  \begin{rk}
  Notice that the difference between the Bianchi one-form of $g(t)$ and $g_{\infty}$ introduced by Kotschwar following Hamilton and deTurck, and the Bianchi operator defined in (\ref{defn-bianchi-op}) is neglectible in the sense that if $h(t):=g(t)-g_{\infty}$,
  \begin{equation*}
  \begin{split}
B(t)_i-B_{g(t)}(h(t))_i=\,&B(g(t),\nabla^{g_{\infty}},g(t))_i-B_{g(t)}(g(t)-g_{\infty})_i\\
=\,&\frac{1}{2}g(t)^{kl}\left(\nabla^{g_{\infty}}_kg(t)_{il}+\nabla^{g_{\infty}}_lg(t)_{ik}-\nabla^{g_{\infty}}g(t)_{kl}\right)\\
&-\div_{g_{\infty}}(h(t))_i+\frac{1}{2}\nabla^{g_{\infty}}\tr_{g_{\infty}}h(t)_i\\
=\,&\frac{1}{2}g(t)^{kl}\left(\nabla^{g_{\infty}}_kh(t)_{il}+\nabla^{g_{\infty}}_lh(t)_{ik}-\nabla^{g_{\infty}}h(t)_{kl}\right)\\
&-\div_{g_{\infty}}(h(t))_i+\frac{1}{2}\nabla^{g_{\infty}}\tr_{g_{\infty}}h(t)_i\\
=\,&\frac{1}{2}\left(g(t)^{kl}-g_{\infty}^{kl}\right)\left(\nabla^{g_{\infty}}_kh(t)_{il}+\nabla^{g_{\infty}}_lh(t)_{ik}-\nabla^{g_{\infty}}h(t)_{kl}\right).
\end{split}
\end{equation*}
Therefore, $B(t)-B_{g(t)}(h(t))=g(t)^{-1}\ast h(t)\ast g_{\infty}^{-1}\ast \nabla^{g_{\infty}}h(t)$ is quadratic in $h$ and its derivatives.
  \end{rk}
	
		Next, we recall the evolution equation satisfied by the Bianchi gauge along the Ricci flow with a Ricci flat background metric $g_{\infty}$ given in \cite{Kot-Evo-Eqn-Bia}:
	\begin{lemma}[Kotschwar]
	Let $(N^n,g_{\infty})$ be a Ricci flat metric and let $(g(t))_{t\in[0,T)}$ be a Ricci flow on $N$. Then, the evolution equation satisfied by $B$ is schematically:
	\begin{equation}\label{evo-eqn-bianchi}
\partial_tB=g(t)^{-1}\ast B(t)\ast \Ric(g(t))+g(t)^{-1}\ast g(t)^{-1}\ast\nabla^{g_{\infty}}(g(t)-g_{\infty})\ast \Ric(g(t)),
\end{equation}
where, if $S$ and $T$ are two tensors on $N$, $S\ast T$ means some weighted sum of contractions of the tensor product of $S$ and $T$ with respect to the background metric $g_b$ with coefficients bounded by universal constants.

	\end{lemma}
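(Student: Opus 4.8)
The plan is to differentiate the defining expression \eqref{lovely-bianchi-defn} directly in time along the flow $\partial_t g(t)=-2\Ric(g(t))$, exploiting that the background connection $\nabla^{g_\infty}$ does not depend on $t$. First I would record two preliminary facts. Symmetrizing the two derivative terms in $k,l$, the Bianchi form rewrites as $B(t)_i=g(t)^{kl}\nabla^{g_\infty}_kg(t)_{il}-\tfrac12 g(t)^{kl}\nabla^{g_\infty}_ig(t)_{kl}$, and since $\nabla^{g_\infty}g_\infty=0$ each factor $\nabla^{g_\infty}g(t)$ equals $\nabla^{g_\infty}h(t)$ with $h(t):=g(t)-g_\infty$. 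Setting $A:=\nabla^{g(t)}-\nabla^{g_\infty}$ for the difference of the two Levi-Civita connections, $A$ is a tensor of schematic type $g(t)^{-1}\ast\nabla^{g_\infty}h(t)$, and a direct contraction shows that $g(t)^{kl}A^m_{kl}=g(t)^{mn}B(t)_n$, i.e. the trace of $A$ is precisely the raised Bianchi form.

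Next I would take $\partial_t$ and sort the resulting terms. Because $\partial_t$ commutes with $\nabla^{g_\infty}$, the contributions in which the time derivative lands on an inverse metric factor, together with $\partial_t g(t)^{-1}=2\,g(t)^{-1}\ast g(t)^{-1}\ast\Ric(g(t))$, are manifestly of the form $g(t)^{-1}\ast g(t)^{-1}\ast\nabla^{g_\infty}h(t)\ast\Ric(g(t))$, matching the second summand of \eqref{evo-eqn-bianchi}. The remaining contributions, in which $\partial_t$ hits the metric being differentiated, assemble (using $\partial_t g(t)=-2\Ric(g(t))$) into the single first-order-in-$\Ric$ expression $-2\,g(t)^{kl}\nabla^{g_\infty}_k\Ric(g(t))_{il}+g(t)^{kl}\nabla^{g_\infty}_i\Ric(g(t))_{kl}$.

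The crux is to show that this last expression carries no surviving $\nabla\Ric$ term, since none appears in \eqref{evo-eqn-bianchi}. To this end I would replace the background derivatives $\nabla^{g_\infty}\Ric(g(t))$ by $g(t)$-covariant derivatives using $A$, which generates correction terms of the form $g(t)^{-1}\ast A\ast\Ric(g(t))$. Among these the contraction involving $g(t)^{kl}A^m_{kl}=g(t)^{mn}B(t)_n$ produces exactly the term $g(t)^{-1}\ast B(t)\ast\Ric(g(t))$, while every other correction is again of type $g(t)^{-1}\ast g(t)^{-1}\ast\nabla^{g_\infty}h(t)\ast\Ric(g(t))$. The purely $\nabla^{g(t)}$-differentiated remainder is $-2\,g(t)^{kl}\nabla^{g(t)}_k\Ric(g(t))_{il}+g(t)^{kl}\nabla^{g(t)}_i\Ric(g(t))_{kl}$, and the contracted second Bianchi identity $g(t)^{kl}\nabla^{g(t)}_k\Ric(g(t))_{il}=\tfrac12 g(t)^{kl}\nabla^{g(t)}_i\Ric(g(t))_{kl}$ makes it vanish identically. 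This cancellation is the only substantive point of the proof: it is what eliminates the derivative of the Ricci tensor and leaves an expression that is merely algebraic in $\Ric(g(t))$.

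Collecting the survivors then gives \eqref{evo-eqn-bianchi}: the single $B(t)$-linear term coming from the trace of $A$ is the first summand, and all the $\nabla^{g_\infty}h(t)$-type contributions combine into the second. I expect the main obstacle to be conceptual rather than computational, namely spotting that the leading $\nabla^{g_\infty}\Ric$ terms must first be converted to $\nabla^{g(t)}\Ric$ before the contracted Bianchi identity applies, since the $\ast$-notation absorbs all the remaining harmless index bookkeeping.
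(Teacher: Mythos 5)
The paper itself offers no proof of this lemma: it is quoted verbatim from Kotschwar \cite{Kot-Evo-Eqn-Bia}, so there is no internal argument to compare yours against. Judged on its own merits, your computation is correct and complete. The three ingredients all check out: (i) the identity $g(t)^{kl}A^m_{kl}=g(t)^{mn}B(t)_n$ for the difference tensor $A:=\nabla^{g(t)}-\nabla^{g_\infty}$ follows directly from the standard formula $A^m_{kl}=\tfrac12 g(t)^{mn}\bigl(\nabla^{g_\infty}_k g(t)_{ln}+\nabla^{g_\infty}_l g(t)_{kn}-\nabla^{g_\infty}_n g(t)_{kl}\bigr)$ and the definition \eqref{lovely-bianchi-defn}; (ii) since $\partial_t$ commutes with the fixed connection $\nabla^{g_\infty}$ and $\partial_t g(t)^{-1}=2\,g(t)^{-1}\ast g(t)^{-1}\ast\Ric(g(t))$, the terms where the time derivative lands on an inverse metric are indeed of the second schematic type in \eqref{evo-eqn-bianchi}; and (iii) the leading piece $-2g^{kl}\nabla^{g_\infty}_k\Ric_{il}+g^{kl}\nabla^{g_\infty}_i\Ric_{kl}$, once the background derivatives are traded for $\nabla^{g(t)}$-derivatives at the cost of $g^{-1}\ast A\ast\Ric$ corrections (one of which is exactly the $g^{-1}\ast B\ast\Ric$ term, the rest being of the second type), collapses to $-2g^{kl}\nabla^{g(t)}_k\Ric_{il}+\nabla^{g(t)}_i\R_{g(t)}=0$ by the contracted second Bianchi identity. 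This last cancellation is indeed the only substantive point, and you located it correctly. One small remark worth recording: your argument never uses the hypothesis that $g_\infty$ is Ricci-flat — only that the background connection is time-independent — so the schematic evolution equation \eqref{evo-eqn-bianchi} holds for an arbitrary fixed background metric; Ricci-flatness is relevant only to the way the lemma is applied in Proposition \ref{Bianchi-C-0-est-prop}.
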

	
	Our main result is the following proposition on the decay of the Bianchi one-form in space-time coordinates:
	\begin{prop}\label{Bianchi-C-0-est-prop}
	Let $n\geq 4$ and $\tau\in(\frac{n-2}{2},n-2)$. Let $\alpha\in\left(0,\min\left\{1,\tau-1,n-2-\tau\right\}\right)$. Let $(N^n,g_b)$ be a stable Ricci-flat ALE metric.  Let $(g(t))_{t\in[0,+\infty)}$ be a solution to the Ricci flow in $B_{C^{2,\alpha}_{\tau}}(g_b,\varepsilon)$, for some $\varepsilon>0$ which satisfies the conclusion of Theorem \ref {theo-dyn-stab-loc-max-lambda} and let $g_{\infty}\in B_{C^{2,\alpha}_{\tau}}(g_b,\varepsilon)$ be its limit Ricci-flat metric $g_{\infty}$.
  Then for any positive $\eta$ small enough and any $k\geq 0$, we have
  \begin{equation}
\|\nabla^{g_{\infty},k}B(t)\|_{C^0}\leq C_{\eta,k}\,t^{-\frac{\theta}{1-\theta}+\eta},\quad t\geq 1,\label{Bianchi-C-0-est}
\end{equation}
for some positive constant $C=C(n,g_b,\varepsilon,\theta,\eta,k)$. 
%Moreover, we have
 %\begin{equation}
%\|B(t)\|_{C^0_{2\tau+1}}\leq Ct^{-\frac{3}{\tau+2}\frac{\theta}{2(1-\theta)}+\eta},\quad t\geq 1.\label{Bianchi-C-0-est-weight}
%\end{equation}

  	\end{prop}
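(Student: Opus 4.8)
The plan is to integrate Kotschwar's evolution equation \eqref{evo-eqn-bianchi} \emph{backward} in time, exploiting that the limit metric $g_{\infty}$ is Ricci-flat so that $B(t)\to 0$ as $t\to+\infty$. Write \eqref{evo-eqn-bianchi} schematically as $\partial_t B=A(t)\ast B+F(t)$, where $A(t):=g(t)^{-1}\ast\Ric(g(t))$ and $F(t):=g(t)^{-1}\ast g(t)^{-1}\ast\nabla^{g_{\infty}}(g(t)-g_{\infty})\ast\Ric(g(t))$. Conceptually, the crucial point is that $F$ is a \emph{product} of two quantities both tending to $0$—this reflects the fact that $\div_{g(t)}\Ric(g(t))-\tfrac12\nabla^{g(t)}\R_{g(t)}=0$ by the contracted second Bianchi identity, so that the obstruction to $B$ vanishing is only the quadratic discrepancy between the connections of $g(t)$ and $g_{\infty}$—and this is what will produce the \emph{doubled} decay rate. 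Since $(g(t))$ satisfies the conclusion of Theorem \ref{theo-dyn-stab-loc-max-lambda}, estimate \eqref{inequ-c-0-thm-int-ric} gives $\int_0^{+\infty}\|\Ric(g(t'))\|_{C^0}\,dt'<+\infty$, so the propagator $U(t,t')$ of the linear part (with $U(t',t')=\Id$) satisfies $\|U(t,t')\|\leq\exp\big(C\int_t^{t'}\|\Ric(g(s))\|_{C^0}\,ds\big)\leq C$ uniformly for $1\leq t\leq t'$. As $\|g(t)-g_{\infty}\|_{C^{2,\alpha'}_{\tau'}}\to 0$ forces $\|B(t)\|_{C^0}\to 0$, the terminal-value form of Duhamel's formula yields
\[
B(t)=-\int_t^{+\infty}U(t,t')\,F(t')\,dt',\qquad\text{hence}\qquad \|B(t)\|_{C^0}\leq C\int_t^{+\infty}\|F(t')\|_{C^0}\,dt'.
\]

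Set $r:=\tfrac{\theta}{2(1-\theta)}$, so that $2r=\tfrac{\theta}{1-\theta}$ is the target exponent. Since the metrics stay uniformly equivalent to $g_b$, one has $\|F(t')\|_{C^0}\leq C\,\|\nabla^{g_{\infty}}(g(t')-g_{\infty})\|_{C^0}\,\|\Ric(g(t'))\|_{C^0}$, whence
\[
\|B(t)\|_{C^0}\leq C\Big(\sup_{t'\geq t}\|\nabla^{g_{\infty}}(g(t')-g_{\infty})\|_{C^0}\Big)\int_t^{+\infty}\|\Ric(g(t'))\|_{C^0}\,dt'.
\]
For the time integral, \eqref{est-l2-ric-hess-eta-a-priori} of Lemma \ref{stabilityALE} with $\eta=0$, combined with the $L^2$-to-$C^0$ conversion of Proposition \ref{coro-A-priori-L^2-C^0-est-RF} (exactly as in \eqref{intermed-c0-int-t-claim}), gives $\int_t^{+\infty}\|\Ric(g(t'))\|_{C^0}\,dt'\leq C|\lambda_{\operatorname{ALE}}(g(t))|^{\theta/2}$, and the \L{}ojasiewicz decay \eqref{crucial-est-int-l2-norm-time-der} bounds this by $C\,t^{-r}$. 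For the supremum, I interpolate the sharp $C^0$ decay $\|g(t')-g_{\infty}\|_{C^0}\leq C\,t'^{-r}$ from \eqref{est-dec-time-c0-prop-sharp} against the uniform-in-time bounds $\|\nabla^{g_{\infty},m}(g(t')-g_{\infty})\|_{C^0}\leq C_m$ furnished by the global Shi estimates of Lemma \ref{lemma-shi-global}: for $m$ large this yields $\|\nabla^{g_{\infty}}(g(t')-g_{\infty})\|_{C^0}\leq C_{\eta}\,t'^{-r+\eta}$ for every $\eta>0$. Multiplying the two bounds proves \eqref{Bianchi-C-0-est} for $k=0$.

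For $k\geq 1$ I observe that the background connection $\nabla^{g_{\infty}}$ is time-independent, so $\partial_t$ and $\nabla^{g_{\infty}}$ commute and each $\nabla^{g_{\infty},k}B$ solves an equation of the same type $\partial_t(\nabla^{g_{\infty},k}B)=A\ast\nabla^{g_{\infty},k}B+G_k$, whose source $G_k$ is a sum of contractions of $\nabla^{g_{\infty},j}(g-g_{\infty})$ (with $j\leq k+1$) against $\nabla^{g_{\infty},j'}\Ric(g)$ (with $j'\leq k$), plus terms $\nabla^{g_{\infty},k-i}A\ast\nabla^{g_{\infty},i}B$ for $i<k$. Each such term carries at least one factor of $\Ric$ or its derivatives, which is integrable in time with rate $t^{-r}$ once the supremum of the remaining, uniformly bounded and almost-sharply decaying factor is pulled out, so backward integration again propagates the rate $t^{-2r+\eta}$. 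More economically, one reaches the same conclusion by interpolating the case $k=0$ against the uniform bounds $\|\nabla^{g_{\infty},m}B\|_{C^0}\leq C_m$, which likewise follow from the uniform control of all covariant derivatives of $g(t)-g_{\infty}$.

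The main obstacle is the second step: upgrading the sharp $C^0$ decay of $g(t)-g_{\infty}$ to an \emph{almost-sharp} decay of its first (and higher) covariant derivatives. This is precisely where the uniform-in-time higher-derivative bounds are indispensable, since a naive interpolation between $C^0$ and $C^2$ alone would only produce the rate $t^{-r/2}$ and destroy the gain; letting the number of interpolated derivatives grow is what recovers a rate arbitrarily close to $r$ at the cost of an arbitrarily small $\eta$. The remaining points—that $\|B(t)\|_{C^0}\to 0$, that the propagator is uniformly bounded, and that all $\ast$-contractions are harmless because $g(t)$ is uniformly equivalent to $g_b$—are routine consequences of the estimates already established.
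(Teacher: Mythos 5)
Your proposal is correct and follows essentially the same route as the paper's proof: both integrate Kotschwar's evolution equation backward from $t=+\infty$ (where $B$ vanishes because $g(t)\to g_{\infty}$ in $C^1$), bound the integrating factor/propagator via the time-integrability of $\|\Ric(g(t))\|_{C^0}$, pull out the almost-sharp decay $t^{-\frac{\theta}{2(1-\theta)}+\eta}$ of $\|\nabla^{g_{\infty}}(g(t)-g_{\infty})\|_{C^0}$ obtained by interpolating the $C^0$ decay against uniform higher-derivative bounds, and estimate the tail $\int_t^{+\infty}\|\Ric(g(t'))\|_{C^0}\,dt'\lesssim t^{-\frac{\theta}{2(1-\theta)}}$ through the \L{}ojasiewicz decay of $\lambda_{\operatorname{ALE}}$, exactly as in the paper. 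The treatment of $k\geq 1$ by interpolation against uniform bounds also matches the paper's argument, so there is nothing to correct.
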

		\begin{rk}
	In [Proposition 6.9, \cite{Kro-Pet-Lp-stab}], the decay of the Bianchi one-form obtained in Proposition \ref{Bianchi-C-0-est-prop} is improved if the perturbation lies in $L^p\cap L^{\infty}$ for $p<n$ small enough. 
	\end{rk}
	\begin{proof}
	%We estimate the $C^0$-norm $\|B(t)\|_{C^0}$ first.
	
	 From now on, the symbol $\lesssim$ (respectively $\gtrsim$) means " less than or equal" (respectively " larger than or equal") up to a positive multiplicative constant that depends on $n$, $g_b$, $k$, $\alpha$ and $\varepsilon$ only. All the norms are understood with respect the metric $g_{\infty}$. For the sake of clarity, we omit the dependence of the Levi-Civita connection on $g_{\infty}$. By (\ref{evo-eqn-bianchi}), the norm of the Bianchi gauge $B(t)$ satisfies in the weak sense:
	\begin{equation*}
\partial_t|B|\gtrsim -|\Ric(g(t))||B(t)|-|\nabla h(t)||\Ric(g(t))|,\quad t>0.
\end{equation*}
	By Gr\"onwall's inequality, one gets pointwise in space:
	\begin{equation*}
	\begin{split}
|B(t)|\gtrsim\, &|B(s)|\exp\left\{-\int_s^t|\Ric(g(t'))|\,dt'\right\}\\
&-\int_s^t\exp\left\{-\int_{t'}^{t}|\Ric(g(s'))|\,ds'\right\}|\nabla h(t')||\Ric(g(t'))|\,dt',\,\,t>s>0.
\end{split}
\end{equation*}
This implies, once we let $t$ tend to $+\infty$:

\begin{equation}
	\begin{split}
|B(s)|\lesssim \,&\int_s^{+\infty}\exp\left\{\int_{s}^{t'}|\Ric(g(s'))|\,ds'\right\}|\nabla h(t')||\Ric(g(t'))|\,dt'\\
\lesssim\,&\exp\left\{\int_{s}^{+\infty}|\Ric(g(s'))|\,ds'\right\}\int_s^{+\infty}|\nabla h(t')||\Ric(g(t'))|\,dt'\\
\lesssim\,& \int_s^{+\infty}|\nabla h(t')||\Ric(g(t'))|\,dt'.\label{step-1-bianchi}
\end{split}
\end{equation}
where $C=C(n,g_b,\varepsilon,\theta)$ is a time-independent positive constant. Here, we have used Proposition \ref{claim-T-max-closed} in the last inequality.

Now, according to the conclusion of Theorem \ref {theo-dyn-stab-loc-max-lambda}, $\|g(t)-g_{\infty}\|_{C^0}\leq Ct^{-\frac{\theta}{2(1-\theta)}}$ if $t\geq 1$. Since for any $k\geq 0$ and $t\geq 1$, $|\nabla^{g(t),k}\Rm(g(t))|_{g(t)}\leq C_k$, for some time-independent positive constant $C_k$ by Lemma \ref{lemma-shi-global}, standard interpolation inequalities 
show that the same decay in time holds true for higher covariant derivatives of $g(t)-g_{\infty}$ at the expense of an arbitrary small error: for any positive $\eta$ small enough and any $k\geq 0$, one gets $\|\nabla^{k}(g(t)-g_{\infty})\|_{C^0}\leq Ct^{-\frac{\theta}{2(1-\theta)}+\eta}$. 

Therefore, (\ref{step-1-bianchi}) leads to the expected estimate (\ref{Bianchi-C-0-est}) for $s\geq 2$:
\begin{equation*}
	\begin{split}
\|B(s)\|_{C^0}\lesssim\,& s^{-\frac{\theta}{2(1-\theta)}+\eta}\int_s^{+\infty}\|\Ric(g(t'))\|_{C^0}\,dt'\\
\lesssim\,&C_{\eta}s^{-\frac{\theta}{2(1-\theta)}+\eta}|\lambda_{\operatorname{ALE}}(g(s-1))|^{\frac{\theta}{2}}\\
\lesssim\,&C_{\eta}s^{-\frac{\theta}{(1-\theta)}+\eta},
\end{split}
\end{equation*}
for any $\eta>0$ small enough. Here we have used Proposition \ref{coro-A-priori-L^2-C^0-est-RF} with $0<r^2:=1<t$ together with [\eqref{est-l2-ric-hess-eta-a-priori}, Lemma \ref{stabilityALE}]. The corresponding decay in time on the covariant derivatives of $B(t)$ are obtained by interpolation.

%In order to prove (\ref{Bianchi-C-0-est}), we multiply (\ref{step-1-bianchi}) by $\rho_{g_b}^{2\tau+1}$ (or $\rho_{g_{\infty}}^{2\tau+1}$) to get:
%\begin{equation*}
%	\begin{split}
%\|B(s)\|_{C^0_{2\tau+1}}\lesssim\,& \sup_{t'\geq s}\|\nabla(g(t')-g_{\infty})\|_{C^0_{\tau+1}}\int_s^{+\infty}\|\Ric(g(t'))\|_{C^0_{\tau}}\,dt'.
%\end{split}
%\end{equation*}
%By weighted interpolation inequalities given by Theorem \ref{inter-inequ-gag-nir},
%one gets 
%\begin{equation}\label{shit-int-inequ-nabla-h}
%\|\nabla(g(t)-g_{\infty})\|_{C^0_{\tau+1}}\lesssim \|g(t)-g_{\infty}\|_{C^0_{\tau}}^{\frac{1}{2}}\lesssim t^{-\frac{\theta}{2(\tau+2)(1-\theta)}},
%\end{equation}
 %for $t\geq 1$. The decay in time on the integral of the $C^0_{\tau}$-norm of the Ricci tensor can be estimated as follows:
 %\begin{equation}
%\int_s^{+\infty}\|\Ric(g(t'))\|_{C^0_{\tau}}\,dt'\lesssim s^{-\frac{\theta}{(\tau+2)(1-\theta)}},\quad s\geq 1.\label{dec-int-ric-c-0-tau-bian}
%\end{equation}
%This is proved as in (\ref{success-est-gt-ti}) in the proof of Proposition \ref{prop-exi-lim-conv-rate} based on [(\ref{dream-C-0-wei-int-ric-stat}), Lemma \ref{lemma-a-priori-wei-metr-wei}]. Estimates (\ref{dec-int-ric-c-0-tau-bian}) and (\ref{shit-int-inequ-nabla-h}) imply the desired decay in time (\ref{Bianchi-C-0-est}).

\end{proof}

We continue by estimating the decay in time of the $C^0$-norm of the scalar curvature. In case the scalar curvature at time $t=0$ is integrable, we also show this property is preserved: this echoes the result of \cite[Theorem $1$, part B]{Dai-Ma-Mass}. Our proof is based on heat kernel estimates established in Section \ref{sec-heat-ker}.
\begin{prop}
Let $n\geq 4$ and $\tau\in(\frac{n-2}{2},n-2)$. Let $\alpha\in\left(0,\min\left\{1,\tau-1,n-2-\tau\right\}\right)$. Let $(N^n,g_b)$ be a stable Ricci-flat ALE metric such that Inequality \ref{basic-ass-2} holds on a neighborhood $B_{C^{2,\alpha}_{\tau}}(g_b,\varepsilon_{\L})$ with exponent $\theta\in(0,1)$. Let $(g(t))_{t\in[0,+\infty)}$ be a solution to the Ricci flow in $B_{C^{2,\alpha}_{\tau}}(g_b,\varepsilon)$, $\varepsilon< \varepsilon_{\L}$, which satisfies the conclusion of Theorem \ref {theo-dyn-stab-loc-max-lambda} and let $g_{\infty}\in B_{C^{2,\alpha}_{\tau}}(g_b,\varepsilon)$ be its limit Ricci-flat metric.

Then,
\begin{equation}
\|\R_{g(t)}\|_{C^0}\leq C\left(t^{-\frac{\theta}{2(1-\theta)}-\frac{n}{4}}+t^{-\frac{1}{1-\theta}}\right),\quad t\geq 1, \label{est-C-0-norm-scal}
\end{equation}
for some time-independent positive constant $C=C(n,g_b,\varepsilon,\theta)$.
Moreover, if $\R_{g(0)}\in L^1$ then $\R_{g(t)}\in L^1$ for every $t\geq 0$ and,
\begin{equation}
\|\R_{g(t)}\|_{L^1}\leq C\left(\|\R_{g(0)}\|_{L^1}+|\lambda_{\operatorname{ALE}}(g(0))|\right),\quad t\geq 0,\label{est-L-1-norm-scal}
\end{equation}
some time-independent positive constant $C=C(n,g_b,\varepsilon,\theta)$. \end{prop}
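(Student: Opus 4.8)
The plan is to treat both norms of $\R_{g(t)}$ through the Duhamel representation attached to the scalar evolution equation $\partial_t\R_{g(t)}=\Delta_{g(t)}\R_{g(t)}+2|\Ric(g(t))|^2_{g(t)}$ and the heat kernel $K$ of Section~\ref{sec-heat-ker}. For $0\le s<t$ one writes
\begin{equation*}
\R_{g(t)}(x)=\int_N K(x,t,y,s)\,\R_{g(s)}(y)\,d\mu_{g(s)}(y)+2\int_s^t\!\!\int_N K(x,t,y,t')\,|\Ric(g(t'))|^2(y)\,d\mu_{g(t')}(y)\,dt',
\end{equation*}
so that the first (linear) summand carries the initial scalar curvature while the second (source) summand carries $|\Ric|^2$. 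The two energy inputs I would rely on are, first, the integrated bound
\begin{equation*}
\int_s^{+\infty}\|\Ric(g(t'))\|_{L^2}^2\,dt'\le C\,|\lambda_{\operatorname{ALE}}(g(s))|,
\end{equation*}
which follows from the monotonicity formula \eqref{mono-lambda} combined with Proposition~\ref{stabilityALE-RF-L2}, and second its consequence $\|\Ric(g(t))\|_{L^2}^2\lesssim t^{-\frac1{1-\theta}}$, obtained by feeding in the \L ojasiewicz decay $|\lambda_{\operatorname{ALE}}(g(s))|\lesssim s^{-\frac1{1-\theta}}$ of Lemma~\ref{stabilityALE} and the short-time growth control of $\|\Ric\|_{L^2}$ over a unit time interval.

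For the pointwise bound \eqref{est-C-0-norm-scal} I would choose $s=t/2$. The linear summand is estimated by the on-diagonal bound $\|K(x,t,\cdot,t/2)\|_{L^2}\le C\,t^{-n/4}$ from Proposition~\ref{diag-upper-heat-ker}, paired with $\|\R_{g(t/2)}\|_{L^2}\le\sqrt n\,\|\Ric(g(t/2))\|_{L^2}\lesssim t^{-\frac{\theta}{2(1-\theta)}}$ (as in \eqref{est-l2-norm-dec-seq}), which reproduces the first term $t^{-\frac{\theta}{2(1-\theta)}-\frac n4}$. The source summand is the heart of the matter, and I would split its time integral at $t'=t-1$. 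On the bulk part $[t/2,t-1]$ the Gaussian estimate \eqref{gauss-heat-ker} gives $\|K(x,t,\cdot,t')\|_{L^\infty}\le C(t-t')^{-n/2}\le C$ since $t-t'\ge 1$, whence this piece is bounded by $C\int_{t/2}^{t-1}\|\Ric(g(t'))\|_{L^2}^2\,dt'\lesssim t^{-\frac1{1-\theta}}$ by the integrated energy above. On the recent part $[t-1,t]$, where the kernel is singular, I would instead pair $\|K(x,t,\cdot,t')\|_{L^q}\le C(t-t')^{-\frac n2(1-1/q)}$ (time-integrable once $q<\tfrac{n}{n-2}$) against $\||\Ric(g(t'))|^2\|_{L^p}=\|\Ric(g(t'))\|_{L^{2p}}^2$, controlling the latter by interpolating the sharp $L^2$-decay of $\Ric$ against its uniformly bounded higher covariant derivatives; this yields $t^{-\frac1{1-\theta}}$ up to an arbitrarily small loss in the exponent, giving the second term.

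The $L^1$ bound \eqref{est-L-1-norm-scal} is considerably softer. Taking $s=0$ in the representation, integrating in $x$ against $d\mu_{g(t)}$ and applying Fubini, the inner integrals are controlled by the forward $L^1$-estimate $\int_N K(x,t,y,s)\,d\mu_{g(t)}(x)\le e^{\int_s^t\|\R_{g(t')}\|_{C^0}\,dt'}$ of Proposition~\ref{L^1-bound-heat-kernel-fct}, cf.\ \eqref{L^1-est-fct}. That exponential is uniformly bounded because $\|\R\|_{C^0}\le\sqrt n\,\|\Ric\|_{C^0}$ and $\int_0^{+\infty}\|\Ric(g(t'))\|_{C^0}\,dt'<\infty$ by \eqref{inequ-c-0-thm-int-ric}, so one obtains
\begin{equation*}
\|\R_{g(t)}\|_{L^1}\le C\,\|\R_{g(0)}\|_{L^1}+C\int_0^{t}\|\Ric(g(t'))\|_{L^2}^2\,dt'\le C\big(\|\R_{g(0)}\|_{L^1}+|\lambda_{\operatorname{ALE}}(g(0))|\big),
\end{equation*}
which both proves \eqref{est-L-1-norm-scal} and shows that integrability of $\R_{g(0)}$ propagates along the flow.

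The main obstacle is the source term in the $C^0$ estimate: for $n\ge 4$ the $L^\infty$-norm of the heat kernel is non-integrable in time as $t'\to t$, so the instantaneous contribution of $|\Ric|^2$ near time $t$ cannot be absorbed by the integrated energy $\int\|\Ric\|_{L^2}^2$ alone. Reconciling this kernel singularity with the sharp rate $t^{-\frac1{1-\theta}}$ is what forces the interpolation of the sharp $L^2$-decay of $\Ric$ against its bounded derivatives on the recent window, and the careful bookkeeping of the resulting exponents (together with the negligible loss) is the delicate point of the argument.
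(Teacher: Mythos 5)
Your treatment of the linear term and of the $L^1$ bound coincides with the paper's: the same Duhamel representation, the same on-diagonal $L^2$ kernel bound paired with $\|\R_{g(t/2)}\|_{L^2}\lesssim t^{-\frac{\theta}{2(1-\theta)}}$, and the same Fubini argument combining \eqref{L^1-est-fct}, the bound $\int_0^{\infty}\|\Ric(g(t'))\|_{C^0}\,dt'<\infty$, and \eqref{ric-hess-L2-control} for \eqref{est-L-1-norm-scal}. Where you diverge is the source term, and there your route is both heavier and slightly lossy. The paper never splits the time integral and never confronts the kernel singularity at $t'=t$: it puts the $L^1_y$ norm on the kernel (which is $\leq 1$ by Proposition \ref{L^1-bound-heat-kernel-fct}) and the $L^\infty_y$ norm on the source, so the source contribution is simply $\int_{t/2}^t\|\Ric(g(t'))\|_{C^0}^2\,dt'$; it then converts each $\|\Ric(g(t'))\|_{C^0}$ into $C\|\Ric(g(t'-1))\|_{L^2}$ by the \emph{parabolic} $L^2$--$C^0$ smoothing estimate of Proposition \ref{coro-A-priori-L^2-C^0-est-RF} (with $r=1$), so that after a shift in time the whole source term is dominated by $C\int_{t/4}^{t}\|\Ric(g(t'))\|_{L^2}^2\,dt'\lesssim t^{-\frac{1}{1-\theta}}$ via \eqref{ric-hess-L2-control}, with no loss of exponent. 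In other words, the ``obstacle'' you single out (non-integrability of $\|K\|_{L^\infty}$ in time) disappears once the sup norm is placed on the source rather than on the kernel.

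The concrete shortfall in your version is on the window $[t-1,t]$: you estimate $\|\Ric(g(t'))\|_{L^{2p}}^2$ by an \emph{instantaneous} (elliptic) interpolation between the $L^2$ decay of $\Ric$ and its uniformly bounded covariant derivatives, and such an interpolation necessarily costs a small but nonzero power. So what your argument actually yields is $\|\R_{g(t)}\|_{C^0}\leq C_\epsilon\bigl(t^{-\frac{\theta}{2(1-\theta)}-\frac{n}{4}}+t^{-\frac{1}{1-\theta}+\epsilon}\bigr)$, with a constant depending on $\epsilon$, which is strictly weaker than \eqref{est-C-0-norm-scal} as stated (the exponent there is exact). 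Your two energy inputs are fine — in particular the pointwise-in-time bound $\|\Ric(g(t))\|_{L^2}^2\lesssim t^{-\frac{1}{1-\theta}}$ does follow from \eqref{ric-hess-L2-control} together with the unit-interval growth control of Proposition \ref{prop-a-priori-L2-est-rm-ric} — but the loss is introduced precisely at the interpolation step. The fix is to discard the elliptic interpolation and use the parabolic one: on $[t-1,t]$ bound the inner integral by $\|\Ric(g(t'))\|_{C^0}^2\leq C\|\Ric(g(t'-1))\|_{L^2}^2$ via Proposition \ref{coro-A-priori-L^2-C^0-est-RF} and integrate in time; this recovers the exact exponent and, as noted above, makes the splitting of the time integral unnecessary altogether.
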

\begin{proof}
According to the evolution equation satisfied by the scalar curvature given in [(\ref{evo-eqn-scal-for}), Lemma \ref{lemma-evo-eqn-Ric-Rm}] together with Duhamel's formula:
\begin{equation}
\begin{split}
\R_{g(t)}(x)=\int_N&K(x,t,y,s)\R_{g(0)}(y)\,d\mu_{g(0)}(y)\\
&+\int_0^t\int_NK(x,t,y,s)|\Ric(g(s))|^2_{g(s)}(y)\,d\mu_{g(s)}(y)ds, \quad t\geq 0, \quad x\in N.\label{duha-scal-curv-form}
\end{split}
\end{equation}
In particular, if $2\leq s:=\frac{t}{2}<t$, Theorem \ref{Gaussian-est-scal-heat-ker} ensures that,
\begin{equation}
\begin{split}\label{prep-est-C-0-scal}
\|\R_{g(t)}\|_{C^0}\leq\,& \sup_{x\in N}\left\|K\left(x,t,\cdot,\frac{t}{2}\right)\right\|_{L^2}\|\R_{g\left(\frac{t}{2}\right)}\|_{L^2}+\int_{\frac{t}{2}}^t\|\Ric(g(t'))\|^2_{C^0}\,dt'\\
\leq\,&\frac{C}{t^{\frac{n}{4}}}\|\R_{g\left(\frac{t}{2}\right)}\|_{L^2}+\int_{\frac{t}{2}}^t\|\Ric(g(t'))\|^2_{C^0}\,dt'\\
\leq\,&\frac{C}{t^{\frac{n}{4}}}\|\R_{g\left(\frac{t}{2}\right)}\|_{L^2}+C\int_{\frac{t}{4}}^{t}\|\Ric(g(t'))\|^2_{L^2}\,dt'\\
\leq\,&\frac{C}{t^{\frac{n}{4}}}\|\R_{g\left(\frac{t}{2}\right)}\|_{L^2}+Ct^{-\frac{1}{1-\theta}}.
\end{split}
\end{equation}
%\textcolor{red}{possibilite d'ameliorer le premier terme avec norme $L^p$, $p>n/(\tau+2)$, + borne $C^0_{\tau+2}$ ---> $t^{-\frac{\tau+2}{2}+\eta}$ meilleur si $\theta<1/2$ par exemple}
The second line is justified by Proposition \ref{coro-A-priori-L^2-C^0-est-RF} applied to $0<r^2:=1<t$ which implies that $\frac{t}{2}-1\geq \frac{t}{4}$ since $t\geq 4$. The last line is obtained thanks to [(\ref{ric-hess-L2-control}), Lemma \ref{stabilityALE}].

Now, Proposition \ref{prop-a-priori-L2-est-rm-ric} together with [(\ref{est-l2-ric-hess-eta-a-priori}), Lemma \ref{stabilityALE}] lead to:
\begin{equation}
\begin{split}\label{prep-bis-est-C-0-scal}
\|\R_{g\left(\frac{t}{2}\right)}\|_{L^2}\leq \,&n \left\|\Ric\left(g\left(\frac{t}{2}\right)\right)\right\|_{L^2}\\
\leq\,& C\int_{\frac{t}{2}-1}^{\frac{t}{2}}\|\Ric(g(t'))\|_{L^2}\,dt'\\
\leq\,& Ct^{-\frac{\theta}{2(1-\theta)}},
\end{split}
\end{equation}
if $t\geq 2$ is large enough, for some time-independent positive constant $C=C(n,g_b,\varepsilon,\theta)$. Estimates (\ref{prep-est-C-0-scal}) and (\ref{prep-bis-est-C-0-scal}) end the proof of (\ref{est-C-0-norm-scal}).\\

In order to prove (\ref{est-L-1-norm-scal}), we observe that by considering (\ref{duha-scal-curv-form}), one obtains:
\begin{equation*}
\begin{split}
\|\R_{g(t)}\|_{L^1}\leq\,& \int_N|\R_{g(t)}(x)|\,d\mu_{g(t)}(x)\\
\leq\,&\sup_{y\in N}\|K(\cdot,t,y,0)\|_{L^1}\|\R_{g(0)}\|_{L^1}\\
&+\int_0^t\sup_{y\in N}\|K(\cdot,t,y,s)\|_{L^1}\int_N|\Ric(g(s))|_{g(s)}^2(y)\,d\mu_{g(s)}(y)ds\\
\leq\,&e^{\int_0^t\|\R_{g(s)}\|_{C^0}\,ds}\left(\|\R_{g(0)}\|_{L^1}+\int_0^t\|\Ric(g(s))\|_{L^2}^2\,ds\right)\\
\leq\,& Ce^{\int_0^{+\infty}\|\R_{g(s)}\|_{C^0}\,ds}\left(\|\R_{g(0)}\|_{L^1}+|\lambda_{\operatorname{ALE}}(g(0))|\,ds\right)\\
\leq\,&C\left(\|\R_{g(0)}\|_{L^1}+|\lambda_{\operatorname{ALE}}(g(0))|\,ds\right),
\end{split}
\end{equation*}
for some time-dependent positive constant $C=C(n,g_b,\varepsilon,\theta)$.

Here, we have used [(\ref{L^1-est-fct}), Proposition \ref{L^1-bound-heat-kernel-fct}] in the third inequality together with [(\ref{ric-hess-L2-control}), Lemma \ref{stabilityALE}] in the penultimate line. Finally, [(\ref{inequ-c-0-thm-int-ric}), Theorem \ref{theo-dyn-stab-loc-max-lambda}] is invoked in the last line since $\|\R_{g(s)}\|_{C^0}\leq n\|\Ric(g(s))\|_{C^0}$. We could have alternatively used (\ref{est-C-0-norm-scal}) to justify this step.

\end{proof}

The next proposition establishes a link between the integral of the scalar curvature and the ADM-mass along a suitable solution to the Ricci flow whenever the scalar curvature is assumed to decay fast enough initially. 
\begin{prop}\label{prop-id-scal-int-mass-init}
Let $n\geq 4$ and $\tau\in(\frac{n-2}{2},n-2)$. Let $\alpha\in\left(0,\min\left\{1,\tau-1,n-2-\tau\right\}\right)$. Let $(N^n,g_b)$ be a stable Ricci-flat ALE manifold. Let $(g(t))_{t\in[0,+\infty)}$ be a solution to the Ricci flow in $B_{C^{2,\alpha}_{\tau}}(g_b,\varepsilon)$ for some $\varepsilon>0$ satisfying the conclusion of Theorem \ref {theo-dyn-stab-loc-max-lambda} and let $g_{\infty}\in B_{C^{2,\alpha}_{\tau}}(g_b,\varepsilon)$ be its limit Ricci-flat metric.

If $\R_{g(0)}=O(\rho_{g_b}^{-q})$ for some $q>n$ then
\begin{equation}
m_{\operatorname{ADM}}(g(0))=\lim_{t\rightarrow+\infty}\int_N\R_{g(t)}\,d\mu_{g(t)}\label{mass-id-t-infty},
\end{equation}
where $m_{\operatorname{ADM}}(g(0))$ is the mass of the initial metric $g(0)$ defined in (\ref{def-mass}).

\end{prop}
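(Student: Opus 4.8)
The plan is to exploit the fact that along our flow the scalar curvature stays integrable, so that on $\mathcal{M}_\tau$ the functional $\lambda_{\operatorname{ALE}}$ admits the \emph{convergent} bulk--boundary splitting \eqref{true-def-lambda}, and then to pass to the limit $t\to+\infty$ using that $\lambda_{\operatorname{ALE}}(g(t))\to 0$ and $f_{g(t)}\to 0$.

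First I would record the exact representation valid on $\mathcal{M}_\tau$. Since $\R_{g(0)}=O(\rho_{g_b}^{-q})$ with $q>n$ we have $\R_{g(0)}\in L^1$, and by \eqref{est-L-1-norm-scal} this is preserved, so $g(t)\in\mathcal{M}_\tau$ for every $t\ge0$. For such metrics both bulk integrals in \eqref{true-def-lambda} converge separately: $\int_N\R_{g(t)}e^{-f_{g(t)}}d\mu_{g(t)}$ because $\R_{g(t)}\in L^1$ and $f_{g(t)}$ is bounded, and $\int_N|\nabla^{g(t)}f_{g(t)}|^2e^{-f_{g(t)}}d\mu_{g(t)}$ because $|\nabla f_{g(t)}|=O(\rho_{g_b}^{-\tau-1})$ is square--integrable precisely when $\tau>\frac{n-2}{2}$. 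Hence
\[
\lambda_{\operatorname{ALE}}(g(t))=\int_N\big(|\nabla^{g(t)}f_{g(t)}|^2+\R_{g(t)}\big)e^{-f_{g(t)}}\,d\mu_{g(t)}-m_{\operatorname{ADM}}(g(t)),\quad t\ge0,
\]
with every term finite.

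Next I would prove that the mass is conserved, $m_{\operatorname{ADM}}(g(t))=m_{\operatorname{ADM}}(g(0))$. Using that $h\mapsto m_{\operatorname{ADM}}(g_b+h)$ is linear and that $\partial_tg=-2\Ric(g(t))$, the derivative of the mass is the boundary flux at infinity of $-2(\div_{g_b}\Ric-\nabla^{g_b}\tr_{g_b}\Ric)$. The contracted second Bianchi identity $\div_g\Ric=\tfrac12 d\R_g$ together with $\tr_g\Ric=\R_g$ turns this integrand into $-\tfrac12\nabla^{g}\R_{g(t)}$ up to terms of size $O(\rho_{g_b}^{-2\tau-3})$ produced by replacing $g$ by $g_b$; the latter integrate to zero on the spheres $\{\rho_{g_b}=R\}$ since $2\tau>n-2$. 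One is then left with $\lim_{R\to\infty}\int_{\{\rho_{g_b}=R\}}\langle\nabla\R_{g(t)},\mathbf{n}\rangle\,d\sigma_{g_b}$, which vanishes as soon as $\R_{g(t)}$ decays faster than $\rho_{g_b}^{2-n}$. This is exactly where the hypothesis $q>n$ enters: I would propagate the fast spatial decay of $\R$ along the flow through Duhamel's formula and the Gaussian bounds of Theorem \ref{Gaussian-est-scal-heat-ker}, noting that the nonlinear source $|\Ric(g(t))|^2=O(\rho_{g_b}^{-2\tau-4})$ already decays faster than $\rho_{g_b}^{-n}$, so that $\R_{g(t)}$ inherits a decay rate $>n-2$ uniformly in time.

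Finally I would let $t\to+\infty$. By the monotonicity \eqref{mono-lambda}, the local maximality of $g_b$ and Lemma \ref{stabilityALE} one has $\lambda_{\operatorname{ALE}}(g(t))\to0$; and since the limit $g_\infty$ of Theorem \ref{theo-dyn-stab-loc-max-lambda} is Ricci--flat, $f_{g_\infty}=0$, so the elliptic control of $f_{g(t)}$ by $\R_{g(t)}$ in \eqref{eqdeffg} together with $\|\Ric(g(t))\|\to0$ gives $\|f_{g(t)}\|_{C^{2,\alpha}_{\tau}}\to0$. Consequently $\int_N|\nabla f_{g(t)}|^2e^{-f_{g(t)}}d\mu\lesssim\|f_{g(t)}\|_{C^1_\tau}^2\to0$ and $\int_N\R_{g(t)}(1-e^{-f_{g(t)}})d\mu\lesssim\|f_{g(t)}\|_{C^0}\,\|\R_{g(t)}\|_{L^1}\to0$. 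Rearranging the displayed identity and using mass conservation yields
\[
\int_N\R_{g(t)}\,d\mu_{g(t)}=\lambda_{\operatorname{ALE}}(g(t))+m_{\operatorname{ADM}}(g(0))-\int_N|\nabla f_{g(t)}|^2e^{-f_{g(t)}}d\mu+\int_N\R_{g(t)}(1-e^{-f_{g(t)}})d\mu\;\xrightarrow[t\to\infty]{}\;m_{\operatorname{ADM}}(g(0)),
\]
which is \eqref{mass-id-t-infty}. The main obstacle is the mass conservation step, that is, making the scalar--curvature flux at infinity vanish uniformly in time: the generic weighted bound only gives $\R_{g(t)}=O(\rho_{g_b}^{-\tau-2})$, which is insufficient when $\tau\le n-4$, and this is precisely why the stronger initial decay $q>n$ is imposed and must be propagated by the heat--kernel estimates. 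A secondary technical point is the legitimacy of exchanging the spatial limit $R\to\infty$ with $t\to+\infty$, which is guaranteed here by the uniform $L^1$ bound on $\R_{g(t)}$ from \eqref{est-L-1-norm-scal} and the $C^{2,\alpha}_\tau$--decay of $f_{g(t)}$.
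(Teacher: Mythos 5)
Your route is genuinely different from the paper's, and its limiting part is sound. The paper never invokes $f_{g(t)}$ or the splitting of $\lambda_{\operatorname{ALE}}$ at all: it linearizes $\R_{g(t)}$ around the limit metric $g_\infty$ via Lemma \ref{lem-lin-equ-Ric-first-var}, integrates by parts so that the linear (divergence) part reproduces exactly the ADM boundary integral, and bounds the quadratic remainder by $\|g(t)-g_\infty\|_{C^{2}_{\tau'}}^2$ with $\tau'<\tau$ and $2\tau'+2>n$; this tends to $0$ by Theorem \ref{theo-dyn-stab-loc-max-lambda}, and mass conservation (cited from Dai--Ma) finishes. Your alternative — expressing $\int_N\R_{g(t)}\,d\mu_{g(t)}$ through the convergent bulk--boundary splitting \eqref{true-def-lambda} on $\mathcal{M}_\tau$ and killing the extra terms using $\lambda_{\operatorname{ALE}}(g(t))\to0$ (Lemma \ref{stabilityALE}) and $f_{g(t)}\to 0$ (elliptic estimates for \eqref{eqdefwg}, $\|\R_{g(t)}\|_{C^0}\to0$ and interpolation; note you only obtain $C^{2,\alpha'}_{\tau'}$ for $\tau'<\tau$, which suffices since $\tau'$ may still be chosen above $\frac{n-2}{2}$) — implements the same cancellation by a different mechanism, at the cost of this extra elliptic step.

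The genuine gap is in your mass-conservation argument. You claim that Duhamel's formula and Theorem \ref{Gaussian-est-scal-heat-ker} propagate a spatial decay rate strictly greater than $n-2$ for $\R_{g(t)}$ \emph{uniformly in time}. This is false: the source term $\int_0^t\int_N K(x,t,y,s)\,|\Ric|^2(y,s)\,d\mu\,ds$, with $|\Ric|^2=O(\rho^{-2\tau-4})\in L^1$, is controlled uniformly in $t$ only by the Green-function convolution $\int_N d(x,y)^{2-n}\rho(y)^{-2\tau-4}\,d\mu(y)=O(\rho(x)^{2-n})$, and since the source is nonnegative and $K>0$, nothing better holds: at times $t\sim\rho(x)^2$ the spread-out charge genuinely contributes at order $\rho(x)^{2-n}$, which is exactly the borderline at which the flux $\int_{\{\rho=R\}}\langle\nabla\R,\mathbf{n}\rangle\,d\sigma$ is $O(1)$ rather than $o(1)$. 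This is also consistent with the paper's discussion immediately after the proposition: when $m_{\operatorname{ADM}}(g(0))\neq0$ one cannot have $\|\R_{g(t)}\|_{L^1}\to0$, i.e.\ the scalar curvature escapes to spatial infinity, so no uniform-in-time improvement beyond rate $n-2$ is available in general. What is true, and what suffices to get $\frac{d}{dt}m_{\operatorname{ADM}}(g(t))=0$, is decay at each \emph{fixed} time: for fixed $t$ the far-field heat-kernel contributions carry a factor $e^{-c\rho^2/t}$ and one gets $\R_{g(t)}=O_t\big(\rho^{-\min(q,\,2\tau+4)}\big)$ with $\min(q,2\tau+4)>n$; one must then still justify differentiating the mass under the limit $R\to\infty$, a point your sketch does not address. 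Both issues are precisely the content of \cite[Theorem 1]{Dai-Ma-Mass}, which the paper simply cites under the hypothesis $q>n$; citing it (or reworking your step with fixed-time decay and the interchange of $\frac{d}{dt}$ and $\lim_{R\to\infty}$ justified) repairs your proof.
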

\begin{rk}
Formula (\ref{mass-id-t-infty}) is inspired by the work \cite{Li-Yu-AF} where the same identity is proved in the context of asymptotically flat immortal Ricci flows with non-negative scalar curvature on $\RR^n$.
\end{rk}

\begin{proof}
Let us prove formula (\ref{mass-id-t-infty}). Recall by linearizing the scalar curvature around the limit metric $g_{\infty}$ based on [(\ref{lem-lin-equ-scal-first-var}), Lemma \ref{lem-lin-equ-Ric-first-var}] that:
\begin{equation*}
\begin{split}
\Bigg|\R_{g(t)}&-\div_{g(t)}\left(\div_{g(t)}(g(t)-g_{\infty})-\nabla^{g_{\infty}}\tr_{g_{\infty}}(g(t)-g_{\infty})\right)\Bigg|\\
&\leq C(n,g_{\infty})\left(|g(t)-g_{\infty}|\left|\nabla^{g_{\infty},2}(g(t)-g_{\infty})\right|+\left|\nabla^{g_{\infty}}(g(t)-g_{\infty})\right|^2\right).
\end{split}
\end{equation*}
In particular, by the definition of the $ADM$ mass introduced in (\ref{def-mass}), one gets by integration by parts that:
\begin{equation}
\begin{split}\label{comp-mass-int-R}
\Bigg|m_{\operatorname{ADM}}(g(t))&-\int_N\R_{g(t)}\,d_{\mu_{g(t)}}\Bigg|\\
 \leq\,&C\int_N\left(|g(t)-g_{\infty}|\left|\nabla^{g_{\infty},2}(g(t)-g_{\infty})\right|+\left|\nabla^{g_{\infty}}(g(t)-g_{\infty})\right|^2\right)\,d\mu_{g(t)}\\
\leq\,&C\|g(t)-g_{\infty}\|_{C^{2}_{\tau'}}^2,
\end{split}
\end{equation}
for $\tau'\in(0,\tau)$ such that $2\tau'+2>n$ which justifies the second inequality ensuring the integrability of $\rho_{g_b}^{-2\tau'-2}$.

As a first conclusion, Theorem \ref{theo-dyn-stab-loc-max-lambda} implies that the righthand side of (\ref{comp-mass-int-R}) converges to $0$ as $t$ tends to $+\infty$.

If $\R_{g(0)}=O(\rho_{g_b}^{-q})$ for some $q>n$ then the mass $m_{\operatorname{ADM}}(g(\cdot))$ is constant in time, according to \cite[Theorem $1$]{Dai-Ma-Mass}. Therefore, one gets the expected result:
\begin{equation*}
\begin{split}
0=&\lim_{t\rightarrow+\infty}\left(m_{\operatorname{ADM}}(g(t))-\int_N\R_{g(t)}\,d_{\mu_{g(t)}}\right)\\
=&m_{\operatorname{ADM}}(g(0))-\lim_{t\rightarrow+\infty}\int_N\R_{g(t)}\,d_{\mu_{g(t)}}.
\end{split}
\end{equation*}

\end{proof}

%\todo[inline]{Faire une remarque de ce qui suit à référencer quand on insiste sur le fait qu'on n'aura pas $\tau = n-2$ ?}
In \cite[Theorem $2$]{Dai-Ma-Mass}, it is shown that if $(g(t))_{t\geq 0}$ is a solution to the Ricci flow which is ALE for each time and such that the scalar curvature is integrable, then in case this solution converges to a limit ALE metric $g_{\infty}$ in the $C^{1,\alpha}_{\tau}$-topology, $\tau>\frac{n-2}{2}$, one has $$\lim_{t\rightarrow+\infty}m_{\operatorname{ADM}}(g(t))=m_{\operatorname{ADM}}(g_{\infty}),$$ \textit{provided} that,
\begin{equation}
\lim_{t\rightarrow+\infty}\|\R_{g(t)}-\R_{g_{\infty}}\|_{L^1}=0.\label{ass-dai-ma}
\end{equation}
In the setting of Proposition \ref{prop-id-scal-int-mass-init}, the convergence (\ref{ass-dai-ma}) is unlikely to hold true and it explains the restriction on $\tau$ to lie below $n-2$.

Indeed, on the contrary, one would get $m_{\operatorname{ADM}}(g(0))=m_{\operatorname{ADM}}(g_{\infty})=0$ since $g_{\infty}$ is an ALE Ricci flat metric. On the other hand, there are small perturbations in the $C^{2,\alpha}_{\tau}$-topology of any given ALE Ricci flat metric with non-zero ADM mass: see for instance the discussion at the beginning of \cite[Section $3$]{Der-Ozu-Lam}. This leads to a contradiction for stable Ricci-flat ALE spaces (such as all known examples).

	\subsection{A discussion on previous stability results}~~\\
	
	In this section, we compare our stability result given by Theorem \ref{theo-dyn-stab-loc-max-lambda} together with its proof and previous results on the same topic.
	
	In \cite[Theorem $1.4$]{Sch-Sch-Sim} and \cite[Theorem $1.2$]{app-scal}, stability of Euclidean space $(\RR^n,g_{e})$ along the DeTurck Ricci flow is investigated  for small perturbations in $L^{\infty}$ of the Euclidean metric $g_{e}$ lying in $L^p$, $p\geq 1$. In \cite{Sch-Sch-Sim}, their argument is based on the Lyapunov function $t\in\RR_+\rightarrow\int_{\RR^n}|\tilde{g}(t)-g_{e}|^2\,d\mu_{g_e}$ where $(\tilde{g}(t))_{t>0}$ denotes a solution to DeTurck Ricci flow whereas \cite{app-scal} uses estimates based on Duhamel's formula. Both methods lead to the same solution $(g(t))_{t>0}$ which is of Type III: in \cite[Theorem $1.2$]{app-scal}, the curvature and its covariant derivatives satisfy
$|\nabla^{\tilde{g}(t),k}\Rm(\tilde{g}(t))|_{\tilde{g}(t)}=O(t^{-\frac{n}{2p}-1-\frac{k}{2}})$ for all $k\geq 0$ uniformly in space.

Moreover, these two articles obtain a corresponding solution $(g(t))_{t>0}$ to the Ricci flow by pulling back the solution to the DeTurck Ricci flow by a one-parameter family of diffeomorphisms $(\psi_t)_{t>0}$, i.e. $g(t):=\psi_t^*\tilde{g}(t)$. This family of diffeomorphisms is generated by minus the evolving Bianchi one-form $B(g(t),\nabla^{g_{e}},g(t))$ introduced in (\ref{lovely-bianchi-defn}): 
\begin{equation*}
\partial_t\psi_t:=-B(g(t),\nabla^{g_{e}},g(t))\circ\psi_t,\quad \psi_t|_{t=0}=\operatorname{Id}_{\RR^n}.
\end{equation*}
The article \cite[Theorem $9.2$]{Sch-Sch-Sim} shows in detail that there exists a limit diffeomorphism $\psi_{\infty}$ which is the pointwise limit of $\psi_t$ as $t$ tends to $+\infty$ and such that $g(t)$ converges to $\psi_{\infty}^*g_e$ in $C^{k}_{loc}$. 

In contrast, Theorem \ref{theo-dyn-stab-loc-max-lambda} gives a Type IIb solution $(g(t))_{t>0}$ to the Ricci flow and our functional $\lambda_{\operatorname{ALE}}$ plays the role of a Lyapunov functional. The use of the DeTurck Ricci flow is delicate here in the sense that the limit metric of the flow is not known a priori.

 However, the work \cite{Der-Kro} proves that one can choose a time-dependent gauge to show that a suitable DeTurck Ricci flow converges to a nearby ALE Ricci flat metric in the setting of integrable and linearly stable ALE Ricci flat metrics. In \cite{Der-Kro}, the convergence is shown to hold in the $L^2\cap L^{\infty}$ topology. No convergence rate in time was obtained. This rate has been established in the work \cite{Kro-Pet-Lp-stab} in case the background Ricci flat metric carries a parallel spinor.
 
 Coming back to the setting and notations of Theorem \ref{theo-dyn-stab-loc-max-lambda}, the possibility of making sense of the DeTurck Ricci flow with respect to the background limit metric $g_{\infty}$ is not taken for granted in general as the decay in time [(\ref{Bianchi-C-0-est}), Proposition \ref{Bianchi-C-0-est-prop}] suggests. Indeed, in order to justify the existence of a diffeomorphism $\psi_{\infty}$ of $N$ as the limit of the one-parameter family of diffeomorphisms generated by $B(g(t),\nabla^{g_{\infty}},g(t))$, [(\ref{Bianchi-C-0-est}), Proposition \ref{Bianchi-C-0-est-prop}] asks the \L ojasiewicz exponent $\theta$ to be close to $1$ enough. More precisely, if $\theta>\frac{1}{2}$ then it is not too difficult to show the existence of $\psi_{\infty}$ and as a result, the corresponding DeTurck Ricci flow converges to $\psi_{\infty}^{\ast}g_{\infty}$ in $C^k_{loc}$, $k\geq 0$. Whether this convergence holds true in weighted H\"older norms is not straightforward and in light of the proof of Proposition \ref{Bianchi-C-0-est-prop}, it would require an even more stringent condition on the exponent $\theta$.\\
 
 Finally, let us notice that in case the background ALE Ricci flat metric $g_b$ is integrable and linearly stable (which includes the Euclidean metric on $\RR^n$) then \cite[Proposition $7.15$]{Der-Ozu-Lam} shows that $\theta=\frac{2\tau-n}{2\tau-(n-2)}$. In particular, the criterion $\theta>\frac{1}{2}$ is equivalent to $\tau>\frac{n+2}{2}$. As $\tau$ is constrained to lie in $(\frac{n}{2}-1,n-2)$, the condition $\tau>\frac{n+2}{2}$ is non empty for $n>6$.

\section{Instability of Ricci-flat ALE metrics}\label{section instability}

Similarly to \cite{Has-Sta}, we prove that there exists an ancient solution to the Ricci flow starting at any unstable ALE Ricci-flat metric $(N^n,g_b)$ that satisfies Inequality \ref{basic-ass-2} and which flows away in the $C^{2,\alpha}_{\tau}$ topology.
\subsection{A priori energy estimates in the unstable case}~~\\

Let us start by proving that a Ricci flow starting in a given $C^{2,\alpha}_{\tau}$-neighborhood of an unstable ALE Ricci flat metric, where a suitable \L{}ojasiewicz inequality is satisfied, escapes a possibly larger neighborhood in finite time.
\begin{lemma}\label{instabdefT}
Let $(N^n,g_b)$ be an unstable ALE Ricci flat metric such that Inequality \ref{basic-ass-2} holds on a neighborhood $B_{C^{2,\alpha}_{\tau}}(g_b,\varepsilon_{\L})$ with exponent $\theta\in(0,1)$. Let $g_0$ be a metric in $B_{C^{2,\alpha}_\tau}(g_b,\varepsilon)$, $\varepsilon<\varepsilon_{\L}$, and let $(g(t))_{t\in[0,T)}$ be Shi's solution to the Ricci flow starting at $g_0$. If $\lambda_{\operatorname{ALE}}(g_0)>0$ then there exists a finite positive time $T(g_0)$ defined by
    \begin{equation*}
    T(g_0) = \min\left\{t>0\,|\,\forall s\in[0,t),\,\|g(s)-g_b\|_{C^{2,\alpha}_\tau}<\varepsilon\,\text{and $\|g(t)-g_b\|_{C^{2,\alpha}_\tau}=\varepsilon$}\right\}.
    \end{equation*}
   Moreover, there exists a positive constant $C=C(n,g_b,\varepsilon,\theta)$ such that $T(g_0)<\frac{1}{C\lambda_{\operatorname{ALE}}^{1-\theta}(g_0)}$ and,
   \begin{equation}
        \lambda_{\operatorname{ALE}}(g(t))\geq \frac{\lambda_{\operatorname{ALE}}(g(s))}{(1 - C\lambda_{\operatorname{ALE}}^{1-\theta}(g(s))(t-s))^\frac{1}{1-\theta}},\quad 0\leq s<t\leq T(g_0).\label{ineg lambdaALE positif}
    \end{equation}
Finally, there exists $\delta(n,g_b,\varepsilon)>0$ such that if $\delta\in(0,\delta(n,g_b,\varepsilon))$, and $g_0\in B_{C^{2,\alpha}_{\tau}}(g_b,\delta)$ then $T(g_0)>T(n,g_b)>0$.

\end{lemma}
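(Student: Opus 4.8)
The plan is to follow the scheme of Lemma~\ref{stabilityALE}, reversing all signs so that the positivity of $\lambda_{\operatorname{ALE}}$ is not merely preserved but amplified into a finite-time blow-up of the driving differential inequality, which in turn forces the flow to leave $B_{C^{2,\alpha}_{\tau}}(g_b,\varepsilon)$.

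First I would set up the basic differential inequality. As long as $g(t)\in B_{C^{2,\alpha}_{\tau}}(g_b,\varepsilon_{\L})$, the monotonicity formula \eqref{mono-lambda} shows that $t\mapsto\lambda_{\operatorname{ALE}}(g(t))$ is nondecreasing; since $\lambda_{\operatorname{ALE}}(g_0)>0$ it therefore stays positive, so $|\lambda_{\operatorname{ALE}}(g(t))|=\lambda_{\operatorname{ALE}}(g(t))$. Because the first variation (Proposition~\ref{first-var-prop}) identifies $\nabla\lambda_{\operatorname{ALE}}(g)$ with $-\big(\Ric(g)+\nabla^{g,2}f_g\big)$ up to the uniformly bounded weight $e^{-f_g}$, and the weighted measure $e^{-f_{g(t)}}d\mu_{g(t)}$ is uniformly equivalent to $d\mu_{g_b}$ on the neighborhood, the $L^2$ norms appearing in \eqref{mono-lambda} and in the \L{}ojasiewicz inequality \eqref{basic-ass-2} are comparable (exactly as in Lemma~\ref{stabilityALE}). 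Combining the two gives, for some $C_0=C_0(n,g_b,\varepsilon,\theta)>0$,
\begin{equation*}
\frac{d}{dt}\lambda_{\operatorname{ALE}}(g(t))\;=\;2\big\|\Ric(g(t))+\nabla^{g(t),2}f_{g(t)}\big\|^2_{L^2}\;\geq\;C_0\,\lambda_{\operatorname{ALE}}(g(t))^{2-\theta}.
\end{equation*}

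Next I would integrate this inequality. Setting $\phi=\lambda_{\operatorname{ALE}}(g(\cdot))$, dividing by $\phi^{2-\theta}$ and integrating from $s$ to $t$ yields $\phi(s)^{-(1-\theta)}-\phi(t)^{-(1-\theta)}\geq C_0(1-\theta)(t-s)$, which rearranges to exactly \eqref{ineg lambdaALE positif} with $C:=C_0(1-\theta)$. In particular, taking $s=0$, the right-hand side of \eqref{ineg lambdaALE positif} diverges as $t\uparrow t_\ast:=\big(C\lambda_{\operatorname{ALE}}^{1-\theta}(g_0)\big)^{-1}$. To convert this into the finite exit time, I would use that $\lambda_{\operatorname{ALE}}$ is continuous, hence bounded by some $M=M(n,g_b,\varepsilon)$, on $B_{C^{2,\alpha}_{\tau}}(g_b,\varepsilon)$ (shrinking $\varepsilon$ if necessary; see \cite{Der-Ozu-Lam}). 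Were the flow to remain in the ball up to $t_\ast$, the lower bound would force $\lambda_{\operatorname{ALE}}(g(t))\to+\infty$, contradicting $\lambda_{\operatorname{ALE}}\leq M$; since Shi's solution stays smooth as long as it remains in the ball (where the curvature is controlled), $T(g_0)$ is a well-defined finite positive time with $T(g_0)<t_\ast$, which is the asserted bound.

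Finally, the lower bound $T(g_0)>T(n,g_b)$ I would obtain directly from Proposition~\ref{claim-T-max-pos}, whose proof uses no stability hypothesis (Remark~\ref{rk-claim-T-max-pos}): for $\delta\leq\delta(n,g_b,\varepsilon)$ it shows $\|g(t)-g_b\|_{C^{2,\alpha}_{\tau}}\leq C(n,g_b,\varepsilon)\delta<\varepsilon$ for all $t\in[0,T(n,g_b)]$, so the flow has not reached the boundary $\{\|g-g_b\|_{C^{2,\alpha}_{\tau}}=\varepsilon\}$ by time $T(n,g_b)$. The one genuinely delicate point, and the step I expect to require the most care, is the finite-exit-time argument: one must ensure that $\lambda_{\operatorname{ALE}}$ is truly bounded above on the whole neighborhood (so that blow-up of the lower bound is genuinely incompatible with staying inside) and that the flow persists and remains $C^{2,\alpha}_{\tau}$-continuous up to $T(g_0)$, so that the minimum defining $T(g_0)$ is attained. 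Everything else — the measure equivalence, the sign bookkeeping, and the ODE integration — is routine and parallels Lemma~\ref{stabilityALE}.
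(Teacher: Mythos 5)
Your proposal is correct and follows essentially the same route as the paper's proof: existence, positivity, and the lower bound $T(g_0)>T(n,g_b)$ all come from Proposition \ref{claim-T-max-pos} with Remark \ref{rk-claim-T-max-pos}; the differential inequality $\frac{d}{dt}\lambda_{\operatorname{ALE}}(g(t))\geq C\lambda_{\operatorname{ALE}}(g(t))^{2-\theta}$ is obtained by combining the monotonicity formula \eqref{mono-lambda} with the \L{}ojasiewicz inequality \eqref{basic-ass-2} and integrated to yield \eqref{ineg lambdaALE positif}. Your finite-exit-time argument (blow-up of the lower bound versus boundedness of $\lambda_{\operatorname{ALE}}$ on the ball) is exactly the paper's appeal to the continuous dependence of $\lambda_{\operatorname{ALE}}(g(t))$ on $\|g(t)-g_b\|_{C^{2,\alpha}_{\tau}}$, just spelled out in slightly more detail.
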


\begin{proof}
  The fact that $T(g_0)$ exists and is positive comes from Proposition \ref{claim-T-max-pos} together with Remark \ref{rk-claim-T-max-pos}. The existence of $\delta(n,g_b,\varepsilon)>0$ such that if $\delta\in(0,\delta(n,g_b,\varepsilon))$, and $g_0\in B_{C^{2,\alpha}_{\tau}}(g_b,\delta)$ then $T(g_0)>T(n,g_b)>0$ is also due to that same Proposition \ref{claim-T-max-pos}.

    Let us now prove that $T(g_0)$ is finite. As long as $g(t)$ stays in $B_{C^{2,\alpha}_\tau}(g_b,\varepsilon)$, we can integrate the \L{}ojasiewicz inequality for ALE metrics given by Inequality \ref{basic-ass-2} with exponent $\theta$ to get
    
    \begin{equation}
\begin{split}\label{int-L-2-inequ-lambda}
\frac{d}{dt}\lambda_{\operatorname{ALE}}(g(t))\geq C\lambda_{\operatorname{ALE}}(g(t))^{2-\theta},\quad t\in(0,T(g_0)),
\end{split}
\end{equation}
for some positive constant $C$ independent of time.

Integrating this differential inequality leads to:

\begin{equation}
\lambda_{\operatorname{ALE}}(g(t))\geq \frac{\lambda_{\operatorname{ALE}}(g_0)}{\left(1-C\lambda_{\operatorname{ALE}}(g_0)^{1-\theta}\cdot t\right)^{\frac{1}{1-\theta}}},\quad 0\leq t\leq T(g_0),\label{covid-20-lambda-unst}
\end{equation}
for some positive constant $C$ independent of time. This gives us the full estimate (\ref{ineg lambdaALE positif}).

    The righthand side of (\ref{covid-20-lambda-unst}) blows up as $t$ tends to $\frac{1}{C\lambda_{\operatorname{ALE}}^{1-\theta}(g_0)}$, and since $\lambda_{\operatorname{ALE}}(g(t))$ depends continuously on $\|g(t)-g_b\|_{C^{2,\alpha}_\tau}$ as recalled in Section \ref{sec-def-lambda}, we have $T(g_0)<\frac{1}{C\lambda_{\operatorname{ALE}}^{1-\theta}(g_0)}$.

\end{proof}

The following lemma is the analogous statement to Lemma \ref{stabilityALE} for unstable ALE Ricci flat metrics.
\begin{lemma}[A priori $L^2$ estimate for the Ricci flow : positive $\lambda_{\operatorname{ALE}}$]\label{instability-ALE}
Let $(N^n,g_b)$ be an unstable ALE Ricci flat metric such that Inequality \ref{basic-ass-2} holds on a neighborhood $B_{C^{2,\alpha}_{\tau}}(g_b,\varepsilon_{\L})$ with exponent $\theta\in(0,1)$. Let $g_0$ be a metric in $B_{C^{2,\alpha}_\tau}(g_b,\varepsilon)$, $\varepsilon<\varepsilon_{\L}$, such that $\lambda_{\operatorname{ALE}}(g_0)>0$ and let $T(g_0)>0$ be defined as in Lemma \ref{instabdefT}. Then one has the following estimate:
\begin{equation}\label{crucial-est-int-l2-norm-time-insta}
\begin{split}
\int_0^{T(g_0)}\|\Ric(g(t))\|_{L^2}^2+&\|\nabla^{g(t),2}f_{g(t)}\|_{L^2}^2\,dt\leq C\lambda_{\operatorname{ALE}}(g(T(g_0))),
\end{split}
\end{equation}
for some time-independent positive constant $C=C(n,g_b,\varepsilon,\theta)$.

Finally, if $\eta\in\left[0,\frac{\theta}{2-\theta}\right)$, one has
\begin{equation}
\begin{split}\label{est-l2-ric-hess-eta-a-priori-insta}
\int_0^{T(g_0)}\|\Ric(g(t))\|_{L^2}^{1-\eta}+\|\nabla^{g(t),2}f_{g(t)}\|_{L^2}^{1-\eta}\,dt\leq C\lambda_{\operatorname{ALE}}(g(T(g_0)))^{\frac{\theta}{2}(1+\eta)-\eta},
\end{split}
\end{equation}
for some positive constant $C=C(n,g_b,\varepsilon,\theta,\eta)$. 
\end{lemma}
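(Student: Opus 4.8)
The plan is to mirror the proof of Lemma \ref{stabilityALE}, the only structural change being that here $\lambda_{\operatorname{ALE}}$ is positive and increasing along the flow on $[0,T(g_0)]$, so that every quantity will be controlled by the \emph{terminal} value $\lambda_{\operatorname{ALE}}(g(T(g_0)))$ rather than by the initial one. First I would use the monotonicity formula (\ref{mono-lambda}) from Proposition \ref{first-var-prop}, integrated over $[0,T(g_0)]$,
$$2\int_0^{T(g_0)}\|\Ric(g(t))+\nabla^{g(t),2}f_{g(t)}\|_{L^2(e^{-f_{g(t)}}d\mu_{g(t)})}^2\,dt=\lambda_{\operatorname{ALE}}(g(T(g_0)))-\lambda_{\operatorname{ALE}}(g_0).$$
Since $\lambda_{\operatorname{ALE}}(g_0)>0$ by assumption, the right-hand side is bounded above by $\lambda_{\operatorname{ALE}}(g(T(g_0)))$. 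Combining this with the uniform equivalence of the measures $e^{-f_{g(t)}}d\mu_{g(t)}$ and $d\mu_{g(t)}$, and applying the a priori bound of Proposition \ref{stabilityALE-RF-L2} pointwise in time to separate the Ricci and Hessian contributions, I obtain (\ref{crucial-est-int-l2-norm-time-insta}).

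For the refined estimate I would set $\gamma:=\frac{\theta}{2}(1+\eta)-\eta$, which is positive precisely when $\eta<\frac{\theta}{2-\theta}$, and differentiate $t\mapsto\lambda_{\operatorname{ALE}}(g(t))^{\gamma}$; this is legitimate since (\ref{ineg lambdaALE positif}) guarantees $\lambda_{\operatorname{ALE}}(g(t))>0$ throughout $[0,T(g_0)]$. Rewriting the time derivative by the monotonicity formula and splitting the norm as $\|\cdot\|_{L^2}^{2}=\|\cdot\|_{L^2}^{1+\eta}\cdot\|\cdot\|_{L^2}^{1-\eta}$, the \L{}ojasiewicz inequality (\ref{basic-ass-2}) supplies the lower bound $\|\Ric(g(t))+\nabla^{g(t),2}f_{g(t)}\|_{L^2}\gtrsim\lambda_{\operatorname{ALE}}(g(t))^{\frac{2-\theta}{2}}$, and the power of $\lambda_{\operatorname{ALE}}(g(t))$ then cancels exactly by the choice of $\gamma$, giving
$$\frac{d}{dt}\lambda_{\operatorname{ALE}}(g(t))^{\gamma}\gtrsim\|\Ric(g(t))+\nabla^{g(t),2}f_{g(t)}\|_{L^2}^{1-\eta}.$$
Integrating over $[0,T(g_0)]$ and discarding the nonnegative term $\lambda_{\operatorname{ALE}}(g_0)^{\gamma}$ bounds the time integral of $\|\Ric+\nabla^{2}f\|_{L^2}^{1-\eta}$ by $C\lambda_{\operatorname{ALE}}(g(T(g_0)))^{\gamma}$; a final use of Proposition \ref{stabilityALE-RF-L2} raised to the power $1-\eta$ separates the two terms and yields (\ref{est-l2-ric-hess-eta-a-priori-insta}).

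The computation is formally parallel to the stable case, so I do not expect a genuine analytic obstacle; the one point requiring care is the sign bookkeeping. In the stable setting one works with the nonpositive quantity $-\lambda_{\operatorname{ALE}}$, which decreases to zero, so the reference value is taken at the initial time; here $\lambda_{\operatorname{ALE}}$ increases away from zero, so the correct reference value is the one at the escape time $T(g_0)$, and the initial contribution is dropped using $\lambda_{\operatorname{ALE}}(g_0)>0$ (equivalently $\lambda_{\operatorname{ALE}}(g_0)^{\gamma}>0$). I would also note that, should one wish to avoid relying on the strict positivity (\ref{ineg lambdaALE positif}), differentiating $\lambda_{\operatorname{ALE}}(g(t))^{\gamma}$ at a potential zero can be handled exactly as in Lemma \ref{stabilityALE}, by working with $\lambda_{\operatorname{ALE}}(g(t))+\tilde\varepsilon$ and letting $\tilde\varepsilon\to0$.
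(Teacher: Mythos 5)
Your proof is correct and takes essentially the same route as the paper's: integrate the monotonicity formula (\ref{mono-lambda}), drop the positive term $\lambda_{\operatorname{ALE}}(g_0)$ to get (\ref{crucial-est-int-l2-norm-time-insta}) via Proposition \ref{stabilityALE-RF-L2}, then differentiate $t\mapsto\lambda_{\operatorname{ALE}}(g(t))^{\gamma}$ with $\gamma=\frac{\theta}{2}(1+\eta)-\eta$, use the \L{}ojasiewicz inequality to cancel the power of $\lambda_{\operatorname{ALE}}$, and integrate. Your closing remarks on sign bookkeeping and the $\tilde\varepsilon$-regularization are consistent with (and slightly more careful than) the paper, which simply performs the computation since positivity of $\lambda_{\operatorname{ALE}}$ along the flow is automatic here.
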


\begin{proof}

  Observe first that by definition of the Ricci flow together with the first variation of $\lambda_{\operatorname{ALE}}$ computed in [(\ref{mono-lambda}), Proposition \ref{first-var-prop}],
\begin{equation}
\begin{split}\label{int-L-2-inequ-lambda}
2\int_0^{T(g_0)}\|\Ric(g(t))+\nabla^{g(t),2}f_{g(t)}\|_{L^2}^2\,dt&=\lambda_{\operatorname{ALE}}(g(T(g_0)))-\lambda_{\operatorname{ALE}}(g_0)\\
&\leq \lambda_{\operatorname{ALE}}(g(T(g_0))).
\end{split}
\end{equation}
This proves (\ref{crucial-est-int-l2-norm-time-insta}) once we invoke Proposition \ref{stabilityALE-RF-L2}. 

Now, on the one hand, consider the function $t\rightarrow\lambda_{\operatorname{ALE}}(g(t))^{\gamma}$ for some positive constant $\gamma$ to be constrained later. Observe as in \cite{Has-Mul} that if $\eta\in[0,\theta/(2-\theta))$ for some $\theta\in[0,1)$:
\begin{equation}
\begin{split}\label{diff-evo-equ-lambda-exp-insta}
\frac{d}{dt}\lambda_{\operatorname{ALE}}(g(t))^{\gamma}&=\gamma \lambda_{\operatorname{ALE}}(g(t))^{\gamma-1}\frac{d}{dt}\lambda_{\operatorname{ALE}}(g(t))\\
&=2\gamma \lambda_{\operatorname{ALE}}(g(t))^{\gamma-1}\|\Ric(g(t))+\nabla^{g(t),2}f_{g(t)}\|^2_{L^2}\\
&=2\gamma \lambda_{\operatorname{ALE}}(g(t))^{\gamma-1}\|\Ric(g(t))+\nabla^{g(t),2}f_{g(t)}\|_{L^2}^{(1+\eta)+(1-\eta)}\\
&\geq C\gamma \lambda_{\operatorname{ALE}}(g(t))^{\gamma-1+(1+\eta)\left(1-\frac{\theta}{2}\right)}\|\Ric(g(t))+\nabla^{g(t),2}f_{g(t)}\|_{L^2}^{1-\eta}\\
&=C\gamma \|\Ric(g(t))+\nabla^{g(t),2}f_{g(t)}\|_{L^2}^{1-\eta},
\end{split}
\end{equation}
if $$\gamma:=\frac{\theta}{2}(1+\eta)-\eta>0.$$ Here we have used the \L{}ojasiewicz inequality for ALE metrics given by Inequality \ref{basic-ass-2} in the fourth line.

Integrating (\ref{diff-evo-equ-lambda-exp-insta}) in time, one gets the expected result, i.e.
 \begin{equation}
\begin{split}
\int_0^{T(g_0)}\|\Ric(g(t))+\nabla^{g(t),2}f_{g(t)}\|_{L^2}^{1-\eta}\,dt
&\leq \frac{1}{C\gamma}\lambda_{\operatorname{ALE}}(g(T(g_0)))^{\gamma}.
\end{split}
\end{equation}
This concludes the proof of (\ref{est-l2-ric-hess-eta-a-priori-insta}) by invoking Proposition \ref{stabilityALE-RF-L2} once more.

%\todo{Sous nos hypothèses, $\lambda_{\operatorname{ALE}}\leq 0$ avec égalité pour les métriques Ricci-plates, non ? Oui, \c c a \'evite juste de faire diff\'erents cas selon que le flot de Ricci est Ricci plat en temps fini ou non...}

\end{proof}

\subsection{A digression on $\lambda_{\operatorname{ALE}}$}~~\\

By Perelman's work \cite{Per-Ent}, recall that for a closed Riemannian manifold $(M^n,g)$, 
\begin{equation*}
\|\Ric(g) + \nabla^{g,2}{f}\|^2_{L^2(e^{-f_g}d\mu_g)} \geq \lambda(g)^{2},
\end{equation*}
where $\lambda(g)$ denotes Perelman's energy.

In the non-compact situation, a similar inequality holds generally if the metric stays at bounded $C^{2,\alpha}_{\tau}$-distance for $\tau>n-2$.

\begin{prop}\label{inégalité globale evolution lambda ALE}
    Let $\tau>n-2$ and $\alpha\in(0,1)$. Let $(N^n,g_b)$ be a Ricci flat ALE metric. Then we have the following inequality for the $\lambda_{\operatorname{ALE}}$-functional: for any metric $g\in B_{C^{2,\alpha}_{\tau}}(g_b,\varepsilon)$, there exist $ \delta= \delta(\tau)>2 $ and $C = C(n,\varepsilon)$ such that we have
    \begin{equation}
        \|\Ric(g) + \nabla^{g,2}{f_g}\|_{L^2(e^{-f_g}d\mu_g)}^2 \geq C|\lambda_{\operatorname{ALE}}(g)|^{\delta},\label{inequ-pere-lambda-ALE}
    \end{equation}
    for $\delta = \delta(\tau)>2$.
\end{prop}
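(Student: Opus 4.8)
The plan is to mimic Perelman's inequality $\|\Ric(g)+\nabla^{g,2}f\|^2_{L^2}\ge\lambda(g)^2$ from the compact case, the new feature being that the reference measure $e^{-f_g}\,d\mu_g$ has infinite total mass on an ALE end; replacing the global Cauchy--Schwarz (which in the compact case uses the normalization $\int e^{-f}=1$) by a localized one on large balls is exactly what will degrade the exponent from $2$ to some $\delta(\tau)>2$. First I would record that in the regime $\tau>n-2$ the mass term in \eqref{true-def-lambda} vanishes: since $g-g_b=O(\rho_{g_b}^{-\tau})$ in $C^1$, the boundary integrand $\langle\div_{g_b}(g-g_b)-\nabla^{g_b}\tr_{g_b}(g-g_b),\mathbf n\rangle_{g_b}$ is $O(\rho_{g_b}^{-\tau-1})$ while $d\sigma_{g_b}$ grows like $R^{n-1}$, so the flux is $O(R^{n-2-\tau})\to0$. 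Hence $\lambda_{\operatorname{ALE}}(g)=\int_N(\R_g+|\nabla^g f_g|^2)\,e^{-f_g}\,d\mu_g$, now an absolutely convergent integral because $\R_g=O(\rho_{g_b}^{-\tau-2})$ with $\tau+2>n$ and $|\nabla^g f_g|^2$ decays even faster. In particular $|\lambda_{\operatorname{ALE}}(g)|\le\int_N|\R_g|\,e^{-f_g}\,d\mu_g+\int_N|\nabla^g f_g|^2\,e^{-f_g}\,d\mu_g$, and the whole game is to bound each term by a power of $A:=\|\Ric(g)+\nabla^{g,2}f_g\|_{L^2(e^{-f_g}d\mu_g)}$ (note $e^{-f_g}\asymp1$ uniformly, since $f_g\to0$).

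For the scalar-curvature term I would write $|\R_g|\le\sqrt n\,|\Ric(g)|$ and split the integral over a ball $B_{g_b}(p,R)$ and its complement. On the ball, Cauchy--Schwarz together with the Euclidean volume growth $\vol_g B_{g_b}(p,R)\le CR^n$ and the a priori bound $\|\Ric(g)\|_{L^2}\le CA$ of Proposition \ref{stabilityALE-RF-L2} give $\int_{B_{g_b}(p,R)}|\Ric(g)|\,e^{-f_g}\le CA\,R^{n/2}$; on the complement the decay $|\Ric(g)|=O(\rho_{g_b}^{-\tau-2})$ with $\tau+2>n$ yields $\int_{\rho_{g_b}>R}|\Ric(g)|\,e^{-f_g}\le CR^{-(\tau+2-n)}$. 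Optimizing $AR^{n/2}+R^{-(\tau+2-n)}$ in $R$ (writing $b:=\tau+2-n>0$, the two terms balance at $R\sim A^{-2/(n+2b)}$) produces $\int_N|\R_g|\,e^{-f_g}\le CA^{2b/(n+2b)}$.

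The gradient term is treated identically but is subdominant. Using the Hardy inequality of \cite{Min-Har-Ine} in the form $\|\rho_{g_b}^{-1}\nabla^g f_g\|_{L^2}\le C\|\nabla^{g,2}f_g\|_{L^2}\le CA$ (again Proposition \ref{stabilityALE-RF-L2}), the same ball/tail split and optimization give $\int_N|\nabla^g f_g|^2\,e^{-f_g}\le CA^{2(n-2)/n}$, and since $2(n-2)/n\ge1>2b/(n+2b)$ for $n\ge4$ this term is $O(A^{2b/(n+2b)})$ as $A\to0$. Combining the two estimates on the neighborhood (where $A<1$) gives $|\lambda_{\operatorname{ALE}}(g)|\le CA^{2b/(n+2b)}$, which rearranges to $A^2\ge c\,|\lambda_{\operatorname{ALE}}(g)|^{\delta}$ with $\delta=\tfrac{n+2b}{b}=2+\tfrac{n}{\tau+2-n}>2$ and $C=C(n,\varepsilon)$, which is exactly \eqref{inequ-pere-lambda-ALE}. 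As a sanity check, $\delta\to2^+$ as $\tau\to+\infty$ (recovering Perelman's exponent) and $\delta\to+\infty$ as $\tau\to(n-2)^+$.

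The main obstacle is the localization step itself: because $\int_N e^{-f_g}\,d\mu_g=+\infty$, the clean trace-plus-Cauchy--Schwarz argument cannot be applied globally, and carrying out Cauchy--Schwarz only on $B_{g_b}(p,R)$ forces one to pay the volume factor $R^{n/2}$ and balance it against the curvature tail. It is precisely this balancing --- possible only because $\tau+2>n$ makes the tail summable --- that yields the loss $\delta>2$ and pins down its dependence on $\tau$. The two remaining technical points, namely the representation $\lambda_{\operatorname{ALE}}(g)=\int_N(\R_g+|\nabla^g f_g|^2)\,e^{-f_g}\,d\mu_g$ (vanishing of the mass/boundary term for $\tau>n-2$) and the uniform two-sided bound $e^{-f_g}\asymp1$, both follow from the decay rates already available, so they should not present serious difficulty.
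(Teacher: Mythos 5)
Your proof is correct, and it takes a genuinely different route from the paper's. The paper never splits $\lambda_{\operatorname{ALE}}$ into its scalar-curvature and gradient parts: it bounds $\|\Ric(g)+\nabla^{g,2}f_g\|^2_{L^2}$ from below by $\tfrac{1}{n}\|\R_g+\Delta_gf_g\|^2_{L^2}$ (trace inequality), inserts the weight $(1+\rho_g)^{\gamma(\tau+2)}$ for a $\gamma\in\big(\tfrac{n}{\tau+2},1\big)$ so that $(1+\rho_g)^{-\gamma(\tau+2)}e^{-f_g}d\mu_g$ is a \emph{finite} measure, applies Jensen's inequality with the convex power $\delta=\tfrac{2-\gamma}{1-\gamma}$, absorbs the leftover weight via the pointwise decay $|\R_g+\Delta_gf_g|=O(\rho_{g_b}^{-\tau-2})$, and finally uses \eqref{eqdeffg} in the form $\R_g+\Delta_gf_g=\tfrac12\big(\R_g+|\nabla^gf_g|^2_g\big)$ to recognize $\lambda_{\operatorname{ALE}}(g)$. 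Your ball/tail decomposition, with Cauchy--Schwarz on $B_{g_b}(p,R)$ and optimization in $R$, replaces the Jensen step entirely; both mechanisms quantify the same loss caused by the infinite mass of $e^{-f_g}d\mu_g$, and in fact your exponent $\delta=2+\tfrac{n}{\tau+2-n}$ is exactly the infimum of the exponents $\tfrac{2-\gamma}{1-\gamma}$, $\gamma\in\big(\tfrac{n}{\tau+2},1\big)$, attainable by the paper's argument, so your inequality is marginally sharper. The price of separating $\R_g$ from $|\nabla^g f_g|^2_g$ is that you need Proposition \ref{stabilityALE-RF-L2} and Hardy's inequality as extra ingredients to convert $\|\Ric(g)\|_{L^2}$ and $\|\rho_{g_b}^{-1}\nabla^gf_g\|_{L^2}$ into the quantity $A$, whereas the paper keeps the Bakry--\'Emery tensor intact and needs neither; since Proposition \ref{stabilityALE-RF-L2} appears in the paper in the regime $\tau\in\big(\tfrac{n-2}{2},n-2\big)$, you should note that weighted norms are monotone in the weight (as $\rho_{g_b}\geq1$), so a metric in $B_{C^{2,\alpha}_{\tau}}(g_b,\varepsilon)$ with $\tau>n-2$ lies in $B_{C^{2,\alpha}_{\tau'}}(g_b,\varepsilon)$ for any $\tau'\in\big(\tfrac{n-2}{2},n-2\big)$ and the proposition applies verbatim. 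Your preliminary observations (vanishing of the mass term when $\tau>n-2$, hence $\lambda_{\operatorname{ALE}}(g)=\int_N(\R_g+|\nabla^gf_g|^2_g)\,e^{-f_g}d\mu_g$, and $e^{-f_g}\asymp1$) are used implicitly in the paper's final line, so making them explicit is a welcome addition rather than a deviation.
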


\begin{rk}
We underline the fact that Proposition \ref{inégalité globale evolution lambda ALE} is not useful in our setting since the convergence rate $\tau$ is assumed to be larger than $n-2$. However, we decide to keep it here since it has its own interest.
\end{rk}

\begin{proof}
    Let us consider a Ricci-flat ALE manifold $(N^n,g_b)$, and a Riemannian metric $g$ satisfying $\|g-g_b\|_{C^{2,\alpha}_\tau}\leq \varepsilon $ for some $\tau>n-2$ and $\alpha\in(0,1)$.
    
    By assumption and the definition of $f_g\in C^{2,\alpha}_\tau$, see Section \ref{subsec-lambda},  we have the following decay rates, $ |\R_g|+|\Delta_gf_g| \leq C(n)C\big( (1+\rho_g)^{-\tau-2} \big)$. Let $\gamma\in\left(\frac{n}{\tau+2},1\right)$, so that $\int_N(1+\rho_g)^{-\gamma(\tau+2)}e^{-f_g}d\mu_g<+\infty$, and define $ \delta := \frac{2-\gamma}{1-\gamma} $. Then we have for $c=c(\gamma,\tau,\delta)>0$,
    \begin{align*}
        \int_N|\Ric(g)+\nabla^{g,2}{f_g}|^2_g & \,e^{-f_g}d\mu_g \\
        \geq& \frac{1}{n}\int_N |R_g + \Delta_g f_g|^2\,e^{-f_g}d\mu_g\\
        =&\frac{1}{n}\int_N |R_g + \Delta_g f_g|^2(1+\rho_g)^{\gamma(\tau+2)}\big((1+\rho_g)^{-\gamma(\tau+2)}\,e^{-f_g}d\mu_g\big)\\
        \geq & \frac{c}{n} \Big\{\int_N |R_g + \Delta_g f_g|^{\frac{2}{\delta}}(1+\rho_g)^{\frac{\gamma(\tau+2)}{\delta}}\big((1+\rho_g)^{-\gamma(\tau+2)}\,e^{-f_g}d\mu_g\big)\Big\}^{\delta}\\
        \geq & \frac{c}{n} \Big\{\int_N |R_g + \Delta_g f_g|_g^{\frac{2}{\delta}+\gamma(1-\frac{1}{\delta})}\Big[|R_g + \Delta_g f_g|(1+\rho_g)^{(\tau+2)}\Big]^{-\gamma(1-\frac{1}{\delta})}\,e^{-f_g}d\mu_g\Big\}^{\delta}\\
        \geq & \frac{c}{n}\big[C(n)C\big]^{\gamma(\frac{1}{\delta}-1)} \Big\{\int_N |R_g + \Delta_g f_g|_g\,e^{-f_g}d\mu_g\Big\}^{\delta}\\
        \geq & \frac{c}{n}\big[C(n)C\big]^{\gamma(\frac{1}{\delta}-1)} \Big|\int_N \big(R_g + \Delta_g f_g\big)\,e^{-f_g}d\mu_g\Big|^{\delta}\\
        =&\frac{c}{n}\big[C(n)C\big]^{\gamma(\frac{1}{\delta}-1)} \Big|\frac{1}{2}\int_N\big( R_g + |\nabla_g f_g|^2_g\big)\,e^{-f_g}d\mu_g\Big|^{\delta},
    \end{align*}
    by Jensen inequality in the fourth line and thanks to \eqref{eqdeffg} in the last line. Here, we have used the fact that $\frac{2}{\delta}-\gamma\left(\frac{1}{\delta}-1\right)=1$.
\end{proof}

    Notice that Inequality (\ref{inequ-pere-lambda-ALE}) with a constant $C$ independent of $\|g-g_b\|_{C^{2,\alpha}_\tau}$ would imply that for any Ricci flow starting with a positive $\lambda_{\operatorname{ALE}}$, $\lambda_{\operatorname{ALE}}$ blows up in finite time. This is not true in general. 
    
    Indeed, Feldman, Ilmanen and Knopf \cite{Fel-Ilm-Kno} have constructed complete expanding gradient K\"ahler-Ricci solitons on the total space of the tautological line bundles $L^{-k}$, $k>n$ over $\mathbb{CP}^{n-1}$. These solutions on $L^{-k}$ are $U(n)$-invariant and are asymptotic to the cone $C(\mathbb{S}^{2n-1}/\mathbb{Z}_k)$ endowed with the Euclidean metric $\frac{1}{2}i\partial\overline{\partial}\, |\cdot|^2$, where $\mathbb{Z}_k$ acts on $\mathbb{C}^n$ diagonally. The curvature tensor of these solitons decay exponentially fast to $0$ at infinity, in particular these metrics are ALE and their mass vanish. On the other hand, the scalar curvature of these metrics is positive everywhere. 
    
    If $g_{\operatorname{FIK}}$ denotes one of the Feldman-Ilmanen-Knopf metrics, then $\lambda_{\operatorname{ALE}}^0(g_{\operatorname{FIK}})>0$.
     Now, for any $t>0$, we have $\lambda_{\operatorname{ALE}}^0(g_{\operatorname{FIK}}(t)) = t^{\frac{n}{2}- 1}\lambda_{\operatorname{ALE}}^0(g_{\operatorname{FIK}})>0$ as we noticed in \cite[Remark 8.3]{Der-Ozu-Lam}, since $g_{\operatorname{FIK}}$ is an expanding soliton. Again, this is in contrast with the compact situation where a Ricci-flow starting at a metric with positive $\lambda$-functional necessarily develops a finite-time singularity.
     
\subsection{An unstability result}~~\\

In this section, the main result is the corresponding statement to Theorem \ref{theo-dyn-stab-loc-max-lambda} in the presence of an unstable ALE Ricci flat metric.
\begin{theo}[unstable case - ancient Ricci flows coming out of Ricci-flat ALE spaces]
  Let $n\geq 4$ and $\tau\in(\frac{n-2}{2},n-2)$. Let $\alpha\in\left(0,\min\left\{1,\tau-1,n-2-\tau\right\}\right)$. 
    Let $(N^n,g_b)$ be an unstable Ricci-flat ALE metric such that Inequality \ref{basic-ass-2} holds on a neighborhood $B_{C^{2,\alpha}_{\tau}}(g_b,\varepsilon_{\L})$ with exponent $\theta\in(0,1)$.
      
      Then there exists a non Ricci-flat ancient solution to the Ricci flow $(g_{\infty}(t))_{t\in (-\infty,0]}$ with positive scalar curvature that converges in $C^{2,\alpha}_\tau$ to $g_b$ as $t\to -\infty$. More precisely, if $\eta\in\left(0,\frac{\theta}{2(1-\theta)}\right)$ and $\varepsilon\in(0,\varepsilon_{\L})$ then there exists a positive constant $C=C(n,g_b,\varepsilon,\theta,\eta)$ such that for $t\leq 0$,
      \begin{equation}
      \|g_{\infty}(t)-g_b\|_{C^{2,\alpha}_{\tau}}\leq \frac{C}{(1+C|t|)^{\frac{\theta}{2(1-\theta)}-\eta}}.
      \end{equation}
      Finally, the solution $g_{\infty}(t)$ is ALE of order $\tau$ uniformly in time in the sense that,
      \begin{equation}
      \rho_{g_b}^{2+\tau+k}|\nabla^{g_{\infty},k}\Rm(g_{\infty}(t))|_{g_{\infty}(t)}\leq C_k,\quad k\geq 0, \quad t\leq 0.\label{ALE-unif-unst}
      \end{equation}
\end{theo}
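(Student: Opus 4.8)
The plan is to follow the scheme of \cite{Has-Mul}: construct the ancient solution as a subsequential limit of suitably time-translated Ricci flows emanating from initial data that approach $g_b$ from the unstable side, and read off the quantitative information from the \L{}ojasiewicz-type estimates of Lemma \ref{instabdefT} and Lemma \ref{instability-ALE}. Concretely, since $g_b$ is \emph{not} a local maximum of $\lambda_{\operatorname{ALE}}$, I would first choose a sequence of metrics $g_{0,i}\to g_b$ in $C^{2,\alpha}_\tau$ with $\lambda_{\operatorname{ALE}}(g_{0,i})>0$, run Shi's Ricci flow $g_i(t)$ from each $g_{0,i}$, and invoke Lemma \ref{instabdefT} to produce finite escape times $T_i=T(g_{0,i})$ at which $\|g_i(T_i)-g_b\|_{C^{2,\alpha}_\tau}=\varepsilon$ while $g_i$ stays in $B_{C^{2,\alpha}_{\tau}}(g_b,\varepsilon)$ on $[0,T_i)$. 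After the time shift $\bar g_i(t):=g_i(t+T_i)$, defined on $[-T_i,0]$, the first point is that $T_i\to+\infty$. This I would get from continuous dependence of the Ricci flow on its initial datum over bounded time intervals, applied to the \emph{static} flow at $g_b$: if some subsequence had $T_i\leq T$, then on $[0,T]$ the flows $g_i$ would stay in $B_{C^{2,\alpha}_{\tau}}(g_b,\varepsilon)$ and converge to the constant flow $g_b$, contradicting $\|g_i(T_i)-g_b\|_{C^{2,\alpha}_\tau}=\varepsilon$ for $i$ large.

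The second step is to extract the ancient limit. Because each $\bar g_i$ lives in $B_{C^{2,\alpha}_{\tau}}(g_b,\varepsilon)$, Shi's estimates together with Lemma \ref{lemma-shi-global} furnish uniform-in-time bounds on all covariant derivatives of the curvature, and hence, via the compact embedding $C^{2,\alpha}_{\tau}\hookrightarrow C^{2,\alpha'}_{\tau'}$ of Lemma \ref{lemme-Chal-Cho-Bru}, a diagonal Arzel\`a--Ascoli argument over the exhausting intervals $[-k,0]$ produces a subsequence $\bar g_i\to g_\infty$ locally uniformly on $(-\infty,0]$ in $C^{2,\alpha'}_{\tau'}$, with $g_\infty$ an ancient Ricci flow. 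The delicate issue is \emph{non-triviality}: I must ensure $g_\infty\not\equiv g_b$, for which I would prove the uniform lower bound $\lambda_{\operatorname{ALE}}(\bar g_i(0))\geq c>0$. Applying the a priori weighted H\"older estimate \eqref{a-priori-est-met-hold-pointwise} of Lemma \ref{a-priori-wei-lemma-Holder-met} between times $0$ and $T_i$, bounding $\int_0^{T_i}\|\Ric(g_i)\|_{C^0}^{1-\eta}$ by $\lambda_{\operatorname{ALE}}(g_i(T_i))^{\frac{\theta}{2}(1+\eta)-\eta}$ through Proposition \ref{coro-A-priori-L^2-C^0-est-RF} and \eqref{est-l2-ric-hess-eta-a-priori-insta}, and using that $\|\Ric(g_{0,i})\|_{C^{0,\alpha}_{\tau+2}}$ and $\|g_{0,i}-g_b\|_{C^{2,\alpha}_\tau}$ tend to $0$, the triangle inequality $\|g_i(T_i)-g_i(0)\|_{C^{2,\alpha}_\tau}\geq\varepsilon-o(1)$ forces $\lambda_{\operatorname{ALE}}(g_i(T_i))\geq c$. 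Continuity of $\lambda_{\operatorname{ALE}}$ along this convergence then gives $\lambda_{\operatorname{ALE}}(g_\infty(0))\geq c>0$, so $g_\infty(0)$ is not Ricci-flat and the limit flow is genuinely non-static.

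Third, I would establish the backward convergence $g_\infty(t)\to g_b$ with the stated rate, the uniform ALE property, and the positivity of the scalar curvature. The decay of $\lambda_{\operatorname{ALE}}$ comes from integrating the differential inequality $\tfrac{d}{dt}\lambda_{\operatorname{ALE}}(g_\infty)\geq C\lambda_{\operatorname{ALE}}(g_\infty)^{2-\theta}$ backward from $t=0$, yielding $\lambda_{\operatorname{ALE}}(g_\infty(t))\leq C(1+C|t|)^{-\frac{1}{1-\theta}}$ uniformly in $i$ (as in \eqref{ineg lambdaALE positif}). Feeding this into \eqref{est-l2-ric-hess-eta-a-priori-insta} and then into Lemma \ref{a-priori-wei-lemma-Holder-met} to control $\|g_\infty(t)-g_\infty(t')\|_{C^{2,\alpha}_\tau}$ as $t'\to-\infty$, where $\Ric(g_\infty(t'))\to0$, the displacement is bounded by $\lambda_{\operatorname{ALE}}(g_\infty(t))^{\frac{\theta}{2}(1+\eta)-\eta}$, whose exponent divided by $(1-\theta)$ tends to $\frac{\theta}{2(1-\theta)}$ as $\eta\to0$; this is exactly the claimed rate $\frac{\theta}{2(1-\theta)}-\eta$, and it simultaneously upgrades the limit to $C^{2,\alpha}_\tau$. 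The uniform ALE bounds \eqref{ALE-unif-unst} follow from the uniform control $\|g_\infty(t)-g_b\|_{C^{2,\alpha}_\tau}\leq\varepsilon$, giving $\Rm=O(\rho_{g_b}^{-\tau-2})$, together with interpolation and Lemma \ref{lemma-shi-global} for the higher derivatives. Finally, positivity of the scalar curvature I would obtain from $\partial_t\R_{g_\infty}=\Delta_{g_\infty}\R_{g_\infty}+2|\Ric(g_\infty)|^2$: since $\R_{g_\infty}\to0$ both at spatial infinity, by the uniform ALE decay, and as $t\to-\infty$, a spacetime minimum argument rules out a negative infimum (a negative interior minimum would force $\Ric(g_\infty)=0$ and hence $\R_{g_\infty}=0$ there), so $\R_{g_\infty}\geq0$, and the strong maximum principle then yields $\R_{g_\infty}>0$ unless $\Ric(g_\infty)\equiv0$, which is excluded by non-triviality.

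The hardest part will be the non-triviality of the limit, that is, the uniform lower bound $\lambda_{\operatorname{ALE}}(\bar g_i(0))\geq c>0$: it is what prevents the limiting ancient flow from collapsing onto the static solution $g_b$ under the loss of topology in the compactness step, and it is precisely here that the escape condition $\|g_i(T_i)-g_b\|_{C^{2,\alpha}_\tau}=\varepsilon$, the weighted a priori estimate of Lemma \ref{a-priori-wei-lemma-Holder-met}, and the \L{}ojasiewicz energy bound \eqref{est-l2-ric-hess-eta-a-priori-insta} must be orchestrated carefully; matching the decay exponent through the same chain of estimates is a close second in terms of bookkeeping.
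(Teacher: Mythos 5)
Your construction coincides with the paper's in its core: the same sequence of flows with escape times $T_i$ from Lemma \ref{instabdefT}, the same uniform positive lower bound on $\lambda_{\operatorname{ALE}}(g_i(T_i))$ obtained by playing the escape condition against Lemma \ref{a-priori-wei-lemma-Holder-met} and \eqref{est-l2-ric-hess-eta-a-priori-insta} (this is precisely the paper's Claim \ref{Claim-pos-inf-lam}), the same compactness-plus-continuity argument for non-triviality of the limit, and the same chain ``$\lambda_{\operatorname{ALE}}$-decay backward in time $\Rightarrow$ weighted displacement bound'' for the convergence rate. The one genuine divergence is your proof that $T_i\to+\infty$, and as stated it is the weak point. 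You want to contradict the equality $\|g_i(T_i)-g_b\|_{C^{2,\alpha}_\tau}=\varepsilon$ by ``continuous dependence on the initial datum,'' but no off-the-shelf such statement exists here: any compactness-type convergence (Hamilton's theorem combined with the compact embedding of Lemma \ref{lemme-Chal-Cho-Bru}) only yields convergence in $C^{2,\alpha'}_{\tau'}$ with $\alpha'<\alpha$, $\tau'<\tau$, and convergence in a strictly weaker norm can never contradict a norm equality in $C^{2,\alpha}_{\tau}$ --- the escape may occur ``at infinity'' or in the top H\"older seminorm, exactly what is lost in the limit. This is why the paper's Claim \ref{claim-T-unbded} never touches the escape condition: it instead contradicts the lower bound $\lambda_{\operatorname{ALE}}(g_i(T_i))\geq C^{-1}>0$, which \emph{is} continuous under the weaker convergence, after identifying the limit with $g_b$ through a global $C^0$ argument based on Proposition \ref{prop-a-priori-L2-est-rm-ric} and the uniform weighted bounds. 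Your route can be repaired, but only by manufacturing the continuous dependence yourself, e.g.\ by iterating the quantitative estimate in the proof of Proposition \ref{claim-T-max-pos} ($\sup_{[0,T(n,g_b)]}\|g(t)-g_b\|_{C^{2,\alpha}_\tau}\leq C\delta$) finitely many times over $[0,T]$; note that you need the Claim \ref{Claim-pos-inf-lam}-type bound for the rest of the argument anyway, so the paper's ordering costs nothing.

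Two further remarks. In the backward-convergence step you apply Lemma \ref{a-priori-wei-lemma-Holder-met} ``at the limit'' between $t'$ and $t$ with $t'\to-\infty$; but that lemma requires an anchor time at which $\|\Ric\|_{C^{0,\alpha}_{\tau+2}}$ is small, and for the ancient limit flow no such time is available a priori --- weighted smallness of $\Ric(g_\infty(t'))$ is circular with what you are proving. The estimate should be run on the approximants $\tilde g_i$ between $-T_i$ and $t$, where the anchor $\|\Ric(\tilde g_i(-T_i))\|_{C^{0,\alpha}_{\tau+2}}\leq C\|g_i-g_b\|_{C^{2,\alpha}_{\tau}}\to 0$ comes for free, and then passed to the limit, which is what the paper does in \eqref{tri-ang-tilde-g-i-g-b}. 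Finally, your maximum-principle argument for the positivity of the scalar curvature (the noncompact minimum principle justified by the uniform spatial decay of $\R$, then the strong maximum principle together with forward uniqueness of bounded-curvature Ricci flows to exclude $\R\equiv 0$) is a genuine addition: the paper's statement asserts positivity but its written proof never establishes it, so this part of your proposal fills an actual gap rather than reproducing the text.
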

%\todo{control on scalar curvature, Ricci curvature ?}
\begin{proof}
	Let us consider $g_b$ a Ricci-flat ALE metric which is not a maximizer of $\lambda_{\operatorname{ALE}}$. This means that there exists a sequence $(g_i)_i$ of metrics converging to $g_b$ in $C^{2,\alpha}_\tau$ with $\lambda_{\operatorname{ALE}}(g_i)>0$. By Lemma \ref{instabdefT}, the Ricci flow $(g_i(t))_t$ starting at $g_i(0) = g_i$ leaves $B_{C^{2,\alpha}_\tau}(g_b,\varepsilon)$ in finite time and we can consider the first time $T_i:=T(g_i)$ for which $\|g_i(T_i)-g_b\|_{C^{2,\alpha}_\tau} = \varepsilon$. 
	
	We start by proving the following positive uniform lower bound $\lambda_{\operatorname{ALE}}(g_i(T_i))$: 	
	\begin{claim}\label{Claim-pos-inf-lam}
	\begin{equation}
	0<C^{-1}\leq\liminf_{i\rightarrow+\infty}\lambda_{\operatorname{ALE}}(g_i(T_i))\leq\limsup_{i\rightarrow+\infty}\lambda_{\operatorname{ALE}}(g_i(T_i)) \leq C,\label{Claim-pos-inf-lam-est}
	\end{equation}
	for some positive constant $C=C(n,g_b,\varepsilon,\theta)$.
	\end{claim}
	
	\begin{proof}[Proof of Claim \ref{Claim-pos-inf-lam}]
	The upper bound in (\ref{Claim-pos-inf-lam-est}) is due to the continuity of the functional $\lambda_{\operatorname{ALE}}$ on a $C^{2,\alpha}_{\tau}$-neighborhood of $g_b$ for $\tau\in\left(\frac{n}{2}-1,n-2\right)$ and $\alpha\in(0,1)$ together with the fact that $g_i(T_i)\in B_{C^{2,\alpha}_{\tau}}(g_b,\varepsilon)$.
	
	The lower bound is more subtle to prove and relies on a priori weighted estimates established in Section \ref{sec-wei-est} that we now explain. The proof is very similar to that of Claim \ref{claim-Ric-C-2-alpha} in the proof of Proposition \ref{claim-T-max-closed}. 	
	
	First, let us check that the assumptions of Lemma \ref{a-priori-wei-lemma-Holder-met} are satisfied. Since $T_i>T(n,g_b)$ by Lemma \ref{instabdefT}, 
the covariant derivatives of $\Rm(g(T(n,g_b)/2))$ are uniformly bounded in space by Proposition \ref{prop-a-priori-C0-est-rm-ric}. Since $g_i(t)\in B_{C^{2,\alpha}_{\tau}}(g_b,\varepsilon)$ for all $t\in[0,T_i]$ by assumption, $\sup_{t\in[0,T_i]}\|\Rm(g_i(t))\|_{C^0}\leq C(n,g_b,\varepsilon)$. Therefore, Lemma \ref{lemma-shi-global} applies and guarantees that uniform-in-time bounds on the covariant derivatives of the curvature tensor hold true for $t\geq \frac{T(n,g_b)}{2}$.

Let us check condition (\ref{hyp-ad-hoc-ric-2}). Since $g_i(t)\in B_{C^{2,\alpha}_{\tau}}(g_b,\varepsilon)$ for all $t\in[0,T_i]$ then if $r^2:=\frac{T(n,g_b)}{2}$, it is straightforward to check that $\int_{t-r^2}^t\|\R_{g_i(s)}\|_{C^0}\,ds\leq \frac{1}{2}$ by considering $\varepsilon$ small enough. This fact lets us to use Proposition \ref{coro-A-priori-L^2-C^0-est-RF} to get for $0<r^2:=\frac{T(n,g_b)}{2}<t$:
\begin{equation}
\begin{split}
\|\Ric(g_i(t))\|_{C^0}\leq\,& C(n,g_b,\varepsilon)\exp\left(c(n)\int_{t-r^2}^t\|\Rm(g_i(s))\|_{C^0}\,ds\right)\|\Ric(g_i(t-r^2)\|_{L^2}\\
\leq\,&C(n,g_b,\varepsilon)\exp\left(C(n,g_b,\varepsilon)r^2\right)\|\Ric(g_i(t-r^2)\|_{L^2}\\
\leq\,&C(n,g_b,\varepsilon)\|\Ric(g_i(t-r^2)\|_{L^2}.\label{intermed-c0-L2-claim-unst}
\end{split}
\end{equation}
Now, (\ref{intermed-c0-L2-claim-unst}) and [(\ref{est-l2-ric-hess-eta-a-priori-insta}), Lemma \ref{instability-ALE}] with $s:=0<\frac{T(n,g_b)}{2}\leq t\leq T_i$ and $\eta=0$ imply:
\begin{equation}
\begin{split}
\int_0^{T_i}\|\Ric(g_i(t'))\|_{C^0}\,dt'\leq\,&\int_0^{\frac{T(n,g_b)}{2}}\|\Ric(g_i(t'))\|_{C^0}\,dt'+C(n,g_b,\varepsilon,\theta)\lambda_{\operatorname{ALE}}(g_i(T_i))^{\frac{\theta}{2}}\\
\leq\,&C(n,g_b)\|\Ric(g_i(0))\|_{C^0}+C(n,g_b,\varepsilon,\theta)\lambda_{\operatorname{ALE}}(g_i(T_i))^{\frac{\theta}{2}}\\
\leq\,&C(n,g_b,\varepsilon,\theta)\left(\delta_i+\lambda_{\operatorname{ALE}}(g_i(T_i))^{\frac{\theta}{2}}\right),\label{intermed-c0-int-t-claim-unst}
\end{split}
\end{equation}
where we have used Proposition \ref{prop-a-priori-L2-est-rm-ric} in the second inequality and where $\delta_i:=\|g_i(0)-g_b\|_{C^{2,\alpha}_{\tau}}$. In particular, if $ \varepsilon$ is chosen small enough, (\ref{intermed-c0-int-t-claim-unst}) ensures that 
[(\ref{hyp-ad-hoc-ric-2}), Lemma \ref{a-priori-wei-lemma-Holder-met}] holds true.

	By Lemma \ref{a-priori-wei-lemma-Holder-met} and using [(\ref{est-l2-ric-hess-eta-a-priori-insta}), Lemma \ref{instability-ALE}] as in the proof of (\ref{intermed-c0-int-t-claim-unst}), we have for any $\eta\in\left(0,\frac{\theta}{2-\theta}\right)$,
	\begin{equation}
	\begin{split}\label{est-dist-g-i-T-i-g-i}
	\|g_i(T_i)-g_i\|_{C^{2,\alpha}_{\tau}} \leq\,& C\left(\|\Ric(g_i)\|_{C^{0,\alpha}_{\tau+2}}+ \int_0^{T_i}\|\Ric(g_i(t'))\|_{C^0}^{1-\eta}\,dt'\right)	\\
	\leq\,&C\left(\|g_i-g_b\|_{C^{2,\alpha}_{\tau}}+ \int_0^{T_i}\|\Ric(g_i(t'))\|_{C^0}^{1-\eta}\,dt'\right)\\
	\leq\,&C\left(\delta_i+ \delta_i^{1-\eta}+\lambda_{\operatorname{ALE}}(g_i(T_i))^{\frac{\theta}{2}(1+\eta)-\eta}\right)\\
	\leq \,&C\left(\delta_i^{1-\eta}+\lambda_{\operatorname{ALE}}(g_i(T_i))^{\frac{\theta}{2}(1+\eta)-\eta}\right),
	\end{split}
	\end{equation}
	for some time-independent positive constant $C=C(n,g_b,\varepsilon,\theta,\eta)$.
	By the triangular inequality together with (\ref{est-dist-g-i-T-i-g-i}) and the very definition of $T_i$,
	\begin{equation*}
	\begin{split}
\varepsilon=\|g_i(T_i)-g_b\|_{C^{2,\alpha}_{\tau}}\leq\,&\liminf_{i\rightarrow+\infty}(\|g_i(T_i)-g_i\|_{C^{2,\alpha}_{\tau}}+\delta_i)\\
\leq\,&\liminf_{i\rightarrow+\infty}C\left(\delta_i^{1-\eta}+\lambda_{\operatorname{ALE}}(g_i(T_i))^{\frac{\theta}{2}(1+\eta)-\eta}\right)\\
=\,&C\liminf_{i\rightarrow+\infty}\lambda_{\operatorname{ALE}}(g_i(T_i))^{\frac{\theta}{2}(1+\eta)-\eta}.
\end{split}
\end{equation*}
This ends the proof of Claim \ref{Claim-pos-inf-lam}.
		\end{proof}
	We continue by proving the following 
	\begin{claim}\label{claim-T-unbded}
	The sequence $(T_i)_i$ is unbounded.
	\end{claim}
	\begin{proof}[Proof of Claim \ref{claim-T-unbded}]
	Assume on the contrary that there exists a bounded subsequence, still denoted by $(T_i)_i$, which, without loss of generality, is monotone non-increasing and converges to a finite positive number $T_{\infty}$. In particular, this implies that the Ricci flows $(g_i(t))_{t}$ are well-defined on $[0,T_{\infty}]$. Since $g_i(0)=g_i$ is assumed to converge to $g_b$ in the $C^{2,\alpha}_{\tau}$-topology and since $g_i(t)\in B_{C^{2,\alpha}_{\tau}}(g_b,\varepsilon)$ for any $t\in[0,T_{\infty}]$, Hamilton's compactness theorem \cite{Ham-Com-Thm} ensures that $g_i(t)$ converges locally smoothly to $g_b$ as $i$ tends to $+\infty$ on $N\times [0,T_{\infty}]$. Moreover, the property that $g_i(t)\in B_{C^{2,\alpha}_{\tau}}(g_b,\varepsilon)$ for all indices $i\geq 0$ together with Lemma \ref{lemme-Chal-Cho-Bru} ensure that the convergence of $g_i(t)$, $t\in[0,T_{\infty}]$, to $g_b$ holds in the $C^{2,\alpha'}_{\tau'}$-topology for any $\tau'\in(0,\tau)$ and $\alpha'\in(0,\alpha)$.
	
	Now, for the same reasons, since $(g_i(T_i))_i\subset B_{C^{2,\alpha}_{\tau}}(g_b,\varepsilon)$, Lemma \ref{lemme-Chal-Cho-Bru} implies that there is a subsequence, still denoted by $(g_i(T_i))_i$, that converges in the $C^{2,\alpha'}_{\tau'}$-topology, for $\tau'<\tau$ and $\alpha'\in(0,\alpha)$ to a metric $g_{\infty}\in C^{2,\alpha}_{\tau}$. By continuity of the functional $\lambda_{\operatorname{ALE}}$ on a $C^{2,\alpha'}_{\tau'}$-neighborhood of $g_b$ for $\tau'\in\left(\frac{n}{2}-1,\tau\right)$ and $\alpha'\in(0,\alpha)$, Claim \ref{Claim-pos-inf-lam} leads in particular to 
	\begin{equation}
	\lambda_{\operatorname{ALE}}(g_{\infty})>0.\label{lam-pos-contradiction}
	\end{equation}
	On the other hand, by Proposition \ref{prop-a-priori-L2-est-rm-ric} applied between times $\frac{T_i}{2}$ and $T_i$,
	\begin{equation}
	\begin{split}
\left\|g_i(T_i)-g_i\left(\frac{T_i}{2}\right)\right\|_{C^0}\leq\,& 2\int_{\frac{T_i}{2}}^{T_i}\|\Ric(g_i(t))\|_{C^0}\,dt\\
\leq\,&Ce^{CT_i}\left\|\Ric\left(g_i\left(\frac{T_i}{2}\right)\right)\right\|_{C^0}\to 0,\quad \text{as $i\to+\infty$}.\label{est-T-i-T-i-2}
\end{split}
\end{equation}
The limit in the last line is justified by $\frac{T_i}{2}<T_{\infty}$ and the convergence of $g_i(t)$, $t\in[0,T_{\infty}]$, to $g_b$ in the $C^0$ topology. In particular, by the triangular inequality and the previous estimate (\ref{est-T-i-T-i-2}), $\lim_{i\rightarrow+\infty}\|g_i(T_i)-g_b\|_{C^0}=0$ which identifies $g_{\infty}$ with $g_b$ by uniqueness of the limit: this leads to a contradiction with (\ref{lam-pos-contradiction}).  
	
	\end{proof}
	
	Consider now the Ricci flow $\tilde{g}_i(t):= g_i(t+T_i)$ obtained by translating time. It is defined on $[-T_i,0]$ and satisfies
	\begin{itemize}
		\item for $-t_i\leq t\leq 0$,
		\begin{equation}
		\|\tilde{g}_i(t)-g_b\|_{C^{2,\alpha}_\tau}\leq \varepsilon,\label{closeness-tilde-g-g-b}
		\end{equation}
		\item there exists a positive constant $C=C(n,g_b,\varepsilon,\theta)$ such that for all indices $i\geq 0$,
		\begin{equation}
		0<C^{-1}\leq\lambda_{\operatorname{ALE}}(\tilde{g}_i(0))\leq C,\label{low-bd-unif-lam-unstable}
		\end{equation}
		\item  and
		\begin{equation}
		\lim_{i\rightarrow +\infty}\|\tilde{g}_i(-T_i)-g_b\|_{C^{2,\alpha}_\tau}=0.\label{lim-tilde-g-init-time}
		\end{equation}
	\end{itemize}
	After passing to a subsequence if necessary, Hamilton's compactness theorem \cite{Ham-Com-Thm} ensures that 
	$(\tilde{g}_i(t))_{t\in[-T_i,0]}$ converges smoothly on compact time intervals to an ancient Ricci flow $\tilde{g}_\infty$ defined on $(-\infty,0]$. Moreover, the property (\ref{closeness-tilde-g-g-b}) together with Lemma \ref{lemme-Chal-Cho-Bru} imply that the convergence holds in the $C^{2,\alpha'}_{\tau'}$-topology for any $\tau'\in(0,\tau)$ and $\alpha'\in(0,\alpha)$. The continuity of the functional $\lambda_{\operatorname{ALE}}$ on a $C^{2,\alpha'}_{\tau'}$-neighborhood of $g_b$ for $\tau'\in\left(\frac{n}{2}-1,\tau\right)$ together with the lower bound (\ref{low-bd-unif-lam-unstable}) imply $\lambda_{\operatorname{ALE}}(\tilde{g}_{\infty}(0))>C>0$. This fact alone shows that the solution $(g_{\infty}(t))_{t\leq 0}$ is therefore nontrivial, i.e. non Ricci-flat. 
	
	There remains to prove that $\tilde{g}_\infty(t)$ converges to $g_b$ as $t$ tends to $-\infty$ in the $C^{2,\alpha}_{\tau}$-topology.
	
	According to the triangular inequality, 
	\begin{equation}
	\begin{split}\label{tri-ang-tilde-g-i-g-b}
	    \|\tilde{g}_i(t)-g_b\|_{C^{2,\alpha}_{\tau}}\leq\,& \|\tilde{g}_i(t)-\tilde{g}_i(-T_i)\|_{C^{2,\alpha}_{\tau}}  +\|\tilde{g}_i(-T_i)-g_b\|_{C^{2,\alpha}_{\tau}}\\
	    \leq\,&  C\left(\|\tilde{g}_i(-T_i)-g_b\|^{1-\eta}_{C^{2,\alpha}_{\tau}}+ \lambda_{\operatorname{ALE}}(\tilde{g}_i(t))^{\frac{\theta}{2}(1+\eta)-\eta}\right) +\|\tilde{g}_i(-T_i)-g_b\|_{C^{2,\alpha}_\tau}\\
	    \leq\,&C\left(\|\tilde{g}_i(-T_i)-g_b\|^{1-\eta}_{C^{2,\alpha}_{\tau}}+ \lambda_{\operatorname{ALE}}(\tilde{g}_i(t))^{\frac{\theta}{2}(1+\eta)-\eta}\right),
	    \end{split}
	\end{equation}
	if $\eta\in\left(0,\frac{\theta}{2-\theta}\right)$, for some time-independent positive constant $C=C(n,g_b,\varepsilon,\theta,\eta)$. Here we have essentially used Lemma \ref{a-priori-wei-lemma-Holder-met} together with [(\ref{est-l2-ric-hess-eta-a-priori-insta}), Lemma \ref{instability-ALE}] in the second line.

	Now, according to [(\ref{ineg lambdaALE positif}), Lemma \ref{instabdefT}] applied between times $T_i+t$ and $T_i$ with $-T_i\leq t\leq 0$, 
	\begin{equation*}
  \frac{\lambda_{\operatorname{ALE}}(\tilde{g}_i(t))}{(1 - C\lambda_{\operatorname{ALE}}^{1-\theta}(\tilde{g}_i(t))|t|)^\frac{1}{1-\theta}}\leq  \lambda_{\operatorname{ALE}}(\tilde{g}_i(0)),
\end{equation*}
which leads to:
\begin{equation}
\begin{split}\label{dec-time-lam-ALE-unst}
\lambda_{\operatorname{ALE}}(\tilde{g}_i(t))\leq\,& \frac{\lambda_{\operatorname{ALE}}(\tilde{g}_i(0))}{(1+C\lambda_{\operatorname{ALE}}^{1-\theta}(\tilde{g}_i(0))|t|)^{\frac{1}{1-\theta}}}\\
\leq\,& \frac{C}{(1+C|t|)^{\frac{1}{1-\theta}}},
\end{split}
\end{equation}
if $-T_i\leq t\leq 0$, for some time-independent positive constant $C=C(n,g_b,\varepsilon,\theta)$ which may vary from line to line. Here, we have used the uniform upper bound in (\ref{low-bd-unif-lam-unstable}) in the second line.

Therefore, if $-T_i\leq t\leq0$, estimates (\ref{tri-ang-tilde-g-i-g-b}) together with (\ref{dec-time-lam-ALE-unst}) lead to:
\begin{equation}
\limsup_{i\rightarrow+\infty}\|\tilde{g}_i(t)-g_b\|_{C^{2,\alpha}_{\tau}}\leq \frac{C}{(1+C|t|)^{\frac{\theta}{2(1-\theta)}-\eta}},\label{first-step-conv-anc-sol}
\end{equation}
for any positive $\eta$ sufficient small and where $C=C(n,g_b,\varepsilon,\theta,\eta)$>0. Since $(\tilde{g}_i(t))_{t\in[-T_i,0]}$ converges to $(g_{\infty}(t))_{t\leq 0}$ smoothly locally on compact time intervals, (\ref{first-step-conv-anc-sol}) implies the expected result:
\begin{equation*}
\|g_{\infty}(t)-g_b\|_{C^{2,\alpha}_{\tau}}\leq \frac{C}{(1+C|t|)^{\frac{\theta}{2(1-\theta)}-\eta}},\quad t\leq 0.
\end{equation*}

The fact that $(g_{\infty}(t))_{t\leq 0}$ satisfies (\ref{ALE-unif-unst}) is a direct application of Shi's estimates for ancient solutions to the Ricci flow: see \cite[Chapter $6$]{Cho-Boo} for instance.

	% By construction, the first and third term converge to $0$ as $i\to +\infty$, and there remains to prove that $C \lambda_{\operatorname{ALE}}(\tilde{g}_i(t))^\frac{\theta}{2}\to 0$ and $i\to +\infty$ and $t$ is arbitrarily negative. This is a consequence of \eqref{ineg lambdaALE positif} since $ \lambda_{\operatorname{ALE}}(\tilde{g}_i(0)) < C \varepsilon $.

	%Arguing like at the end of the proof of \cite[Theorem 1.2]{Has-Mul} using \eqref{controle divergence} as well as the controls \eqref{Controle fg} and \eqref{Controle Rm}, we conclude that $\tilde{g}_\infty(t)$ converges to $g_b$ in $C^{2,\alpha}_\tau$ up to a controlled diffeomorphism.
\end{proof}

\newpage

\appendix

\section{First and second variations of geometric quantities}\label{section appendix}
In this appendix, we collect first and second derivatives of various geometric quantities.
We start by recalling the first variations of the Ricci and scalar curvature:
	\begin{lemma}\label{lem-lin-equ-Ric-first-var}
		Let $(N^n,g)$ be a Riemannian manifold. Let $h\in S^2T^*N$ be a smooth symmetric $2$-tensor on $N$. Then,
		\begin{equation}
		\begin{split}
		\delta_{g}(-2\Ric)(h)&=\Delta_{g}h+2\Rm(g)(h)-\Ric(g)\circ h-h\circ \Ric(g)-\Li_{B_{g}(h)}\\
		&=\Delta_{L,g}h-\Li_{B_{g}(h)},
		\end{split}
		\end{equation}
		where $\Delta_{L,g}$ denotes the Lichnerowicz operator as defined in introduced in Definition \ref{defn-Lic-Op} and where $B_{g}(h)$ denotes the linearized Bianchi gauge defined by:\begin{equation}
		B_{g}(h):=\div_{g}h-\frac{1}{2}g\left(\nabla^{g}\tr_{g}h,\cdot\right).\label{defn-bianchi-op}
		\end{equation}
		In particular, the first variation of the scalar curvature along a variation $h\in S^2T^*N$ is:
		\begin{equation}
		\begin{split}
		\delta_{g}\R(h)&=\div_{g}\div_{g}h-\Delta_{g}\tr_gh-\left<h,\Ric(g)\right>_g\label{lem-lin-equ-scal-first-var}
		\end{split}
		\end{equation}
\end{lemma}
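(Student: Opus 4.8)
The plan is to derive both formulas from the first variation of the Levi--Civita connection and then specialize. First I would record that, along the path of metrics $g+sh$, the variation of the Christoffel symbols is the tensor
\begin{equation*}
\delta_g\Gamma(h)^k_{ij}=\tfrac12 g^{kl}\big(\nabla^g_i h_{jl}+\nabla^g_j h_{il}-\nabla^g_l h_{ij}\big),
\end{equation*}
whose trace is $\delta_g\Gamma(h)^k_{kj}=\tfrac12\nabla^g_j(\tr_g h)$ since the outer two terms cancel. Feeding this into the Palais variation formula for the Ricci tensor,
\begin{equation*}
\delta_g\Ric(h)_{ij}=\nabla^g_k\,\delta_g\Gamma(h)^k_{ij}-\nabla^g_i\,\delta_g\Gamma(h)^k_{kj},
\end{equation*}
expands $\delta_g\Ric(h)$ as a sum of second covariant derivatives of $h$, on which the rest of the argument operates.

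The term arising from $-\tfrac12 g^{kl}\nabla^g_k\nabla^g_l h_{ij}$ is, up to the factor $-2$, exactly $\Delta_g h$ in the sign convention $\Delta_g=-\nabla_g^*\nabla_g$ of Definition \ref{defn-Lic-Op}. The remaining second-order terms carry mismatched derivative orderings, and the key step is to commute covariant derivatives via the Ricci identity; each commutator produces a contraction of $h$ against the Riemann and Ricci tensors. The main obstacle — in fact the only nontrivial point — is the careful bookkeeping that reorganizes these contractions, after multiplication by $-2$, into precisely $2\Rm(g)(h)-\Ric(g)\circ h-h\circ\Ric(g)$, matching the convention $\Rm(g)(h)(X,Y):=h(\Rm(g)(e_i,X)Y,e_i)$ of Definition \ref{defn-Lic-Op}. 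The surviving first-order terms then assemble into $-\Li_{B_g(h)}$, the symmetrized covariant derivative of the one-form $B_g(h)=\div_g h-\tfrac12\nabla^g(\tr_g h)$ defined in \eqref{defn-bianchi-op}. This yields the first displayed identity, and the second equality is then immediate from the definition of $\Delta_{L,g}$ in Definition \ref{defn-Lic-Op}.

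For the scalar curvature I would simply trace the Ricci variation. Writing $\R_g=g^{ij}\Ric(g)_{ij}$ and using $\delta_g(g^{ij})(h)=-h^{ij}$, one obtains
\begin{equation*}
\delta_g\R(h)=-\langle h,\Ric(g)\rangle_g+g^{ij}\,\delta_g\Ric(h)_{ij},
\end{equation*}
so the first term already produces the claimed $-\langle h,\Ric(g)\rangle_g$. For the trace $g^{ij}\,\delta_g\Ric(h)_{ij}$, contracting the Palais formula directly — equivalently, tracing the first identity and using the contracted second Bianchi identity to cancel the curvature contributions — collapses everything to $\div_g\div_g h-\Delta_g\tr_g h$. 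This establishes \eqref{lem-lin-equ-scal-first-var} and completes the proof.
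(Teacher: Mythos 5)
Your proof is correct and follows the standard route — variation of the Christoffel symbols, the formula $\delta_g\Ric(h)_{ij}=\nabla_k\,\delta_g\Gamma(h)^k_{ij}-\nabla_i\,\delta_g\Gamma(h)^k_{kj}$, commutation of covariant derivatives to generate the curvature terms, and tracing for the scalar curvature — which is exactly the computation in \cite[Chapter $2$]{Cho-Boo}, to which the paper delegates this lemma without giving a proof of its own. The only slight imprecision is your appeal to the contracted second Bianchi identity in the traced identity: the curvature contributions there cancel simply because $\tr_g\big(2\Rm(g)(h)\big)=2\left<\Ric(g),h\right>_g$ offsets $\tr_g\big(\Ric(g)\circ h+h\circ\Ric(g)\big)=2\left<\Ric(g),h\right>_g$ (equivalently, by the symmetry of $h$ and the antisymmetries of $\Rm$ when contracting the Palais formula directly), and no Bianchi identity is needed — but this does not affect the correctness of the argument.
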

A proof of this lemma can be found for instance in \cite[Chapter $2$]{Cho-Boo}.

We state without proof distortion bounds on distances along the Ricci flow which are a straightforward consequence of the Ricci flow equation:
\begin{prop}\label{prop-A-priori-dist-distance-RF}
Let $(N^n,g(t))_{t\in[0,T)}$ be a solution to the Ricci flow. Then, for any $0\leq s\leq t<T$, and $x,y\in N$,
\begin{equation}
e^{-\int_s^{t}\|\Ric(g(t'))\|_{C^0}\,dt'}d_{g(t)}(x,y)\leq d_{g(s)}(x,y)\leq e^{\int_s^{t}\|\Ric(g(t'))\|_{C^0}\,dt'}d_{g(t)}(x,y).\label{dist-bds-gal}
\end{equation}
\end{prop}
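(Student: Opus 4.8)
The plan is to reduce the distance estimate to a pointwise comparison of the metrics $g(s)$ and $g(t)$ along individual curves, and then to optimize over curves joining $x$ and $y$. First I would fix a smooth curve $\gamma$ in $N$ and differentiate the squared norm of its velocity in time: since $\partial_t g=-2\Ric(g(t))$, one has $\partial_t|\dot\gamma|^2_{g(t)}=-2\Ric(g(t))(\dot\gamma,\dot\gamma)$. Bounding the right-hand side in absolute value by $2\|\Ric(g(t))\|_{C^0}|\dot\gamma|^2_{g(t)}$ yields the differential inequality $\big|\partial_t\log|\dot\gamma|_{g(t)}\big|\leq\|\Ric(g(t))\|_{C^0}$, valid wherever $\dot\gamma\neq 0$.

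Integrating this inequality in time from $s$ to $t$ gives the pointwise two-sided bound $e^{-\int_s^t\|\Ric(g(t'))\|_{C^0}\,dt'}|\dot\gamma|_{g(s)}\leq|\dot\gamma|_{g(t)}\leq e^{\int_s^t\|\Ric(g(t'))\|_{C^0}\,dt'}|\dot\gamma|_{g(s)}$. Because the exponential factors are independent of the point, integrating the velocity norm along $\gamma$ produces the very same two-sided comparison between the lengths $L_{g(s)}(\gamma)$ and $L_{g(t)}(\gamma)$. This is the crucial step: the global nature of the $C^0$-norm is exactly what makes the comparison factor uniform over all points and all curves, so that it factors out of the length integral.

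To pass from lengths to distances I would use near-minimizers rather than genuine minimizing geodesics, which sidesteps any completeness or existence discussion in the noncompact ALE setting. Given $\epsilon>0$, choose a piecewise smooth curve $\gamma$ from $x$ to $y$ with $L_{g(t)}(\gamma)\leq d_{g(t)}(x,y)+\epsilon$; then $d_{g(s)}(x,y)\leq L_{g(s)}(\gamma)\leq e^{\int_s^t\|\Ric(g(t'))\|_{C^0}\,dt'}L_{g(t)}(\gamma)$, and letting $\epsilon\to 0$ gives the upper bound for $d_{g(s)}(x,y)$ in terms of $d_{g(t)}(x,y)$. Choosing instead a curve nearly minimizing for $g(s)$ and arguing symmetrically yields the lower bound, which is the left inequality in the statement.

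The main (and very mild) obstacle is purely bookkeeping: one must check that the integrated length comparison survives for curves whose velocity vanishes at isolated parameter values, and that the near-minimizing curves may be taken piecewise smooth so that the differential inequality applies on each smooth piece. Both points are standard and cause no genuine difficulty, precisely because the comparison factor $e^{\pm\int_s^t\|\Ric(g(t'))\|_{C^0}\,dt'}$ depends only on time and not on the curve or the point, so it survives every passage to an infimum unchanged.
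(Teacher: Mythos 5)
Your proof is correct, and it is precisely the standard argument the paper has in mind: the paper states this proposition \emph{without proof}, calling it a straightforward consequence of the Ricci flow equation, and your chain (pointwise Gronwall bound on $|\dot\gamma|_{g(t)}$ via $\partial_t g=-2\Ric(g(t))$, uniform length comparison since the exponential factor is point-independent, then passage to distances by near-minimizing curves) is exactly that consequence. The two bookkeeping points you flag are indeed harmless; if you want to avoid them entirely, note that the same Gronwall argument applied to $t\mapsto g(t)(v,v)$ for each fixed vector $v$ gives the uniform quadratic-form comparison $e^{-2\int_s^t\|\Ric(g(t'))\|_{C^0}\,dt'}g(s)\leq g(t)\leq e^{2\int_s^t\|\Ric(g(t'))\|_{C^0}\,dt'}g(s)$, from which the length and distance inequalities follow immediately.
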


We continue by recalling the evolution equation satisfied by the Ricci tensor along the Ricci flow:

\begin{lemma}\label{lemma-evo-eqn-Ric-Rm}
Let $(N^n,g(t))_{t\in[0,T)}$ be a smooth Ricci flow. Then, on $N\times (0,T)$,
\begin{eqnarray}
\partial_t\Rm(g(t))&=&\Delta_{g(t)}\Rm(g(t))+\Rm(g(t))\ast\Rm(g(t)),\label{evo-eqn-Rm-for}\\
\partial_t\Ric(g(t))&=&\Delta_{L,g(t)}\Ric(g(t)),\label{evo-eqn-Ric-for}\\
\partial_t\R_{g(t)}&=&\Delta_{g(t)}\R_{g(t)}+2|\Ric(g(t))|^2_{g(t)}.\label{evo-eqn-scal-for}
\end{eqnarray}
\end{lemma}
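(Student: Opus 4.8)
The three identities are Hamilton's classical evolution equations, and the plan is to obtain all of them by linearizing the relevant curvature quantity at $g(t)$ in the flow direction $\partial_t g=-2\Ric(g(t))$ and then simplifying with the contracted second Bianchi identity $\div_g\Ric(g)=\tfrac12 d\R_g$. The two contracted identities \eqref{evo-eqn-Ric-for} and \eqref{evo-eqn-scal-for} follow almost immediately from the first-variation formulas already recorded in Lemma \ref{lem-lin-equ-Ric-first-var}, so I would dispatch them first; the full curvature identity \eqref{evo-eqn-Rm-for} needs a genuine computation, but only in the schematic $\Rm\ast\Rm$ form, which keeps the bookkeeping light.

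For \eqref{evo-eqn-Ric-for}, I would use the chain rule $\partial_t\Ric(g(t))=\delta_{g(t)}\Ric(\partial_t g)=\delta_{g(t)}\Ric(-2\Ric)$ together with Lemma \ref{lem-lin-equ-Ric-first-var}, which gives $\delta_g(-2\Ric)(h)=\Delta_{L,g}h-\Li_{B_g(h)}$ for any symmetric $2$-tensor $h$. Taking $h=-2\Ric$ computes $\partial_t(-2\Ric(g(t)))=-2[\Delta_{L,g}\Ric-\Li_{B_g(\Ric)}]$, and dividing by $-2$ yields $\partial_t\Ric=\Delta_{L,g}\Ric-\Li_{B_g(\Ric)}$. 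The point is then that the Bianchi gauge of the Ricci tensor vanishes: by \eqref{defn-bianchi-op}, $B_g(\Ric)=\div_g\Ric-\tfrac12 g(\nabla^g\tr_g\Ric,\cdot)=\div_g\Ric-\tfrac12 d\R_g$, which is $0$ by the contracted second Bianchi identity. Hence $\Li_{B_g(\Ric)}=0$ and $\partial_t\Ric=\Delta_{L,g}\Ric$.

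For \eqref{evo-eqn-scal-for}, I would substitute $h=-2\Ric$ directly into the first variation of scalar curvature \eqref{lem-lin-equ-scal-first-var}, obtaining $\partial_t\R=-2\div_g\div_g\Ric+2\Delta_g\R_g+2|\Ric|^2_g$. Applying the contracted second Bianchi identity once gives $\div_g\div_g\Ric=\tfrac12\Delta_g\R_g$, so the first term equals $-\Delta_g\R_g$ and the three terms collapse to $\Delta_g\R_g+2|\Ric|^2_g$, which is exactly \eqref{evo-eqn-scal-for} (recall $\Delta_g$ is the rough Laplacian $g^{ij}\nabla_i\nabla_j$ here).

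The remaining and hardest step is \eqref{evo-eqn-Rm-for}. Here I would start from the variation of the Christoffel symbols, $\partial_t\Gamma^k_{ij}=\tfrac12 g^{kl}(\nabla_i h_{jl}+\nabla_j h_{il}-\nabla_l h_{ij})$ with $h=-2\Ric$, insert it into $\partial_t R^l{}_{ijk}=\nabla_i(\partial_t\Gamma^l_{jk})-\nabla_j(\partial_t\Gamma^l_{ik})$, and then convert the resulting second covariant derivatives of $\Ric$ into $\Delta_g\Rm$ plus curvature-quadratic terms. The mechanism is the second Bianchi identity (to trade $\nabla^2\Ric$ for $\nabla^2\Rm$ up to $\Rm\ast\Rm$) followed by the Ricci commutation formula for interchanging covariant derivatives, each commutation producing a contraction of $\Rm$ with $\Rm$. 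The main obstacle is purely the index bookkeeping in this last reduction; since the statement only asserts the schematic form $\Rm\ast\Rm$ — a metric contraction of two copies of the curvature tensor with universally bounded coefficients — I would not track the precise Hamilton reaction term, only verify that every lower-order term produced is of this type, which makes the computation routine rather than delicate.
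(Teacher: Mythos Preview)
Your argument is correct. The paper does not actually supply a proof of this lemma: it is merely \emph{recalled} as a classical fact (these are Hamilton's original evolution equations), in the same spirit as the preceding Lemma~\ref{lem-lin-equ-Ric-first-var}, whose proof is deferred to \cite[Chapter~2]{Cho-Boo}. Your derivations of \eqref{evo-eqn-Ric-for} and \eqref{evo-eqn-scal-for} from Lemma~\ref{lem-lin-equ-Ric-first-var} via the contracted second Bianchi identity are the standard ones, and your schematic treatment of \eqref{evo-eqn-Rm-for} is entirely appropriate given that only the $\Rm\ast\Rm$ form is asserted.
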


We next give a global version of Shi's estimates when the initial metric is smooth:
\begin{lemma}\label{lemma-shi-global}
Let $(N^n,g(t))_{t\in[0,T)}$ be a complete smooth Ricci flow such that 
\begin{equation}
\sup_N|\nabla^{g(0),k}\Rm(g(0))|_{g(0)}\leq C_k,
\end{equation}
for any $k\geq 0$, where $C_k$ is a positive constant. If $\tilde{C}_0:=\sup_{t\in[0,T)}|\Rm(g(t))|_{g(t)}$ is finite then 
\begin{equation}
\sup_N|\nabla^{g(t),k}\Rm(g(t))|_{g(t)}\leq \tilde{C}_k,\quad t\in[0,T),
\end{equation}
 for some time-independent positive constant $\tilde{C}_k=\tilde{C}_k(n,\tilde{C}_0,(C_i)_{1\leq i\leq k})$.
\end{lemma}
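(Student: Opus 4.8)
The statement is the classical Bernstein--Bando--Shi interior estimate, in the form that upgrades bounds on the initial data to \emph{uniform-in-time} bounds on all derivatives of curvature under a global bound on $|\Rm|$. The plan is to argue by induction on $k$, the base case $k=0$ being exactly the hypothesis $\sup_{N\times[0,T)}|\Rm(g(t))|_{g(t)}\le\tilde C_0$. For the inductive step I would start from the standard evolution equation along the Ricci flow, $\partial_t\nabla^{g(t),k}\Rm=\Delta_{g(t)}\nabla^{g(t),k}\Rm+\sum_{i+j=k}\nabla^{g(t),i}\Rm\ast\nabla^{g(t),j}\Rm$, which yields for $u_k:=|\nabla^{g(t),k}\Rm|^2_{g(t)}$ the inequality $\partial_t u_k\le\Delta_{g(t)}u_k-2u_{k+1}+C_n\,u_k^{1/2}\sum_{i+j=k}|\nabla^{g(t),i}\Rm|\,|\nabla^{g(t),j}\Rm|$ (up to lower-order terms produced by the evolution $\partial_tg=-2\Ric$, which are of the same type). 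Assuming inductively that $u_0,\dots,u_{k-1}$ are bounded by time-independent constants, every summand except the two endpoint terms $i\in\{0,k\}$ has both factors controlled, so the reaction is bounded by $C_1u_k+C_2u_k^{1/2}$ with $C_1=C_n\tilde C_0$ and $C_2=C_2(n,\tilde C_0,\dots,\tilde C_{k-1})$. This by itself gives only exponential-in-time growth, which is insufficient when $T=+\infty$.

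To obtain a genuinely uniform bound I would introduce the auxiliary quantity $F:=(A+u_{k-1})u_k$ for a constant $A=A(\tilde C_{k-1})$ to be fixed. The decisive point is that differentiating the factor $u_{k-1}$ produces, through its own evolution inequality $\partial_t u_{k-1}\le\Delta_{g(t)}u_{k-1}-2u_k+(\text{reaction})$, the strongly negative term $-2u_k^2$. After computing $\partial_t F-\Delta_{g(t)}F$, the cross term $-2\langle\nabla u_{k-1},\nabla u_k\rangle$ is absorbed, via Young's inequality and the bound $|\nabla u_{k-1}|\le 2\tilde C_{k-1}^{1/2}u_k^{1/2}$, partly into the good term $-2(A+u_{k-1})u_{k+1}$ and partly into a further multiple of $u_k^2$; choosing $A$ large enough renders this cross term harmless. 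Using $u_k\ge F/(A+\tilde C_{k-1})$ to convert $-u_k^2$ into $-cF^2$, I would arrive at $\partial_t F\le\Delta_{g(t)}F-cF^2+C_5F+C_6$ with all constants independent of time.

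The right-hand side, viewed as a function of $F$, is a downward parabola, hence negative once $F$ exceeds an explicit threshold $F_\ast=F_\ast(n,\tilde C_0,\dots,\tilde C_{k-1})$; combined with the initial bound $\sup_NF(\cdot,0)\le(A+C_{k-1}^2)C_k^2$ this confines $\sup_NF(\cdot,t)$ below $\max\{\sup_NF(\cdot,0),F_\ast\}$ for all $t$, giving the desired time-independent bound on $u_k$, and hence on $|\nabla^{g(t),k}\Rm|$, with $\tilde C_k$ depending only on $n$, $\tilde C_0$ and $C_1,\dots,C_k$, as claimed. \textbf{The main obstacle} is that $N$ is noncompact, so the maximum principle cannot be applied by simply evaluating at a spatial maximum. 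I would resolve this exactly as in Shi's original argument: localize with a cutoff $\varphi$ supported on a large geodesic ball and run the argument on $\varphi F$, the extra first- and second-order terms coming from $\varphi$ being controlled by the uniform bound $|\Rm|\le\tilde C_0$ (which bounds the relevant geometry) and then letting the radius tend to infinity; alternatively one may invoke the Omori--Yau maximum principle, valid here since $(N,g(t))$ is complete with Ricci curvature bounded below uniformly in time. This localization is the only genuinely delicate point, the algebraic structure above being otherwise routine.
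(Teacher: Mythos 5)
Your proof is correct, and at heart it is the same Bernstein--Shi scheme as the paper's: combine the evolution inequalities of $|\nabla^{g(t),k-1}\Rm(g(t))|^2$ and $|\nabla^{g(t),k}\Rm(g(t))|^2$ into one auxiliary function whose reaction terms can be dominated, apply a maximum principle valid on a complete noncompact manifold, and induct on $k$. The executions differ in two instructive ways. For the auxiliary function, the paper (written out for $k=1$, higher $k$ being "similar") uses the \emph{additive} quantity $F=a|\Rm|^2+|\nabla\Rm|^2$ with $a\sim \tilde{C}_0$, so that the good term $-2a|\nabla\Rm|^2$ coming from the lower-order factor absorbs the bad reaction $c(n)\tilde{C}_0|\nabla\Rm|^2$, giving the linearly damped inequality $\partial_tF\le\Delta_{g(t)} F-c(n)\tilde{C}_0F+C$; you use the \emph{multiplicative} quantity $F=(A+u_{k-1})u_k$, which produces quadratic damping $-cF^2$ at the price of the cross term $-2\langle\nabla u_{k-1},\nabla u_k\rangle$, which you absorb correctly via Young's inequality with $A$ large. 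Both yield time-independent bounds; the additive choice is algebraically lighter (no cross term to absorb), while yours is closer to Shi's original computation. For noncompactness, the paper does not localize: it takes from \cite{Shi-Def-Pos} an exhaustion function $\psi$ comparable to $1+d_{g(0)}(p,\cdot)$ with $\Delta_{g(t)}\psi$ bounded, applies the maximum principle to $F-\gamma\psi$ at an interior maximum point, and lets $\gamma\to0$, so that the $T$-dependent constant coming from $\Delta_{g(t)}\psi$ disappears in the limit; note this tacitly requires $F-\gamma\psi$ to attain a maximum, i.e. some a priori sublinear spatial growth of $F$. Your cutoff localization is actually more self-contained on this point, since $\varphi F$ attains its maximum on the compact support of $\varphi$; just be explicit about the two-limit structure (for fixed $T'<T$ the $g(t)$-derivatives of a $g(0)$-cutoff are controlled only up to factors $e^{c\tilde{C}_0T'}$, these errors vanish as the radius tends to infinity, and the resulting bound is independent of $T'$, whence one may let $T'\to T$). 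Your Omori--Yau alternative, by contrast, inherits exactly the a priori growth issue present in the paper's barrier argument, and needs a parabolic version of that principle, so it should remain the fallback rather than the main route.
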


\begin{proof}
This is essentially due to the maximum principle applied to the evolution equation satisfied by the $k$th-covariant derivatives of the curvature tensor which is derived in \cite[Theorem $13.2$]{ham-3d-pos}. Indeed, let us recall the proof if $k=1$, the cases $k\geq 2$ can be handled similarly. The covariant derivative $\nabla^{g(t)}\Rm(g(t))$ satisfies schematically once we differentiate [(\ref{evo-eqn-Rm-for}), Lemma \ref{lemma-evo-eqn-Ric-Rm}] :
\begin{equation*}
\partial_t\nabla^{g(t)}\Rm(g(t))=\Delta_{g(t)}\nabla^{g(t)}\Rm(g(t))+\nabla^{g(t)}\Rm(g(t))\ast \Rm(g(t)).
\end{equation*}
Notice that:
\begin{equation*}
\begin{split}
\partial_t|\Rm(g(t))|_{g(t)}^2\leq\,& \Delta_{g(t)}|\Rm(g(t))|_{g(t)}^2-2|\nabla^{g(t)}\Rm(g(t))|_{g(t)}^2\\
&+c(n)|\Rm(g(t))|_{g(t)}|\Rm(g(t))|_{g(t)}^2,\\
\partial_t|\nabla^{g(t)}\Rm(g(t))|_{g(t)}^2\leq\,& \Delta_{g(t)}|\nabla^{g(t)}\Rm(g(t))|_{g(t)}^2-2|\nabla^{g(t),2}\Rm(g(t))|_{g(t)}^2\\
&+c(n)|\Rm(g(t))|_{g(t)}|\nabla^{g(t)}\Rm(g(t))|_{g(t)}^2.
\end{split}
\end{equation*}

Then, based on Shi's approach to Bernstein's estimates \cite{Shi-Def}, one considers the function $F:=a|\Rm(g(t))|_{g(t)}^2+|\nabla^{g(t)}\Rm(g(t))|_{g(t)}^2$ for some positive constant $a$ to be specified later which satisfies:
\begin{equation}
\begin{split}\label{evo-eqn-F}
\partial_tF\leq\,& \Delta_{g(t)}F+(c(n)\tilde{C}_0-2a)|\nabla^{g(t)}\Rm(g(t))|_{g(t)}^2+ac(n)\tilde{C}_0^3\\
\leq\,&\Delta_{g(t)}F-c(n)\tilde{C}_0|\nabla^{g(t)}\Rm(g(t))|_{g(t)}^2+ac(n)\tilde{C}_0^3\\
\leq\,&\Delta_{g(t)}F-c(n)\tilde{C}_0F+2ac(n)\tilde{C}_0^3,
\end{split}
\end{equation}
if $a:=c(n)\tilde{C}_0$. Notice that in case $N$ is closed, the maximum principle applied to (\ref{evo-eqn-F}) gives the expected result. If $(N^n,g(t))_{t\in[0,T]}$ is non-compact there exists a smooth positive exhaustion function $\psi$ satisfying $\Delta_{g(t)}\psi\leq C(n,\tilde{C}_0,T)$ and $c_1(1+d_{g(0)}(p,x))\leq \psi(x)\leq c_1(1+d_{g(0)}(p,x))$ for all $x\in N$, for some point $p\in N$ and some positive constants $c_1$, $c_2$.
The existence of such a function is due to \cite[Lemmata $4.2$, $4.3$]{Shi-Def-Pos}.

Now, for $\gamma>0$, consider the function $F-\gamma \psi$ and observe that:
\begin{equation}
\left(\partial_t-\Delta_{g(t)}\right)(F-\gamma\psi)\leq -c(n)\tilde{C}_0(F-\gamma\psi)+c(n)\tilde{C}_0^4+\gamma C(n,\tilde{C}_0,T).\label{evo-eqn-F-gamma}
\end{equation}
Since for each $t\in[0,T)$ and each $\gamma>0$, $F(\cdot,t)-\gamma\psi(\cdot,t)$ is unbounded from below so that it attains its maximum on $N$. Moreover, notice that for every $\gamma>0$, 
\begin{equation}
\sup_N(F(\cdot,0)-\gamma\psi(\cdot,0))\leq \sup_NF(\cdot,0)\leq C(n,\tilde{C}_0,C_1).\label{control-t-0-F-gamma}
\end{equation}
If $F-\gamma\psi$ attains its maximum at an interior point $(t_0,x_0)\in (0,T)\in N$ then the maximum principle applied to (\ref{evo-eqn-F-gamma}) implies that 
\begin{equation}
F(x_0,t_0)-\gamma\psi(x_0,t_0)\leq c(n)\tilde{C}_0^3+\gamma C(n,\tilde{C}_0,T),
\end{equation}
where $c(n)$ and $C(n,\tilde{C}_0,T)$ are constants that may vary from line to line. This fact together with (\ref{control-t-0-F-gamma}) implies for every $(x,t)\in N\times[0,T)$:
\begin{equation*}
\begin{split}
F(x,t)\leq\, &\gamma\psi(x,t)+\max\left\{F(x_0,t_0)-\gamma\psi(x_0,t_0),\sup_NF(\cdot,0)-\gamma\psi(\cdot,0)\right\}\\
\leq\,& \gamma\psi(x,t)+\max\left\{c(n)\tilde{C}_0^3+\gamma C(n,\tilde{C}_0,T),C(n,\tilde{C}_0,C_1)\right\}.
\end{split}
\end{equation*}
By sending $\gamma$ to $0$, one gets the expected result, i.e. $\sup_N|\nabla^{g(t)}\Rm(g(t))|_{g(t)}\leq C(n,\tilde{C}_0,C_1).$ for all $t\in[0,T)$.

\end{proof}

We recall classical Shi's estimates on the curvature tensor and state the corresponding local Shi's estimate on the Ricci tensor.
\begin{prop}\label{prop-a-priori-C0-est-rm-ric}
Let $(N^n,g(0))$ be a complete Riemannian metric with bounded curvature, i.e. $|\Rm(g(0))|\leq C_0$ on $N$. Let $(g(t))_{t\in[0,T_{\operatorname{Shi}}]}$ be Shi's solution to the Ricci flow starting from $g(0)$ where $T_{\operatorname{Shi}}=T(n,C_0)$.
Then, for $t\in[0,T_{\operatorname{Shi}}]$,
\begin{equation}
\|\Rm(g(t))\|_{C^0}\leq2\|\Rm(g(0))\|_{C^0},\quad t^{\frac{k}{2}}\|\nabla^{g(t),k}\Rm(g(t))\|_{C^0}\leq C_k,\quad k\geq 0,\label{shi-est-curv-tensor}\\
\end{equation}
where $C_k=C(n,k,C_0)$.

Moreover, the following local Shi type estimates hold true: for each $k\geq 0$, there exist positive constants $C_k=C(n,k,C_0)$ and $r_0=r(n,\inj_{g(0)}(N))$ such that for $r<r_0$ and $x\in N$,
\begin{equation}
\sup_{B_{g(0)}(x,r)}t^{\frac{k}{2}}|\nabla^{g(t),k}\Ric(g(t))|_{g(t)}\leq C_k \sup_{B_{g(0)}(x,2r)}|\Ric(g(0))|_{g(0)},\quad t\in[0,T_{\operatorname{Shi}}].\label{shi-est-ric-cov-tensor}
\end{equation}
\end{prop}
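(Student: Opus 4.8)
The final statement to prove is Proposition \ref{prop-a-priori-C0-est-rm-ric}, which combines classical global Shi estimates for the full curvature tensor with a \emph{local} Shi-type estimate specifically for the Ricci tensor.

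The plan is to treat the two assertions \eqref{shi-est-curv-tensor} and \eqref{shi-est-ric-cov-tensor} separately, since the first is entirely standard and the second is the substantive new content. For \eqref{shi-est-curv-tensor}, I would simply invoke the classical theory: the bound $\|\Rm(g(t))\|_{C^0}\le 2\|\Rm(g(0))\|_{C^0}$ on the Shi time interval $[0,T_{\operatorname{Shi}}]$ with $T_{\operatorname{Shi}}=T(n,C_0)$ follows from the maximum principle applied to \eqref{evo-eqn-Rm-for} (the reaction term $\Rm\ast\Rm$ gives an ODE comparison $\partial_t|\Rm|\le \Delta|\Rm|+c(n)|\Rm|^2$, whose solution stays below $2C_0$ for a definite time). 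The interior-in-time derivative estimates $t^{k/2}\|\nabla^{g(t),k}\Rm(g(t))\|_{C^0}\le C_k$ are exactly Shi's original Bernstein-type estimates from \cite{Shi-Def}; I would cite these rather than reprove them, as the argument is identical to the non-compact maximum principle machinery already exhibited in the proof of Lemma \ref{lemma-shi-global}.

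The real work is the localized estimate \eqref{shi-est-ric-cov-tensor} for the Ricci tensor. The key structural observation is that, by \eqref{evo-eqn-Ric-for}, the Ricci tensor evolves by the \emph{linear} Lichnerowicz-heat equation $\partial_t\Ric=\Delta_{L,g(t)}\Ric$, with no quadratic self-interaction; the zeroth-order curvature term $\Rm\ast\Ric$ is controlled by the already-established bound $\|\Rm(g(t))\|_{C^0}\le 2C_0$. Consequently $|\Ric(g(t))|$ is a subsolution to a heat inequality $\partial_t|\Ric|\le \Delta_{g(t)}|\Ric|+c(n)C_0|\Ric|$ (via Kato's inequality), which linearly propagates a bound on $\sup_{B_{g(0)}(x,2r)}|\Ric(g(0))|$ forward in time. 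I would run a standard localized Bernstein argument: introduce a spatial cutoff $\phi$ supported in $B_{g(0)}(x,2r)$ and equal to $1$ on $B_{g(0)}(x,r)$ (using that on the short Shi interval the metrics are uniformly equivalent, so $g(0)$-balls and $g(t)$-balls are comparable, and that the injectivity radius lower bound supplies the scale $r_0=r(n,\inj_{g(0)}(N))$ below which local geometry is controlled), then consider the auxiliary quantity $F:=a\,\phi^2|\Ric|^2+t|\nabla\Ric|^2$ (iterating with $t^k|\nabla^k\Ric|^2$ for higher $k$) and apply the maximum principle to the parabolic inequality it satisfies, absorbing the cutoff gradient terms as in the proof of Lemma \ref{lemma-short-time-est-curv}.

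The main obstacle will be organizing the cutoff-and-iteration bookkeeping cleanly at the level of higher covariant derivatives: the term $t^{k/2}\sup|\nabla^{g(t),k}\Ric|$ requires carefully tracking how the weight $t^{k/2}$ interacts with the commutator and lower-order terms that appear when one differentiates the Lichnerowicz-heat equation $k$ times, and the localization means the derivatives of $\phi$ of all orders up to $k$ feed into the estimate. This is genuinely routine but delicate, so I would either cite the local Shi estimates in the reference (e.g. the treatment in \cite[Chapter $6$]{Cho-Boo}) and only indicate the linearity that makes the Ricci tensor behave better than a general solution, or present the $k=0,1$ cases in detail and remark that the general case follows by the same induction already illustrated in Lemma \ref{lemma-shi-global}. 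The only genuinely new point requiring care is that the estimate is sourced by $|\Ric(g(0))|$ alone (not $|\Rm(g(0))|$), which is precisely why the linear structure of \eqref{evo-eqn-Ric-for} is essential and must be emphasized.
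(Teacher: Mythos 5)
Your proposal is correct and matches the paper's own (very terse) proof: the paper likewise disposes of \eqref{shi-est-curv-tensor} by citing Shi \cite[Lemma $7.1$]{Shi-Def}, and proves \eqref{shi-est-ric-cov-tensor} "along the same lines" by a localized Bernstein/maximum-principle argument run inductively, with Shi's bounds on $\Rm(g(t))$ controlling the coefficients of the linear equation \eqref{evo-eqn-Ric-for} for $\Ric$ — exactly the structure you describe, including the key point that linearity makes the estimate sourced by $|\Ric(g(0))|$ alone.
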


\begin{proof}
Estimates (\ref{shi-est-curv-tensor}) are due to Shi \cite[Lemma $7.1$]{Shi-Def}.

Estimates (\ref{shi-est-ric-cov-tensor}) can be proved along the same lines by using Shi's estimates (\ref{shi-est-curv-tensor}) on the curvature tensor inductively.
\end{proof}
The following proposition establishes a priori rough $C^0$ and $L^2$ estimates on the Ricci curvature along a solution to the Ricci flow. 
\begin{prop}\label{prop-a-priori-L2-est-rm-ric}
Let $(N^n,g(t))_{t\in[0,T)}$ be a complete Ricci flow with bounded curvature, i.e. $\sup_N|\Rm(g(t))|<+\infty$ for each $t\in[0,T)$. 

Then for $0\leq s\leq t\leq T$,
\begin{equation}
\|\Ric(g(t))\|_{C^0}\leq e^{c(n)\int_s^t\|\Rm(g(t'))\|_{C^0}\,dt'}\|\Ric(g(s))\|_{C^0}.\label{shi-est-ric-tensor}
\end{equation}

Furthermore, if $\Ric(g(t))\in L^2$ for each $t\in [0,T)$ then, for $0\leq s\leq t<T$,
\begin{equation}
\|\Ric(g(t))\|_{L^2}\leq e^{c(n)\int_s^t\|\Rm(g(t'))\|_{C^0}\,dt'}\|\Ric(g(s))\|_{L^2}.\label{shi-est-ric-tensor-l2}
\end{equation}
\end{prop}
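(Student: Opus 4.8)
The plan is to reduce both estimates to a single scalar parabolic differential inequality for $|\Ric(g(t))|^2_{g(t)}$, and then to process that inequality in two complementary ways: pointwise via the maximum principle for (\ref{shi-est-ric-tensor}), and integrated against the volume measure for (\ref{shi-est-ric-tensor-l2}).

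First I would start from the evolution equation (\ref{evo-eqn-Ric-for}) of Lemma \ref{lemma-evo-eqn-Ric-Rm}, namely $\partial_t\Ric(g(t))=\Delta_{L,g(t)}\Ric(g(t))$, and differentiate $|\Ric|^2_{g(t)}=g^{ik}g^{jl}\Ric_{ij}\Ric_{kl}$ in time. The derivative hitting the two inverse metrics produces, through $\partial_tg^{ij}=2\Ric^{ij}$, a term cubic in $\Ric$ bounded pointwise by $c(n)|\Rm|_{g(t)}|\Ric|^2_{g(t)}$ (using $|\Ric|\leq c(n)|\Rm|$), while the derivative hitting the two factors of $\Ric$ yields $2\langle\Delta_{L,g(t)}\Ric,\Ric\rangle$. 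Expanding $\Delta_{L,g(t)}$ via Definition \ref{defn-Lic-Op}, its zeroth-order part $2\Rm(\Ric)-\Ric\circ\Ric-\Ric\circ\Ric$ is again controlled by $c(n)|\Rm||\Ric|^2$, and the Bochner identity $\Delta|\Ric|^2=2\langle\Delta\Ric,\Ric\rangle+2|\nabla\Ric|^2$ (with the convention $\Delta=-\nabla^*\nabla$ of Definition \ref{defn-Lic-Op}) converts the leading term. Discarding the favorable $-2|\nabla\Ric|^2$ produces
$$\partial_t|\Ric|^2_{g(t)}\leq\Delta_{g(t)}|\Ric|^2_{g(t)}+c(n)\|\Rm(g(t))\|_{C^0}\,|\Ric|^2_{g(t)}.$$

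For (\ref{shi-est-ric-tensor}) I would apply the maximum principle to this inequality. Since $g(t)$ has bounded curvature for each $t$, the quantity $|\Ric|^2_{g(t)}$ is bounded, so I would invoke the same complete-manifold maximum principle used in the proof of Lemma \ref{lemma-shi-global} (comparing with $\gamma\psi$ for the exhaustion function $\psi$ of \cite{Shi-Def-Pos} and letting $\gamma\to0$) to conclude that $M(t):=\sup_N|\Ric(g(t))|^2$ obeys $\tfrac{d}{dt}M\leq c(n)\|\Rm(g(t))\|_{C^0}M$ in the barrier sense; Grönwall's inequality and a square root then give (\ref{shi-est-ric-tensor}). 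For (\ref{shi-est-ric-tensor-l2}) I would instead integrate the same inequality against $d\mu_{g(t)}$, using $\partial_t\,d\mu_{g(t)}=-\R_{g(t)}\,d\mu_{g(t)}$: the Laplacian term integrates to zero, the gradient term is favorable, and the volume variation contributes $-\int_N\R_{g(t)}|\Ric|^2$, which is absorbed into the curvature term since $|\R_{g(t)}|\leq c(n)|\Rm(g(t))|$. This yields $\tfrac{d}{dt}\int_N|\Ric|^2\,d\mu_{g(t)}\leq c(n)\|\Rm(g(t))\|_{C^0}\int_N|\Ric|^2\,d\mu_{g(t)}$, and Grönwall again gives (\ref{shi-est-ric-tensor-l2}).

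The hard part will be justifying these formal manipulations on the noncompact manifold $N$: the absence of boundary contributions when integrating $\Delta_{g(t)}|\Ric|^2$ by parts, and the validity of the comparison argument for the unbounded-below test function in the maximum principle. Both are handled exactly as elsewhere in the paper — by exhausting $N$ with the smooth domains $(\Omega_j)_j$ and cutoff functions as in Proposition \ref{L^1-bound-heat-kernel-fct}, respectively by the exhaustion-function barrier of Lemma \ref{lemma-shi-global} — using that $\Ric(g(t))\in L^2$ together with bounded curvature so that all cutoff error terms tend to zero in the limit.
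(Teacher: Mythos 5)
Your proposal is correct and takes essentially the same approach as the paper's proof: both derive the pointwise subsolution inequality $\partial_t|\Ric(g(t))|^2_{g(t)}\leq\Delta_{g(t)}|\Ric(g(t))|^2_{g(t)}+c(n)\|\Rm(g(t))\|_{C^0}|\Ric(g(t))|^2_{g(t)}$ from the Lichnerowicz evolution of the Ricci tensor, then use the maximum principle for (\ref{shi-est-ric-tensor}) and a time-differentiation of the $L^2$ norm (with cutoff-justified integration by parts, the volume variation $-\R_{g(t)}d\mu_{g(t)}$, and Gr\"onwall) for (\ref{shi-est-ric-tensor-l2}). The only differences are cosmetic: you integrate $\Delta_{g(t)}|\Ric|^2$ directly where the paper integrates $\langle\Ric,\Delta_{g(t)}\Ric\rangle$ by parts, and you make explicit the noncompact maximum principle via the barrier construction of Lemma \ref{lemma-shi-global}, which the paper invokes implicitly.
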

\begin{proof}
Estimate (\ref{shi-est-ric-tensor}) is a straightforward application of the maximum principle.

Indeed, using the evolution equation [(\ref{evo-eqn-Ric-for}), Lemma \ref{lemma-evo-eqn-Ric-Rm}] satisfied by the Ricci tensor together with the previous bound on the curvature tensor, one gets: 
\begin{equation}
\begin{split}
\partial_t|\Ric(g(t))|_{g(t)}^2\leq& \Delta_{g(t)}|\Ric(g(t))|^2_{g(t)}+c(n)|\Rm(g(t))|_{g(t)}|\Ric(g(t))|^2_{g(t)}\\
\leq&\Delta_{g(t)}|\Ric(g(t))|^2_{g(t)}+c(n)\sup_{t'\in[s,t]}\|\Rm(g(t'))\|_{C^0}|\Ric(g(t))|^2_{g(t)}.\label{subsol-ric-eqn}
\end{split}
\end{equation}
In particular, the function $e^{-c(n)\int_s^t\|\Rm(g(t'))\|_{C^0}\,dt'}|\Ric(g(t))|^2_{g(t)}$ is a subsolution of the heat equation along the Ricci flow on $N\times[0,T)$. The maximum principle implies that $$\|\Ric(g(t))\|_{C^0}\leq e^{c(n)\int_s^t\|\Rm(g(t'))\|_{C^0}\,dt'} \|\Ric(g(s))\|_{C^0},$$ for any $0\leq s\leq t\leq T$. 

If $\Ric(g(t))\in L^2$, $t\in[0,T]$, the proof is the $L^2$ counterpart of the proof of the $C^0$ estimate (\ref{shi-est-ric-tensor}). By differentiating in time and using the evolution equation [(\ref{evo-eqn-Ric-for}), Lemma \ref{lemma-evo-eqn-Ric-Rm}] satisfied by the Ricci tensor:
\begin{equation*}
\begin{split}
\frac{d}{dt}\|\Ric(g(t))\|^2_{L^2}=\,&\int_N2\left<\Ric(g(t)),\Delta_{g(t)}\Ric(g(t))+\Rm(g(t))\ast\Ric(g(t))\right>_{g(t)}\,d\mu_{g(t)}\\
&-\int_N\R_{g(t)}|\Ric(g(t))|^2_{g(t)}\\
\leq\,&-2\|\nabla^{g(t)}\Ric(g(t))\|^2_{L^2}+c(n)\|\Rm(g(t))\|_{C^0}\|\Ric(g(t))\|_{L^2}^2\\
\leq\,&c(n)\|\Rm(g(t))\|_{C^0}\|\Ric(g(t))\|_{L^2}^2.
\end{split}
\end{equation*}
Here we have used integration by parts in the second line which can be justified by using suitable cut-off functions. The expected estimate follows by Gr\"onwall's inequality applied to the previous differential inequality. 
\end{proof}

We continue by establishing a general pointwise formula linking the laplacian along the Ricci flow of the difference of the solution at two different time:
\begin{lemma}\label{lemma-beauty-int-lap-met-ric}
Let $(N^n,g(t))_{t\in[0,T)}$ be a smooth Ricci flow. Then, for $0\leq s\leq t<T$, the following formula holds pointwise if $h(t'):=g(t')-g(s)$, $t'\in[s,t]$:
\begin{equation}
\begin{split}\label{so-beautiful-int-der-lap-lemma}
\Delta_{g(t)}(g(t)-g(s))=\,&\left[2\Ric(g(t'))-\Ric(g(t'))\circ h(t')-h(t')\circ\Ric(g(t'))\right]_{t'=s}^{t}\\
&+\int_s^t2\Ric(g(t'))\circ\Ric(g(t'))+\Rm(g(t'))\ast\Ric(g(t'))\,dt'\\
&+\int_s^t\nabla^{g(t')}\Ric(g(t'))\ast \nabla^{g(t')}(g(t')-g(s))\,dt'\\
&+\int_s^t\Ric(g(t'))\ast \nabla^{g(t'),2}(g(t')-g(s))\,dt'.
\end{split}
\end{equation}

\end{lemma}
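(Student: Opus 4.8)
The plan is to derive the formula by starting from the definition of the Laplacian and exploiting the Ricci flow equation $\partial_{t'} g(t') = -2\Ric(g(t'))$ together with the evolution equation for the Ricci tensor recorded in Lemma \ref{lemma-evo-eqn-Ric-Rm}. The central observation is that for a fixed background time $s$, the tensor $h(t') := g(t')-g(s)$ satisfies $\partial_{t'} h(t') = -2\Ric(g(t'))$, so that integrating in time from $s$ to $t$ gives $g(t)-g(s) = -2\int_s^t \Ric(g(t'))\,dt'$. Thus it suffices to understand how $\Delta_{g(t)}$ interacts with this time integral.

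First I would write $\Delta_{g(t)}(g(t)-g(s))$ and rewrite it so as to separate the difference of a boundary term from a genuinely integrated quantity. The natural route is to compute $\frac{d}{dt'}\left(\Delta_{g(t')}(g(t')-g(s))\right)$ pointwise and then integrate over $[s,t]$. This requires differentiating both the Laplacian $\Delta_{g(t')}$ (whose coefficients and Christoffel symbols move with $t'$) and the argument $h(t')$. Using $\partial_{t'} h = -2\Ric$, the term where the derivative hits the argument produces $\Delta_{g(t')}(-2\Ric(g(t')))$, which by the evolution equation \eqref{evo-eqn-Ric-for}, namely $\partial_{t'}\Ric = \Delta_{L,g(t')}\Ric$, can be traded for $\partial_{t'}\Ric$ up to the algebraic zeroth-order terms $2\Rm(g)(\Ric)$ (expanding the Lichnerowicz operator as in Definition \ref{defn-Lic-Op}), which on a general background contribute the $\Rm\ast\Ric$ and $\Ric\circ\Ric$ schematic terms. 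The term where the time-derivative hits the metric coefficients of $\Delta_{g(t')}$ produces, via the standard variation formula for $\Delta$ under $\partial_{t'}g = -2\Ric$, exactly the mixed terms $\nabla\Ric \ast \nabla h$ and $\Ric \ast \nabla^2 h$ that appear in the last two integrals of \eqref{so-beautiful-int-der-lap-lemma}.

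The boundary contribution $\left[2\Ric(g(t'))-\Ric(g(t'))\circ h(t')-h(t')\circ\Ric(g(t'))\right]_{t'=s}^{t}$ should emerge from the exact part of the computation: at the level of the trace-corrected leading symbol, the linearization of $-2\Ric$ is $\Delta_L h - \Li_{B_g(h)}$ by Lemma \ref{lem-lin-equ-Ric-first-var}, so one expects $\Delta_{g(t')} h(t')$ to equal $2\Ric(g(t')) + (\text{lower order algebraic in } \Ric, h)$ up to a gauge term, and the precise bookkeeping of the $\Ric\circ h + h\circ\Ric$ correction comes from commuting $\Ric$ and $h$ through the metric contractions. I would organize the calculation by treating the identity as a schematic $\ast$-identity, tracking only the tensorial structure and the precise coefficients of the genuinely ``exact'' (non-schematic) terms $2\Ric$, $\Ric\circ h$, $h\circ \Ric$, and $2\Ric\circ\Ric$.

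The main obstacle I anticipate is the careful accounting of the Bianchi gauge term $\Li_{B_g(h)}$. Because the formula in Lemma \ref{lem-lin-equ-Ric-first-var} produces a gauge contribution whenever one relates $\Delta_L h$ to $-2\delta\Ric$, one must verify that the divergence-type terms either cancel in the time-integration or get absorbed into the schematic $\nabla\Ric\ast\nabla h$ and $\Ric\ast\nabla^2 h$ integrals; getting the precise structure of these terms right — rather than just their schematic form — is the delicate part. Everything else is a bounded bookkeeping exercise in commuting covariant derivatives and using the Ricci flow equation, so the proof should amount to differentiating in time, substituting the two evolution equations from Lemma \ref{lemma-evo-eqn-Ric-Rm}, and integrating back, with the fundamental theorem of calculus supplying the boundary bracket.
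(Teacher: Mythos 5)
Your skeleton --- writing $\Delta_{g(t)}(g(t)-g(s))=\int_s^t \tfrac{d}{dt'}\bigl[\Delta_{g(t')}h(t')\bigr]\,dt'$, trading $\Delta_{g(t')}\partial_{t'}g(t')=-2\Delta_{g(t')}\Ric(g(t'))$ for $-2\partial_{t'}\Ric(g(t'))$ plus curvature terms via \eqref{evo-eqn-Ric-for}, and letting the fundamental theorem of calculus produce the $2\Ric$ boundary terms --- is exactly the paper's first step. But your treatment of the other term, $\bigl(\tfrac{d}{dt'}\Delta_{g(t')}\bigr)h(t')$, has a genuine gap: for a $2$-tensor, unlike for a function, the variation of the rough Laplacian is \emph{not} just $\Ric\ast\nabla^{2}h+\nabla\Ric\ast\nabla h$. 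The connection also acts on the indices of $h$, so differentiating the Christoffel symbols inside $\nabla_i\nabla_j h_{pq}$ produces a zeroth-order term $\nabla^{g(t')}\bigl(\partial_{t'}\Gamma(g(t'))\bigr)\ast h\sim \nabla^{g(t'),2}\Ric(g(t'))\ast h$. This term is the entire point of the lemma: left untreated it is of none of the forms allowed on the right-hand side of \eqref{so-beautiful-int-der-lap-lemma}, and it is precisely the term whose elimination generates the boundary bracket $\Ric\circ h+h\circ\Ric$ and the integrand $2\Ric\circ\Ric$.

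The paper's resolution, for which your proposal offers no substitute, goes as follows: using the evolution of the Christoffel symbols \eqref{chris-symb-evo} and commuting covariant derivatives, the zeroth-order term equals $\Delta_{g(t')}\Ric\circ h+h\circ\Delta_{g(t')}\Ric$ plus contractions of $h$ with the antisymmetrized divergences $\nabla_p(\div_{g(t')}\Ric)_k-\nabla_k(\div_{g(t')}\Ric)_p$ plus $\Rm\ast\Ric\ast h$; the antisymmetric part vanishes \emph{exactly} by the contracted second Bianchi identity $\div_{g}\Ric(g)=\tfrac{1}{2}\nabla^{g}\R_{g}$ (it becomes the antisymmetric part of a Hessian of $\R$); then $\Delta_{g(t')}\Ric$ is again traded for $\partial_{t'}\Ric$ via \eqref{evo-eqn-Ric-for}, and an integration by parts in time, using $\partial_{t'}h=-2\Ric$, converts $\int_s^t\partial_{t'}\Ric\circ h\,dt'$ into $[\Ric\circ h]_{t'=s}^{t}$ plus $2\int_s^t\Ric\circ\Ric\,dt'$. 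Your alternative mechanism for the boundary terms --- invoking the linearization Lemma \ref{lem-lin-equ-Ric-first-var} to write $\Delta_{g(t')}h\approx 2\Ric+\dots$ up to a Lie-derivative gauge term --- cannot work here: that formula is a first variation, valid only modulo remainder terms quadratic in $(h,\nabla h,\nabla^{2}h)$ such as $\nabla h\ast\nabla h$ and $h\ast\nabla^{2}h$, and such terms are not of the schematic form $\nabla\Ric\ast\nabla h$ or $\Ric\ast\nabla^{2}h$ appearing in \eqref{so-beautiful-int-der-lap-lemma}, whereas the lemma asserts an exact pointwise identity. So the ``Bianchi gauge'' difficulty you flag is real, but its resolution is the contracted Bianchi identity applied inside the variation of the Christoffel symbols together with the time integration by parts, not anything involving the linearized Bianchi operator $B_{g}(h)$ of Lemma \ref{lem-lin-equ-Ric-first-var}.
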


\begin{proof}
Observe first that:
\begin{equation}
\begin{split}\label{beautiful-int-der-lap}
\Delta_{g(t)}(g(t)-g(s))=\int_s^t\frac{d}{dt'}\Delta_{g(t')}(g(t')-g(s))\,ds,
\end{split}
\end{equation}
Now, for $t'\in[0,T)$,
\begin{equation*}
\begin{split}
\frac{d}{dt'}\Delta_{g(t')}(g(t')-g(s))=&\left(\frac{d}{dt'}\Delta_{g(t')}\right)(g(t')-g(s))+\Delta_{g(t')}(\partial_{t'}g(t'))\\
=&-2\Delta_{g(t')}\Ric(g(t'))+\left(\frac{d}{dt'}\Delta_{g(t')}\right)(g(t')-g(s))\\
=&-2\partial_{t'}\Ric(g(t'))+\Rm(g(t'))\ast\Ric(g(t'))\\
&+\sum_{k=0}^2\nabla^{g(t'),2-k}\Ric(g(t'))\ast \nabla^{g(t'),k}(g(t')-g_b).
\end{split}
\end{equation*}
Here, we have used Lemma \ref{lemma-evo-eqn-Ric-Rm} in the third line. As a first conclusion:
\begin{equation}
\begin{split}\label{beautiful-int-der-lap-bis}
\Delta_{g(t)}(g(t)-g(s))=&2\Ric(g(s))-2\Ric(g(t))+\int_s^t\Rm(g(t'))\ast\Ric(g(t'))\,dt'\\
&+\int_s^t\sum_{k=0}^2\nabla^{g(t'),2-k}\Ric(g(t'))\ast \nabla^{g(t'),k}(g(t')-g(s))\,dt'.
\end{split}
\end{equation}
We are half way from the proof of (\ref{so-beautiful-int-der-lap-lemma}): we refine the computation of $\left(\frac{\partial}{\partial t'}\Delta_{g(t')}\right)(g(t')-g(s))$ as follows. Notice that we only need to identify the zeroth order term $(g(t')-g(s))$. 

%since $\tau>\frac{n-2}{2}\geq 1$ as we assume $n\geq 4$.

For doing so, recall the following evolution equation satisfied by the Christoffel symbols along the Ricci flow as given for instance in \cite[Chapter $2$]{Cho-Boo}:
\begin{equation}
\frac{\partial}{\partial t}\Gamma(g(t))_{ij}^k=-g(t)^{kl}\left(\nabla^{g(t)}_{i}\Ric(g(t))_{jl}+\nabla^{g(t)}_{j}\Ric(g(t))_{il}-\nabla^{g(t)}_{l}\Ric(g(t))_{ij}\right).\label{chris-symb-evo}
\end{equation}
In particular, if $h(t):=g(t)-g(s)$, the zeroth order term of $\left(\frac{\partial}{\partial t}\Delta_{g(t)}\right)(g(t)-g(s))$ is:
\begin{equation*}
\begin{split}
g(t)^{ij}\nabla^{g(t)}_{i}\left(\frac{\partial}{\partial t}\nabla^{g(t)}_{j}\right)h(t)_{pq}&=-g(t)^{ij}\nabla^{g(t)}_{j}\left(\partial_t\Gamma(g(t))_{ip}^k\right)h(t)_{kq}-g(t)^{ij}\nabla^{g(t)}_{j}\left(\partial_t\Gamma(g(t))_{iq}^k\right)h(t)_{pk}\\
&=\Delta_{g(t)}\Ric(g(t))_{pk}h(t)_{kq}+h(t)_{pk}\Delta_{g(t)}\Ric(g(t))_{kq}\\
&\quad+\left[g(t)^{ij}\nabla^{g(t)}_{j}\nabla^{g(t)}_{p}\Ric(g(t))_{ik}-g(t)^{ij}\nabla^{g(t)}_{j}\nabla^{g(t)}_{k}\Ric(g(t))_{ip}\right]h(t)_{kq}\\
&\quad+\left[g(t)^{ij}\nabla^{g(t)}_{j}\nabla^{g(t)}_{q}\Ric(g(t))_{ik}-g(t)^{ij}\nabla^{g(t)}_{j}\nabla^{g(t)}_{k}\Ric(g(t))_{iq}\right]h(t)_{pk}\\
&=\left(\Delta_{g(t)}\Ric(g(t))\circ h(t)+h(t)\circ\Delta_{g(t)}\Ric(g(t))\right)_{pq}\\
&\quad+\left[\nabla^{g(t)}_{p}\div_{g(t)}\Ric(g(t))_k-\nabla^{g(t)}_{k}\div_{g(t)}\Ric(g(t))_p\right]h(t)_{kq}\\
&\quad+\left[\nabla^{g(t)}_{q}\div_{g(t)}\Ric(g(t))_k-\nabla^{g(t)}_{k}\div_{g(t)}\Ric(g(t))_q\right]h(t)_{pk}\\
&\quad+\Rm(g(t))\ast \Ric(g(t))\ast h(t)_{pq}\\
&=\left(\partial_t\Ric(g(t))\circ h(t)+h(t)\circ\partial_t\Ric(g(t))\right)_{pq}\\
&\quad+\left(\Rm(g(t))\ast \Ric(g(t))\ast h(t)\right)_{pq}.
\end{split}
\end{equation*}
Here, we have used the traced Bianchi identity in the last line to cancel the terms involving the divergence $\div_{g(t)}\Ric(g(t))$ together with the evolution equation satisfied by $\Ric(g(t))$ given by Lemma \ref{lemma-evo-eqn-Ric-Rm}.
To conclude, we have obtained schematically:
\begin{equation}
\begin{split}\label{new-ref-est-der-laplacian}
\left(\frac{\partial}{\partial t'}\Delta_{g(t')}\right)(g(t')-g(s))=&\partial_{t'}\Ric(g(t'))\circ h(t')+h(t')\circ\partial_{t'}\Ric(g(t'))+\Rm(g(t'))\ast\Ric(g(t'))\\
&+\nabla^{g(t')}\Ric(g(t'))\ast \nabla^{g(t')}(g(t')-g(s))\\
&+\Ric(g(t'))\ast \nabla^{g(t'),2}(g(t')-g(s)).
\end{split}
\end{equation}
With (\ref{new-ref-est-der-laplacian}) in hand, we are in a position to conclude. Indeed, by integrating by parts with respect to time,
\begin{equation}
\begin{split}\label{so-beautiful-int-der-lap-sras}
\Delta_{g(t)}(g(t)-g(s))=&2\Ric(g(s))-\Ric(g(s))\circ h(s)-h(s)\circ\Ric(g(s))\\
&-2\Ric(g(t))+\Ric(g(t))\circ h(t)+h(t)\circ\Ric(g(t))\\
&+\int_s^t2\Ric(g(t'))\circ\Ric(g(t'))+\Rm(g(t'))\ast\Ric(g(t'))\,dt'\\
&+\int_s^t\nabla^{g(t')}\Ric(g(t'))\ast \nabla^{g(t')}(g(t')-g(s))\,dt'\\
&+\int_s^t\Ric(g(t'))\ast \nabla^{g(t'),2}(g(t')-g(s))\,dt',
\end{split}
\end{equation}
as expected.

\end{proof}

We finally recall a special case of the Gagliardo-Nirenberg interpolation inequalities essentially proved by Aubin \cite{Aub-Ine-Int} in this setting that we will make use of:
\begin{lemma}[Interpolation inequalities for asymptotically conical metrics]\label{inter-inequ-gag-nir}
Let $(M^n,g)$ be a complete Riemannian metric. Then, the following interpolation inequalities hold true for any integer $m\geq 0$:
\begin{equation}
\sup_M|\nabla^{g,j}T|_g\leq C(n,\inj_g(x))\sup_M|T|_g^{1-\frac{j}{m}}\cdot \sup_M|\nabla^{g,m}T|_g^{\frac{j}{m}},\label{int-inequ-easy}
\end{equation}
where $T$ is any tensor on $M$ with compact support in $B(x,\inj_g(x)/2)$, $x \in M$ and $0\leq j\leq m$.\\

In particular, if the curvature tensor decays quadratically with derivatives, i.e. if 
\begin{equation*}
A^k(g):=\sup_Mr_p^{2+k}|\nabla^{g,k}\Rm(g)|_g<+\infty,
\end{equation*}
 for all $k\geq 0$ and if $\inj_g(x)\geq \iota r_p(x)$ from some positive constant $\iota$ and $p\in M$ uniform in $x\in M$, then
\begin{equation}
\sup_{B_g(x,\iota r_p(x)/4)}r_p(x)^j|\nabla^{g,j}T|_g\leq C\cdot\sup_{B_g(x,\iota r_p(x)/2)}|T|_g^{1-\frac{j}{m}}\cdot \sup_{B_g(x,\iota r_p(x)/2)}\bigg(\sum_{k=0}^mr_p(x)^k|\nabla^{g,k}T|_g\bigg)^{\frac{j}{m}},
\end{equation}
where $C=C(n,\iota, (A^k(g))_{0\leq k\leq m})$.
\end{lemma}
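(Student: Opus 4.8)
The plan is to get the first inequality \eqref{int-inequ-easy} from the classical multiplicative Gagliardo--Nirenberg (Landau--Kolmogorov) interpolation inequality and then to promote it to the weighted statement by a scaling argument. For \eqref{int-inequ-easy} itself I would simply invoke Aubin \cite{Aub-Ine-Int}: for a tensor compactly supported in a geodesic ball $B(x,\inj_g(x)/2)$ one passes to the exponential chart centred at $x$, reduces to the Euclidean interpolation inequality (the discrepancy between covariant and coordinate derivatives only generating lower order terms that are reabsorbed), the base case $j=1$, $m=2$ being the optimised Taylor estimate $\sup|\nabla T|\le \tfrac2h\sup|T|+\tfrac h2\sup|\nabla^{g,2}T|$ and the general case following by induction on $m$. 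The only feature I need to retain is that the resulting constant depends on the background scale only through the lower bound on $\inj_g(x)$.

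For the weighted inequality I would argue by rescaling. Fix $x\in M$ and set $R:=r_p(x)$. The triangle inequality gives $(1-\tfrac\iota2)R\le r_p(y)\le(1+\tfrac\iota2)R$ for all $y\in B_g(x,\iota R/2)$, so $r_p\approx R$ on the ball. I would then consider $\bar g:=R^{-2}g$. Distances scale by $R^{-1}$, hence $B_g(x,\iota R/2)=B_{\bar g}(x,\iota/2)$ and $\inj_{\bar g}(x)=R^{-1}\inj_g(x)\ge\iota$; moreover, since the Levi-Civita connection is scale invariant and $r_p\approx R$, one has $|\nabla^{\bar g,k}\Rm(\bar g)|_{\bar g}=R^{2+k}|\nabla^{g,k}\Rm(g)|_g\lesssim A^k(g)$ uniformly in $x$. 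Thus $\bar g$ has bounded geometry at unit scale on $B_{\bar g}(x,\iota/2)$, and \eqref{int-inequ-easy} applied to $\bar g$ holds with a constant $C(n,\iota)$ that is independent of $x$.

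Then I would localise. Using the bounded geometry of $\bar g$ I would produce a cutoff $\chi$ with $\supp\chi\subset B_g(x,\iota R/2)$, $\chi\equiv1$ on $B_g(x,\iota R/4)$ and $|\nabla^{g,k}\chi|_g\le C_k R^{-k}$ for $0\le k\le m$; this is the one step where the full strength of the bounds $(A^k)$ is used, since controlling the higher covariant derivatives of $\chi$ requires bounded curvature and its derivatives at the scale $R$. Applying \eqref{int-inequ-easy} for $\bar g$ to the compactly supported tensor $\chi T$, expanding $\nabla^{\bar g,m}(\chi T)$ by Leibniz and using $|\nabla^{g,m-k}\chi|_g\lesssim R^{-(m-k)}$ to get $R^m|\nabla^{g,m}(\chi T)|_g\lesssim\sum_{k=0}^m R^k|\nabla^{g,k}T|_g$, and finally converting every $\bar g$-norm back to a $g$-norm through $|\,\cdot\,|_{\bar g}=R^{\mathrm{rank}}|\,\cdot\,|_g$, I obtain the claim on $B_g(x,\iota R/4)$. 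The bookkeeping point is that the factors attached to $T$ itself appear as $R^{s(1-j/m)}R^{sj/m}=R^{s}$ on the right and as $R^{s}$ on the left (where $s$ is the rank of $T$), and hence cancel; what survives is exactly the weight $R^{j}=r_p(x)^{j}$ on the left and the weighted sum $\sum_{k=0}^m r_p(x)^k|\nabla^{g,k}T|_g$ on the right, with a constant depending only on $n$, $\iota$ and $(A^k(g))_{0\le k\le m}$.

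The routine parts are the Euclidean interpolation inequality and the scaling computation. The genuinely delicate step, and the reason the hypotheses are what they are, is the construction of the cutoff $\chi$ with uniformly (in $x$) controlled covariant derivatives up to order $m$: together, the quadratic curvature decay $A^k(g)<+\infty$ and the uniform lower bound $\inj_g(x)\ge\iota\, r_p(x)$ guarantee bounded geometry of the rescaled metric $\bar g$ at unit scale, uniformly over $x\in M$, so that both \eqref{int-inequ-easy} and the cutoff carry $x$-independent constants and the rescaling closes.
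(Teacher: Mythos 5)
Your proposal is correct, but note that the paper itself offers no proof of this lemma: it is stated as a recollection, with the result attributed to Aubin \cite{Aub-Ine-Int}. So your argument is best read not as an alternative route but as the natural filling-in of what the paper leaves implicit, and it does so correctly. The two steps are the right ones: take \eqref{int-inequ-easy} as given (exactly as the paper does), then deduce the weighted inequality by rescaling $\bar g := r_p(x)^{-2}g$ and localizing with a cutoff. The bookkeeping checks out: writing $R=r_p(x)$, for a covariant $s$-tensor $S$ one has $|\nabla^{\bar g,k}S|_{\bar g}=R^{k+s}|\nabla^{g,k}S|_g$, so the rank factors $R^{s}$ cancel between the two sides of \eqref{int-inequ-easy} applied to $\chi T$, leaving the weight $R^{j}$ on the left and, after the Leibniz bound $R^m|\nabla^{g,m}(\chi T)|_g\leq C\sum_{k=0}^m R^k|\nabla^{g,k}T|_g$, the weighted sum on the right; moreover $|\nabla^{\bar g,k}\Rm(\bar g)|_{\bar g}\leq (1-\iota/2)^{-2-k}A^k(g)$ and $\inj_{\bar g}\geq \iota(1-\iota/2)$ on $B_{\bar g}(x,\iota/2)$, so the rescaled geometry is bounded uniformly in $x$ and both the constant in \eqref{int-inequ-easy} and the cutoff's derivative bounds are $x$-independent, which is the crux. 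Two minor caveats. First, the comparability $r_p\approx r_p(x)$ on $B_g(x,\iota r_p(x)/2)$ requires $\iota<2$; this is harmless (one may always shrink $\iota$, and in the ALE application it is small), but it deserves a word. Second, your parenthetical sketch of \eqref{int-inequ-easy} via exponential charts is the one shaky point: without curvature bounds, normal coordinates give no control on the Christoffel symbols, so the ``lower order terms are reabsorbed'' step is not free; the clean intrinsic proof applies the one-dimensional Landau inequality along geodesics to $t\mapsto\langle T(\gamma(t)),S(t)\rangle$ with $S$ parallel, which needs no curvature control at all. Since you, like the paper, ultimately invoke Aubin for this step rather than your sketch, this does not affect the validity of the argument.
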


\newpage

\bibliographystyle{alpha.bst}
\bibliography{bib-4d-RF-stab}

\end{document}